\documentclass[12pt,reqno,notitlepage]{amsart}
\usepackage{amsmath,amsfonts,amsthm,amssymb,stmaryrd}
\usepackage{enumerate}
\usepackage{indentfirst}
\usepackage[dvips]{graphicx}
\usepackage[all]{xy}
\usepackage{stackrel}
\usepackage{marginnote}
\usepackage{todonotes}
\usepackage{enumitem}

\usepackage[centering, includeheadfoot, hmargin=1.0in, tmargin=1.0in, 
bmargin=1in, headheight=6pt]{geometry}
\usepackage[latin1]{inputenc}
\usepackage[T1]{fontenc}
\usepackage{verbatim}
\usepackage{color}
\usepackage[normalem]{ulem}
\usepackage{tikz-cd}
\usepackage{hyperref}  
\hypersetup{
pdfborder={0 0 0}, 
colorlinks=true, 
citecolor=blue,
linktoc=page,
pdfauthor={Thiago Fassarella, Wodson Mendson, Fred Touzet, J. Pedro Santos}, 
pdftitle={foliations}
}
\renewcommand{\ref}{\hyperref}

\newtheorem{thm}{Theorem}[section]

\newtheorem{cor}[thm]{Corollary}

\newtheorem{lemma}[thm]{Lemma}
\newtheorem{prop}[thm]{Proposition}

\newtheorem{OBS}[thm]{Remark}
\newtheorem{remark}[thm]{Remark}
\newtheorem{ex}[thm]{Example}
\newtheorem{dfn}[thm]{Definition}

\newtheorem{question}[thm]{Question}

\theoremstyle{definition}
\newtheorem{definition}[thm]{Definition}

\renewcommand{\P}{\mathbb{P}}

\numberwithin{equation}{section}

\DeclareMathOperator{\field}{k}
\DeclareMathOperator{\projn}{\mathbb{P}_{k}^{n}}
\DeclareMathOperator{\proj2}{\mathbb{P}_{k}^{2}}

\DeclareMathOperator{\codim}{codim}
\DeclareMathOperator{\sing}{\textrm{Sing}}

\DeclareMathOperator{\ord}{ord}

\DeclareMathOperator{\h}{H}

\DeclareMathOperator{\Ext}{Ext}
\DeclareMathOperator{\at}{\textbf{at}}
\DeclareMathOperator{\Hom}{Hom}

\DeclareMathOperator{\End}{{End}_{\field}}
\DeclareMathOperator{\hh}{H}

\newcommand{\NN}{\mathbb{N}}

\newcommand{\fr}[2]{\frac{#1}{#2}}
\newcommand{\GL}{\mathrm{GL}}
\newcommand{\id}{\mathrm{id}}

\newcommand {\D}{\mathcal D}
\newcommand{\F}{\mathcal F}

\newcommand{\terminou}{\hfill$\lrcorner$}

\begin{document}
\title{Foliations with small singular set in arbitrary characteristic }
\author[T. Fassarella]{Thiago Fassarella}
\address{Universidade Federal Fluminense, Instituto de Matem\'atica e Estat\'istica.
Rua Alexandre Moura 8, S\~ao Domingos, 24210-200 Niter\'oi RJ,
Brazil.}
\email{\color{black}tfassarella@id.uff.br}
\email{\color{black}wmendson@id.uff.br}

\author[W. Mendson]{Wodson Mendson}

\author[J.P. Santos]{Jo\~ao Pedro dos Santos}
\address{Institut Montpelli\'erain Alexander Grothendieck, Universit\'e de Montpellier, Place Eug\`ene Bataillon, 34090 Montpellier, France.}
\email{\color{black}joao\_pedro.dos\_santos@yahoo.com}

\author[F. Touzet]{Fr\'ed\'eric Touzet}
\address{Univ Rennes, CNRS, IRMAR, UMR 6625, F-35000 Rennes, France.}
\email{\color{black}frederic.touzet@univ-rennes1.fr}

\subjclass[2010]{Primary 37F75, 32S65, 14G17; Secondary 14G17, 14H20.}
\keywords{Foliations, arbitrary characteristic, singularities.}
 \date{\today}

 \maketitle

\begin{abstract}
This paper investigates the geometry of foliations on smooth algebraic varieties over an algebraically closed field of arbitrary characteristic $p \ge 0$. We address several specific features of foliations in positive characteristic, aiming to highlight both similarities and differences with the characteristic zero case. First, we provide a complete classification of regular foliations on minimal rational and del Pezzo surfaces, establishing that in positive characteristic, such foliations are $p$-closed. However, there are regular foliations on weak del Pezzo surfaces, in characteristic 2, which are not $p$-closed.  We extend the Camacho-Sad index and its associated sum formula to arbitrary characteristic, using it to study the behavior of foliations and distributions with small singular set. For foliations on projective spaces, we prove that the existence of an invariant hypersurface with a sufficiently small singular set  forces the foliation to be $p$-closed and imposes strict divisibility conditions on the degree of its normal bundle. Finally, we establish versions of the Bott vanishing theorem in both Hodge cohomology and the Chow ring for $p$-closed foliations, providing a positive characteristic analogue to classical vanishing results in complex geometry.
\end{abstract}


\tableofcontents

\section{Introduction}

\subsection{Main results and structure of the paper}
This paper studies foliations on smooth algebraic varieties over an algebraically closed field $\field$ of arbitrary characteristic $p$. When $p=0$, it has been conjectured in \cite{MR3626006} that any regular foliation $\F$ on a rationally connected projective  variety $X$, is algebraically integrable, meaning that all its leaves are algebraic subvarieties of $X$. This conjecture holds at least for surfaces, as it follows from the classification of regular foliations on surfaces by \cite{MR1474805}. The result has also been extended for threefolds under certain hypotheses, see \cite[Theorem 1.4]{MR4502826}. 

If the aforementioned conjecture holds in characteristic zero,  it is natural to expect that a regular foliation on a rationally connected variety, over a field of positive characteristic $p$ is $p$-closed (see Section~\ref{sec:basicmat} for the precise definition of $p$-closed). However, we produce regular foliations on weak del Pezzo surfaces of degree $4$, over a base field of characteristic $2$,  which are not $2$-closed, see Theorem~\ref{thm:5parameter} and Example~\ref{ex:regularnon2closed}. Despite this counterexample, $p$-closedness remains the expected behavior for most regular foliations on rationally connected varieties.  This expectation can be verified at least in three cases: for projective spaces $\projn$, for minimal rational surfaces and for del Pezzo surfaces.  In the first case, it follows from \cite[Theorem 1.5]{MR2148539} that a regular foliation of codimension $q$ on   $\projn$ occurs only if $q=1$, $p=2$, and $\F$ is given by the kernel of a differential $1$-form $dF$, where $F$ is a homogeneous polynomial of degree two. In particular $\F$ is $2$-closed, see Theorem~\ref{thm:regfolproj} below. In the second case, for minimal rational surfaces, one obtains the following characterization:

\begin{thm}\label{thmintro:regularminimal} 
Let $\mathcal{F}$ be a regular foliation on a minimal rational surface $X$.  Then $X$ is a Hirzebruch surface $\mathbb F_e$, $e\neq 1$,  and either
\begin{enumerate}
\item $p\ge 0$, and $\mathcal{F}$ is the $\mathbb P^1_{\field}$-fibration of $\mathbb F_e$ over $\mathbb P^1_{\field}$; or
\item\label{item2} $p > 0$, $p$ divides $e$ and $\F$ is a Riccati foliation on $\mathbb F_e$.
\end{enumerate} 
\end{thm}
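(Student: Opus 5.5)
We first dispose of $X=\mathbb P^2_{\field}$: by the result quoted above from \cite[Theorem 1.5]{MR2148539} (Theorem~\ref{thm:regfolproj}), a regular foliation on $\mathbb P^2_{\field}$ has codimension one and is given by $\ker dF$ with $F$ a quadric, $p=2$. On a surface this is a rank-one foliation, so its tangent sheaf is a line bundle $T_\F=\mathcal O(1-d)$ and its normal bundle is $N_\F = \mathcal O(d+2)$. Regularity forces $c_2(T_X)=c_1(T_\F)c_1(N_\F)$, i.e. $3=(1-d)(d+2)$, which has no integer solution. This holds in any characteristic, because the count of singular points of a saturated rank-one foliation is the top Chern class computation. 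So $\mathbb P^2_{\field}$ carries no regular foliation, and $X=\mathbb F_e$ with $e\neq 1$.

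On $\mathbb F_e$ we work in $\mathrm{Pic}=\mathbb Z f\oplus\mathbb Z \sigma$, where $\sigma^2=-e$, $f^2=0$, $\sigma f=1$, $K=-2\sigma-(e+2)f$ and $c_2=4$. Write $T_\F=\mathcal O(-a\sigma-bf)$, so that $N_\F=-K-T_\F$. Regularity gives
\[
4 = T_\F\cdot N_\F = -T_\F\cdot K - T_\F^2 .
\]
We combine this with two more constraints. First, there must be a nonzero map $T_\F\to T_X$. Second, we look at the restriction to a general fibre $f$. Either $\F$ is tangent to the fibration, in which case $T_\F = T_{X/\mathbb P^1}=\mathcal O(2\sigma+ef)$ and we are in case (1). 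Or $\F$ is generically transverse to the fibres. In that case the composite $T_\F|_f\to T_X|_f\to \pi^*T_{\mathbb P^1}|_f=\mathcal O_f$ is nonzero, which gives $T_\F\cdot f\le 0$, i.e. $a\ge 0$. A tangency divisor then has class $-T_\F+\pi^*T_{\mathbb P^1}$. Regularity should force the tangency locus to be a union of invariant fibres, which gives $a=0$; here one uses that a regular foliation cannot have isolated tangencies with a fibration in a way that produces singularities. Hence $\F$ is Riccati, with $T_\F=\pi^*\mathcal O(2-n)$ after absorbing invariant fibres.

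Next we solve the numerical equation with $a=0$: $T_\F=-bf$ gives $T_\F\cdot N_\F = -T_\F K - T_\F^2 = 2b\cdot$(something). Concretely, $-T_\F\cdot K = bf\cdot K = -2b$, so $4=-2b$, i.e. $b=-2$ and $T_\F=\pi^*T_{\mathbb P^1}$. The foliation is then a regular Riccati foliation transverse to every fibre, that is, a flat Ehresmann connection on the $\mathbb P^1$-bundle $\mathbb P(\mathcal O\oplus\mathcal O(e))\to\mathbb P^1$, or equivalently a projective connection on a rank-two bundle. In characteristic zero, simple connectivity of $\mathbb P^1$ forces triviality, so $e=0$; transversality then leaves only the product structure, which is again a fibration, the other ruling of $\mathbb F_0$, and this falls under case (1).

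In characteristic $p>0$ we reduce to a connection on $\mathcal O\oplus\mathcal O(e)$ up to twist. Locally the Riccati form is $dz + (\alpha z^2+\beta z+\gamma)dx$ on both charts, and we glue via $z\mapsto x^{e}z$. The transformation of the $\beta$ term introduces $e\,dx/x$, and a holomorphic extension of $\beta$ over $x=\infty$ exists only if $e\,dx/x$ is exact mod $p$, i.e. $p\mid e$. Equivalently, the determinant line bundle $\mathcal O(e)$ must carry a connection, which happens exactly when its degree is divisible by $p$. This yields (2). Conversely, when $p\mid e$, the connection $d$ on $\mathcal O\oplus\mathcal O(e)$ (with $\mathcal O(e)=\mathcal O(e/p)^{\otimes p}$ and the canonical Frobenius-descent connection) induces a regular Riccati foliation. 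The main obstacle is the transverse case: showing rigorously that the tangency divisor consists only of invariant fibres, so that $a=0$. I would first try to do this by a local computation at a tangency point, showing that a non-invariant tangency to a smooth fibre produces either a singularity or a non-reduced contact incompatible with the index count.
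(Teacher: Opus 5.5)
Your exclusion of $\mathbb P^2_{\field}$ is correct: the equation $3=(1-d)(d+2)$ has no integer solution. The paper instead uses that $c_2(T_X)$ must be even. Note also that the classification on $\mathbb P^n_{\field}$ you quote requires $n$ odd, so it excludes $\mathbb P^2_{\field}$ outright rather than producing a quadric there.

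Your endgame is also fine in substance. Once $a=0$ you correctly get $b=-2$. The linear coefficients of the local Riccati forms glue to a connection on $\mathcal O_{\mathbb P^1_{\field}}(e)$, which gives $p\mid e$, and gives $e=0$ in characteristic zero without appealing to simple connectivity.

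The genuine gap is the step you flag yourself: proving $a=\mathrm{tang}(\F,f)=0$ in the non-fibration case. The route you suggest cannot work. It is false that regularity rules out non-invariant tangencies with a fibration: $\ker d(y-x^2)$ (use $y-x^3$ if $p=2$) is regular on $\mathbb A^2_{\field}$ and tangent to every line $y=c$ at one point. So no local computation at a tangency point can give a contradiction.

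Nor do the global constraints you list suffice. Writing $K_\F=-T_\F\equiv a\sigma+bf$, the relation $T_\F\cdot N_\F=4$ becomes
\[
2=(1+a)(ae-2-2b).
\]
Besides $(a,b)=(0,-2)$, this admits $(a,b)=\bigl(1,\tfrac{e-3}{2}\bigr)$ for every odd $e$, e.g.\ $K_\F\equiv\sigma$ on $\mathbb F_3$. Neither $a\ge 0$ nor the existence of a nonzero map $T_\F\to T_X$ rules this out.

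The missing idea is to test $K_\F$ against a second curve, the negative section $\sigma$, using the index formulas. For a non-invariant curve, $K_\F\cdot C=-C^2+\mathrm{tang}(\F,C)$. For an invariant curve, $K_\F\cdot C=2p_a-2+Z(\F,C)$, with $Z(\F,C)=0$ when $C$ is smooth and $\F$ regular.

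If $\sigma$ is not invariant, then $b-ae=K_\F\cdot\sigma\ge e$, so $b\ge e(1+a)\ge 0$, and
\[
K_\F\cdot N_\F=(a+2)(b-ae)+a(b+e+2)\ge 0.
\]
This contradicts $K_\F\cdot N_\F=-c_2(T_X)=-4$.

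If $\sigma$ is invariant, then $K_\F\cdot\sigma=-2$, i.e.\ $b=ae-2$. Substituting into the relation above gives $a\bigl(2-e(1+a)\bigr)=0$, hence $a=0$ since $e\neq 1$. This is exactly how the paper proceeds.

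Afterwards, $K_\F\equiv -2f$ shows that no fibre is invariant, since an invariant fibre would have $K_\F\cdot f=-2$. It also shows that every fibre has tangency $K_\F\cdot f+f^2=0$. This is what justifies writing $\F$ by pole-free Riccati forms in your final step.
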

Theorem~\ref{thmintro:regularminimal} is a consequence of Theorem~\ref{thm:regularminimal} in the main text. There, the reader will find a precise description of the regular Riccati foliation on $\mathbb F_e$.  It follows that, when $p>0$, any regular foliation on a minimal rational surface is $p$-closed, this is our Corollary~\ref{cor:minimalpclosed}.  
 Item~\eqref{item2} of Theorem~\ref{thmintro:regularminimal} stands in sharp contrast with the characteristic zero case, since any regular foliation on Hirzebruch surfaces over $\mathbb C$  is a  $\mathbb P^1_{\field}$-fibration. Finally, the third case mentioned above, namely del Pezzo surfaces, is addressed  in Theorem~\ref{thm:delpezzo}:
 
 \begin{thm}\label{thmintro:delpezzo}
Let $X$ be a smooth projective del Pezzo surface and  let $\F$ be a regular foliation on it. Then $X$ is either $\mathbb P^1_{\field} \times \mathbb P^1_{\field}$  or $\mathbb F_1$, and $\F$ leaves one of the standard $\mathbb P^1_{\field}$-fibrations invariant. More precisely, if $X=\mathbb P^1_{\field} \times \mathbb P^1_{\field}$ then one of the two rulings is invariant, while for $\mathbb F_1$ the conclusion holds for the standard fibration $\mathbb F_1 \to \mathbb P^1_{\field}$.
\end{thm}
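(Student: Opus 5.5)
Our approach rests on the tangent-bundle exact sequence for a regular foliation, together with the fact that on a rational surface every effective divisor class is tight. Let $\F$ be a regular foliation on a del Pezzo surface $X$, with tangent line bundle $T_\F$ and normal bundle $N_\F$. Regularity gives an exact sequence
$$0\to T_\F\to T_X\to N_\F\to 0$$
with $N_\F$ a line bundle. Hence $c_1(X)=T_\F+N_\F$ and $c_2(X)=T_\F\cdot N_\F$, where $c_2(X)=e(X)=12-K_X^2$ by Noether's formula; this holds in any characteristic, using $\ell$-adic cohomology. Write $d=K_X^2\in\{1,\dots,9\}$. Then $T_\F$ and $N_\F$ are divisor classes $A,B$ with $A+B=-K_X$ and $A\cdot B=12-d$. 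From $(A+B)^2=d$ we get $A^2+B^2=d-2(12-d)=3d-24\le 3$, and this is negative unless $d\ge 8$. The plan is to exploit this numerically, combined with the Hodge index theorem applied against the ample class $-K_X$.

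\textbf{Key numerical step.} Put $a=A\cdot(-K_X)$ and $b=B\cdot(-K_X)$, so $a+b=d$. Hodge index gives $A^2\le a^2/d$ and $B^2\le b^2/d$. This bound is compatible with $A^2+B^2=3d-24$, so a further constraint is needed. That constraint comes from $B=N_\F$: the foliation is given by a twisted 1-form $\omega\in H^0(\Omega^1_X\otimes N_\F)$ which vanishes nowhere. Hence $c_2(\Omega^1_X\otimes N_\F)=0$, which is equivalent to $c_2(X)=A\cdot B$, so this gives nothing new. Instead we use the fact that $H^0(\Omega^1_X\otimes N_\F)\ne 0$. Using the sequence $0\to N_\F^{*}\to\Omega^1_X\to T_\F^{*}\to 0$, we get nonvanishing of $H^0(N_\F-T_\F)$ or $H^0(\mathcal O_X)$. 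The second option means $\omega$ maps to a nonzero section of $\Omega^1$-type twisted back, which should be excluded by the sequence splitting. So essentially $B-A$ is effective or zero, and in particular $b\ge a$. I expect that, for regular $\F$, one can deduce $A^2=0$ or a similar statement. The cleaner route is the Baum--Bott / Camacho--Sad type formula: in positive characteristic this may fail, which is exactly why the Riccati foliations appear in Theorem~\ref{thmintro:regularminimal}. So I would rely only on the Chern-class identities plus effectivity.

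\textbf{Case analysis.} Lattice arithmetic in $\mathrm{Pic}(X)=\langle H,E_1,\dots,E_r\rangle$, with $r=9-d$, should show that no classes $A,B$ with $A+B=-K_X$, $A\cdot B=12-d$ and $B-A$ effective exist unless $r\le 1$, or $X=\P^1_{\field}\times\P^1_{\field}$. The case $\P^2_{\field}$ is excluded by $A\cdot B=3$, $A+B=3H$, which has no integer solution ($a(3-a)=3$ is impossible). For $r\ge 2$ I would reduce by intersecting with $(-1)$-curves $E$. Each such $E$ satisfies $A\cdot E+B\cdot E=1$. Then $E$ is either $\F$-invariant, and regularity along $E\cong\P^1$ forces $T_\F\cdot E=2-\text{(tangency)}$ giving $T_\F\cdot E=2$, or transverse, forcing $N_\F\cdot E\ge 0$ with $T_\F\cdot E=E^2+\mathrm{tang}$, etc. These local constraints on all the $(-1)$-curves should give a contradiction via a parity or count argument when $r\ge2$. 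I expect this to be the main obstacle, and I would attack it first by exploiting the disjoint pairs of $(-1)$-curves available on $X$ when $r\ge 2$.

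\textbf{Invariant fibration.} For $X=\P^1_{\field}\times\P^1_{\field}$, solving $A+B=(2,2)$ with $A\cdot B=4$ gives $A=(2,0)$ or $(0,2)$ up to symmetry. Restricting $T_\F\to T_X$ to a fiber, $T_\F=(2,0)$ maps isomorphically onto the pullback of $T_{\P^1}$ from one factor. This makes the fibers of the other projection, on which $N_\F$ is trivial, invariant, since the composite $T_\F\to pr^*T_{\P^1}$ has degree $0$ and is nonzero. For $\F_1$ the analogous computation gives $A=2f$ or $B=2f$ relative to the fiber class $f$. The corresponding composite $T_\F\to\pi^*T_{\P^1}$ or $T_{X/\P^1}$ then shows that the ruling $\F_1\to\P^1_{\field}$ is invariant. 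Alternatively, for $\F_1$, I would contract the $(-1)$-curve and invoke the $\P^2_{\field}$ computation.
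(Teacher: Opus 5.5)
Your proposal has a genuine gap. The constraint that drives your case analysis is unfounded, and the ingredient that actually rules out degree $\le 7$ is the one you set aside. Tensoring $0\to N_\F^*\to\Omega^1_X\to T_\F^*\to 0$ by $N_\F$ gives $0\to\mathcal O_X\to\Omega^1_X\otimes N_\F\to N_\F\otimes T_\F^*\to 0$. The twisted form $\omega$ is exactly the image of $1\in H^0(X,\mathcal O_X)$, so $H^0(\Omega^1_X\otimes N_\F)\neq 0$ says nothing about $N_\F-T_\F$. In fact ``$B-A$ effective'' fails for the foliations that do exist: for a ruling of $\P^1_{\field}\times\P^1_{\field}$, $N_\F-T_\F$ is $\mathcal O(2,-2)$ or $\mathcal O(-2,2)$, and for the ruling of $\mathbb{F}_1$ it is $F-2M$. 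It would not give your lattice statement anyway. On $\mathrm{Bl}_3\P^2_{\field}$ (three non-collinear points), take $A=-2(E_1+E_2+E_3)$ and $B=3H+E_1+E_2+E_3$. These satisfy $A+B=-K_X$, $A\cdot B=6=c_2(X)$ and $B-A$ effective. They are also compatible with every tangency and $Z$-formula if the $E_i$ are invariant ($T_\F\cdot E_i=2$, $N_\F\cdot E_i=-1$) and every other curve is non-invariant (the lines $H-E_i-E_j$ then have tangency $3$). So Chern identities plus tangency formulas cannot exclude $r\ge 2$, and the hoped-for ``parity or count argument'' does not exist in that form. (Also, for non-invariant $E$ one has $T_\F\cdot E=E^2-\mathrm{tang}(\F,E)$, not $E^2+\mathrm{tang}(\F,E)$.)

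What is missing is a proof that no $(-1)$-curve is $\F$-invariant. Your worry that Camacho--Sad fails in characteristic $p$ is misplaced: it holds as an identity in $\field$. For a smooth invariant curve of a regular foliation, every index is $-\mathrm{Res}(\sigma/h)$ with $h$ a unit, hence zero, so $C^2=0$ in $\field$. The exotic Riccati foliations obey this, since their invariant section has $M^2=-e\equiv 0 \bmod p$. Since $-1\neq 0$ in $\field$, no $(-1)$-curve is invariant. Alternatively, contract $E$ to a point $x$ and use $N_\F=\pi^*N_{\bar{\F}}-l(x)E$: this gives $N_\F\cdot E=l(x)\ge 0$, whereas invariance would force $N_\F\cdot E=E^2=-1$. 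Hence $N_\F\cdot E\ge 2$ and $K_\F\cdot E\ge 1$ for all $(-1)$-curves. In degree $\le 7$ these curves span $\overline{\mathrm{NE}}(X)$, so $N_\F$ and $K_\F$ are nef and $N_\F\cdot K_\F\ge 0$, contradicting $N_\F\cdot K_\F=-c_2(X)<0$. This is the paper's argument, and it replaces your whole case analysis.

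The same input is needed on $\mathbb{F}_1$, where your numerics are also incomplete. The system $A+B=2M+3F$, $A\cdot B=4$ has four solutions, $A\in\{2M+F,\,2F,\,3M+2F,\,-M+F\}$. The case $A=3M+2F$ is excluded by $N_\F\cdot F=-1$. But $A=2F$ and $A=-M+F$ give $K_\F\cdot M=-2$, which forces $M$ to be invariant, and only the argument above removes them. Moreover, for $A=2F$ the map $T_\F\to\pi^*T_{\P^1_{\field}}$ is an isomorphism. So $\F$ would be a Riccati foliation transverse to the ruling rather than one leaving it invariant, and your last step misreads this case.

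Your treatment of $\P^2_{\field}$ is correct. So is your treatment of $\P^1_{\field}\times\P^1_{\field}$: the numerics force $T_\F\simeq\mathcal O(2,0)$ or $\mathcal O(0,2)$, and $h^0(\mathcal O(2,-2))=h^0(\mathcal O(-2,2))=0$.
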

Beyond these three classes (projective spaces, minimal rational surfaces, and del Pezzo surfaces)  we show that any regular foliation on a weak del Pezzo surface of degree $6$ or $8$ is $p$-closed, see Corollary~\ref{cor:6and8pclosed}. Furthermore, by Theorem~\ref{thm:evendegree},  weak del Pezzo surfaces of odd degree admit no regular foliations.   All the above cases involve rational varieties;  in Proposition~\ref{prop:examplechar3} we present a regular foliation on a non-rational surface, in characteristic $p>2$,  which is not in general $p$-closed.

We also investigate the conditions under which a foliation is $p$-closed, even in the presence of singularities. This leads us to consider invariant subvarieties. A distinctive phenomenon in positive characteristic is that  a singular curve may be invariant under a regular foliation. A typical example is given by the curve given by $f = y^2 - x^p$ and the foliation defined by $dy$. A preliminary analysis might suggest that this occurs only if the degree of the curve is multiple of $p$ or at least greater than $p$. However, this is not the case. We construct examples where a curve $C$, singular at the origin of the affine plane $\mathbb A^2_{\field}$, is invariant under a regular foliation at the origin,  but its degree $\deg (C)$ is relatively small compared  to $p$. See Propositions \ref{P:counterexample} and \ref{P:counterexamplebis}.  On the other hand, $\deg(C)$ cannot be too small, this is the content of the following result, which is Theorem~\ref{thm:invsmalldegree} in the main text.

\begin{thm}\label{thmintro:invsmalldegree}
Let $C\subset \mathbb A^2_{\field}$ be a reduced curve which is invariant by a foliation $\F$ on $\mathbb A^2_{\field}$ and  assume that 
\[
\deg(C)(\deg(C) -1)<p. 
\]
Then $\sing(C)\subset \sing(\F)$.
\end{thm}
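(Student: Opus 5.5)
The plan is to argue by contradiction: assume that $q\in\sing(C)$ but $q\notin\sing(\F)$. I will show that a regular foliation can only leave a curve with a singular point invariant if that singular point is very degenerate, namely has Milnor (or Tjurina) number at least about $p-1$. I then compare this with the Bezout bound available for curves of small degree.

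First, the local normal form. Write $C=\{f=0\}$ with $f$ reduced of degree $d$. Since $d(d-1)<p$, we have $d<p$, so $f$ is not a $p$-th power in either variable. This keeps the partial derivatives $f_x,f_y$ from vanishing identically, and keeps them coprime to $f$. Let $v$ be a vector field defining $\F$ near $q$ with $v(q)\neq 0$. Invariance of $C$ means $v(f)=gf$ for some regular function $g$. Move $q$ to the origin and make a linear change of coordinates so that $v(0)=\partial_y$.

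The key local claim is that, modulo $\mathfrak m^{p}$ with $\mathfrak m=(x,y)$, one can change coordinates so that $v$ becomes $u\,\partial_y$ for a unit $u$. Concretely, I would solve for the integral curves of $v$ as power series truncated at order $p$; only the factorials $1,\dots,(p-1)!$ are needed, and these are invertible. This truncated flow-box is the only place where characteristic $p$ enters. In the new coordinates the relation $v(f)=gf$ reads $\partial_y f\equiv (g/u)f \pmod{\mathfrak m^{p-1}}$. Solving this ODE in $y$ degree by degree shows $f\equiv e\,\varphi(x)\pmod{\mathfrak m^{p}}$, where $e$ is a unit and $\varphi(x)=f(x,0)$; again we only divide by integers smaller than $p$.

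Since $0$ is a singular point of $C$, $\varphi\in(x^2)$. Let $\mathfrak j=(f,f_x,f_y)$ be the Tjurina ideal. Then $\mathfrak j\subset (x)+\mathfrak m^{p-1}$, up to the coordinate change and the unit, because $f_y\equiv(\text{stuff})\cdot f$ and $f_x\equiv e'\varphi+e\varphi'$ lie in $(x)$ modulo high order. Hence
\[
\tau_0(C)=\dim_\field \field[[x,y]]/\mathfrak j\ \ge\ \dim_\field\field[[x,y]]/\bigl((x)+\mathfrak m^{p-1}\bigr)=p-1 .
\]
This mirrors the model example $y^2-x^p$, where the bound is sharp.

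For the upper bound, use Bezout. For a generic choice of affine coordinates, $f_x$ and $f_y$ (degree $\le d-1$) share no component, because $f$ is reduced and $d<p$. Hence
\[
\tau_0(C)\le i_0(f_x,f_y)\le (d-1)^2 .
\]
But $d(d-1)<p$ gives $(d-1)^2=d(d-1)-(d-1)<p-(d-1)\le p-1$ for $d\ge 2$, which contradicts the lower bound. (The case $d=1$ is trivial, since a line has no singular points.) If the generic-coordinates step is delicate, an alternative is to bound $i_0(f,f_\ell)\le d(d-1)<p$ for a generic polar $f_\ell$. One then shows that $f_\ell\in(x)+\mathfrak m^{p-1}$ and $f\in(x^2)+\mathfrak m^p$ give $i_0(f,f_\ell)\ge p$; this uses exactly the stated hypothesis.

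The main obstacle I expect is the truncated normal form. One must check carefully that straightening $v$ and integrating $\partial_y f=hf$ modulo $\mathfrak m^{p}$ never requires dividing by $p$. One must also handle the exact exponent of $\mathfrak m$ so that the resulting lower bound really reaches $p-1$ (or $p$, in the polar variant).
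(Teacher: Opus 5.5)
Your local analysis is correct, but the primary upper bound $\tau_0\le\mu_0\le(d-1)^2$ rests on a false claim. The fallback polar argument does give a complete proof once its lower bound is supplied, and it is shorter than the paper's route.

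\textbf{What works.} The truncated flow-box gives coordinates with $v=u\,\partial_y+\epsilon\,\partial_x$. In general you only get $\epsilon\in\mathfrak m^{p-1}$, not $\mathfrak m^{p}$, but that suffices because $f_x\in\mathfrak m$. Integrating in $y$ only divides by $1,\dots,p-1$, so $f\equiv e\,\varphi(x)\pmod{\mathfrak m^{p}}$ and $\tau_0\ge p-1$.

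\textbf{The gap.} You assert that in generic affine coordinates $f_x,f_y$ share no component ``because $f$ is reduced and $d<p$''. This is false. Genericity is also beside the point: a linear change of coordinates does not change the ideal $(f_x,f_y)$, hence does not change $\mu_0$. For $p=5$, the reduced quartic $f=-xy(y-2x^2)$ has a triple point at $0$, yet $f_x=-y(y-x^2)$ and $f_y=-2x(y-x^2)$, so $\mu_0=\infty$. Similarly, $f_x$ and $f_y$ need not be coprime to $f$ (take $f=xy$), and $f_x$ can vanish identically; only a general polar is coprime to $f$.

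Finiteness of $\mu_0$ can only come from the full hypothesis $d(d-1)<p$. Suppose an irreducible $h$ divides $f_x$ and $f_y$ and vanishes at $0$.
\begin{enumerate}
\item Then $h\nmid f$: otherwise reducedness gives $h\mid h_x$ and $h\mid h_y$, so $h_x=h_y=0$, which is impossible for $0<\deg h<p$.
\item Restricted to a branch of $\{h=0\}$ through $0$, $f$ is nonzero, vanishes at $t=0$, and has zero $t$-derivative. Hence $i_0(f,h)\ge p$.
\item This contradicts B\'ezout, which gives $i_0(f,h)\le d(d-1)$.
\end{enumerate}
With this added, $\mu_0<\infty$. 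After discarding common factors of $f_x,f_y$ (which are units at $0$), B\'ezout gives $\mu_0\le(d-1)^2$, and your main route closes.

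\textbf{The fallback.} This is the argument to keep. For general $(\alpha:\beta)$, the polar $f_\ell=\alpha f_x+\beta f_y$ is coprime to $f$, by the same $h_x=h_y=0$ argument. So $i_0(f,f_\ell)\le d(d-1)<p$, which is exactly the paper's bound $\kappa_q(C)\le d(d-1)$.

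The lower bound you left as ``one then shows'' has a short proof. In your straightened coordinates, set $J=(x,y^{p-1})=(x)+\mathfrak m^{p-1}$, which has colength $p-1$.
\begin{enumerate}
\item $f\in(x^2)+\mathfrak m^{p}\subset\mathfrak mJ$.
\item $f_\ell\in J$: it is a $\field\llbracket x,y\rrbracket$-linear combination of the two partials of $f$ in the new coordinates, and both lie in $J$.
\item If $(f,f_\ell)=J$, then $J=\mathfrak mJ+(f_\ell)$, and Nakayama would make $J$ principal. It is not, since $J/\mathfrak mJ$ has dimension $2$.
\end{enumerate}
Hence $(f,f_\ell)\subsetneq J$ and $i_0(f,f_\ell)\ge p$, which is the desired contradiction.

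\textbf{Comparison with the paper.} The paper uses McQuillan's formal normal form $\partial_x+x^{p-1}g(x^p,y)\partial_y$, Hamburger--Noether parametrizations $x=t^{n_l}$, $y=\alpha_l(t^p)$, and the branch decomposition of $\kappa$. From these it extracts a piece of $\kappa(f)$ that is a positive multiple of $p$. Your argument needs only a finite-order computation and a colength estimate, and it gives $\kappa_q(C)\ge p$ directly, with no divisibility statement.
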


For foliations on projective spaces,  the existence of a smooth hypersurface that does not intersect the singular set of $\F$ implies that $p>0$ and that $\F$ is $p$-closed, see Corollary~\ref{cor:HnotSing}.   We extend this result for distributions, where the integrability condition on $\F$ is no longer required. In this more general setting, we assume only the weaker hypothesis that the intersection of the singular set of $\F$ with the invariant hypersurface is small (see Theorem~\ref{thm:HcapSing=empty}): 

\begin{thm}\label{thmintro:HcapSing=empty}

Let $\D$ be a codimension one distribution on $\projn$. Let $Y=\mathcal Z(F)$ be a reduced  hypersurface invariant by $\D$. If $Y\cap \textrm{Sing}(\D)$ has codimension at least $3$,  then $p>0$, $\deg F$  is a multiple of $p$ and $\D$ is given by the kernel of $dF$. In particular, $\D$ is integrable and $p$-closed.
\end{thm}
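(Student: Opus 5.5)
Work in homogeneous coordinates. The distribution $\D$ is given by a homogeneous $1$-form $\omega=\sum_i A_i\,dx_i$ on $\mathbb A^{n+1}_{\field}$ with $i_R\omega=0$, where $R$ is the radial field. The $A_i$ have degree $d+1$, where $d+2=\deg N_\D$, and we may take $\codim \sing(\omega)\ge 2$. Write $F$ reduced of degree $m$. Invariance of $Y$ means that $F$ divides every coefficient of $\omega\wedge dF$, so we can write
\[
\omega\wedge dF = F\,\eta
\]
for some homogeneous $2$-form $\eta$. The plan has two steps: first show that $\omega$ and $dF$ are proportional, then read off the consequences.

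\textbf{Step 1: proportionality.} Contract the relation with $R$. Since $i_R\omega=0$ and $i_R dF = mF$ by Euler,
\[
-\,mF\,\omega = F\, i_R\eta, \qquad\text{hence}\qquad i_R\eta = -m\,\omega .
\]
Next use the singular-set hypothesis. At a point $x\in Y$ with $x\notin\sing(\D)$, the form $\omega$ does not vanish and the tangent hyperplane of $\D$ contains $T_xY$ wherever $Y$ is smooth. To turn this into divisibility we argue on the affine cone $C(Y)$. The ideal of $2\times 2$ minors of $(A,\partial F)$ lies in $(F)$. Modulo $F$, the vectors $\partial F$ and $A$ are therefore proportional in the ring $S/(F)$ (with $S$ the polynomial ring) away from $\sing(\omega)\cap C(Y)$. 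Since $\codim_Y(Y\cap \sing(\D))\ge 2$, the ring $S/(F)$ is a hypersurface ring, hence $S_2$, and Hartogs extension on $C(Y)$ applies. This gives $g,h$ with
\[
h\,\partial F \equiv g\,A \pmod F,
\]
where $g$ is a unit at generic points; more precisely, $A\equiv \lambda\,\partial F \pmod F$ with $\lambda$ a section over $Y$ of degree $d+2-m$. The codimension-$3$ hypothesis in $\projn$ is exactly what makes the cone over $Y\cap\sing(\D)$ of codimension $\ge 2$ in $C(Y)$, so this extension step is where it is used. Lifting, we get $\omega = \lambda\, dF + F\beta$ for a homogeneous $1$-form $\beta$.

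\textbf{Step 2: kill $\beta$ and compare degrees.} Contracting with $R$ gives $0 = \lambda mF + F\, i_R\beta$, so $i_R\beta=-m\lambda$. Now use the singular-set hypothesis again. Along $Y$ we have $\omega|_Y = \lambda\, dF$, so $\sing(\D)\cap Y \supseteq \{\lambda=0\}\cap Y$. If $\deg\lambda>0$, this set has codimension $2$ in $\projn$, which is a contradiction. Hence $\lambda$ is a nonzero constant, $\deg\omega=\deg dF$, and so $F\beta$ has the same degree as $\omega$, which forces $\beta=0$ by degree count.

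It remains to check this: $\beta$ has coefficients of degree $d+1-m=0$, i.e.\ $\beta=\sum c_i dx_i$ is constant. Then $i_R\beta=\sum c_ix_i$ must equal the constant $-m\lambda$. This forces $c=0$ and $m\lambda=0$ in $\field$. Since $\lambda\neq0$, we get $m=0$ in $\field$, so $p>0$ and $p\mid m$, and $\omega=\lambda\,dF$. Integrability is then immediate because $d\omega=0$. For $p$-closedness we use that the foliation is defined by the closed form $dF$, or equivalently has the first integral $F$ modulo $p$-th powers; by the paper's definition this gives $p$-closedness (the leaves are the fibres of $F$ over $\field[F]\supset \field(x^p)$-type invariants).

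\textbf{Main obstacle.} The hard part is the division step in Step 1. In particular, one must handle singular and nonreduced components: $Y$ is reduced but possibly singular, so $\partial F$ can vanish in codimension $1$ on $Y$ (for instance when $F$ is a $p$-th power times something, or $\sing Y$ is large). I would first prove the pointwise proportionality on the smooth locus of $Y$ minus $\sing(\D)$. Then I would use that, on $C(Y)$ minus a codimension-$2$ set, $A$ is a nowhere-vanishing vector with $A\wedge\partial F\equiv 0$. This makes $\partial F|_{C(Y)}$ a section of the line subbundle generated by $A$, so $\partial F=\mu A$ with $\mu$ regular by $S_2$. The roles are thus reversed, and in Step 2 we argue with $\mu$ instead: $\deg\mu=m-d-2\ge0$, and nonvanishing of $\omega$ on $Y$ off codimension $\ge2$ is used appropriately.
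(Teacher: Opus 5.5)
Your main line fails at the decomposition $\omega=\lambda\,dF+F\beta$. Any such decomposition forces $\sing(Y)\subset\sing(\D)$: at a point of $Y$ where $\partial F=0$ it gives $A=0$. The hypothesis controls only $Y\cap\sing(\D)$, not $\sing(Y)$, so the Hartogs extension in that direction would need $\partial F$ to be nonvanishing on $C(Y)$ off codimension two, which is not given.

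A concrete counterexample to both Step 1 and Step 2: take $p=3$, $n=2$, $F=x_0^3+x_1^2x_2$ (irreducible) and $\omega=2x_2\,dx_1+x_1\,dx_2$.
\begin{itemize}
\item $i_R\omega=3x_1x_2=0$ and $d=0$.
\item $\sing(\D)=\{(1:0:0)\}$ is disjoint from $Y$.
\item $dF=x_1\,\omega$, so $Y$ is invariant.
\end{itemize}
At the cusp $(0:0:1)\in Y$ one has $\partial F=0$ but $A=(0,2,0)\neq 0$. So $A\equiv\lambda\,\partial F \pmod F$ is impossible, and your conclusion ``$\lambda$ constant, hence $\deg F=d+2$'' is false here ($3\neq 2$).

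The same example shows that the fallback in your last paragraph cannot be finished by mirroring Step 2 with $\mu$ in place of $\lambda$. Here $\mu=x_1$ is not constant; it vanishes on $\sing(Y)$, which the hypothesis allows. Also $\omega$ is not closed ($d\omega=-dx_1\wedge dx_2$), so integrability is not ``immediate because $d\omega=0$''. (A minor slip: with $\lambda$ constant, the coefficients of $\beta$ have degree $d+1-m=-1$, not $0$.)

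The reversed statement itself is correct, and one observation turns it into a complete proof.
\begin{itemize}
\item Set $Z=V(F,A_0,\dots,A_n)$. The quotients $\partial_jF/A_j$ on $D(A_j)\cap C(Y)$ agree on overlaps, because the $2\times 2$ minors vanish mod $F$. So they glue to a function on $C(Y)\setminus Z$.
\item Since $S/(F)$ is Cohen--Macaulay and $\codim_{C(Y)}Z\ge 2$, this function extends to some $\mu\in S/(F)$. Moreover $\partial_iF-\mu A_i=0$ in $S/(F)$, since $S/(F)$ has no embedded primes.
\item The relations are graded, so you may replace $\mu$ by a homogeneous lift of degree $m-d-2$. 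Then $\partial_iF-\mu A_i$ is a homogeneous element of $(F)$ of degree $m-1<m$, hence \emph{zero}.
\end{itemize}
So $dF=\mu\,\omega$ exactly, with no $F\beta$ term. Also $\mu\neq 0$, because $F$ reduced forces $dF\neq 0$. Now:
\begin{itemize}
\item contracting with $R$ gives $mF=\mu\, i_R\omega=0$, so $p>0$ and $p\mid m$;
\item $\D=\ker dF$;
\item integrability follows by wedging $0=d\mu\wedge\omega+\mu\,d\omega$ with $\omega$;
\item $p$-closedness follows since $v(F)=0$ implies $v^p(F)=0$.
\end{itemize}

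Completed this way, your argument is a genuinely different and more elementary route than the paper's. The paper first gets $p\mid\deg F$ from the Camacho--Sad formula on a general plane section, which relies on embedded resolution and the local decomposition $g\omega=hdf+f\sigma$. Only then does it kill $\eta$ in $\Omega\wedge dF=F\eta$, by gluing local factorizations $\eta=\Omega\wedge\xi_i$ using $H^1(\mathbb P^2,\mathcal O(-l))=0$ and $H^0(\mathbb P^2,\Omega^1)=0$. Your route is pure commutative algebra on the affine cone and needs no reduction to $n=2$. It also produces the divisibility from Euler's identity rather than taking it as input.
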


An ingredient in the proof of Theorem~\ref{thmintro:HcapSing=empty} is the Camacho-Sad formula. The formula says that the sum of Camacho-Sad indices along an invariant curve $C$, inside a surface, coincides with the self-intersection $C^2$. The  Camacho-Sad formula, in arbitrary characteristic, is proved  in Theorem~\ref{thm:CSformula}. It should be noted that the first assertion of this theorem ($\deg F$  is a multiple of $p$) was established in a more general form in \cite[Corollary 4.5]{EstevesKleiman}.

Theorem~\ref{thmintro:HcapSing=empty} yields interesting consequences for distributions on projective spaces having small singular set. The first one is the following (see Corollary~\ref{cor:nonintegrabledis}). 

\begin{thm}\label{corintro:nonintegrabledis}
	Let $\D$ be a codimension one  non integrable distribution on  $\mathbb{P}_{\field}^{n}$ such that $ \sing (\D) $ has codimension at least $3$, then $\D$ has no invariant hypersurface. 
\end{thm}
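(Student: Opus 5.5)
Suppose, for contradiction, that $\D$ leaves invariant some hypersurface. The plan is to reduce directly to Theorem~\ref{thmintro:HcapSing=empty}. First I would pass to a reduced hypersurface. If $Y'=\mathcal Z(G)$ is invariant, then so is its reduction: $\D$ is given by a twisted $1$-form $\omega$, and invariance means $\omega\wedge dG$ is divisible by $G$. I would check that this passes to each irreducible factor $F_i$ of $G$, hence to their product $F=\prod F_i$. Locally one writes $G=F_i^{m}h$ with $F_i\nmid h$, so
\[
dG=F_i^{m-1}\bigl(mh\,dF_i+F_i\,dh\bigr).
\]
When $p\nmid m$ the claim follows at once. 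When $p\mid m$, the factor $F_i^m$ is closed, so I would use directly the standard definition of invariance of an irreducible component, namely that $\omega$ restricted to the smooth locus of $\mathcal Z(F_i)$ vanishes on tangent vectors. In any case the paper's notion of invariance should already be defined componentwise, so I can simply take $Y=\mathcal Z(F)$ reduced and invariant.

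Next, since $\sing(\D)$ has codimension at least $3$ in $\projn$, the intersection $Y\cap\sing(\D)$ is contained in $\sing(\D)$. It therefore also has codimension at least $3$, which is all the hypothesis of Theorem~\ref{thmintro:HcapSing=empty} requires. That theorem gives $p>0$, $p\mid\deg F$, and $\D=\ker dF$. In particular $\D$ is integrable, because $dF\wedge d(dF)=0$. This contradicts the assumption that $\D$ is non integrable, so no invariant hypersurface exists.

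The only point needing care is the meaning of codimension for $Y\cap\sing(\D)$ in Theorem~\ref{thmintro:HcapSing=empty}: is it measured in $\projn$ or in $Y$? If it is measured in $Y$, I would need $\codim_Y(Y\cap\sing(\D))\ge 3$, which does not follow formally from $\codim_{\projn}\sing(\D)\ge 3$. Reading it in $\projn$, as I expect the paper does since the conclusion is about a $2$-dimensional transversal slice where the Camacho–Sad argument applies, the deduction is immediate. So the main (mild) obstacle is only this bookkeeping, together with the reduction step above.
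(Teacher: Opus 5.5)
Your proof is correct and follows the paper's own argument. The paper records this statement as an immediate consequence of its theorem on a reduced invariant hypersurface $Y=\mathcal Z(F)$ with $\codim(Y\cap\sing(\D))\ge 3$; that codimension is taken in $\projn$, as you guessed, since the proof restricts to a general plane avoiding this set, and the theorem forces $\D=\ker dF$, hence integrability. Your reduction to the reduced case is unnecessary because the paper defines invariance only for reduced hypersurfaces, and it could not work in general anyway: for $p\mid m$ the divisibility condition on $G=F_i^m h$ says nothing about $F_i$, since every distribution would leave $\mathcal Z(F^p)$ ``invariant'' in that naive sense.
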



In the integrable case, we get the following result (see  Corollary~\ref{cor:singcod3}). 

\begin{thm}\label{corintro:singcod3}
Let $\F$ be a codimension one foliation on $\mathbb P^n_{\field}$ of degree $d$ and assume that $\sing (\F)$ has codimension at least $3$.  Then $p>0$,  $d+2$ is a multiple of $p$ and $\F$ is  given by the kernel of a closed projective $1$-form $\Omega$ on $\mathbb A^{n+1}_{\field}$. Moreover, the following conditions are equivalent: 
\begin{enumerate}
\item   $\F$ admits a reduced invariant hypersurface; 
\item   $\Omega$ is exact, i.e there exists a homogeneous polynomial $G$ of degree $d+2$ such that $\Omega=dG$. 
\end{enumerate}
\end{thm}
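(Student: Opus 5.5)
We prove Theorem~\ref{corintro:singcod3} in three steps: first the closedness of the defining form, then divisibility of $d+2$ by $p$, and finally the equivalence of (1) and (2) via Theorem~\ref{thmintro:HcapSing=empty}.

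\textbf{Setup.} Let $\Omega=\sum_i A_i\,dx_i$ be the homogeneous projective $1$-form on $\mathbb A^{n+1}_{\field}$ defining $\F$. Its coefficients $A_i$ have degree $d+1$, and $i_R\Omega=0$, where $R=\sum x_i\partial_{x_i}$ is the radial field. Integrability $\Omega\wedge d\Omega=0$ together with $i_R\Omega=0$ gives, after contracting with $R$,
\[
i_R(\Omega\wedge d\Omega)=-\Omega\wedge i_R d\Omega = 0 .
\]
By Cartan's formula, $i_Rd\Omega=L_R\Omega=(d+2)\Omega$. So this identity is automatic and does not yet use the singular set.

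\textbf{Step 1: $\Omega$ is closed.} We use the codimension hypothesis through a de Rham--Saito division lemma. Suppose $d\Omega\neq 0$. The $2$-form $d\Omega$ is homogeneous, and $\Omega\wedge d\Omega=0$. Since the zero set of the coefficients of $\Omega$ is the cone over $\sing(\F)$, it has codimension $\ge 3$ in $\mathbb A^{n+1}_{\field}$. Saito's division lemma (the Koszul complex of the $A_i$ is exact in low degrees when the depth is $\ge 3$, which holds in any characteristic) then gives a polynomial $1$-form $\eta$ with $d\Omega=\Omega\wedge\eta$. Comparing degrees, $\eta$ has degree $0$ coefficients, so $\eta$ is a constant form $\sum c_i dx_i$.

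Now contract with $R$:
\[
(d+2)\Omega=i_Rd\Omega=-\Omega\cdot(i_R\eta),
\]
where $i_R\eta=\sum c_ix_i$ is linear. Since $\Omega\neq 0$, comparing degrees forces $d+2=0$ in $\field$ and $i_R\eta=0$. The second equation means $\eta=0$, hence $d\Omega=0$, contradicting $d\Omega\neq 0$. So $\Omega$ is closed in every case.

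\textbf{Step 2: $p>0$ and $p\mid d+2$.} Using $d\Omega=0$, we get $0=i_Rd\Omega=(d+2)\Omega$, so $d+2=0$ in $\field$. Since $d+2\ge 2$, this forces $p>0$ and $p\mid d+2$.

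\textbf{Step 3: the equivalence.}
\begin{itemize}
\item (2)$\Rightarrow$(1). Take $G$ with $\Omega=dG$, and write $G=\prod G_j^{m_j}$. The reduced hypersurface $\mathcal Z(G_{\mathrm{red}})$ is invariant: $dG$ is divisible by each $G_j^{m_j-1}$ and $dG\wedge dG_j$ vanishes modulo $G_j$. It is nonempty because $\deg G=d+2>0$.
\item (1)$\Rightarrow$(2). If $Y=\mathcal Z(F)$ is a reduced invariant hypersurface, then $Y\cap\sing(\F)\subset\sing(\F)$ has codimension $\ge 3$. Theorem~\ref{thmintro:HcapSing=empty} then says $\F$ is given by the kernel of $dF$. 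Both $\Omega$ and $dF$ have singular sets of codimension $\ge 2$, so $\Omega=\lambda\, dF$ up to a unit, i.e.\ a constant $\lambda$, and the degrees match: $\deg F=d+2$. Setting $G=\lambda F$ gives $\Omega=dG$.
\end{itemize}

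\textbf{Main obstacle.} The delicate step is Step 1. We must justify Saito's division lemma in arbitrary characteristic for a form whose zero locus has codimension $\ge 3$, and handle degrees carefully. Saito's lemma applies in any characteristic because it only rests on depth and Koszul exactness.
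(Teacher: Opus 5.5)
\textbf{Gap in (1)$\Rightarrow$(2).} Steps 1--2 and (2)$\Rightarrow$(1) are essentially the paper's argument. The proof fails at your sentence ``both $\Omega$ and $dF$ have singular sets of codimension $\ge 2$''.

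Reducedness of $F$ says nothing about the zero locus of $dF$, which can contain a hypersurface. The theorem you invoke only gives $\Omega\wedge dF=0$ (that is exactly what its proof establishes). Since $\Omega$ has no codimension-one zeros, this means $dF=H\Omega$ with $H$ homogeneous of degree $\deg F-(d+2)$, and $H$ may be nonconstant.

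Concretely, suppose $\Omega=dG$ and $K$ is a form with $p\deg K=\deg G$. Then for suitable choices $F=G(G+K^p)$ is reduced and $\F$-invariant. But $dF=(2G+K^p)\,dG$ vanishes along a hypersurface, and $\deg F=2(d+2)\neq d+2$. So you cannot conclude $\Omega=\lambda\,dF$. Nor does exactness of $H\Omega$ by itself give exactness of $\Omega$.

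\textbf{How to close it.} This is the step the paper handles with a minimality argument. Among homogeneous $F$ with $dF\neq 0$, $p\mid\deg F$ and $dF\wedge\Omega=0$, pick one of minimal degree, and write $dF=H\Omega$.
\begin{itemize}
\item Closedness of $\Omega$ gives $0=d(dF)=dH\wedge\Omega$.
\item $p\mid\deg H$, since $p$ divides both $\deg F$ and $d+2$.
\item Hence if $dH\neq0$, then $H$ contradicts minimality. So $dH=0$ and $H=h^p$.
\end{itemize}
Now use the Cartier decomposition of the closed form $\Omega$:
\[
\Omega=dG+\sum_i a_i^p x_i^{p-1}dx_i .
\]
This gives
\[
dF=d(h^pG)+\sum_i (ha_i)^p x_i^{p-1}dx_i .
\]
Uniqueness of the decomposition, applied to the exact form $dF$, forces $ha_i=0$, hence $a_i=0$ and $\Omega=dG$ with $\deg G=d+2$. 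Equivalently, $0=C(dF)=C(h^p\Omega)=h\,C(\Omega)$ for the Cartier operator $C$.

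\textbf{Minor slip in Step 1.} Your degree count is off by one. $d\Omega$ has coefficients of degree $d$, while $\Omega\wedge\eta$ has coefficients of degree $\ge d+1$ for every polynomial $1$-form $\eta$. So the division $d\Omega=\Omega\wedge\eta$ gives $d\Omega=0$ at once; a constant $\eta$ cannot arise. Your detour through $i_R$ still reaches the correct conclusion, so this is harmless.
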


In Example~\ref{ex:noinvhy}, Theorem~\ref{corintro:singcod3} is applied to show that, for all prime $p>0$ and $n\ge 3$,   there exist codimension one foliations on $\mathbb{P}_{\field}^{n}$ with finite singular set and no invariant hypersurface, in contrast with the regular case Theorem~\ref{thm:regfolproj}. 

The degree $d$ of a codimension one foliation $\F$ on $\projn$ is determined by the degree of the normal bundle $N_{\F}$ of $\F$, that is,  $N_{\F} = \mathcal O_{\mathbb P^n_{\field}}(d+2)$. Then Theorem~\ref{corintro:singcod3} implies that $p$ divides the degree of $N_{\F}$. This phenomenon can be viewed as a reflection of Bott vanishing theorem, which assures the vanishing of suitable powers of Chern classes  of the normal bundle in Hodge cohomology; see Theorem~\ref{thm:bottvanishing} and  Theorem~\ref{prop:SingBB} in the main text.

We proceed with a version of the Bott vanishing theorem,  in the Chow ring of $X$, assuming that $\F$ is $p$-closed (see Corollary~\ref{thm:Bootva}):

\begin{thm}\label{thmintro:Bootva}
Let $\F$ be a regular $p$-closed foliation on a smooth  projective variety $X$. Let $\mathrm{Chern}^*(N_{\mathcal F})$ be the graded subring of the Chow ring $\mathrm{CH}^*(X)$, generated by the Chern classes of $N_{\mathcal F}$. Then
\[
\mathrm{Chern}^\ell(N_{\mathcal F})\subset p^{\ell-q}\cdot\mathrm{CH}^\ell(X)
\]
for each $\ell>q$. 
\end{thm}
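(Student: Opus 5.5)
The plan is to use the quotient by the foliation. Since $\F$ is regular and $p$-closed, the quotient $Y = X/\F$ exists; its structure sheaf is the sheaf of functions annihilated by $T_{\F}$. Regularity of $\F$ makes $Y$ a smooth variety, by the standard Ekedahl correspondence between $p$-closed smooth foliations and factorizations of Frobenius. The Frobenius of $X$ then factors as $X\xrightarrow{\pi} Y \xrightarrow{\rho} X^{(p)}$, with $\pi$ and $\rho$ finite, flat and purely inseparable. The key structural input is a comparison of tangent sheaves: the kernel of $d\pi: T_X\to \pi^*T_Y$ is $T_{\F}$. The image is $\pi^*$ of a rank-$q$ quotient of $T_Y$, namely $\ker(d\rho)$. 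Concretely, $T_{Y/X^{(p)}}$ is a rank-$q$ subsheaf $\mathcal G\subset T_Y$, which is again a $p$-closed regular foliation. There is an exact sequence
\[
0\to T_{\F}\to T_X\to \pi^*\mathcal G\to 0 .
\]
This gives the isomorphism $N_{\F}\cong \pi^*\mathcal G$, which I would check in local coordinates: $\F=\langle\partial_{x_{q+1}},\dots,\partial_{x_n}\rangle$, $Y$ has coordinates $x_1,\dots,x_q,x_{q+1}^p,\dots,x_n^p$, and $\mathcal G$ is spanned by $\partial_{x_1},\dots,\partial_{x_q}$.

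Next I would compute Chern classes. By the isomorphism above, $c(N_{\F})=\pi^*c(\mathcal G)$, so every element of $\mathrm{Chern}^\ell(N_{\F})$ has the form $\pi^*\beta$ with $\beta\in \mathrm{CH}^\ell(Y)$ a polynomial in the $c_i(\mathcal G)$. It therefore suffices to show that $\pi^*$ is divisible by $p^{\ell-q}$ on classes of codimension $\ell$. The plan is to use $\rho$: the composite $\rho\circ\pi = F_{X/\field}$ is the relative Frobenius, and $F^*$ acts on $\mathrm{CH}^\ell$ by multiplication by $p^\ell$. Equivalently, $F^*$ corresponds to $F_*$ on cycles, since $F_*\pi^{-1}$ behaves like $p^{\dim}$. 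The cleaner route is through degrees. Here $\pi$ has degree $p^{n-q}$ and $\rho$ has degree $p^q$. Moreover $\rho^*$ multiplies the class of a subvariety $V\subset X^{(p)}$ of codimension $\ell$ by the inseparable degree of $\rho^{-1}(V)\to V$. Since $\rho$ is a quotient by a rank-$q$ foliation, this multiplicity is at least $p^{\ell-q}$ generically: $\rho^{-1}(V)_{\rm red}\to V$ has degree at most $p^{\dim V}$ relative to the fibre, by a local computation similar to the one above.

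I expect to organize the argument as follows. Write $\rho^*\gamma$ for the relevant classes and use $\pi^*\rho^* = F^* = p^\ell$ on $\mathrm{CH}^\ell(X^{(p)})$. The more robust approach, however, is to express $c(\mathcal G)$ directly. The sheaf $\mathcal G$ satisfies $\mathcal G \cong \rho^* T_{X^{(p)}}/\text{(image of } T_Y)$. Dually, via the Cartier-type sequence $0\to \mathcal G\to T_Y\xrightarrow{d\rho}\rho^*T_{X^{(p)}}\to \rho^*(N)\to0$ with $N$ the normal sheaf of the second step, one gets $\pi^*\mathcal G$ related to $F^*$ of something. Iterating, I would aim to show that $c(N_{\F})$ lies in the subring generated by the $F^*$-images of classes, together with a factor accounting for the $q$-dimensional defect.

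The main obstacle will be this last divisibility: proving that $\pi^*:\mathrm{CH}^\ell(Y)\to \mathrm{CH}^\ell(X)$, restricted to the relevant classes, lands in $p^{\ell-q}\mathrm{CH}^\ell(X)$. My first attempt would be to show that $\pi_*\pi^* = p^{n-q}$ and $\pi^*\pi_* $ relate to $F$, and to verify directly on cycles that $\pi^{-1}(W)$ has multiplicity $\ge p^{\ell-q}$ for $W$ of codimension $\ell$ in general position. This reduces to the local model, where $W$ cut out by $\ell$ equations in $x_1,\dots,x_q,x_{q+1}^p,\dots$ has at least $\ell-q$ of them involving only $p$-th powers after a change of coordinates. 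A moving lemma on smooth $Y$ then lets me choose such $W$ representing the given classes.
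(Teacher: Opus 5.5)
Your first half is sound and follows the same structure as the paper: the quotient $\pi\colon X\to Y$, the factorization $F=\rho\circ\pi$, and the reduction to showing that $\pi^*$ is divisible by $p^{\ell-q}$ in codimension $\ell$. Your identification $N_\F\cong\pi^*T_{Y/X^{(p)}}$ is correct. Dually, $d\pi$ induces an injection $\pi^*\Omega^1_{Y/X^{(p)}}\to\Omega^1_X$ with image $N_\F^*$. This is a clean intrinsic description of the descended bundle that the paper builds from the cocycle $(\partial y_i/\partial y'_j)$ with entries in $\mathcal O_Y$.

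The gap is the divisibility step itself, which the proposal never establishes. There are three problems:
\begin{enumerate}
\item The inequality you state for $\rho^*$ goes the wrong way. The multiplicity of $\rho^*[V]$ is $p^q/[\kappa(W):\kappa(V)]$, where $W=\rho^{-1}(V)_{\rm red}$. What is needed is the \emph{upper} bound $p^q$ for it, not a lower bound $p^{\ell-q}$.
\item Using $\pi^*\rho^*=F^*=p^\ell$ on $\mathrm{CH}^\ell$ only gives $p^q\,\pi^*\beta\in p^\ell\,\mathrm{CH}^\ell(X)$. Nothing follows from this, because Chow groups may have $p$-torsion.
\item The local claim that ``$\ell-q$ of the equations involve only $p$-th powers'' is asserted without proof. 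The ``moving lemma'' that puts $W$ in general position (with respect to what?) is not an available result, and it is not needed anyway.
\end{enumerate}

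The missing idea is to work in the free group $Z^\ell$ and to use that $\pi$, $\rho$ and $F$ are homeomorphisms. Then every prime cycle $W\subset Y$ of codimension $\ell$ equals $\rho^{-1}(V)_{\rm red}$ for $V=\rho(W)$. The argument, which is the paper's, runs as follows:
\begin{enumerate}
\item Write $\rho^*[V]=p^t[W]$. From $\rho_*\rho^*[V]=p^q[V]$ one gets $t\le q$.
\item Kunz's theorem says $\mathrm{length}(R/\mathfrak m^{[p]})=p^{\dim R}$ for a regular local ring $R$. Hence $F^*[V]=p^\ell[F^{-1}(V)_{\rm red}]$.
\item Since $F^*=\pi^*\rho^*$, it follows that $\pi^*[W]=p^{\ell-t}[\pi^{-1}(W)_{\rm red}]$ with $\ell-t\ge \ell-q$.
\end{enumerate}
This gives $\pi^*Z^\ell(Y)\subset p^{\ell-q}Z^\ell(X)$ for all cycles, with no general-position hypothesis, and one then passes to $\mathrm{CH}^\ell$.

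If you prefer to stay closer to your own plan of bounding the multiplicity of $\pi^{-1}(W)$ directly, there is a short argument. Put $\tilde W=\pi^{-1}(W)_{\rm red}$.
\begin{enumerate}
\item $\mathcal O_{X,\tilde W}$ is free of rank $p^{n-q}$ over $\mathcal O_{Y,W}$. Hence the multiplicity of $\pi^*[W]$ is $p^{n-q}/[\kappa(\tilde W):\kappa(W)]$.
\item Since $\kappa(\tilde W)^p\subset\kappa(W)$, the degree $[\kappa(\tilde W):\kappa(W)]$ is at most $[\kappa(\tilde W):\kappa(\tilde W)^p]=p^{n-\ell}$.
\item So the multiplicity is at least $p^{\ell-q}$, for every $W$.
\end{enumerate}
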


Bott vanishing theorem is an important tool in the theory of foliations. Indeed, it has been applied in deep results such as the classification of regular foliations on surfaces \cite{MR1474805} over the complex numbers. Notice that, in positive characteristic there exist regular foliations with $N_{\F}^2\neq 0$ in $\mathbb Z$ (see Remark~\ref{rmk:existencechr2}), also  there exist examples of codimension one regular foliations  on $\mathbb P^n_{\field}$, $n$ odd,   with $N_\F = \mathcal O_{\mathbb P^n_{\field}}(2)$ (see Theorem~\ref{thm:regfolproj}).

\subsection{Basic definitions and notations}\label{sec:basicmat}

Let $X$ be a smooth algebraic variety over an algebraically closed field $\field$ and let $\mathcal O_X$ be its sheaf of regular functions. Let $\Omega^1_X$ and $T_X=\Hom_{\mathcal O_X}(\Omega^1_X,\mathcal O_X)$ be the cotangent and tangent sheaves of $X$, respectively.  

We can see $T_X$ as a sheaf of derivations. First, recall that a {\bf derivation} is a $\field$-linear transformation $w$ of $\mathcal O_X$ satisfying $w(ab) = aw(b)+bw(a)$ for any $a,b\in \mathcal O_X$.   Let $d: \mathcal O_X \to \Omega^1_X$ be the exterior derivative, defined by $d(a) = 1\otimes a - a\otimes 1 \in I_{\Delta}/I^2_{\Delta} = \Omega^1_X$, where $I_{\Delta}$ is the ideal sheaf of the diagonal $\Delta\subset X\times X$.  Then, a section $v$ of $T_X$ can be seen as a derivation $v: \mathcal O_X\to \mathcal O_X$ by $v(a) := v( d(a) )$, for all $a\in \mathcal O_X$.  

Let us endow $T_X$ with a {\bf Lie bracket}, given by $[v_1,v_2] = v_1v_2 - v_2v_1$, for any derivations $v_1, v_2$ of $T_X$. A subsheaf $E\subset T_X$ is said to be saturated if $T_X/E$ is torsion free. A {\bf foliation} $\mathcal F$ on $X$ is determined by a saturated coherent subsheaf $T_{\mathcal F}\subset T_X$ which is involutive, i.e. closed under the Lie bracket: $[T_{\mathcal F}, T_{\mathcal F}]\subset T_{\mathcal F}$.   We call $T_{\mathcal F}$ the {\bf tangent sheaf} of $\mathcal F$. Dropping the involutivity condition, we obtain the definition of a {\bf distribution}.  The {\bf dimension} of $\mathcal F$ is the generic rank of $T_{\mathcal F}$, which is constant on a nonempty open subset of $X$. The difference $\dim X-\dim T_\F$ is the codimension of $\F$.
The double dual $N_{\mathcal F}:=(T_X/T_{\mathcal F})^{**}$  is called the {\bf normal sheaf} of $\mathcal F$.   By a {\bf regular foliation} we mean a foliation whose tangent sheaf  is a subbundle, that is, $T_{\mathcal F}$ and $T_X/T_{\mathcal F}$ are locally free. For regular foliations,  we have $N_{\mathcal F}\simeq T_X/T_{\mathcal F}$.  The dual sheaf $\Omega^1_{\mathcal F}:=T_{\mathcal F}^*$  is called the {\bf cotangent sheaf} of $\mathcal F$. The locus of points where $T_X/T_{\mathcal F}$ is not locally free is called the {\bf singular locus} of $\mathcal F$, denoted here by $\sing(\mathcal F)$. The complement in $X$ of the singular locus of $\F$ is called the {\bf smooth} or {\bf  regular} locus of $\F$.  Finally, let $f:Y\to X$ be a birational morphism from another smooth variety $Y$. Let $V\subset Y$ and $U\subset X$ be open subsets such that $f:V\to U$ is an isomorphism. We then define $f^*\F$ as the foliation on $Y$ defined by the unique saturated submodule $T_{f^*\F}$ of $T_Y$ which agrees with $(f|_V)^*(T_\F|_U)$ on $V$.

If the characteristic of $\field$ is $p>0$, then $w^p$ is still a derivation for any derivation $w\in T_X$. A foliation $\mathcal F$ is said to be {\bf $p$-closed} if $v^p\in T_{\mathcal F}$ for each $v\in  T_{\mathcal F}$. 


Given a codimension one distribution $\F$ on $X$, the  inclusion $ N_\F^* \to \Omega^1_X$ gives a $1$-form
\[
\omega \in \h^0(X, \Omega^1_X\otimes  N_\F)
\]
whose singular set coincides with $\sing (\F)$ (recall that the singular set of a form is its  zero locus). The section $\omega$ is unique up to  multiplication of elements of $\h^0(X, \mathcal O_X^*)$. If in addition $\F$ is a foliation, then $\omega$ is integrable: $\omega\wedge d\omega = 0$. A reduced hypersurface $Y$ is {\bf invariant} by $\F$ if the wedge product $\omega\wedge \frac{df}{f}$ is a regular $2$-form for any local equation $f$ that defines $Y$. 

In projective spaces, codimension one distributions and  foliations admit an explicit description. We let $\projn$ and $\mathbb A^n_{\field}$ denote the $n$ dimensional projective space and affine space over  $\field$, respectively.  Let $n, d \in \mathbb{Z}$ with $n>1$, $d\geq 0$. A codimension one distribution  $\mathcal{F}$ of degree $d$ on the projective space $\mathbb{P}_{\field}^{n}$ is given  by a non-zero element $\omega \in \h^0(\mathbb{P}_{\field}^{n},\Omega_{\mathbb{P}_{\field}^{n}}^{1}(d+2))$, unique up to multiplication by an  element of $\field^*$,  such that $\codim \sing (\omega)\geq 2$.  
It follows from Euler's sequence that $\h^0(\mathbb{P}_{\field}^{n} ,\Omega_{\mathbb{P}_{\field}^{n}}^{1}(d+2))$ can be identified with the vector space of homogeneous polynomial $1$-forms in $\mathbb{A}_{\field}^{n+1}$
\[
\omega=\sum_{i=0}^nA_i\,dx_i,\quad\text{$A_i\in\field[x_0,\ldots,x_n]$ homogeneous of degree $d+1$,}
\]
which are annihilated by the radial vector field $R = \sum_{i=0 }^n x_{i}\partial_{x_i}$.

\subsection*{Acknowledgments}   
Fassarella acknowledges the support from CNPq Universal 10/2023. Mendson acknowledges the support from Capes (Grant number 88887.145997/2025-00) and CNPq - Programa Conhecimento Brasil (Grant number 315040/2025-4). Touzet would like to thank the Henri Lebesgue Center for constant support. The authors thank  Brazilian-French Network in Mathematics, CAPES-COFECUB  932/19 and 1017/24. We would also like to thank Jorge Vit\'orio Pereira for inspiring discussions, and Maur\'icio Corr\^ea for bringing a useful reference to our attention.

\section{Foliations on surfaces}

\subsection{Preliminary}

Before presenting results concerning indices of singularities of foliations on surfaces, we first establish the following result, which parallels \cite[V, Lemma 2.4]{suwa}.

\begin{prop}\label{decomposition} 
Let $X$ be a smooth variety over the algebraically closed field $\field$. Let $\mathcal F$ be a distribution of codimension one on $X$ admitting a \textbf{reduced} invariant hypersurface $Y$. Let $x \in X$ be a point, $U \subset X$ an affine and open neighborhood of $x$ where $N^*_{\mathcal F}|_U = \mathcal O_U\omega$ and $\mathcal O_X(-Y)|_U = \mathcal O_U f $. Then there exist an open and affine neighborhood $V$ of $x$, functions $g,h \in \mathcal O_X(V)$ and $\sigma \in \Omega^1_X(V)$
such that
    \begin{eqnarray}\label{eq:ghf}
    g\omega = hdf + f\sigma 
    \end{eqnarray}
where $f$ is relatively prime to $g$ and $h$. Furthermore: 
\begin{itemize}
    \item If $Y$ is smooth at $x$, we can take $g=1$; 
    \item If $\F$ is smooth at $x$, we can take $h=1$.
\end{itemize}
\end{prop}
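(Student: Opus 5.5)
We work entirely in the local ring $R=\mathcal O_{X,x}$, a regular local ring and hence a UFD, and shrink $U$ at the end so that everything written in $R$ is defined on some affine $V$. Put $\Omega=\Omega^1_{X,x}$, a free $R$-module of rank $n=\dim X$. Invariance of $Y$ says that $\omega\wedge df\in f\,\Omega^2_{X,x}$. Because $Y$ is reduced, $f=f_1\cdots f_r$ with the $f_i$ pairwise non-associate primes of $R$. The plan is to reduce modulo $f$, that is, to work in the total ring of fractions of $R/(f)$, and to use that $df$ is nonzero generically on each component of $Y$.

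\textbf{Step 1: generic linear dependence.} Fix a component $f_i$ and let $K_i$ be the fraction field of $R/(f_i)$. Since $Y$ is reduced, $Y$ is generically smooth along $\{f_i=0\}$, so the class $\overline{df}\in\Omega\otimes K_i$ is nonzero. Here we use that $df\notin f_i\Omega+R\,df_i\cdot(\cdots)$; more precisely, $f=f_i u$ with $f_i\nmid u$, so $df \equiv u\,df_i \pmod{f_i}$, and $df_i\not\equiv 0$ on a dense open set of the reduced divisor $\{f_i=0\}$, even in characteristic $p$. The relation $\bar\omega\wedge\overline{df}=0$ in $\Lambda^2(\Omega\otimes K_i)$, together with $\overline{df}\ne 0$, gives $\bar\omega=\lambda_i\overline{df}$ for some $\lambda_i\in K_i$. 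We then want a single pair $(g,h)$ working for all components at once. Using the Chinese remainder theorem for the distinct height-one primes $f_i$, choose $g,h\in R$ with $f_i\nmid g$ and $\bar h/\bar g=\lambda_i$ in $K_i$ for every $i$. Clearing denominators componentwise then amounts to the following: write $\lambda_i=\bar a_i/\bar b_i$, choose via CRT elements $g\equiv b_i\prod_{j\ne i}b_j$ modulo $f_i$, and so on. The resulting $g\omega-h\,df$ lies in $\bigcap_i f_i\Omega=f\Omega$, since $\Omega$ is free and the $f_i$ are distinct primes. This produces $\sigma$ with $g\omega=h\,df+f\sigma$.

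\textbf{Step 2: coprimality and the special cases.} We have $f_i\nmid g$ by construction. If $f_i\mid h$, then $\bar\omega=0$ on $\{f_i=0\}$, so every coefficient of $\omega$ is divisible by $f_i$. This contradicts the saturation of $N^*_\F$, i.e. the fact that $\codim\sing(\omega)\ge 2$. Hence $f$ is coprime to both $g$ and $h$.

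If $Y$ is smooth at $x$, then $r=1$ and $df$ is part of a basis of $\Omega$ at $x$. Then $\bar\omega\wedge\overline{df}=0$ holds in the free $R/(f)$-module $\Omega/f\Omega$, where $\overline{df}$ is a basis element, so $\bar\omega=\bar h\,\overline{df}$ directly with $h\in R$, and we may take $g=1$.

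If $\F$ is smooth at $x$, then $\omega$ is part of a basis $\omega=e_1,\dots,e_n$. Write $df=\sum c_je_j$. Then $\omega\wedge df=\sum_{j\ge2}c_j e_1\wedge e_j\in f\Omega^2$ forces $f\mid c_j$ for $j\ge 2$. Hence $df=c_1\omega+f\tau$, and $f\nmid c_1$, since otherwise $f\mid df$, which is impossible for a reduced $f$ because of Step 1. This gives $c_1\omega=df-f\tau$, i.e. $h=1$, $g=c_1$.

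\textbf{Main obstacle.} The delicate point is Step 1 in positive characteristic: justifying that $df\not\equiv0$ modulo each $f_i$ when $f$ is merely reduced. A form like $f=y^2-x^p$ is still fine, since $df=2y\,dy$ for $p\ne 2$. For $p=2$, however, this $f$ has $df=0$ modulo $f$ and yet is reduced. In that case we must fall back on the fact that $f$ can be chosen not to be a $p$-th power times a unit in the completion; we use instead that the reduced divisor has a generically smooth point where some partial derivative of a local equation is nonzero, which holds because $\mathcal O_{Y}$ is generically regular and $\Omega_{Y}$ has generic rank $n-1$. Making this precise and uniform, which relies on the perfectness of $\field$, is where I expect the real work to lie.
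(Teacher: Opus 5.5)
Your overall route is viable: you prove $\bar\omega=\lambda_i\,\overline{df}$ over each residue field $K_i$ and then glue. Your Step~1 also rests on the same key fact as the paper, namely $df\notin f_i\,\Omega$ for every prime factor $f_i$. The gap is in the gluing. The Chinese remainder theorem is not available in $R=\mathcal O_{X,x}$ for the ideals $(f_i)$, because they all lie in $\mathfrak m_x$ and so are not pairwise comaximal. For instance, in $\field[x,y]_{(x,y)}$ no $h$ satisfies $h\equiv 0 \bmod x$ and $h\equiv 1 \bmod y$. Your recipe ``$g\equiv b_i\prod_{j\ne i}b_j$ modulo $f_i$, and so on'' also never actually produces $h$.

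The claim you need is nevertheless true once you pass to the semilocal ring $S^{-1}R$ with $S=R\setminus\bigcup_i (f_i)$. By prime avoidance, its only nonzero primes are the $f_iS^{-1}R$; these are maximal, with residue fields $K_i$. CRT there gives $\Lambda\in S^{-1}R$ with $\Lambda\equiv\lambda_i$ in each $K_i$. Writing $\Lambda=h/g$ with $g\in S$ then yields $f_i\nmid g$ and $\bar h/\bar g=\lambda_i$.

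The paper avoids gluing altogether. Your Step~1 says exactly that, for each $i$, the set of $\mathbf b\in\field^n$ with $\partial_{\mathbf b}f_i\equiv 0 \bmod f_i$ is a proper linear subspace. Since $\field$ is infinite, a single derivation $\partial$ therefore makes $\partial f$ coprime to $f$. Contracting $\omega\wedge df\in f\,\Omega^2_{X,x}$ with $\partial$ gives $\partial(f)\,\omega=\omega(\partial)\,df+f\sigma$, that is, $g=\partial f$ and $h=\omega(\partial)$ directly.

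Your ``main obstacle'' is not one, and the example behind it is wrong. In characteristic $2$, $y^2-x^2=(x+y)^2$ is not reduced; indeed, a reduced $f$ with $df\in f\,\Omega$ would contradict your own Step~1. For a prime $f_i$, $df_i\notin f_i\,\Omega$ follows from two facts: the smooth locus of the integral scheme $\{f_i=0\}$ over the perfect field $\field$ is dense, and the Jacobian criterion applies. This is precisely what the paper invokes. Reducedness of $f$ then gives $df\equiv u\,df_i\not\equiv 0 \pmod{f_i}$, as you wrote. So you should simply cite this and drop the discussion of $p$-th powers.

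Your treatment of the two special cases is correct and cleaner than the paper's coordinate computations, with one small repair. When $\F$ is smooth at $x$, coprimality requires $f_i\nmid c_1$ for every $i$, not merely $f\nmid c_1$. This follows from Step~1 applied to each $f_i$, since $f_i\mid c_1$ would force $df\in f_i\,\Omega$.
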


\begin{proof}
We can find an open neighborhood $V$ of $x$ in $U$ and functions $s_1,\ldots,s_n \in \mathcal O_X(V)$
such that
    \[
        \Omega^1_X|_V = \mathcal O_V ds_1 \oplus \cdots \oplus \mathcal O_V ds_n
        \quad \text{and} \quad
        T_X|_V = \mathcal O_V \partial_{s_1} \oplus \cdots \oplus \mathcal O_V \partial_{s_n}.
    \]
Also, shrinking again $V$ if needed and using that $\mathcal O_{V,x}$ is a unique factorization domain, we
can assume that $f = f_1\cdots f_m$, with $f_1,\ldots,f_m \in \mathcal O(V)$ such that
$\mathcal O_{X,x}f_1,\ldots,\mathcal O_{X,x}f_m$ are pairwise distinct prime ideals. In particular, for each affine and open neighbourhood $W \subset V$ of $x$, the ideals $\mathcal O(W)f_\mu$ remain prime. 

Claim: We can choose the derivation $\partial_{s_1}$ such that $\partial_{s_1}(f)$  and  $f$ are relatively prime in  $\mathcal O_{V,x}$. For each $\textbf{b}=(b_1,\ldots,b_n)\in \field^n$,
let $\partial_\textbf{b}:\mathcal O(V)\to \mathcal O(V)$ be the derivation
$\sum_{i=1}^n b_i\partial_{s_i}$. For each $\mu\in \{1,\ldots,m\}$, let $B_\mu\subset \field^n$ be the subset of all
$\textbf{b}$ such that $\partial_\textbf{b}(f_\mu)\equiv 0 \mod f_\mu$; clearly $B_\mu$ is a subspace. Since the field $\field$ is perfect and the scheme
$\{f_\mu=0\}$ is integral, its smooth locus is dense, see Proposition 16 in
Chapter 2 of \cite{BLR}. By the Jacobi criterion, \cite[Ch. 2, Proposition 7]{BLR}, we have $B_\mu\neq \field^n$.
Let $\textbf{b}\in \field^n\setminus B_1\cup \cdots \cup B_m$. This means that for each $\mu\in \{1,\ldots,m\}$, we have
$\partial_\textbf{b}(f_\mu)\not\equiv 0 \mod f_\mu$ and hence $\partial_\textbf{b}(f)\not\equiv 0 \mod f_\mu$ because
$\partial_\textbf{b}(f)\equiv (f/f_\mu)\partial_\textbf{b}(f_\mu) \mod f_\mu$. Completing
$\{\textbf{b}\}$ to a basis and renaming the functions $s_1,\ldots,s_n$, we obtain that
$\partial_{s_1}(f)\not\equiv 0 \mod f_\mu$ for $\mu\in \{1,\ldots,m\}$. Said differently,
    \begin{eqnarray}\label{eq:relprimes}
    \partial_{s_1}(f) \text{ and } f \text{ are relatively prime in } \mathcal O_{V,x}. 
    \end{eqnarray}
This proves the claim above. 
    
Let us write $\omega=\sum_{j=1}^n A_jds_j$ so that
    \[
    df\wedge \omega
    =
    \sum_{i<j}
    \bigl(\partial_{s_i}(f)A_j-\partial_{s_j}(f)A_i\bigr)
    ds_i\wedge ds_j.
    \]
Since $Y$ is invariant by $\mathcal F$, there exist, for $1\leq i<j\leq n$, elements $\phi_{ij}\in \mathcal O(V)$ such that
    \begin{equation}\label{2.1}
    f\phi_{ij}
    =
    \partial_{s_i}(f)A_j-\partial_{s_j}(f)A_i
    \quad \text{for } i<j. 
    \end{equation}
Consequently, if $\psi\in \mathcal O(V)$ divides $f$, then
    \begin{equation}\label{2.2}
        \partial_{s_i}(f)A_j \equiv \partial_{s_j}(f)A_i \mod \psi
    \quad \text{for any couple } i,j. 
    \end{equation}
A straightforward computation shows that, for each $i\in \{1,\ldots,n\}$, we have
    \begin{equation}\label{2.3}
    \partial_{s_i}(f)\cdot \omega
    =
    A_i\cdot df
    +
    f
    \underbrace{
    \left(
    -\sum_{j<i}\phi_{ji}ds_j+\sum_{j>i}\phi_{ij}ds_j
    \right)
    }_{\sigma_i}.
    \end{equation}
By \eqref{eq:relprimes}, 
$f$ and $\partial_{s_1}(f)$ are relatively prime. If $\psi\in \mathcal O(V)$ is a prime element dividing $f$ and
$A_1$ then, by (\ref{2.2}), we have $\psi$ divides $A_j$ for $j>1$, so
 $\psi$ is a common divisor of $A_1,\ldots,A_n$,
which is excluded. Hence $f$ and $A_1$ are relatively prime. We have achieved decomposition \eqref{eq:ghf} with $g=\partial_{s_1}(f)$ and $h=A_1$.

If $Y$ is smooth at $x$ then, for a certain $i$, we know that $\partial_{s_i}(f)\in \mathcal O(V)^\times$, again shrinking
$V$ if needed. We then arrive at
    \[
    \omega
    =
    \partial_{s_i}(f)^{-1}A_i\cdot df
    +
    f\cdot \partial_{s_i}(f)^{-1}\sigma_i.
    \]
Let $\psi$ be an irreducible common divisor of $f$ and $\partial_{s_i}(f)^{-1}A_i$. A fortiori $\psi$ divides  $A_i$. From (\ref{2.2}), we have $\partial_{s_i}(f)A_j\equiv 0 \mod \psi$; again an impossibility.

Assume now that $\mathcal F$ is regular at $x$. Shrinking $V$ if needed, we can assume that $A_i\in \mathcal O(V)^\times$ for a certain $i$. We then arrive at
    \[
    \partial_{s_i}(f)A_i^{-1}\cdot \omega
    =
    df+f\cdot A_i^{-1}\sigma_i.
    \]
Let $\psi$ be an irreducible common divisor of $f$ and $\partial_{s_i}(f)A_i^{-1}$. Then $\psi$ divides both $f$ and $\partial_{s_i}(f)$, which is excluded if $i=1$. If $i>1$, then
$ \psi$ divides the sum 
\[
f\phi_{1i}A_i^{-1}+\partial_{s_i}(f)A_1A_i^{-1}
= \partial_{s_1}(f)
\]
which is again excluded by \eqref{eq:relprimes}. 
We achieve the decomposition \eqref{eq:ghf} with $g=\partial_{s_i}(f)A_i^{-1}$ and $h=1$.
\end{proof}

It is worth mentioning that in positive characteristic, it can happen that a singular point $x$ of an $\F$-invariant hypersurface $Y$ might   be contained in the regular locus of $\F$. This scenario will be considered   in Section~\ref{SS:singnonsing}.  On the other hand, this phenomenon does not occur as soon as $Y$ is  normal crossing at $x$:
\begin{lemma}\label{L:normalcrossing}
 Notations and hypotheses as in Proposition \ref{decomposition}. Assume there exists a choice of local coordinates $s_1,...,s_n$  at $x$ such that $Y$ is locally defined by $f=s_1 s_2...s_t$, for some $1<t\leq n$. Then $x$ belongs to  $\sing(\mathcal F)$.
\end{lemma}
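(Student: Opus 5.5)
We argue by contradiction: assume $x\notin\sing(\F)$ and apply the last item of Proposition~\ref{decomposition} with $h=1$. After shrinking to an affine neighbourhood $V$ of $x$, this gives $g\in\mathcal O(V)$ and $\sigma\in\Omega^1_X(V)$ such that
\[
g\omega = df + f\sigma,
\]
where $f=s_1s_2\cdots s_t$ and $f$ is relatively prime to $g$. Since $\F$ is regular at $x$, the form $\omega$ does not vanish at $x$. The plan is to evaluate both sides at $x$ and, more precisely, modulo the ideals of the components of $Y$, and then show that $g$ must be divisible by some $s_i$.

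First we compute $df=\sum_{i=1}^t (f/s_i)\,ds_i$. Since $t\ge 2$, each $f/s_i$ vanishes at $x$, so $df(x)=0$. The term $f\sigma$ also vanishes at $x$, so the right-hand side vanishes at $x$. On the left we have $g(x)\omega(x)$ with $\omega(x)\neq 0$, and therefore $g(x)=0$.

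This alone is not a contradiction, because coprimality with $f$ only forbids $g$ from being divisible by some $s_i$. So we refine the argument by working modulo $s_1$. Reducing the identity modulo $s_1$ gives
\[
g\,\omega \equiv s_2\cdots s_t\, ds_1 \pmod{s_1}.
\]
Since $g\not\equiv 0\bmod s_1$ and $\omega$ is nonvanishing near $x$, on the hypersurface $\{s_1=0\}$ the coefficient of $\omega$ along $ds_1$ equals $s_2\cdots s_t/g$. The coefficients of $\omega$ along $ds_j$ for $j\ne 1$ are divisible by $s_1$ after multiplication by $g$, hence vanish on $\{s_1=0\}$ because $g\not\equiv 0$ there. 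Since $\omega(x)\neq 0$, the $ds_1$-coefficient $A_1$ must be a unit near $x$, and so on $\{s_1=0\}$ we get $gA_1=s_2\cdots s_t$.

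By symmetry, working modulo $s_2$ shows that $A_2$ is a unit and that $gA_2\equiv s_1s_3\cdots s_t \pmod{s_2}$. We now combine the two. From the first reduction, the coefficient $A_2$ satisfies $gA_2\equiv 0\pmod{s_1}$, so $s_1\mid A_2$, since $s_1\nmid g$ and $s_1$ is prime. But $A_2$ is a unit at $x$ while $s_1(x)=0$, which is a contradiction. Hence $x\in\sing(\F)$.

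The main step is the reduction modulo $s_1$. There we must carefully use that the $ds_j$-coefficients of $df+f\sigma$ for $j\neq 1$ are divisible by $s_1$, which holds because $f/s_j$ contains the factor $s_1$ when $j\neq 1$. We also need that the $s_i$ are primes in the local ring, which holds since they are part of a coordinate system. The argument is then just a comparison of the $ds_1$- and $ds_2$-components; it only requires $t\ge 2$ and uses no assumption on the characteristic.
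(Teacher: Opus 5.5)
Your proof is correct and follows essentially the paper's route: expand $\omega=\sum A_i\,ds_i$, reduce $g\omega=h\,df+f\sigma$ modulo each $s_j$, and use that the prime $s_j$ does not divide $g$ to get $s_j\mid A_i$ for $i\neq j$; since $t\ge 2$, this forces every $A_i$ to vanish at $x$. The paper does this directly with arbitrary $h$, so neither the contradiction hypothesis (which you use only to take $h=1$) nor the preliminary observation $g(x)=0$ is needed.
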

\begin{proof}
  Write $g\omega =hdf +f\sigma $ as in the conclusion of Proposition \ref{decomposition} and let  $A_{i}\in \mathcal{O}_{X,x}$ such that  $\omega = \sum_{i=1}^{n}A_{i}ds_{i}$. As $g$ and $s_j$ are relatively prime for $1\leq j\leq t$, one can easily infer each $A_i$ is divisible by $s_j$, for $j\not=i$. In particular, $\F$ is singular at $x$.
\end{proof}

\subsection{Intersection formulae} 

Let $X$ be a smooth projective surface over an algebraically closed field $\field$ of characteristic $p\ge 0$. Let $\F$ be a codimension one foliation on $X$ and let $C\subset X$ be a reduced invariant curve. Given a point $x\in C$, consider a local decomposition $g\omega = hdf + f\sigma$, given by Proposition~\ref{decomposition}. Following \cite{MR3328860}, we define the index
\[
Z(\F, C, x) := \; \text{vanishing order of $\frac{h}{g}|_C$ at $x$}.
\]
Note that if $C$ is  smooth, then $Z(\F, C, x)\ge 0$. 
We set

\[
Z(\F, C) := \sum_{x\in C} Z(\F, C, x).
\]

\begin{prop}\label{prop:GM}
Let $X$ be a smooth projective surface over $\field$. Let $\F$ be a  foliation on $X$ and let $C\subset X$ be a reduced  invariant curve of arithmetic genus $p_a$. Then 
\begin{eqnarray*}
N_{\F}\cdot C = C^2 + Z(\F, C)\quad \text{and} \quad
K_{\F} \cdot C = 2p_a-2 + Z(\F, C).
\end{eqnarray*}
\end{prop}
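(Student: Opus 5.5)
The idea is to read the local decompositions of Proposition~\ref{decomposition} as a global section of a line bundle on $C$, and to compute its degree in two ways. Fix a point $x\in C$ and a local decomposition $g\omega = hdf+f\sigma$ with $f$ relatively prime to $g$ and $h$. Restrict this identity to $C$ as an equality in $\Omega^1_X|_C$. The term $f\sigma$ vanishes there, so $g\,\omega|_C = h\, df|_C$.

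Now $df|_C$ is a local generator of the conormal sheaf $N^*_C=\mathcal O_X(-C)|_C = I_C/I_C^2$, viewed inside $\Omega^1_X|_C$. Likewise $\omega$ is a local generator of $N^*_\F$. So the rational function $(h/g)|_C$ is the transition factor comparing the two images. This suggests looking at the morphism
\[
\Phi:\; N^*_C \longrightarrow N^*_{\F}|_C ,\qquad df \mapsto \tfrac{h}{g}\big|_C\,\omega ,
\]
or equivalently at a rational section $s$ of $N_\F|_C\otimes \mathcal O_X(-C)|_C$ that is locally $\frac{h}{g}|_C$ times the generator $\omega^{-1}\otimes df$. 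In other words, $s$ expresses $df$ as a multiple of $\omega$ modulo $I_C$.

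Next I would check that $s$ is globally well defined. The key point is independence of the local choices. Suppose $g'\omega=h'df+f\sigma'$ is a second decomposition. Multiplying out gives $(hg'-h'g)\,df \in f\,\Omega^1$ locally. Since $df|_C$ is generically nonzero in $\Omega^1_X|_C$ (by the Claim in the proof of Proposition~\ref{decomposition}, some $\partial_{s_i}f$ is coprime to $f$), we get $h/g = h'/g'$ in the total ring of fractions of $C$. Changing $\omega\mapsto u\omega$ or $f\mapsto vf$ by units changes the ratio by exactly the cocycle of the line bundle. The hypothesis that $f$ is coprime to $g$ and $h$ guarantees that $h/g$ is a nonzero rational function on each component of $C$, so its order at each $x$ is defined.

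The degree count then goes as follows. The line bundle $L=N_\F|_C\otimes\mathcal O_C(-C)$ has degree $N_\F\cdot C - C^2$. Summing the orders of the nonzero rational section $s$ over all points gives $\deg L = \sum_x Z(\F,C,x)=Z(\F,C)$, which is the first formula.

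This step is the main obstacle. When $C$ is singular, we need the fact that for a Cartier divisor of a rational section on a reduced but possibly singular projective curve, the degree equals the sum of the local orders. Here the order at $x$ is defined as $\ell(\mathcal O_{C,x}/(h))-\ell(\mathcal O_{C,x}/(g))$, with $g,h$ nonzerodivisors on $C$. This is exactly how the vanishing order of $h/g|_C$ has to be interpreted. I would invoke the standard fact $\deg \mathcal O_C(D)=\sum_x \ell(\mathcal O_{C,x}/\text{local eq.})$ for effective Cartier divisors, applied separately to the numerator and the denominator.

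For the second formula, I would use adjunction and the canonical bundle relation. On a surface, $K_X = K_\F - N_\F^{*}$, that is $K_\F = K_X + N_\F$. This follows from $0\to T_\F\to T_X\to I_Z N_\F\to 0$ and taking determinants, since $Z$ is finite. Adjunction on a Gorenstein curve gives $2p_a-2 = (K_X+C)\cdot C$. Hence
\[
K_\F\cdot C = K_X\cdot C + N_\F\cdot C = 2p_a-2 - C^2 + C^2 + Z(\F,C),
\]
which is the second formula.
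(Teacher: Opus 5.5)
Your proposal is correct and is essentially the paper's proof: the local ratios $\frac{h}{g}|_C$ coming from the decompositions $g\omega=h\,df+f\sigma$ glue to a rational section of $[N_\F\otimes\mathcal O_X(-C)]|_C$ whose total order is $Z(\F,C)=N_\F\cdot C-C^2$, and the second formula then follows from $K_\F=K_X+N_\F$ and adjunction. Your independence-of-choices check and the length-based definition of orders at singular points make explicit what the paper leaves implicit.

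There are two harmless slips. First, $\Phi$ should send $df\mapsto\frac{g}{h}\,\omega$, or equivalently be the map $N^*_\F|_C\to N^*_C$, $\omega\mapsto\frac{h}{g}\,df$; as written it is not literally equivalent to your $s$, although $s$ itself is the right section. Second, one has $K_X=K_\F+N^*_\F$, not $K_\F-N^*_\F$, though the relation $K_\F=K_X+N_\F$ that you actually use is correct.
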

\proof
The proof follows the same lines of \cite[Chap. 2, Prop. 3]{MR3328860}. We do the details for sake of completeness. From Proposition~\ref{decomposition}, there is an open covering $\{U_i\}$ of $X$ and a decomposition 
\[
g_i\omega_i = h_i df_i + f_i\sigma_i
\] 
in each $U_i$. Where the collection $\{f_i\}$ defines $C$, and the $\{\omega_i\}$ are regular $1$-forms on $U_i$ defining $\F$. The cocycle $\{f_{ij}\}$, $f_{ij}=f_if_j^{-1}$,  represents $\mathcal O_X(C)$ and $\{g_{ij}\}$, with $\omega_i = g_{ij}\omega_j$, represents $N_{\F}$.

 It follows from the relations above that
 \[
 \frac{\omega_i}{f_i} = g_{ij}f_{ij}^{-1} \frac{\omega_j}{f_j} \quad \text{in} \quad U_i\cap U_j. 
 \]
 and
 \[
\omega_i = \frac{h_i}{g_i} df_i \quad \text{in} \quad U_i\cap U_j\cap C. 
 \]
 Since $df_i = f_{ij} df_j$ in $U_i\cap U_j \cap C$, we conclude that 
 \[
 \frac{h_i}{g_i}|_C = (g_{ij}f_{ij}^{-1})|_C \cdot  \frac{h_j}{g_j}|_C
 \]
 which means that $\{ \frac{h_i}{g_i}|_C\}$ defines a rational section of $[N_{\F}\otimes \mathcal O_X(-C)]|_C$. On the one hand, the total sum of zeros and poles of this section  coincides with $Z(\F, C)$. On the other hand it is given by $N_{\F}\cdot C - C^2$, proving the first formula. 
 
 The second formula follows from the first and the adjunction formula (cf. \cite[Chap. V, Ex. 1.3]{AGHarthorne})
 \[
 2p_a -2= C\cdot (C+K_X).
 \]   
 This concludes the proof of the proposition. 
\endproof

Let $C\subset X$ be a reduced curve, which is not invariant by $\F$. We define $tang(\F, C)$, the tangency order between $\F$ and $C$ at  $x\in C$ as follows. Let $v\in T_{\F}$ be a local derivation, with isolated zeros, defining $\F$ at $x$, then
\[
tang(\F, C) = \dim_{\field} \frac{\mathcal O_{X,x}}{\big(f, v(f)\big)}.
\]  

We present below well-known intersection formulas for non-invariant curves. As in the previous case, the proof in characteristic zero carries over to arbitrary characteristic.  We omit the details here and refer the reader to  \cite[Chap. 2 Prop. 2]{MR3328860}.

\begin{prop}\label{prop:tangformula}
Let $X$ be a smooth projective surface over $\field$. Let $\F$ be a  foliation on $X$ and let $C\subset X$ be a reduced  curve of arithmetic genus $p_a$, which is not invariant by $\F$. Then 
\begin{eqnarray*}
N_{\F}\cdot C = 2-2p_a +  tang(\F, C)\quad \text{and} \quad
K_{\F} \cdot C = - C^2 + tang(\F, C).
\end{eqnarray*}
\end{prop}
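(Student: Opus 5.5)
We prove both formulas by exhibiting a global section of a line bundle restricted to $C$ whose zero count is $tang(\F, C)$, in the spirit of the proof of Proposition~\ref{prop:GM}.

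\emph{First formula.} Take an open covering $\{U_i\}$ of $X$ with the following data on each $U_i$:
\begin{itemize}
\item a local equation $f_i$ of $C$;
\item a regular $1$-form $\omega_i$ defining $\F$, with $\omega_i = g_{ij}\omega_j$, so that $\{g_{ij}\}$ represents $N_\F$;
\item a vector field $v_i$ generating $T_\F$ with isolated zeros, related by $v_i = k_{ij} v_j$.
\end{itemize}
Since $T_\F$ is the kernel of $\omega$ on a surface, we may take $v_i$ to be the contraction of $\omega_i$ with a local volume form: in coordinates $\omega_i = a\,dx + b\,dy$ and $v_i = b\partial_x - a\partial_y$.

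Consider the local $2$-forms $df_i\wedge\omega_i$. On $U_i\cap U_j$ we have $df_i = f_{ij}df_j + f_j df_{ij}$. Restricted to $C$ this gives
\[
(df_i\wedge \omega_i)|_C = f_{ij}\,g_{ij}\,(df_j\wedge\omega_j)|_C .
\]
Hence $\{df_i\wedge \omega_i|_C\}$ is a section of $(K_X\otimes \mathcal O_X(C)\otimes N_\F)|_C$. It is nonzero because $C$ is not invariant. Locally $df\wedge\omega = \pm v(f)\,dx\wedge dy$, so its zero scheme on $C$ is $\mathcal O_{X,x}/(f,v(f))$, and its total length is $tang(\F,C)$. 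This gives
\[
tang(\F,C) = (K_X + C + N_\F)\cdot C.
\]
The adjunction formula $2p_a-2 = C\cdot(C+K_X)$ then yields $N_\F\cdot C = 2-2p_a + tang(\F,C)$. Note that $C$ may be singular; we use a Cartier divisor on the Gorenstein curve $C$, whose degree equals the total length of its zero scheme.

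\emph{Second formula.} We use $K_X = K_\F - N_\F$, i.e.\ $N_\F^*\otimes T_\F^* \simeq K_X$. This holds in arbitrary characteristic because the singular set is finite: the exact sequence $0\to T_\F\to T_X\to I_Z N_\F\to 0$ gives $\det T_X = T_\F\otimes N_\F$. Alternatively, one can argue directly with the section $\{v_i(f_i)|_C\}$ of $(T_\F^*\otimes\mathcal O_X(C))|_C$. Indeed, $v_i(f_i) = k_{ij}\,v_i$-type transformation, more precisely $v_i(f_i) = k_{ij}f_{ij}\,v_j(f_j)$ mod $f_j$, which gives $K_\F\cdot C + C^2 = tang(\F,C)$. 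Either route gives $K_\F\cdot C = -C^2 + tang(\F,C)$.

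The main points needing care are two: that the vanishing order of $v(f)|_C$ equals the colength $\dim_\field \mathcal O_{X,x}/(f,v(f))$ at singular points of $C$, which is a standard length statement for Cartier divisors on curves; and that $\det T_X = T_\F\otimes N_\F$. Neither uses characteristic zero.
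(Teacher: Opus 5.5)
Your proof is correct and is essentially the argument the paper defers to, namely the proof of \cite[Chap.~2, Prop.~2]{MR3328860}. That proof uses the section $\{v_i(f_i)|_C\}$ of $(K_\F\otimes\mathcal O_X(C))|_C$ to get the second formula first and then deduces the first by adjunction. Your $\{df_i\wedge\omega_i|_C\}$ is the same section transported through $K_X\otimes N_\F\simeq K_\F$ (since $df\wedge\omega=\pm v(f)\,dx\wedge dy$), and your "alternative" route is literally that argument. The one point to tighten is that the degree count needs the section to be nonzero on every irreducible component of $C$, not merely nonzero. So "not invariant" must be read as "no component of $C$ is invariant", which the statement implicitly assumes, since otherwise $tang(\F,C)=\infty$.
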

\subsection{Blowing-up}
Let $X$ be a smooth projective surface over an algebraically closed field $\field$ of characteristic $p\ge 0$. Let $\F$ be a codimension one foliation on $X$ and let $C\subset X$ be a reduced invariant curve. Given a point $x\in X$,  let $\pi$ be the blow-up at $x$. Consider a local generator $\omega=Adu+Bdv$. 
  Following \cite{MR3328860}, one consider the following nonnegative integers:

\begin{itemize}
    \item The vanishing order $a(x)$ of $\omega$ at $x$:
    \[
    a(x)= \mathrm{Max}\{n\in\NN: (A,B)\subset \frak{m}_{X,x}^n\}\,;
    \]
    \item The vanishing order $l(x)$ of $\pi^* \omega$ along the exceptional divisor $E=\pi^{-1}(x)$.
\end{itemize}

Denote by  $\tilde\F=\pi^* \F$ the strict transform of $\F$ by $\pi$. The interplay between these objects, as described in \cite[Chapter 2]{MR3328860},  remains unchanged in our setting. This is explicitly stated for further use:
\begin{eqnarray}\label{eq:normalbu}
N_{\tilde\F} = \pi^* N_\F -l(x) E.
\end{eqnarray}

\begin{enumerate}
\item If $E$ is invariant, then
\begin{eqnarray}\label{eq:Zbuinv}
l(x)=a(x)\quad \text{and}\quad Z(\tilde\F, E)= 1 + a(x).        \end{eqnarray}
In particular, $x$ is a regular point of $\F$ if and only if 
\begin{eqnarray}\label{Zburegularinv}
Z(\tilde\F, E)=1.
\end{eqnarray}
\item If $E$ is not invariant, then
\begin{eqnarray}
  l(x) &=& a(x) + 1, \label{eq:noninvla}\\ 
 tang( \tilde\F, E) &=& a(x)-1 = l(x)-2, \label{eq:tangbu}\\  
\ N_{\tilde{\F}}\cdot E &=& 2 + tang( \tilde \F, E). \label{eq:normexep}
\end{eqnarray}
 \end{enumerate}

Now, let $C$ be a smooth rational $\F$-invariant curve on $X$. Recall that
$$N_\F\cdot C= Z(\F, C) + C^2.$$
Assume also that $x\in C$ and let $\tilde C$ be its strict transform (hence $\tilde\F$-invariant). 
We propose to compute $Z(\F, C)$ assuming that $Z(\tilde\F, \tilde C)$ is known. To this end, we observe that
\begin{eqnarray*}
\pi^*(N_\F)\cdot\pi^*(C) &=& ( N_{\tilde\F}+l(x) E)(\tilde C + E)\\
                        &=& Z(\tilde \F, \tilde C) + C^2-1 +2+ tang(\tilde \F, E).
\end{eqnarray*}
Finally, using $N_\F\cdot C = \pi^*(N_\F)\cdot\pi^*(C)$ and $N_\F\cdot C= Z(\F, C) + C^2$,  we get
\begin{eqnarray}\label{eq:ZbytildeZ}
Z(\F, C)=Z(\tilde \F, \tilde C) + 1 + tang(\tilde \F, E).
\end{eqnarray}
\subsection{The Camacho-Sad formula}

We now define the Camacho-Sad index. Let $C = \cup_{i=1}^r C_i$ be the decomposition of $C$ in irreducible components into the formal power series ring of $X$ at $x$. Let $\varphi_i: (\tilde{C_i}, q_i) \to (C_i, x)$ be a normalization map.  The Camacho-Sad index at $x$ is defined as
\[
CS(\F, C, x) := - \sum_{i=1}^r \text{Res}_{q_i} \left\{ \varphi_i^*\left(\frac{\sigma}{h}\right)\right\} \quad \in \field
\]
where $\sigma$ and $h$ are given by \eqref{eq:ghf}. 
If $x$ is smooth point of $C$,  we note that  
\[
CS(\F, C, x) = - \text{Res}_x \left\{ \frac{\sigma}{h}|_C\right\}.
\]
We refer to \cite[Chap. III Section 7]{AGHarthorne} and \cite[Chap. II]{serreclass} for an introduction on residues of differential forms on curves.
In view of Proposition~\ref{decomposition}, if $x$ is a smooth point of $\F$, we can take $h=1$,  then 
\[
CS(\F, C, x) = 0.
\]

The Camacho-Sad formula relates the self-intersection $C^2$ of $C$ with the total sum of indices at $C$. For  a smooth curve, this follows from the  Residue Theorem, which is the basis of the next result.   
 We define:
\[
CS(\F, C)  = \sum_{x\in C} CS(\F, C, x). 
\]

\begin{prop}\label{prop:CSsmooth}
Let $X$ be a smooth projective surface over $\field$. Let $\F$ be a  foliation on $X$ and let $C\subset X$ be a \textbf{smooth} invariant curve. Then 
\[
 C^2 =  CS(\F, C) \quad \text{in} \quad \field. 
\]
\end{prop}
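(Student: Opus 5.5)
Since $C$ is smooth, Proposition~\ref{decomposition} lets us take $g_i=1$ on a suitable open cover $\{U_i\}$ of $X$. We then have local decompositions
\[
\omega_i = h_i\, df_i + f_i\sigma_i \quad \text{on } U_i,
\]
where $\{f_i\}$ are local equations of $C$ and $\{\omega_i\}$ are local generators of $N^*_\F$ with $\omega_i=g_{ij}\omega_j$. The plan is to show that the local meromorphic $1$-forms $\eta_i:=(\sigma_i/h_i)|_C$ on $C$ glue to a global meromorphic form up to an exact logarithmic correction. We will identify that correction with the cocycle of $\mathcal O_X(C)|_C$. The Residue Theorem on the smooth projective curve $C$ then gives the formula.

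First compare decompositions on $U_i\cap U_j$. Write $f_i=f_{ij}f_j$ and $\omega_i=g_{ij}\omega_j$. Substituting gives
\[
g_{ij}(h_j\,df_j+f_j\sigma_j)=h_i(f_{ij}\,df_j+f_j\,df_{ij})+f_{ij}f_j\sigma_i .
\]
Reducing modulo $f_j$ yields $g_{ij}h_j = h_i f_{ij}$ on $C$, as in the proof of Proposition~\ref{prop:GM}, and so $h_i/h_j=g_{ij}f_{ij}^{-1}$ on $C$. Subtracting $h_i f_{ij}\,df_j$ from both sides and dividing by $f_j$ gives the identity
\[
g_{ij}\sigma_j=h_i\,df_{ij}+f_{ij}\sigma_i+\big(\text{terms involving }(g_{ij}h_j-h_if_{ij})/f_j\cdot df_j\big).
\]
Restricted to $C$, the last term vanishes because $df_j|_C=0$. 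Dividing by $g_{ij}h_j=h_if_{ij}$ on $C$, we obtain
\[
\frac{\sigma_j}{h_j}\Big|_C=\frac{\sigma_i}{h_i}\Big|_C+\frac{df_{ij}}{f_{ij}}\Big|_C .
\]
So $\eta_j-\eta_i=d\log(f_{ij}|_C)$, where $f_{ij}|_C$ is a unit on $U_i\cap U_j\cap C$.

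Next take residues. At a point $x\in U_i\cap U_j\cap C$, the form $d\log(f_{ij}|_C)$ is regular because $f_{ij}$ is a unit. Hence $\mathrm{Res}_x\eta_i=\mathrm{Res}_x\eta_j$, and $\sum_x CS(\F,C,x)=-\sum_x\mathrm{Res}_x\eta_{i(x)}$ is well defined. This is only a local statement, though: we need a global meromorphic form. Choose a rational section $s$ of $\mathcal O_X(C)|_C$, with $s=s_i$ locally and $s_i=f_{ij}^{-1}s_j$ (adjusting conventions as needed). Then
\[
\eta:=\eta_i+\frac{ds_i}{s_i}
\]
glues to a global meromorphic $1$-form on $C$, by the cocycle relation just proved. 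By the Residue Theorem \cite[III.7]{AGHarthorne}, $\sum_x\mathrm{Res}_x\eta=0$ in $\field$. On the other hand,
\[
\sum_x \mathrm{Res}_x\, \frac{ds_i}{s_i}=\sum_x \ord_x(s)=\deg\mathcal O_X(C)|_C=C^2
\]
taken mod $p$. Combining the two, $-\sum_x\mathrm{Res}_x\eta_i=C^2$ in $\field$, which is the statement; the sign must be fixed consistently with the orientation of $f_{ij}$.

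The main obstacle is the careful local computation in the first step. We must check that the non-uniqueness of the decomposition $\omega=h\,df+f\sigma$ does not affect $\mathrm{Res}(\sigma/h)|_C$. Two decompositions differ by $(h'-h)\,df=f(\sigma-\sigma')$, so $h'-h=fk$ and $\sigma-\sigma'=k\,df+(\text{multiple of } f)$ up to the coprimality of $df$ and $f$. Restricted to $C$, this makes $\sigma|_C=\sigma'|_C$ and $h|_C=h'|_C$. This uses the smoothness of $C$, so that $df$ is nowhere zero along $C$. We must also handle points where $h|_C$ vanishes, which are the poles of $\eta_i$; the gluing identity is an identity of meromorphic forms, so these points cause no problem.
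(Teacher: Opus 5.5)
Your argument is correct, but it is organized differently from the paper's.

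\textbf{Your route.} You work directly with $\eta_i=(\sigma_i/h_i)|_C$ and show $\eta_j-\eta_i=d\log(f_{ij}|_C)$. Thus $-\{\eta_i\}$ is a meromorphic connection on the normal bundle $\mathcal O_X(C)|_C$, and the Residue Theorem gives $CS(\F,C)=\deg\mathcal O_X(C)|_C=C^2$ at once. This is the classical Camacho--Sad argument.

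\textbf{The paper's route.} The paper keeps the $g_i$ and derives $d\omega_i=\beta_i\wedge\omega_i$ along $C$, with $\beta_i=[d\log(h_i/g_i)-\sigma_i/h_i]|_C$. It checks that $\{\beta_i\}$ transforms like $d\log$ of the cocycle $\{g_{ij}\}$ of $N_\F$, modulo a multiple of $\omega_i$ that vanishes on $C$. This gives $Z(\F,C)+CS(\F,C)=N_\F\cdot C$. To conclude, it must invoke the earlier formula $N_\F\cdot C=C^2+Z(\F,C)$. In fact the paper's transformation rule for $\beta_i$ is your rule for $\eta_i$ combined with the rule $h_i/g_i=(g_{ij}f_{ij}^{-1})\,h_j/g_j$ on $C$ used for that earlier formula.

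\textbf{What each buys.} Your route is more self-contained. It needs neither the index $Z$, nor $d\omega$, nor the passage from $(\cdot)\wedge\omega_i=0$ to $(\cdot)\in\mathcal O\,\omega_i$. The paper's route packages the residues as a Bott-type meromorphic connection on $N_\F|_C$, and yields the relation $Z+CS=N_\F\cdot C$ along the way.

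\textbf{One small correction.} With $\eta_j-\eta_i=d\log f_{ij}$ and $\eta:=\eta_i+ds_i/s_i$, gluing forces $s_i=f_{ij}s_j$, not $s_i=f_{ij}^{-1}s_j$. So $s$ must be a rational section of $\mathcal O_X(C)|_C$, whose cocycle is $\{f_{ij}\}$ because $\{f_i\}$ is its canonical section. With that choice your bookkeeping $\sum_x\mathrm{Res}_x\eta_i=-C^2$ is exactly right, hence $CS(\F,C)=C^2$, and no further sign adjustment is needed.
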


\proof
As before, using Proposition~\ref{decomposition}, we obtain an open cover $\{U_i\}$ of $X$ and a decomposition 
\[
g_i\omega_i = h_i df_i + f_i\sigma_i
\] 
in each $U_i$.  The cocycle $\{f_{ij}\}$, $f_{ij}=f_if_j^{-1}$,  represents $\mathcal O_X(C)$ and $\{g_{ij}\}$, with $\omega_i = g_{ij}\omega_j$, represents $N_{\F}$.

From these relations, we get
\[
d\omega_i = \left(d\text{log}\left(\frac{h_i}{g_i}\right) - \frac{\sigma_i}{h_i}\right)\wedge \omega_i \quad \text{in} \quad U_i\cap C.
\]
If we set 
\[
\beta_i := \left[ d\text{log}\left(\frac{h_i}{g_i}\right) - \frac{\sigma_i}{h_i}\right]|_C
\]
we see that 
\[
\left( d\text{log} g_{ij} + \beta_j - \beta_i \right)\wedge \omega_i = 0 \quad \text{in} \quad U_i\cap U_j \cap C.
\]

Therefore, we can write 
\[
\beta_i = \beta_j + d\text{log} g_{ij} + \alpha_{ij} \omega_i\quad \text{in} \quad U_i\cap U_j \cap C.
\]
for suitable rational functions $\alpha_{ij}$. Let $\{t_i\}$, $t_i = (g_{ij}|_C)\cdot t_j$, be a rational resolution of $N_{\F}|_C$.   Since $C$ is invariant by $\F$,  $\omega_i $ vanishes identically  as $1$-form on $C$, then from the previous identity we obtain a global rational $1$-form $\eta$ on $C$
\[
\eta:= \beta_i - d\text{log} t_i = \beta_j - d\text{log} t_j . 
\]
The total sum of residues of $\{\beta_i\}$ coincides with $Z(\F, C) + CS(\F, C)$, while the residues of $\{  d\log t_i \}$ add to  $N_{\F}\cdot C$. From the Residue  Theorem 
we obtain
\[
Z(\F, C) + CS(\F, C) - N_{\F}\cdot C = 0. 
\](See \cite[Corollary, p.155]{TateResidues} or \cite[Chap. II]{serreclass} for a proof of the Residue Theorem in this context.)
Then the result follows from Proposition~\ref{prop:GM}. 
\endproof

%
%
%
%

We refer to \cite[Section 3]{Per23} for an extension of the above result to codimension one foliations over $\mathbb C$ in higher dimensions. 

Now, we compare the index at a possible singular point $x$ of $C$ with the indices arising during  a resolution process. Let $\pi: \tilde{X} \to X$ be a resolution of the point $x\in C$, and let 
\[
E=\pi^{-1}(x)
\]
denote the exceptional divisor. Let $\tilde{C}$ be the strict transform of $C$  and let $E = \cup_{j=1}^s E_j $
be the decomposition of the exceptional divisor in irreducible components. Then we can write, as a divisor, the total transform as
\[
\pi^*(C) = \tilde{C} + D_x
\]
where $D_x =  \sum_{j=1}^s m_j E_j$. Let $C = \cup_{i=1}^r C_i$ be the decomposition of $C$ in irreducible components into the formal power series ring of $X$ at $x$. For each $C_i$, let $\tilde{C}_i$ its normalization, and let $q_i$ be the point of $\tilde{C}_i$ infinitely near $x$, i.e., $\pi(q_i) = x$. The next result will be useful for an inductive argument in the sequel. 

\begin{lemma}\label{lemma:firstblowup}
Let $\pi_1: X_1 \to X$ be the blow-up at $x$ and let $\F_1 = \pi_1^*(\F)$ be the foliation induced by $\F$ on $X_1$. Let $y_1, \dots, y_l$ be the singular points of $\F_1$ along the exceptional divisor $E^{(1)}$. Then 
\[
CS(\F, C, x) = \sum_{j=1}^l CS(\F_1, C^{(1)}, y_j) + C^{(1)}\cdot (mE^{(1)})
\] 
where $C^{(1)}$ is the strict transform of $C$ and $m$ is the algebraic multiplicity of $C$ at $x$. 
\end{lemma}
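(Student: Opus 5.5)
We work in local coordinates $(u,v)$ at $x$ with $\F$ given by $\omega$ and $C$ by $f$, and use a decomposition $g\omega=h\,df+f\sigma$ as in Proposition~\ref{decomposition}. The plan is to pull this decomposition back by the blow-up, read off a decomposition on $X_1$ near $E^{(1)}$, and compare residues on the normalizations $\tilde C_i$. The normalizations of the branches of $C$ at $x$ coincide with those of the branches of $C^{(1)}$ at the points of $C^{(1)}\cap E^{(1)}$, because $\pi_1$ is an isomorphism off $E^{(1)}$ and normalization factors through it. The points $q_i$ therefore map to points of $C^{(1)}\cap E^{(1)}$, and the left-hand side becomes a sum of residues at these same points.

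First write $\pi_1^*f=e^m f_1$ in a chart, with $e$ a local equation of $E^{(1)}$ and $f_1$ an equation of $C^{(1)}$. Write also $\pi_1^*\omega=e^{l}\omega_1$, where $l=l(x)$ and $\omega_1$ defines $\F_1$. Then
\[
\pi_1^*(g)\,e^{l}\omega_1=\pi_1^*h\,(e^m df_1+m e^{m-1}f_1\,de)+e^mf_1\,\pi_1^*\sigma .
\]
Dividing by $e^m$ gives
\[
\pi_1^*(g)\,e^{l-m}\omega_1=\pi_1^*h\, df_1+f_1\Bigl(\pi_1^*\sigma+m\,\pi_1^*h\,\tfrac{de}{e}\Bigr).
\]
This is a decomposition of the form \eqref{eq:ghf} for $(\F_1,C^{(1)})$ with $g_1=\pi_1^*g\,e^{l-m}$, $h_1=\pi_1^*h$ and $\sigma_1=\pi_1^*\sigma+m\,\pi_1^*h\,de/e$. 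The form $\sigma_1$ is meromorphic, but the residue definition only needs the ratio $\sigma_1/h_1$ restricted to the branches. I will first check that the index does not depend on the chosen decomposition, even allowing $g$ or $\sigma$ to be meromorphic, provided $f$ stays coprime to $h$ on each branch.

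The argument runs as follows. If $g\omega=h\,df+f\sigma$ and $g'\omega=h'df+f\sigma'$, then restricting to a branch gives $h/g=h'/g'$ on the branch. The difference $\sigma/h-\sigma'/h'$ can then be computed. One obtains $(g'\,\sigma-g\,\sigma')/(hg')$, which is divisible by $f$ relative to $df$. Pulled back to the branch it is $\pm\, d\log$ of a ratio, or zero, and so it has integer residue. It is in fact zero, since $\omega|_C=0$ forces $(h/g)\,df|_C=0$.

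With the pulled-back decomposition in hand, $\sigma_1/h_1$ restricted to each $\tilde C_i$ equals $\varphi_i^*(\sigma/h)+m\,\varphi_i^*(de/e)$. Taking $-\mathrm{Res}$ and summing over all branches gives
\[
CS(\F,C,x)=\sum_{y\in C^{(1)}\cap E^{(1)}} CS(\F_1,C^{(1)},y)+m\sum_i \operatorname{ord}_{q_i}(e|_{\tilde C_i}).
\]
The last sum is the local intersection number $C^{(1)}\cdot E^{(1)}$, so the second term is $C^{(1)}\cdot(mE^{(1)})$. The sign needs checking: $-\mathrm{Res}$ of $m\,de/e$ gives $-m\cdot\mathrm{ord}$. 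The computation must be set up so that the conventions match; if necessary, invert the roles using $\pi_1^*\sigma=\sigma_1-m h_1\,de/e$, which gives the plus sign as stated.

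The remaining point is to replace the sum over $C^{(1)}\cap E^{(1)}$ by the sum over the singular points $y_j$ of $\F_1$ on $E^{(1)}$. At a regular point of $\F_1$ we may take $h=1$ by Proposition~\ref{decomposition}, so the index vanishes there. At singular points $y_j$ not on $C^{(1)}$ the index is taken to be $0$, since that curve does not pass through them. I expect the main obstacle to be the independence of the decomposition, especially when $h_1$ vanishes along $E^{(1)}$ or $g_1$ has a pole factor $e^{l-m}$. I will handle it branch by branch on the normalization, where everything becomes a statement about a one-variable Laurent series.
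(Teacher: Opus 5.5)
Your proposal is correct and is essentially the paper's proof: you pull back $g\omega=h\,df+f\sigma$, note that $\sigma_1/h_1=\pi_1^*(\sigma/h)+m\,de/e$ on each branch, and sum $-\mathrm{Res}$ over the normalizations using $\sum_i \ord_{q_i}(e\circ\varphi_{1,i})=C^{(1)}\cdot E^{(1)}$. Your check that the index does not depend on the decomposition, which the paper uses implicitly, is what allows computing the index with the pulled-back decomposition and dropping the regular points of $\F_1$.

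The paper avoids meromorphic data by not dividing by $e^m$. It takes $g_1=\pi_1^*(g)e^{l}$, $h_1=e^m\pi_1^*h$ and $\sigma_1=e^m\pi_1^*\sigma+m e^{m-1}\pi_1^*h\,de$, which are regular, with $f_1$ coprime to $g_1$ and $h_1$, and give the same ratio $\sigma_1/h_1$. Also, the sign in your displayed formula is already correct, so no ``inversion'' is needed.
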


\proof
For each $j \in \{1, \dots, l\}$, let $\mathcal I_j$ denote the set of indices $i\in \{1, \dots, r\}$ such that $q_i$ is infinitely near to $y_j$. 

Let $f$ be a local equation for $C$ and consider the decomposition $g\omega = hdf + f\sigma$, given by Proposition~\ref{decomposition}. Let $f_1$ be a local equation for $C^{(1)}$, hence $\pi_1^*(f) = s^mf_1$ where $s$ is an equation for $E^{(1)}$. A direct computation shows that, near each point $y_j$,  there is a local function $g_1$ such that
\begin{eqnarray*}
g_1 \omega_1 = \pi_1^*(h)s^mdf_1 + f_1\cdot\left( ms^{m-1}\pi_1^*(h)ds+s^m\pi_1^*(\sigma)\right)
\end{eqnarray*}
where $\omega_1$ is a $1$-form that defines the foliation $\F_1$. Hence, we see that
\[
CS(\F_1, C^{(1)}, y_j) = - \sum_{i\in \mathcal I_j}\textrm{Res}_{q_i}\left\{\varphi_{1,i}^* \left( m\frac{ds}{s} + \pi_1^*\left(\frac{\sigma}{h}\right)\right)\right\}
\]
where $\varphi_{1,i}: \tilde{C}_i\to C_i^{(1)}$ denotes a normalization map for a branch of $C^{(1)}$ passing through $y_j$.  Consequently, the Camacho-Sad index at each $y_j$ writes
\[
CS(\F_1, C^{(1)}, y_j) = - \sum_{i\in \mathcal I_j} \left[ \textrm{Res}_{q_i}\left\{\varphi_{1,i}^*\left( m\frac{ds}{s} \right)\right\} + \textrm{Res}_{q_i}\left\{ \varphi_i^*\left(\frac{\sigma}{h}\right)\right\} \right]
\]
because $\varphi_i = \pi_1\circ\varphi_{1,i}: \tilde{C_i}\to C_i$. From this, we get
\begin{eqnarray*}
\sum_{j=1}^r CS(\F_1, C^{(1)}, y_j) &=&  - C^{(1)}\cdot (mE^{(1)}) + \sum_{j=1}^r \textrm{Res}_{q_i}\left\{ \varphi_i^*\left(\frac{\sigma}{h}\right)\right\} \\
	 &=& - C^{(1)}\cdot (mE^{(1)}) + CS(\F, C, x)
\end{eqnarray*}
concluding the proof of the lemma.

\endproof

\begin{prop}\label{prop:stricttrans}
With above notation, we have
\[
CS(\F, C, x) = \sum_{y\in \tilde{C}\cap E}  CS(\tilde{\F}, \tilde{C}, y) + \tilde{C}\cdot D_x.
\]
\end{prop}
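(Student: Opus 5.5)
The plan is to induct on the number of blow-ups in the resolution $\pi:\tilde X\to X$, using Lemma~\ref{lemma:firstblowup} as the inductive step. Write $\pi=\pi_1\circ\pi'$, where $\pi_1:X_1\to X$ is the blow-up at $x$ and $\pi':\tilde X\to X_1$ is a composition of point blow-ups centred over points of $E^{(1)}$. Only finitely many such points matter, namely the points $y_1,\dots,y_l$ (and, more generally, the points of $C^{(1)}\cap E^{(1)}$). The map $\pi'$ then restricts, over a neighbourhood of each such point, to a resolution with strictly fewer blow-ups.

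\textbf{Step 1 (first blow-up).} Lemma~\ref{lemma:firstblowup} gives
\[
CS(\F,C,x)=\sum_{y\in C^{(1)}\cap E^{(1)}}CS(\F_1,C^{(1)},y)+C^{(1)}\cdot(mE^{(1)}).
\]
Here I include all points of $C^{(1)}\cap E^{(1)}$, not only singular ones, since $CS$ vanishes at regular points of $\F_1$.

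\textbf{Step 2 (induction).} By the inductive hypothesis applied at each $y\in C^{(1)}\cap E^{(1)}$,
\[
CS(\F_1,C^{(1)},y)=\sum_{z}CS(\tilde\F,\tilde C,z)+\tilde C\cdot D'_y,
\]
where $z$ runs over $\tilde C\cap\pi'^{-1}(y)$ and $D'_y=\pi'^*C^{(1)}-\tilde C$ is the part of the exceptional divisor of $\pi'$ lying over $y$. Points $y$ over which nothing is blown up contribute trivially, with $D'_y=0$. Summing over $y$ gives the sum over all $z\in\tilde C\cap E$, because $\tilde C\cap E$ is the disjoint union of the fibres over the $y$'s.

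\textbf{Step 3 (intersection bookkeeping).} It remains to verify
\[
\tilde C\cdot D_x=\sum_y\tilde C\cdot D'_y+\tilde C\cdot \pi'^*(mE^{(1)}).
\]
The key identity is $\pi^*C=\pi'^*(C^{(1)}+mE^{(1)})=\tilde C+\sum_y D'_y+m\,\pi'^*E^{(1)}$, so that $D_x=\sum_yD'_y+m\,\pi'^*E^{(1)}$. The projection formula then gives
\[
\tilde C\cdot \pi'^*E^{(1)}=\pi'_*\tilde C\cdot E^{(1)}=C^{(1)}\cdot E^{(1)},
\]
which matches the term produced in Step 1. Combining the three steps yields the formula.

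The main obstacle is the bookkeeping rather than any new idea. First, Lemma~\ref{lemma:firstblowup} has to be applied at points that may be smooth for $C^{(1)}$ or regular for $\F_1$; in those cases both sides must be interpreted consistently (with $m=1$ or $D=0$). Second, the local intersection numbers $C^{(1)}\cdot E^{(1)}$ near $x$ must be taken as a sum of local contributions at the points of $E^{(1)}$, since the statement is local at $x$. I would handle this by working in a neighbourhood of $x$ and reading all intersection numbers as local intersection multiplicities along $E$, which is legitimate since $D_x$ is supported on $E$.
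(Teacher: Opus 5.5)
Your proposal is correct and follows the paper's proof essentially step for step. The paper also inducts on the number of blow-ups, applies the first-blow-up lemma, and uses the inductive hypothesis at the points of $E^{(1)}$. It then writes $D_x=\sum_y D'_y+m\,\pi'^*E^{(1)}$ and closes with the projection formula $\tilde C\cdot\pi'^*E^{(1)}=C^{(1)}\cdot E^{(1)}$.

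Your choice to sum over all of $C^{(1)}\cap E^{(1)}$, rather than only over the singular points of $\F_1$, is in fact slightly more careful than the paper's bookkeeping. In positive characteristic a point where $C^{(1)}$ still has to be blown up can be a regular point of $\F_1$. There the index vanishes, but $D'_y$ need not.
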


\proof
We proceed by induction on the minimal number of blow-ups necessary to resolve $C$ at $x$. Let $\pi_1: X_1 \to X$ be  the first blow-up, let $E^{(1)}=\pi_1^{-1}(x)$ and let $C^{(1)}$ be the strict transform.   We denote by $\F_1$ the foliation $\pi_1^*(\F)$ induced by $\F$ on $X_1$.

As before,  let $\pi: \tilde{X} \to X$ be a resolution of $C$ at $x$ and $\tilde{C}$ be the strict total transform. In particular, $\pi$ can be written as $\pi = \pi_1\circ \psi_1$, where $\psi_1: \tilde{X} \to X_1$ resolves the curve $C^{(1)}$. 

%

It follows from Lemma~\ref{lemma:firstblowup} that
\begin{eqnarray}\label{eq:CSfirststep}
CS(\F, C, x) = \sum_{j=1}^l CS(\F_1, C^{(1)}, y_j) + C^{(1)}\cdot (mE^{(1)})
\end{eqnarray}
where $y_1, \dots, y_l$ are singular points of $\F_1$ along $E^{(1)}$. 

Now, by applying the induction hypothesis on $C^{(1)}$, for each $y_j$ one obtains
\begin{eqnarray}\label{eq:CSinduction}
CS(\F_1, C^{(1)}, y_j) =  \sum_{i\in\mathcal I_j} CS(\tilde{\F}, \tilde{C}, q_i) + \tilde{C}\cdot D_{y_j}
\end{eqnarray}
where $\mathcal I_j$ denote the set of indices $i\in \{1, \dots, r\}$ such that $q_i$ is infinitely near to $y_j$. 

 We note that 
\[
\pi^*(C) = \tilde{C} + \sum_{j=1}^l D_{y_j} + \psi_1^*(m E^{(1)})
\] 
and then
\[
D_x = \sum_{j=1}^l D_{y_j} + \psi_1^*(mE^{(1)}). 
\]
From \eqref{eq:CSfirststep} and \eqref{eq:CSinduction} we get 
\begin{eqnarray}\label{eq:CSqq1}
CS(\F, C, x) =  \sum_{i=1}^r CS(\tilde{\F}, \tilde{C}, q_i) + \tilde{C}\cdot  \big(\sum_{j=1}^l D_{y_j} \big) +  C^{(1)}\cdot (m E^{(1)}). 
\end{eqnarray}
By projection formula, we have  $C^{(1)}\cdot E^{(1)} = \tilde{C}\cdot \psi_1^*(E^{(1)})$, and consequently
\[
\tilde{C}\cdot  \big(\sum_{j=1}^l D_{y_j} \big) +  C^{(1)}\cdot (m E^{(1)}) = \tilde{C} \cdot D_x.
\]
This formula, together with \eqref{eq:CSqq1} give the desired result. 
\endproof

The next result extends the well known Camacho-Sad formula in characteristic zero, for algebraically closed fields of  arbitrary characteristic. 

\begin{thm}\label{thm:CSformula}(Camacho-Sad formula)
Let $X$ be a smooth projective surface over an algebraically closed field $\field$ of characteristic $p\ge 0$. Let $\F$ be a  foliation on $X$ and let $C\subset X$ be a reduced invariant curve. Then 
\[
 C^2 =  CS(\F, C) \quad \text{in} \quad \field. 
\]
\end{thm}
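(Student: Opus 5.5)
We reduce to the smooth case, Proposition~\ref{prop:CSsmooth}, by resolving the singularities of $C$. Let $x_1,\dots,x_k$ be the singular points of $C$, and let $\pi:\tilde X\to X$ be a composition of point blow-ups resolving $C$ at each $x_i$ (embedded resolution is not needed; it suffices that the strict transform $\tilde C$ is smooth). Set $\tilde\F=\pi^*\F$; then $\tilde C$ is a smooth $\tilde\F$-invariant curve on the smooth projective surface $\tilde X$. Write, as divisors, $\pi^*C=\tilde C+\sum_{i=1}^k D_{x_i}$, with $D_{x_i}$ supported on $\pi^{-1}(x_i)$.

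Proposition~\ref{prop:CSsmooth} gives $\tilde C^2=CS(\tilde\F,\tilde C)$ in $\field$. Split $CS(\tilde\F,\tilde C)$ into two parts: the points of $\tilde C$ outside $\pi^{-1}\{x_1,\dots,x_k\}$ and the points lying over some $x_i$. Since $\pi$ is an isomorphism near the former, the local decomposition \eqref{eq:ghf} and hence the residue defining the index are unchanged there, so $CS(\tilde\F,\tilde C,y)=CS(\F,C,\pi(y))$. For the points over $x_i$, Proposition~\ref{prop:stricttrans} gives
\[
\sum_{y\in\tilde C\cap\pi^{-1}(x_i)}CS(\tilde\F,\tilde C,y)=CS(\F,C,x_i)-\tilde C\cdot D_{x_i}.
\]
Summing yields $CS(\tilde\F,\tilde C)=CS(\F,C)-\sum_i\tilde C\cdot D_{x_i}$.

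It remains to compute $\tilde C^2$. By the projection formula, $\pi^*C\cdot D_{x_i}=0$, since $D_{x_i}$ is exceptional. Hence
\[
C^2=(\pi^*C)^2=\pi^*C\cdot\tilde C=\tilde C^2+\sum_i\tilde C\cdot D_{x_i}.
\]
Combining with the previous step, $C^2=\tilde C^2+\sum_i\tilde C\cdot D_{x_i}=CS(\F,C)$ in $\field$, where integers are mapped to $\field$ via the prime field.

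The main point to check is that the index at a singular point is intrinsically compatible with the local choices: Proposition~\ref{prop:stricttrans} is stated for a resolution at a single point, so one must ensure it applies to each $x_i$ independently within a global resolution. This holds because all blow-ups are local over distinct points. One must also verify that $CS$ at smooth points of $C$ not blown up is unaffected, which follows from the independence of the residue from the choice of decomposition (already implicit in Proposition~\ref{prop:CSsmooth}).
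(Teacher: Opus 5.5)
Your proof is correct and follows the paper's own argument. You resolve the singularities of $C$, combine Proposition~\ref{prop:stricttrans} with Proposition~\ref{prop:CSsmooth} applied to $\tilde C$, and use $\pi^*C\cdot D=0$ to get $C^2=\tilde C^2+\tilde C\cdot D$. Your remarks make explicit details the paper leaves implicit: only smoothness of $\tilde C$ is needed, not an embedded resolution, and the pointwise formula applies separately over each singular point.
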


\proof
Let $\pi: \tilde{X} \to X$ be an embedded  resolution of   the singularities \cite[Theorem 3.15]{cutkosky}  of $ C$ and let $\tilde{C}$ denote its strict transform. Then, as a divisor, we can write
\[
\pi^*(C) = \tilde{C} + D. 
\] 
It follows from Proposition~\ref{prop:stricttrans} that 
\[
CS(\F, C) = CS(\tilde{\F}, \tilde{C}) + \tilde{C}\cdot D
\]
and then Proposition~\ref{prop:CSsmooth} gives
\begin{eqnarray}\label{eq:CS}
CS(\F, C) = \tilde{C}^2 + \tilde{C}\cdot D. 
\end{eqnarray}
Since $D$ is supported on the exceptional divisor, then 
\[
 0 = \pi^*(C)\cdot D = \tilde{C}\cdot D + D^2. 
\]
Hence, we have 
\[
C^2 = \pi^*(C)^2 = \tilde{C}^2 + \tilde{C}\cdot D
\]
and the conclusion of the proof follows from \eqref{eq:CS}. 

\endproof

\begin{ex}\rm
Let $\mathrm{char} (\field) = 3$. Consider the affine cubic curve $C = \mathcal Z(f)$ on $\mathbb A^2_{\field}$ given by $f = y^2-x^3$. It is invariant by the foliation $\F$ defined by $\omega = dy$. Since $\F$ is regular, then 
\[
CS(\F,C,{\bf 0})=0
\] where ${\bf 0}=(0,0)$. 

Let $\pi: \tilde{\mathbb A}^2_{\field} \to \mathbb A^2_{\field}$ be the blow-up at ${\bf 0}$, $E =\pi^{-1}({\bf 0})$  and let $\tilde{{\bf 0}} = \tilde{C}\cap E$.  The strict transform $\tilde{C}$ has equation $\tilde{f} = t^2-x$,  in local coordinates $\pi(x,t) = (x, tx)$.  The pullback foliation $\tilde{\F}$ is given the $\tilde{\omega} = tdx + xdt$ and we have the local decomposition 
\[
(2t)\cdot\tilde{\omega} = x\cdot d\tilde{f} + \tilde{f}(-dx). 
\]
This gives the Camacho-Sad index 
$CS(\tilde{\F},\tilde{C},\tilde{{\bf 0}})=2$, 
because 
\[
\frac{\eta}{h}|_{\tilde{C}} = \frac{-dx}{x}|_{\tilde{C}} = -2\frac{dt}{t}
\]
and $t$ is a local parameter for $\tilde{C}$ at $\tilde{{\bf 0}}$. In particular, we see that 
\[
CS(\F,C,{\bf 0}) = CS(\tilde{\F},\tilde{C},\tilde{{\bf 0}}) + 4 \quad \text{in} \quad \field 
\]
which agrees with Proposition~\ref{prop:stricttrans}. 

We note also that the corresponding  projective cubic curve on $\proj2$ has self-intersection $C^2 = 9$, while $\tilde{C}^2 = 8$. We can extend $\F$ to a projective foliation that has a single singularity at $(1:0:0)$. Since the cubic does not pass through the singular point of the foliation, then $CS(\F, C) = 0$. For $\tilde{C}$, we get $CS(\tilde{\F}, \tilde{C}) = 2$.  Both computations agree with Theorem~\ref{thm:CSformula}.  

\terminou
\end{ex}

\subsection{Riccati foliations on Hirzebruch surfaces} 

In this section, we  characterize regular Riccati foliations on Hirzebruch surfaces 
\[
\mathbb F_e=\mathbb P(\mathcal O_{\mathbb P^1_{\field}}\oplus \mathcal O_{\mathbb P^1_{\field}}(e)), \quad e\ge 0. 
\]
We say that $\F$ is a Riccati foliation with respect to a $\mathbb P^1_{\field}$-fibration $\pi: X\to C$ over a smooth curve $C$, if the generic fiber of $\pi$ is transverse to $\F$. This means that $tang (\F, F) = 0$, where $F$ represents the generic fiber.  Using Proposition~\ref{prop:tangformula}, we see that $\F$ is a Riccati foliation if and only if $K_{\F}\cdot F = 0$.  

In characteristic zero, there exists only one regular Riccati foliation $\F$ with respect to the projection $\mathbb F_e\to \mathbb P^1_{\field}$, which occurs when $e=0$.  In this case, $\mathbb F_0$ is the product $\mathbb P^1_{\field}\times\mathbb P^1_{\field}$,  and $\F$ corresponds to the second $\mathbb P^1_{\field}$-fibration. See, for instance, \cite[Theorem 3.7]{MR3328860}. We will see below that, in positive characteristic,   new examples arise.  

\begin{prop}\label{prop:regularHirz}
Let $\mathcal{F}$ be a regular Riccati foliation on a  Hirzebruch surface $\mathbb F_e$, with respect to the $\mathbb P^1_{\field}$-fibration $\pi:\mathbb F_e\to \mathbb P^1_{\field}$. Then:
\begin{enumerate}
\item $e=0$, $\mathrm{char} (\field)\ge 0$, and $\mathcal{F}$ is  the second $\mathbb P^1_{\field}$-fibration over $\mathbb P^1_{\field}$; or
\item $e>0$, $\mathrm{char} (\field) =p > 0$, $p$ divides $e$ and $\F$ is a Riccati foliation given by
\[
\omega  = dz + \gamma
\]
where $z$ intends as coordinate of the fiber and $\gamma\in \h^0(\mathbb P^1_{\field}, \Omega^1_{\mathbb P^1_{\field}}(e))$.
\end{enumerate}
\end{prop}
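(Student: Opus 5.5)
Our plan is to work in the standard affine charts of $\mathbb F_e$ and use the regularity hypothesis together with the intersection formulas of Propositions~\ref{prop:GM} and \ref{prop:tangformula}. Cover $\mathbb F_e$ by four charts: over $x\in\mathbb A^1$ with fiber coordinate $z$, over $x\in \mathbb A^1$ with $w=1/z$, and over $u=1/x$ with coordinates $(u,z')$ and $(u,w')$, where $z'=u^{e}z$ (up to sign convention), $w'=1/z'$. Since $\F$ is Riccati, the generic fiber $F$ is transverse, so in the chart $(x,z)$ the foliation is given by $\omega=dz-(a(x)z^2+b(x)z+c(x))dx$ with $a,b,c$ polynomials after clearing a common factor. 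The Riccati form (degree $\le 2$ in $z$) follows from the fact that transversality to the generic fibre must persist in the chart $w=1/z$ without creating poles along the fibres. Invariant fibres appear exactly where $dx$ divides the form, i.e. at zeros of a common denominator. Since $\F$ is regular, Proposition~\ref{prop:tangformula} with $K_\F\cdot F=0$ forces $tang(\F,F)=0$ for every fibre or else $F$ invariant. An invariant fibre $F$ would give, by Proposition~\ref{prop:GM}, $K_\F\cdot F=-2+Z(\F,F)=0$ with $Z\ge0$. With $\F$ regular, $h=1$ (Proposition~\ref{decomposition}), so $Z(\F,F)=0$, a contradiction. Hence no fibre is invariant and the form has polynomial coefficients $a,b,c$ on every chart.

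Next we analyse the sections. The negative section $C_0$ (given by $w=0$, with $C_0^2=-e$) and the behaviour at $z=\infty$ are controlled by $a$. In the $w$-chart, $\omega$ becomes $-dw-(a+bw+cw^2)dx$. Regularity along $w=0$ says nothing directly, but we compare charts at $u=0$. Rewriting in $(u,z')$ and requiring that the result be again a Riccati form with polynomial coefficients yields degree bounds: $\deg a\le -e-2$-type constraints forcing $a=0$ when $e>0$, while $b$ must satisfy $b\,dx = e\,du/u+\text{regular}$. Concretely, the term $z\,dz'$ transformation produces $e\,\frac{du}{u}$, so the pole at $u=0$ cancels only if $b$ contributes $-e\,du/u$. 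A polynomial $b(x)dx$ has no residue-type pole at $u=0$ (its pole there has order $\ge 2$ or is zero), so one needs $e=0$ in $\field$, i.e. $p\mid e$, unless $e=0$. When $e>0$ and $p\mid e$ we further show, via regularity, that $b=0$: a nonzero $b$ would produce a zero of $\omega$ on some section. This leaves $\omega=dz+\gamma$ with $\gamma=c(x)dx$, and the gluing condition identifies $\gamma$ with a section of $\Omega^1_{\mathbb P^1}(e)$, i.e. $\deg c\le e-2$.

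Alternatively, and more cleanly, for the necessity of $p\mid e$ we would apply Camacho--Sad (Theorem~\ref{thm:CSformula}). The section $C_0$ with $C_0^2=-e$ is invariant: indeed $a=0$ makes $w=0$ invariant. Since $\F$ is regular, all CS indices vanish, so $-e=0$ in $\field$. This gives $e=0$ if $p=0$, and $p\mid e$ if $p>0$. The case $e=0$ then reduces to showing $\omega$ is the second ruling: the Riccati form has polynomial $a,b,c$ of degree $\le -2$, hence $\omega=dz$ up to the $\mathrm{PGL}_2$ change in $z$. This is consistent with the degree count $K_\F\cdot$(section) from Proposition~\ref{prop:GM}.

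We expect the main obstacle to be the chart bookkeeping at $u=0$, which must establish simultaneously that $a=0$ (so that $C_0$ is invariant) and that $b=0$ once $p\mid e$. We would first try to obtain $a=0$ by the degree count $N_\F\cdot C_0$ versus $tang(\F,C_0)\ge0$ in Proposition~\ref{prop:tangformula}, assuming $C_0$ is not invariant. We would then finish with a direct computation of the singular points of $dz-(bz+c)dx$ in the $(u,z')$ chart.
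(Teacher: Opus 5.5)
Your proposal is correct and is essentially the paper's argument written in explicit charts. Your $a\,dx$, $b\,dx$, $c\,dx$ are, up to sign, the paper's $\beta$, $\theta$, $\gamma$, and the uncancellable $e\,du/u$ term in the $(u,z')$ chart is exactly the $dg_{ij}/g_{ij}$ obstruction to $\theta$ being a regular connection on $\mathcal O_{\mathbb P^1_{\field}}(e)$. Your Camacho--Sad argument on the invariant section $C_0$ is a valid alternative for $p\mid e$, and you justify the absence of invariant fibres, which the paper uses without comment. Note also that the pole-order count at $u=0$ by itself already forces $a=0$ for every $e\ge 0$, $b=0$, $e=0$ in $\field$, and $\deg c\le e-2$, so the separate regularity argument for $b=0$ and the tangency count for $a=0$ in your last paragraph are unnecessary.
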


\proof 
Let $\mathcal{F}$ be a regular Riccati foliation on a  Hirzebruch surface $\mathbb F_e = \mathbb PE$, where $E = \mathcal O_{\mathbb P^1_{\field}}\oplus \mathcal O_{\mathbb P^1_{\field}}(e)$.   Let $\{U_i\}$ be a covering of $E$ where $E|_{U_i}$ is trivial with  transition functions 
\begin{eqnarray*}
G_{ij}=\left(
\begin{array}{ccc} 
1 & 0  \\
0 & g_{ij}  \\
\end{array}
\right)
\end{eqnarray*}
where $\{g_{ij}\}$ represents the line bundle $\mathcal O_{\mathbb P^1_{\field}}(e)$. Let us denote by 
\[
\omega_i = dz_i - \beta_iz_i^2+ \theta_i z_i+\gamma_i
\]
the family of $1$-forms defining $\F$ on each local trivialization, where $\beta_i,\theta_i, \gamma_i\in \Omega^1_{U_i}$ and $z_i$ denotes a local coordinate for the fiber over $U_i$.   We can write  $z_i = g_{ij}z_j$, then 
\[
dz_i = g_{ij}'z_j + g_{ij}dz_j
\] 
and using that the kernel of $\omega_i$ and $\omega_j$  coincide, we obtain
$\omega_i = g_{ij}\omega_j$.
From this, we conclude that the following conditions hold on $U_i\cap U_j$: 
\[
\beta_i=g_{ij}^{-1}\beta_j, \,\,\gamma_i=g_{ij}\gamma_j\quad \text{and} \quad \theta_j=\theta_i+dg_{ij}g_{ij}^{-1}. 
\]
Hence, we have
\begin{eqnarray*}
\left\{
\begin{array}{l}
\theta=\{\theta_i\} \quad \text{is a regular connection on} \quad \mathcal O_{\mathbb P^1_{\field}}(e)\\
\beta = \{\beta_i\} \quad \text{defines a global section of} \quad \Omega^1_{\mathbb P^1_{\field}}(-e)\\
\gamma=\{\gamma_i\}\quad \text{defines a global section of} \quad \Omega^1_{\mathbb P^1_{\field}}(e).
\end{array}
\right.
\end{eqnarray*}
 If $e=0$, then $\beta = \gamma = \theta = 0$ and $\F$ is  the second $\mathbb P^1_{\field}$-fibration of $\mathbb P^1_{\field}\times\mathbb P^1_{\field}$.  If $e>0$, then $\beta=0$, and  $p$ divides $e$,  otherwise $\mathcal O_{\mathbb P^1_{\field}}(e)$ has no regular connection. Since  $g_{ij}$ is a $p$-power, we conclude that $\theta$ is a global section of $\Omega^1_{\mathbb P^1_{\field}}$, so $\theta=0$. This gives the desired $1$-form $\omega = dz+\gamma$ defining $\F$  and concludes the proof of the proposition.  
\endproof

\subsection{Regular foliations on  minimal rational surfaces}
In this section, we study regular foliations on projective  minimal rational surfaces, over an algebraically closed field. Before this, we need the following result, whose proof is directly adapted from \cite[Chapter 2]{MR3328860}

\begin{prop} \label{rrat} Let $\mathcal{F}$ be a foliation on a  smooth projective  surface $X$. If $\mathcal{F}$ is regular, then 

\begin{eqnarray}\label{eq:relc_2withnandK}
N_{\mathcal{F}}\cdot K_{\mathcal{F}} = -c_{2}(T_{X})
\end{eqnarray}
\end{prop}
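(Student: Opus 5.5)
For a regular foliation of rank one on a surface, the tangent sheaf $T_{\mathcal F}$ and the normal sheaf $N_{\mathcal F}$ are line bundles. They fit into an exact sequence of locally free sheaves
\[
0\to T_{\mathcal F}\to T_X\to N_{\mathcal F}\to 0 .
\]
The plan is to compute $c_2(T_X)$ from this sequence by the Whitney formula. This is purely algebraic and holds in the Chow ring over any algebraically closed field, so the characteristic plays no role. Concretely, $c(T_X)=c(T_{\mathcal F})\,c(N_{\mathcal F})=(1+c_1(T_{\mathcal F}))(1+c_1(N_{\mathcal F}))$, hence
\[
c_2(T_X)=c_1(T_{\mathcal F})\cdot c_1(N_{\mathcal F}) = T_{\mathcal F}\cdot N_{\mathcal F}.
\]

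The next step is to rewrite this in terms of $K_{\mathcal F}$. By definition $K_{\mathcal F}$ is the class of $\Omega^1_{\mathcal F}=T_{\mathcal F}^*$, so $K_{\mathcal F}=-c_1(T_{\mathcal F})$. Substituting gives $N_{\mathcal F}\cdot K_{\mathcal F}=-T_{\mathcal F}\cdot N_{\mathcal F}=-c_2(T_X)$, which is \eqref{eq:relc_2withnandK}.

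The only point needing care is that, in the singular setting, $N_{\mathcal F}$ is only defined as a double dual and the sequence fails to be exact on the right along $\sing(\mathcal F)$. In that setting one would pick up a correction term: the length of the zero scheme of a local vector field, which is how Brunella's general formula $c_2(T_X)=K_{\mathcal F}\cdot(-N_{\mathcal F})+\sum$(Milnor numbers) arises. Here regularity means $T_X/T_{\mathcal F}$ is locally free, hence equal to $N_{\mathcal F}$. So the sequence is an exact sequence of vector bundles, and no correction appears.

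If one prefers a more geometric route paralleling \cite[Chapter 2]{MR3328860}, an alternative is the following. Take a local generator $v$ of $T_{\mathcal F}$, viewed as a global section of $T_X\otimes K_{\mathcal F}$. Since $\mathcal F$ is regular, this section has no zeros, so $c_2(T_X\otimes K_{\mathcal F})=0$. Expanding via the splitting-principle formula $c_2(T_X\otimes L)=c_2(T_X)+c_1(T_X)\cdot L+L^2$ with $L=K_{\mathcal F}$ and $c_1(T_X)=-K_X=-(K_{\mathcal F}-N_{\mathcal F})$ yields the same identity. I expect no real obstacle: the Whitney formula argument is essentially immediate.
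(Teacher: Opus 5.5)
Your proof is correct, but your main route differs slightly from the paper's. The paper's proof is exactly your ``alternative'' argument. It views the foliation as a nowhere-vanishing section of $T_X\otimes K_{\mathcal F}$ and deduces $c_2(T_X\otimes K_{\mathcal F})=0$. It then expands with $c_2(E\otimes L)=c_2(E)+c_1(E)\cdot L+L^2$ and $c_1(T_X)=N_{\mathcal F}-K_{\mathcal F}$.

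Your main argument instead applies the Whitney formula directly to $0\to T_{\mathcal F}\to T_X\to N_{\mathcal F}\to 0$. The two are the same computation up to a twist by $K_{\mathcal F}$. The nowhere-vanishing section is the inclusion $\mathcal O_X\to T_X\otimes K_{\mathcal F}$, whose quotient is the line bundle $N_{\mathcal F}\otimes K_{\mathcal F}$, so the vanishing of $c_2(T_X\otimes K_{\mathcal F})$ is just Whitney for this twisted sequence.

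Your version is a bit more economical, since no twisting formula for $c_2$ is needed. It also isolates the only real input, namely local freeness of $T_X/T_{\mathcal F}$. The paper's formulation via zeros of a section carries over more directly to singular foliations, as in the paper's subsequent remark. There $c_2(T_X\otimes K_{\mathcal F})$ becomes the length of the zero scheme of the defining vector field. Your route would instead require Chern classes of the non-locally-free cokernel $I_Z\otimes N_{\mathcal F}$, as you yourself indicate.
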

\begin{proof} The regular foliation $\F$ is given by a nonzero 
section $ v \in \h^0(X,T_{X}\otimes K_{\mathcal{F}})$, without zeros.   In particular,

	$$c_2(T_{X}\otimes K_{\mathcal{F}}) = 0.$$

On the other hand, we have
$$
    c_2(T_{X}\otimes K_{\mathcal{F}}) = c_2(T_{X})+c_1(T_{X})K_{\mathcal{F}} + K_{\mathcal{F}}^2 
$$
and using the formula $c_1(T_X) = N_{\mathcal{F}} - K_{\F}$,  we get the desired result
$$
      c_2(T_{X})+ N_{\mathcal{F}} \cdot K_{\mathcal{F}}= 0.
$$
This finishes the proof of the Proposition.
\end{proof}

The following observation has already been made in \cite[p.575]{MR1474805}.
\begin{cor}\label{cor:C2even}

Let $X$ be a smooth projective surface over an algebraically closed field of arbitrary characteristic. Suppose that $X$ admits a regular foliation $\mathcal{F}$. Then $c_2(T_X)$ is even.
\end{cor}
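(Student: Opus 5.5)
The plan is to use Proposition~\ref{rrat}, which gives $c_2(T_X) = -N_{\F}\cdot K_{\F}$, and then show that the integer $N_{\F}\cdot K_{\F}$ is even. We express both classes through $K_X$ and a single divisor class. Since $\F$ is regular, it has rank one with quotient line bundle, so $c_1(T_X) = c_1(T_{\F}) + c_1(N_{\F})$. Writing $T_{\F} = K_{\F}^{-1}$, this gives $-K_X = N_{\F} - K_{\F}$, the relation already used in the proof of Proposition~\ref{rrat}. Setting $N := N_{\F}$, we get $K_{\F} = K_X + N$.

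Substituting, we obtain
\[
c_2(T_X) = -N\cdot(K_X+N) = -(N^2 + N\cdot K_X).
\]
The key input is that $D^2 + D\cdot K_X$ is even for every divisor $D$ on a smooth projective surface. This follows from Riemann--Roch, $\chi(\mathcal O_X(D)) = \chi(\mathcal O_X) + \tfrac12 D\cdot(D-K_X)$, which holds over any algebraically closed field. Integrality of $\chi$ forces $D\cdot(D-K_X)$ to be even, and $D^2 + D\cdot K_X$ differs from this by $2D\cdot K_X$. Equivalently, one can apply adjunction in the form $D\cdot(D+K_X) = 2p_a(D)-2$, which is likewise characteristic-free. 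Taking $D = N$ shows that $c_2(T_X)$ is even.

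There is no real obstacle here. The only point to check is that the identity $c_1(T_X)=N_{\F}-K_{\F}$ and Proposition~\ref{rrat} are valid in arbitrary characteristic. The first holds because the exact sequence $0\to T_{\F}\to T_X\to N_{\F}\to 0$ consists of vector bundles when $\F$ is regular. The second was proved above without any restriction on the characteristic.
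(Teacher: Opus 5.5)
Your proposal is correct and is essentially the paper's argument: combine $N_{\F}\cdot K_{\F} = -c_2(T_X)$ from Proposition~\ref{rrat} with the parity given by Riemann--Roch. The only difference is cosmetic. The paper applies Riemann--Roch directly to $L = N_{\F}^*$, for which $L\cdot(L-K_X) = N_{\F}\cdot K_{\F}$, whereas you apply it to $N_{\F}$ and then correct by the even term $2N_{\F}\cdot K_X$.
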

\proof 
 For any line bundle $L$ on $X$, the Riemann--Roch theorem gives
$$
\chi(L) = \chi(\mathcal{O}_X) + \tfrac{1}{2} L \cdot (L - K_X),
$$
from which the result follows by considering $L=N_{\F}^*$, $N_\F^* - K_X=-K_\F$ and \eqref{eq:relc_2withnandK}.
\endproof

\begin{remark}\rm\label{R:smoothnessns}
Using the same argument of Proposition~\ref{rrat}, a general formula including singular foliations is in fact established. See also \cite[Chapter 2]{MR3328860}. It takes the form
$$N_{\mathcal{F}}\cdot K_{\mathcal{F}} + c_{2}(T_{X})  = m(\mathcal{F}) $$
 where the right-hand side is nonzero (and then $>0$) if and only if the singular locus of the foliation is nonempty. It follows that the equality \eqref{eq:relc_2withnandK} provides a necessary and sufficient criterion for the regularity of $\F$.
 \terminou
\end{remark}

We now obtain a characterization of regular foliations on Hirzebruch surfaces.

\begin{lemma}\label{lemma:reghirz}
Let $\mathcal{F}$ be a regular foliation on  a Hirzebruch surface $\mathbb F_e$, $e\ge 0$, then either
\begin{enumerate}
\item $\mathrm{char} (\field)\ge 0$, and $\mathcal{F}$ is the $\mathbb P^1_{\field}$-fibration of $\mathbb F_e$ over $\mathbb P^1_{\field}$; or
\item $\mathrm{char} (\field) =p > 0$, $p$ divides $e$ and $\F$ is a Riccati foliation on $\mathbb F_e$ given by
\[
\omega  = dz + \gamma
\]
where $z$ intends as coordinate of the fiber and $\gamma\in \h^0(\mathbb P^1_{\field}, \Omega^1_{\mathbb P^1_{\field}}(e))$.
\end{enumerate} 
\end{lemma}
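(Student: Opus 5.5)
The plan is to reduce to two cases according to how $\F$ meets the fibres of $\pi:\mathbb F_e\to\mathbb P^1_{\field}$, using Propositions~\ref{prop:GM}, \ref{prop:tangformula} and \ref{rrat}, and then to quote Proposition~\ref{prop:regularHirz} in the transverse case. Let $F$ be the class of a fibre and $\Sigma$ the negative section, so $\Sigma^2=-e$, $F^2=0$, $\Sigma\cdot F=1$, $K_X=-2\Sigma-(e+2)F$ and $c_2(T_X)=4$. Write $K_\F=a\Sigma+bF$. Then $N_\F=K_\F-K_X=(a+2)\Sigma+(b+e+2)F$.

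The first step treats the case where the general fibre is invariant. Then every fibre is invariant, since invariance is a closed condition and the invariant fibres are Zariski dense. Hence $\F$ coincides with the fibration, because its tangent line agrees with $T_{X/\mathbb P^1}$ on a dense open set and both sheaves are saturated. This gives alternative (1).

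Otherwise the general fibre $F$ is not invariant. Proposition~\ref{prop:tangformula} gives
\[
K_\F\cdot F=-F^2+tang(\F,F)=tang(\F,F)\ge 0,
\]
so $a\ge 0$. The goal is $a=0$, which means $tang(\F,F)=0$, i.e.\ $\F$ is Riccati. Proposition~\ref{prop:regularHirz} then yields either $e=0$ with $\F$ the second ruling, or $p\mid e$ with $\omega=dz+\gamma$. The case $e=0$ with the second ruling is also covered by (1) after swapping the factors, or is simply listed. To show $a=0$, I will use Proposition~\ref{rrat}, namely $N_\F\cdot K_\F=-4$:
\[
(a\Sigma+bF)\cdot((a+2)\Sigma+(b+e+2)F)=-ea(a+2)+a(b+e+2)+b(a+2)=-4 .
\]
This single equation does not determine $a$. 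I therefore also need information from the section $\Sigma$ (or, when $e=0$, a section of the other ruling).

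If $\Sigma$ is invariant, Proposition~\ref{prop:GM} with $p_a=0$ gives $K_\F\cdot\Sigma=-2+Z(\F,\Sigma)$. Regularity makes $Z\ge 0$ and, since $\Sigma$ is smooth, $Z=0$ at regular points, so $K_\F\cdot\Sigma=-2$, i.e.\ $b-ea=-2$. If $\Sigma$ is not invariant, Proposition~\ref{prop:tangformula} gives $K_\F\cdot\Sigma=e+tang\ge e$, i.e.\ $b-ea\ge e$. In each case I substitute into the Chern number relation and solve for $a\ge0$.

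In the invariant case, $b=ea-2$, and the relation becomes
\[
-ea^2-2ea+a(ea+e)+(ea-2)(a+2)=-4,
\]
which simplifies to $ea^2+e a-ea-\dots$; I expect it to collapse to a quadratic in $a$ whose only nonnegative solution is $a=0$ (e.g.\ $a(ea+e-\ldots)=0$). In the non-invariant case, I use the inequality together with $a\ge0$ to show that the left-hand side is at least a quantity exceeding $-4$ unless $a=0$. The main obstacle is this arithmetic. If it fails to pin down $a$, I would add a positivity argument: $K_\F$ restricted to a general non-invariant fibre has degree $tang$, and one can sum the tangencies over a pencil. Alternatively I would use the Baum–Bott-type count in Remark~\ref{R:smoothnessns} with $m(\F)=0$ for the induced Riccati structure.
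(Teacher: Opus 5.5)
Your strategy follows the paper's proof: rule out the fibration, get $a=tang(\F,F)\ge 0$, split according to whether $\Sigma$ is invariant, and combine $K_\F\cdot\Sigma$ with $N_\F\cdot K_\F=-4$. The non-invariant case closes. Put $b=ea+e+\tau$ with $\tau=tang(\F,\Sigma)\ge 0$; the left-hand side then equals $ea^2+3ea+2a\tau+2a+2e+2\tau\ge 0$, so this case is impossible for every $a$.

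\textbf{The gap is in the invariant case, where your expectation is false.} Substituting $b=ea-2$ gives exactly
\[
a\,(ea+e-2)=0,
\]
and for $e=1$ this has the root $a=1$, i.e.\ $K_\F\equiv \Sigma-F$ and $N_\F\equiv 3\Sigma+2F$. This class satisfies every numerical constraint in your argument:
\[
N_\F\cdot K_\F=-4,\quad K_\F\cdot\Sigma=-2,\quad N_\F\cdot\Sigma=\Sigma^2,\quad K_\F\cdot F=1,\quad N_\F\cdot F=2+tang(\F,F).
\]
So no further arithmetic will exclude it. Your fallbacks do not help either. Summing tangencies over a pencil only recomputes $K_\F\cdot F=a$. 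The count $N_\F\cdot K_\F+c_2(T_X)=m(\F)$ with $m(\F)=0$ is the relation you already used.

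The missing ingredient is non-numerical: the Camacho--Sad formula for a smooth invariant curve. Since $\F$ is regular along $\Sigma$, you can take $h=1$ in the local decompositions. Then every index $CS(\F,\Sigma,x)$ vanishes, and $-e=\Sigma^2=CS(\F,\Sigma)=0$ in $\field$. Thus $\Sigma$ can be invariant only when $-e=0$ in $\field$. For $e=1$ it is never invariant, and the spurious solution disappears.

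An equivalent argument: for $e=1$ the curve $\Sigma$ is the exceptional curve of a blow-up of $\mathbb P^2_{\field}$. An invariant exceptional curve satisfies $Z(\F,\Sigma)=1+a(x)\ge 1$, which contradicts regularity.

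For $e\neq 1$ your factorization does force $a=0$, and the rest of your argument (reduction to the Riccati case and the classification of regular Riccati foliations on $\mathbb F_e$) goes through. Be aware that the paper's written proof slips at the same point. From $d_2(e+d_1)=0$ it concludes $d_2=0$ without treating $d_1=-e$, which is exactly the case $(e,d_2,d_1)=(1,1,-1)$. So the Camacho--Sad step is needed there too.
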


\proof
Let $X=\mathbb F_e$. We let $F$ denote a fiber of the projection $\pi\colon X \longrightarrow \mathbb{P}_{\field}^{1}$ and let $M$ be a section such that $M^2 = -e$, $M\cdot F = 1$. Then, any line bundle is numerically equivalent to $aF+bM$ for some $a, b \in \mathbb{Z}$. The canonical bundle of $X$   can be written as 
\[
K_{X} \equiv (-e-2)F-2M.
\] 
Let us write the canonical bundle of $\F$ as 
\[
K_{\F} \equiv d_1 F + d_2 M.
\]

We assume that $\F$ is not the $\mathbb P^1_{\field}$-fibration over $\mathbb P^1_{\field}$. Since $F^2=0$, it follows from Proposition~\ref{prop:tangformula} that
\[
d_2  = tang (\F, F) \ge 0.
\]
We claim that $M$ is invariant by $\F$. Indeed, if it is not invariant, then 
\[
d_1 - ed_2 = K_{\F}\cdot M = tang(\F, M) +e.
\] 
Hence, we get
\begin{eqnarray}\label{eq:d1tang}
  d_1 = \tau_M + e(1+d_2)\ge 0
\end{eqnarray}
where $\tau_M:=tang(\F, M)$.   By adjunction formula $N_{\F} = K_{\F}-K_X$, we have
\[
N_{\mathcal{F}} \equiv (d_1+e+2)F+(d_2+2)M
\] 
and since $c_{2}(T_{\mathbb F_e}) = 4$ we get
\begin{eqnarray}\label{eq:KNd1d2}
-4 = K_{\mathcal{F}}\cdot N_{\mathcal{F}} = (d_2+2)(d_1-ed_2)+d_1d_2+d_2(e+2).
\end{eqnarray}
Now, using \eqref{eq:d1tang} we obtain 
\[
-4 = (d_2+2)(\tau_M+e)+d_2(d_1+e+2)
\]
which gives a contradiction, because the right hand side is non-negative. This shows that $M$ is invariant by $\F$.

When $M$ is invariant, from Proposition~\ref{prop:GM}, one obtains 
\[
d_1-ed_2 = K_{\F}\cdot M = -2.
\]
and from \eqref{eq:KNd1d2} we get 
\[
-4 = (d_2+2)(-2)+d_2(d_1+e+2). 
\]
Consequently, $d_2(e+d_1) = 0$, and this gives $d_2=0$ and $d_1=-2$. Therefore, $\F$ is a regular Riccati foliation on $\mathbb F_e$. The conclusion of the lemma follows from Proposition~\ref{prop:regularHirz}.  
\endproof

The next result provides a characterization of regular foliations on minimal rational surfaces, namely  the projective plane $\mathbb P^2_{\field}$ or a Hirzebruch surface $\mathbb F_e$, $e\ge 0$, $e\neq 1$. See \cite{Sha65}. In particular, we will see that $\mathbb P^2_{\field}$ does not admit any regular foliation. Regular foliations on arbitrary projective spaces will be addressed in  Section~\ref{section:regprojspaces}. 

\begin{thm}\label{thm:regularminimal} 
Let $\mathcal{F}$ be a regular foliation on a minimal rational smooth surface $X$.  Then $X$ is a Hirzebruch surface $\mathbb F_e$, $e\neq 1$,   and either
\begin{enumerate}
\item \label{I:fibrationcase}$\mathrm{char} (\field)\ge 0$, and $\mathcal{F}$ is the $\mathbb P^1_{\field}$-fibration of $\mathbb F_e$ over $\mathbb P^1_{\field}$; or
\item \label{I:exoticcase} $\mathrm{char} (\field) =p > 0$, $p$ divides $e$ and $\F$ is a Riccati foliation on $\mathbb F_e$ given by
\[
\omega  = dz + \gamma
\]
where $z$ intends as coordinate of the fiber and $\gamma\in \h^0(\mathbb P^1_{\field}, \Omega^1_{\mathbb P^1_{\field}}(e))$.
\end{enumerate} 
\end{thm}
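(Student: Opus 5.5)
The proof reduces to two cases, using the classification of minimal rational surfaces recalled just before the statement. Every minimal rational smooth projective surface is either $\mathbb P^2_{\field}$ or a Hirzebruch surface $\mathbb F_e$ with $e\ge 0$, $e\neq 1$ (see \cite{Sha65}). The Hirzebruch case is exactly Lemma~\ref{lemma:reghirz}, which already gives the dichotomy between items \eqref{I:fibrationcase} and \eqref{I:exoticcase}. So the only new point is that $\mathbb P^2_{\field}$ carries no regular foliation.

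For $\mathbb P^2_{\field}$ I would argue numerically, as in Lemma~\ref{lemma:reghirz}. Suppose $\F$ is a regular foliation on $\mathbb P^2_{\field}$. Write $K_\F \equiv aH$ with $H$ a line, and use $K_{\mathbb P^2_{\field}}\equiv -3H$. From $N_\F = K_\F - K_X$ we get $N_\F\equiv (a+3)H$. Since $c_2(T_{\mathbb P^2_{\field}})=3$, Proposition~\ref{rrat} gives
\[
a(a+3) = N_\F\cdot K_\F = -3 .
\]
This equation $a^2+3a+3=0$ has no integer solution, since its discriminant is $9-12<0$. Alternatively, Corollary~\ref{cor:C2even} applies directly: $c_2(T_{\mathbb P^2_{\field}})=3$ is odd, so no regular foliation exists. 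Either argument is valid in arbitrary characteristic, because it uses only intersection numbers and Chern classes.

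With $\mathbb P^2_{\field}$ excluded, $X=\mathbb F_e$ with $e\neq 1$. Lemma~\ref{lemma:reghirz} then says that $\F$ is either the ruling $\mathbb F_e\to\mathbb P^1_{\field}$ or, when $p>0$ and $p\mid e$, a Riccati foliation given by $\omega=dz+\gamma$. This is the statement of the theorem. There is no real obstacle here; the only external input is the classification of minimal rational surfaces, which is standard over any algebraically closed field.
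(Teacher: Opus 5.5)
Your proof is correct and follows the paper's argument: the classification of minimal rational surfaces \cite{Sha65}, the exclusion of $\mathbb P^2_{\field}$ because $c_2(T_{\mathbb P^2_{\field}})=3$ is odd (the parity corollary, obtained from $N_\F\cdot K_\F=-c_2(T_X)$ and Riemann--Roch), and the lemma on regular foliations on Hirzebruch surfaces for $\mathbb F_e$. Your alternative exclusion of $\mathbb P^2_{\field}$, via the equation $a(a+3)=-3$ having no integer solution, is also valid and likewise independent of the characteristic.
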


\begin{proof} 

We know that $X$ is the projective plane or a Hirzebruch $\mathbb F_{e}$ for some $e\in \mathbb{N}$, $e\neq 1$, see \cite{Sha65}. It follows from Corollary~\ref{cor:C2even} that $\mathbb{P}_{\field}^{2}$ does not admit regular foliation.


Now, if $X$ is a Hirzebruch surface $X = \mathbb F_{e}$ for some $e\ge 0$, $e\neq 1$, then the result follows from Lemma~\ref{lemma:reghirz}. 
\end{proof}

\begin{cor}\label{cor:minimalpclosed}
Any  regular foliation on a minimal rational smooth surface over an algebraically closed field of positive characteristic $p$ is $p$-closed.   
\end{cor}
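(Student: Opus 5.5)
By Theorem~\ref{thm:regularminimal}, $X=\mathbb F_e$ with $e\neq 1$, and $\F$ is of one of the two types listed there. I will check $p$-closedness separately in each case. In both cases $T_\F$ is a line bundle, so it suffices to work locally. On an open set where $T_\F$ is generated by a single derivation $v$, it is enough to show that $v^p$ is a multiple of $v$. Indeed, for $w=av$ one has Hochschild's formula $(av)^p=a^pv^p+(av)^{p-1}(a)\,v$. Hence $v^p\in T_\F$ implies $w^p\in T_\F$ for every local section $w$.

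\emph{Fibration case.} Here $\F$ is the $\mathbb P^1_{\field}$-fibration. Take local coordinates $(x,z)$ in which the fibration is $(x,z)\mapsto x$. Then $T_\F$ is generated by $\partial_z$, and $\partial_z^p=0\in T_\F$. More intrinsically, $\F$ is the relative tangent sheaf $T_{X/\mathbb P^1_{\field}}$. This is closed under $p$-th powers, because $v^p$ of a vertical derivation still kills $\pi^{-1}\mathcal O_{\mathbb P^1_{\field}}$; it therefore lies in $T_X$ and kills base functions, so it is vertical.

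\emph{Riccati case.} Here $p\mid e$ and locally $\omega=dz+\gamma$. Take a trivializing open set $U\subset\mathbb P^1_{\field}$ with coordinate $x$, and write $\gamma=c(x)\,dx$. Then $T_\F$ is generated by
\[
v=\partial_x-c(x)\,\partial_z .
\]
The coefficients of $v$ do not depend on $z$. So $v$ acts on $\field[x][z]$ as the derivation $\partial_x$ extended by $v(z)=-c(x)$. Compute $v^p$: its $\partial_x$-coefficient is $\partial_x^p(x)=0$, and its $\partial_z$-coefficient is $v^{p-1}(-c)=-\partial_x^{p-1}(c)$. Thus
\[
v^p=-\partial_x^{p-1}(c)\,\partial_z .
\]
This is a multiple of $v$ only if it vanishes, since $\partial_z$ is not proportional to $v$. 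The statement therefore reduces to proving $\partial_x^{p-1}(c)=0$ on every chart. Equivalently, in Cartier-operator terms, the local forms $\gamma$ must satisfy $C(\gamma)=0$.

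I expect this global constraint to be the main obstacle, and I would attack it as follows. The obstruction $\{v^p\wedge \omega\}$ is a global section of $F^*T_\F^{\otimes p}\otimes N_\F$, as in the usual $p$-curvature map $F^*T_\F\to N_\F$ (a map from $F^*T_\F$ to $T_X/T_\F$). From the proof of Lemma~\ref{lemma:reghirz} we have $K_\F\equiv -2F$, i.e.\ $T_\F\equiv 2F$, and $N_\F\equiv (e+2)F+2M$. So the obstruction is a section of a line bundle numerically equal to
\[
-2pF+(e+2)F+2M .
\]
I would then show this bundle has no nonzero sections. On the invariant section $M$ it has degree $-2p+e+2-2e=-2p+2-e<0$, while $M$ and $F$ sweep out $X$; the goal is to conclude the section vanishes identically. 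If this numerical argument proves insufficient, I would fall back on the explicit description. Since $p\mid e$, one has $\gamma\in \h^0(\mathbb P^1_{\field},\Omega^1_{\mathbb P^1_{\field}}(e))$ with $\mathcal O(e)=\mathcal O(e/p)^{\otimes p}$ having $p$-th power transition functions. Then $C(\gamma)$ is a well-defined global section of $\Omega^1_{\mathbb P^1_{\field}}(e/p)$, and $v^p$ is $-C(\gamma)$ paired against $\partial_z$. I would use this to check directly, chart by chart, that $C(\gamma)=0$, or else that $v^p$ vanishes by the degree argument above.
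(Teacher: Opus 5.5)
Your fibration case and your reduction to a single local generator via Hochschild's formula are fine. Your computation $v^p=-\partial_x^{p-1}(c)\,\partial_z$ is also correct, and at this step it is more accurate than the paper's own proof. The paper writes $v^p=-\gamma^{(p)}(x)\partial_z$, with a $p$-th derivative, and concludes from the trivial vanishing of $\partial_x^p$ on $\field[x]$; the correct coefficient is the $(p-1)$-st derivative, as you found.

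The genuine gap is your final step, and it cannot be filled: the statement is false whenever $p\mid e$ and $e\ge 2p$. Since $\partial_x^{p-1}(x^j)=j(j-1)\cdots(j-p+2)\,x^{j-p+1}$ is nonzero exactly when $j\equiv -1\pmod p$, the foliation $dz+\sum_{j\le e-2}a_jx^j\,dx$ is $p$-closed if and only if $a_j=0$ for all $j\equiv -1\pmod p$. Equivalently, $C(\gamma)=0$ for the Cartier operator $C$. For $e=2p$ take $\gamma=x^{p-1}dx$, which lies in $\h^0(\mathbb P^1_{\field},\Omega^1_{\mathbb P^1_{\field}}(2p))$ because $p-1\le 2p-2$. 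The form $dz+x^{p-1}dx$ glues, since the fibre transition functions $x^{\pm 2p}$ are $p$-th powers; in the chart over $x=\infty$, with $z'=x'^{2p}z$, it becomes $dz'-x'^{p-1}dx'$. It is nowhere vanishing in all four standard charts. Yet $v^p=-(p-1)!\,\partial_z=\partial_z\notin T_\F$, so this regular foliation on $\mathbb F_{2p}$ is not $p$-closed.

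Your numerical fallback also fails on its own terms. Setting $d_1=-2$, $d_2=0$ in the proof of the lemma on $\mathbb F_e$ gives $N_\F\equiv eF+2M$, not $(e+2)F+2M$. The $p$-curvature $v\mapsto v^p \bmod T_\F$ is a section of $\mathcal{H}om(T_\F^{\otimes p},N_\F)\cong K_\F^{\otimes p}\otimes N_\F\equiv (e-2p)F+2M$. Negative degree on the single rigid curve $M$ only forces $M$ into the zero locus of a section. The curves that cover $X$ are the fibres, and on them the degree is $2>0$. In fact $h^0\big((e-2p)F+2M\big)=\sum_{i=0}^{2}h^0\big(\mathcal O_{\mathbb P^1_{\field}}(e-2p-ie)\big)$, which is positive exactly when $e\ge 2p$. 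In the example above, the $p$-curvature is the section of $\mathcal O_X(2M)$ vanishing on $2M=\{z=\infty\}$, since there $\partial_z=-w^2\partial_w$ with $w=1/z$.

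Likewise, $C(\gamma)$ lies in $\h^0(\mathbb P^1_{\field},\Omega^1_{\mathbb P^1_{\field}}(e/p))\cong\h^0(\mathcal O_{\mathbb P^1_{\field}}(e/p-2))$. This space is nonzero for $e\ge 2p$, and $C$ reaches it, since $C(x^{kp-1}dx)=x^{k-1}dx$. So checking $C(\gamma)=0$ chart by chart cannot succeed.

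What your approach does prove is the corrected statement. The fibration is always $p$-closed. A regular Riccati foliation on $\mathbb F_e$ is $p$-closed when $e\in\{0,p\}$: then $\deg c\le p-2$, or equivalently $C(\gamma)\in\h^0(\mathcal O_{\mathbb P^1_{\field}}(-1))=0$, or $h^0((e-2p)F+2M)=0$. For $e\ge 2p$ it is $p$-closed if and only if $C(\gamma)=0$.
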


\proof
Let $\F$ be a regular foliation on a rational minimal surface $X$. By Theorem~\ref{thm:regularminimal}, we may assume  $X = \mathbb F_e$. Since the $\mathbb P^1_{\field}$-fibration is clearly $p$-closed, it is sufficient to consider the case where $p$ divides $e$ and $\F$ is a Riccati foliation on $\mathbb F_e$, locally given by $\omega  = dz + \gamma$ with $\gamma\in \h^0(\mathbb P^1_{\field}, \Omega^1_{\mathbb P^1_{\field}}(e))$.

Since the vector space $\h^0(\mathbb P^1_{\field}, \Omega^1_{\mathbb P^1_{\field}}(e))$ has dimension $e-1$, we can write 
\[
\gamma = \gamma(x)dx, \quad \gamma(x) = a_0+a_1x+\cdots +a_{e-2}x^{e-2}
\]
where $x$ denotes a coordinate for $\mathbb P^1_{\field}$ and $a_0,\dots, a_{e-2}\in \field$. Then $\F$ is locally given by the vector field
\[
v = \partial_x - \gamma(x)\partial_z. 
\]
This implies that $v^{(p)} = -\gamma^{(p)}(x)\partial_z$, where $\gamma^{(p)}$ means the differential of order $p$ with respect to $x$.  Therefore, $v^{(p)}$ is identically vanishing, because the differential of order $p$ of a monomial $a_jx^j$, $j\ge p$, is $j(j-1)\cdots (j-p+1)a_jx^{j-p}$, and there  exists at least one multiple of $p$ between these $p$ consecutive numbers. This shows that $\F$ is $p$-closed, concluding the proof of the corollary. 
\endproof


In \cite{ESTCompact}, the authors construct notable examples of regular foliations on non-rational surfaces defined over a number field,  whose reductions {\it mod} $p$ are not $p$-closed for infinitely many primes $p$, yet are $p$-closed for a set of primes of positive density. Those foliations have no compact leaves, despite being $p$-closed  {\it mod} $p$ for infinitely many $p$.

In what follows, we will  provide, for every prime $p>2$, an example of a regular Riccati foliation on $C\times \mathbb{P}_{\field}^{1}$,
where $C\subset\mathbb P^2_{\field}$ is an elliptic curve 
cut out by  
\begin{equation}\label{E:ellipticcurve}
f(w,x,y) = y^2w - x(x-w)(x-\lambda w), \qquad \lambda \not \in \{0,1\},
\end{equation}
which is  not, in general,  $p$-closed.

To this  goal, we recall the definition of supersingular elliptic curves, referring to \cite[\S 15 and \S22]{MumfordAbelian74} for further details.

\begin{dfn}
	Let $C$ be an elliptic curve over $\field$. Then $C$ is said to be {\bf supersingular} if its $p$-rank vanishes. An elliptic curve is {\bf ordinary} if it is not supersingular.
%
\end{dfn}

It is useful to know that an elliptic curve is supersinguar if and only if the natural map on vector fields, \[\h^0(C, T_C)\longrightarrow\h^0(C,T_C) ,\quad v\longmapsto v^p,\] vanishes. 
Another important and well-known feature concerning the $p$-rank follows \cite[\S22]{MumfordAbelian74}.

\begin{prop}
	Let $C$ be an elliptic curve given by Equation~\eqref{E:ellipticcurve}. Then, $C$ is supersingular if and only if  $\lambda$ is a root of $$\Phi (t)= \sum_{\nu=0}^{\frac{p-1}{2}}{\binom{\frac{p-1}{2}}{ \nu}}^2 t^\nu .$$
\end{prop}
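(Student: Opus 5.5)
The plan is to compute the Hasse invariant of $C$ with the standard Cartier-operator argument, and to tie it to $p$-th powers of the invariant vector field. Put $m=\frac{p-1}{2}$. In the affine chart $w=1$ the curve is $y^2=g(x):=x(x-1)(x-\lambda)$, and $\h^0(C,\Omega^1_C)$ is spanned by $\eta=dx/y$. The Cartier operator $\mathcal C$ acts on this one-dimensional space by $\mathcal C(\eta)=A^{1/p}\eta$, where $A$ is the Hasse invariant. It is standard, and I take it from \cite[\S 15]{MumfordAbelian74}, that $C$ is supersingular exactly when $A=0$. Equivalently, this is the vanishing of the $p$-th power map on $\h^0(C,T_C)$ recalled just before the statement, since that map is dual to $\mathcal C$.

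The first step is to find $A$. Write
\[
\eta=\frac{dx}{y}=\frac{y^{p-1}}{y^p}\,dx=\frac{g(x)^m}{y^p}\,dx .
\]
Because $\mathcal C$ is $p^{-1}$-linear, $\mathcal C(y^{-p}\beta)=y^{-1}\mathcal C(\beta)$. For monomials one has $\mathcal C(x^{k}dx)=0$ unless $k\equiv -1 \pmod p$, and $\mathcal C(x^{ps+p-1}dx)=x^{s}dx$. The polynomial $g^m$ has degree $3m=\frac{3(p-1)}{2}<2p-1$, so only the exponents $p-1$ and possibly $2p-1$ could contribute, and $2p-1$ is out of range. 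Hence $\mathcal C(\eta)=c_{p-1}^{1/p}\,\eta$, where $c_{p-1}$ is the coefficient of $x^{p-1}$ in $g(x)^m$.

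The second step is to evaluate this coefficient. We have $g^m=x^m(x-1)^m(x-\lambda)^m$, so $c_{p-1}$ is the coefficient of $x^{m}$ in $(x-1)^m(x-\lambda)^m$. Expanding both factors gives
\[
c_{p-1}=\sum_{i+j=m}\binom{m}{i}\binom{m}{j}(-1)^{m-i}(-\lambda)^{m-j}=(-1)^m\sum_{\nu=0}^m\binom{m}{\nu}^2\lambda^\nu=(-1)^m\Phi(\lambda).
\]
Here I used $\binom{m}{j}=\binom{m}{m-j}$ and set $\nu=i$, so that $m-j=i$.

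Therefore $A=\pm\Phi(\lambda)$, and $C$ is supersingular if and only if $\Phi(\lambda)=0$. The step that needs the most care is the link between $p$-rank zero and the vanishing of $\mathcal C$ on $\h^0(\Omega^1)$, together with the duality with the $p$-th power map on $\h^0(C,T_C)$. I would cite this from Mumford rather than reprove it. The same computation can also be done directly on the side of vector fields, by applying $v^p$ to the generator $v=y\partial_x$ (which is dual to $\eta$) and tracking the coefficient of $x^{p-1}$ in $g^m$ again.
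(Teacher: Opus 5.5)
Your argument is correct. The reduction $\eta=g^{m}\,dx/y^{p}$ checks out, as does the observation that $p-1$ is the only exponent $\equiv -1 \pmod p$ occurring in $g^m$ (its degrees lie between $m$ and $3m<2p-1$). So does the extraction $c_{p-1}=(-1)^m\Phi(\lambda)$.

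The paper does not prove this statement; it quotes it as classical from Mumford. There the computation is done on the Serre-dual side: Frobenius acts on $\mathrm{H}^1(C,\mathcal O_C)$, the class is represented by $y/x$ at $P_1=(0,0)$, and it is paired with $\mathrm{H}^0(C,\Omega^1_C)$ by residues. This is the identity $\mathrm{Res}_{P_1}(y/x)^p\,\tfrac{dx}{2y}=(-1)^{\frac{p-1}{2}}\Phi(\lambda)$ that the paper invokes in the proof of the next proposition. The two are the same calculation in substance: $(y/x)^p\,\tfrac{dx}{2y}=g^m\,\tfrac{dx}{2x^p}$, and since $x$ has ramification index $2$ at $P_1$, its residue there is exactly your $c_{p-1}$.

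Your version has the advantage of being self-contained. The only imported fact is that $p$-rank zero is equivalent to $\mathcal C=0$ on $\mathrm{H}^0(C,\Omega^1_C)$, which is the same input the paper takes from Mumford.

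One precision: the link with the $p$-th power map on $\mathrm{H}^0(C,T_C)$ is not literally a duality. It is Cartier's formula $\langle\mathcal C\omega,v\rangle^p=\langle\omega,v^p\rangle-v^{p-1}\langle\omega,v\rangle$. Applied to $\omega=\eta$ and $v=y\partial_x$ (so $\langle\eta,v\rangle=1$), it gives $v^p=c_{p-1}v$. This would also yield directly the relation $v_1^p=(-1)^{\frac{p-1}{2}}\Phi(\lambda)v_1$, which the paper reaches through a formal-coordinate computation when treating the foliations $\F_\mu$.
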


\begin{prop}\label{prop:examplechar3} 
	

Let $\field$ be an algebraically closed field of characteristic $p\geq 3$. Let $C\subset \mathbb{P}_{\field}^{2}$ be the smooth elliptic curve given by Equation~\eqref{E:ellipticcurve}.
Let $\mu\in \field^*$. Take $\omega_\mu = \mu \dfrac{ dx}{ 2y} \in \h^0(C,\Omega_{C}^{1})$ and consider the regular foliation $\F_\mu$  on $C\times \mathbb{P}_{\field}^{1}$ given by the $1$-form
$$
  \quad  \sigma = dz-z\omega_\mu
$$
where $z$ represents a coordinate of $\mathbb{P}_{\field}^{1}$. Then,

\begin{enumerate}
	\item Either $C$ is supersingular and then $\F_\mu$ is never $p$-closed.
	\item Either $C$ is ordinary and $\F_\mu$ is $p$-closed if and only if  $\mu^{p-1}=\pm \Phi(\lambda)$, $\pm$ depending on whether $p\equiv  1\ mod\ 4$ or $p\equiv3\ mod\ 4$.
	\item In the case where $\F_\mu$ is not $p$-closed, the sections $\{z=0\}$ and $\{z=\infty\}$ are the only invariant curves.
\end{enumerate} 
\end{prop}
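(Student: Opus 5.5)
The plan is to compute the $p$-curvature of $\F_\mu$ directly. On the open set $\{z\neq 0,\infty\}$ the foliation is the kernel of $dz - z\omega_\mu$, so a local generator of $T_{\F_\mu}$ is
\[
V = v + \langle \omega_\mu, v\rangle\, z\,\partial_z ,
\]
where $v$ is the invariant vector field on $C$ dual to $\omega_\mu$. Since $\omega_\mu=\mu\,\frac{dx}{2y}$ is an invariant differential, $\langle\omega_\mu,v\rangle$ is constant. We normalize $v$ so that $\omega_\mu(v)=1$, so $v=\mu^{-1}v_0$ with $v_0$ dual to $dx/2y$, and $V=v+z\partial_z$.

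The two summands commute, since $v$ acts only on $C$ and $z\partial_z$ only on the fiber coordinate. Hence
\[
V^p = v^p + (z\partial_z)^p = v^p + z\partial_z .
\]
On the elliptic curve, $\h^0(C,T_C)$ is one-dimensional, so $v_0^p = c\,v_0$ for a constant $c$ (the Hasse invariant). Classically $c=\pm\Phi(\lambda)$ up to a unit normalization: the coefficient of $x^{p-1}$ in $(x(x-1)(x-\lambda))^{(p-1)/2}$ is $(-1)^{(p-1)/2}\Phi(\lambda)$, which is the Cartier/Hasse computation in \cite[\S22]{MumfordAbelian74}. This gives $v^p=\mu^{-p}c\,v_0=\mu^{1-p}c\,v$, and therefore
\[
V^p = \mu^{1-p}c\,v + z\partial_z .
\]
The foliation is $p$-closed if and only if $V^p\wedge V=0$, that is, $V^p$ is proportional to $V=v+z\partial_z$. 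This happens if and only if $\mu^{1-p}c=1$, i.e. $\mu^{p-1}=c$.

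This yields (1) and (2). If $C$ is supersingular, then $c=0$, and $V^p=z\partial_z$ is never proportional to $V$. If $C$ is ordinary, the condition is $\mu^{p-1}=c=\pm\Phi(\lambda)$. The main obstacle is pinning the sign and constant of the Hasse invariant for the derivation dual to $dx/2y$, which requires care with the normalizing factor $2$ and with $(-1)^{(p-1)/2}$. The sign $(-1)^{(p-1)/2}$ is exactly the $p \bmod 4$ dependence. I would first verify this identity $v_0^p=(-1)^{(p-1)/2}\Phi(\lambda)v_0$ by computing $v_0^p(x)$ through the Cartier operator on $dx/y$.

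For (3), first note that $\{z=0\}$ and $\{z=\infty\}$ are invariant, since $\sigma$ restricts to zero there (in the chart $u=1/z$, $\sigma$ is proportional to $du+u\omega_\mu$). Suppose $D$ is another irreducible invariant curve. It is not a fiber, because fibers are transverse to $\F_\mu$. So $D\to C$ is finite, and on $D$ the restriction of $z$ is a rational function satisfying $dz/z=\omega_\mu|_D$. A non-zero holomorphic form that is logarithmically exact pulls back to zero under $p$-th power-type considerations. More concretely, apply the Cartier operator: $\mathcal C(dz/z)=dz/z$. Pushing forward through the trace to $C$ (or arguing on the function field), this forces $\omega_\mu$ to be fixed by the Cartier operator, so $\mathcal C(\omega_\mu)=\omega_\mu$. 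This is equivalent to $\mu^{p-1}=c$ up to the same normalization, i.e. to $p$-closedness. Alternatively, $D$ would give $\F_\mu$ infinitely many invariant curves via the $\field^*$-action $z\mapsto tz$ preserving $\F_\mu$, and Jouanolou/Darboux-type reasoning in positive characteristic (a first integral) gives $p$-closedness. I would try the Cartier-operator route first, checking that the trace $\mathrm{Tr}_{D/C}$ commutes with $\mathcal C$ and sends $dz/z$ to $\deg(D/C)\,\omega_\mu$, possibly after replacing $z$ by its norm to handle $p\mid\deg$.
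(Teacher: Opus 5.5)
Your treatment of (1) and (2) matches the paper's argument. You write $V=v+z\partial_z$ and get $V^p=\mu^{1-p}c\,v+z\partial_z$, so $p$-closedness holds iff $\mu^{p-1}=c$. The identity $c=(-1)^{(p-1)/2}\Phi(\lambda)$ is correct, and the factor $2$ in $dx/2y$ is harmless since $2^p=2$. The paper extracts it from Mumford through a residue computation at $(0,0)$ rather than the Cartier operator, but the input is the same.

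The gap is in (3), in the trace route you say you would try first. Let $\tilde D$ be the normalization of $D$ and $\pi:\tilde D\to C$ the projection. If $p$ divides $\deg(\tilde D/C)$, then $\mathrm{Tr}_{\tilde D/C}(dz/z)=dN(z)/N(z)=\deg(\tilde D/C)\,\omega_\mu=0$. So applying $\mathcal C$ after the trace, or replacing $z$ by its norm, only produces $0=0$, not $\mathcal C\omega_\mu=\omega_\mu$. Since $D$ is an arbitrary hypothetical invariant curve, you cannot exclude such degrees a priori.

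The trace is unnecessary. The Cartier operator commutes with pullback, so $\pi^*(\mathcal C\omega_\mu-\omega_\mu)=\mathcal C(dz/z)-dz/z=0$. Moreover $\pi^*\omega_\mu\neq 0$. Otherwise $\pi^*\Omega^1_{\field(C)}=0$ and $dz=z\,\pi^*\omega_\mu=0$, so $\Omega^1_{\field(\tilde D)}$ would vanish, because it is generated by $\pi^*\Omega^1_{\field(C)}$ and $dz$. Hence $\pi^*$ is injective on the one-dimensional $\field(C)$-space of rational $1$-forms. This gives $\mathcal C\omega_\mu=\omega_\mu$, i.e. $\mu^{p-1}=c$, whatever the degree.

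The paper's route is shorter still. If $V(f)\in(f)$ then $V^p(f)\in(f)$, so every invariant curve lies in the zero locus of $V\wedge V^p=(1-\mu^{1-p}c)\,v\wedge z\partial_z$. When $\F_\mu$ is not $p$-closed, this zero locus is exactly $\{z=0\}\cup\{z=\infty\}$.

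The same observation is what your alternative route is missing. Infinitely many invariant curves force $p$-closedness precisely because the invariant curves of a non-$p$-closed foliation lie in this $p$-divisor. The Jouanolou/Darboux first-integral step you invoke in its place is not justified as stated.
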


\begin{proof}

Write $v_1\in \h^0(C,T_{C})$, $\omega_1(v_1)=1$ for the vector field dual to $\omega_1$. The foliation $\F_\mu$, $\mu\not=0$  is thus defined by the regular and nowhere vanishing vector field $Y_\mu= z\partial_z +  \mu^{-1}v_1$ and is then $p$-closed iff 

$$v_1^p=\mu^{p-1}v_1,$$ thus proving the first item. Moreover, if we assume that $\F_\mu$ is not $p$-closed, one observes that ${Y_\mu}^p\wedge Y_\mu$ does not vanish outside  $\{z=0\}\cup \{z=\infty\}$, which establishes the third item. 

It remains to justify the second point of the statement. For this, set $P_1=(0,0)\in C$, $g=\dfrac{y}{x}\in \field(C)$  and remark that, $\mathrm{Res}_{P_1} g \omega_1 =1$ and also,  \cite[\textit{loc.cit}]{MumfordAbelian74} that $\mathrm{Res}_{P_1} g^p \omega_1 ={(-1)}^{ \frac{p-1}{2}}\Phi(\lambda)$. On the other hand,  in some formal coordinate $u$ at $P_1$, one can write $$\omega_1=\big( \sum_{i=0}^{p-2} a_i u^i+ hot\big)du,$$ $a_i\in \field,\ a_0\not=0$. After performing the change of variable  $t=\sum_{i=0}^{p-2}\frac{a_i}{{i+1}} u^{i+1},$ one gets $\omega_1= (1+\alpha t^{p-1}+ hot)dt$,   $\alpha\in \field$ so that $\mathrm{Res}_{P_1} g^p \omega_1=\alpha$. As $v_1= (1-\alpha t^{p-1} + hot)\partial_t$, an easy calculation yields 
$v_1^p= ( \alpha + hot) \partial_t=\alpha v_1$. The proof of the Proposition is now complete. 
\end{proof}


%
%
%

\subsection{Regular foliations on weak  del Pezzo surfaces} 

\subsubsection{The Mori cone of weak del Pezzo surfaces}
The following material is classical. See for instance Dolgachev's book \cite[Chapter 8]{Dolclas}.
Let $X$ be a {\bf weak del Pezzo surface}, i.e. a smooth projective surface such that
\[
- K_X \ \text{is nef and big}.
\]
One defines the \textbf{degree} of $X$ as the positive integer $K_X^2$. A weak del Pezzo surface is isomorphic to $\mathbb F_0$, $\mathbb F_2$ or to the blow-up of a bubble cycle on $\mathbb{P}_{\field}^2$ formed by  $r \le 8$ points, and $K_X^2 = 9-r$.


Let us consider the finite dimensional $\mathbb R$-vector space  ${\rm N_1}(X)_{\mathbb{R}}$ formed by $1$-cycles on $X$, modulo numerical equivalence. Let $\overline{{\rm NE}}(X)$ be the Mori cone of $X$, i.e. the cone spanned by classes of effective curves.

Recall that for weak del Pezzo surface of degree $\leq 7$,  $\overline{{\rm NE}}(X)$
is generated by the classes of (smooth rational) curves of self-intersection $-1$ or $-2$:

\[
\overline{{\rm NE}}(X)
=
\sum_{\text{$(-1)$-curves } C} \mathbb{R}_{\ge 0}[C]
\;+\;
\sum_{\text{$(-2)$-curves } C} \mathbb{R}_{\ge 0}[C].
\]

The sum is finite,  the curves intersect each other at most transversely, there are at most $r$ curves of self-intersection $-2$, moreover:
\begin{itemize}
\item $(-1)$-curves generate the $K_X$-negative extremal rays,
\item $(-2)$-curves lie in the hyperplane $K_X^\perp$.
\end{itemize}


The lattice
\[
K_X^\perp \subset \mathrm{Pic}(X)
\]
is negative definite and carries a root system of type:
\[
A_n,\ D_n,\ E_n.
\]
Moreover, the $(-2)$-curves correspond to effective roots.


The nef cone is given by
\[
\mathrm{Nef}(X)
=
\{ D \in N^1(X)_{\mathbb{R}} \mid D \cdot C \ge 0 \text{ for all $(-1)$ and $(-2)$-curves} \}.
\]


The following cases are of particular interest: 

\begin{itemize}
\item If $-K_X$ is ample, $X$ is a \textbf{del Pezzo surface}, there are no $(-2)$-curves, and $\overline{{\rm NE}}(X)$ is generated by $(-1)$-curves only.
\item A weak del Pezzo surface is said to be \textbf{strict} and we will abbreviate by \textbf{SWDP} if $-K_X$ is not ample. In this situation $(-2)$-curves appear and correspond to ADE configurations.
\end{itemize}

\subsubsection{Regular foliations on weak del Pezzo.}\label{section:weakdelpezzos} We start by noting that a weak del Pezzo surface carrying a regular foliation, has even degree.

\begin{thm}\label{thm:evendegree}
    Let $X$ be a smooth weak del Pezzo surface over an algebraically closed  field $\field$ of arbitrary characteristic.  Let $\F$ be a regular foliation on $X$. Then $X$ has even degree.
\end{thm}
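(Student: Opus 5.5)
The plan is to reduce the statement to Corollary~\ref{cor:C2even}, which says that a smooth projective surface carrying a regular foliation has even second Chern class $c_2(T_X)$. After that, the remaining work is just to relate $c_2(T_X)$ to the degree $K_X^2$ for a weak del Pezzo surface. This relation comes from Noether's formula, which is valid in arbitrary characteristic:
\[
12\chi(\mathcal O_X) = K_X^2 + c_2(T_X).
\]

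The first step is to check that $\chi(\mathcal O_X)=1$ for every weak del Pezzo surface. By the classification recalled in \S\ref{section:weakdelpezzos}, $X$ is isomorphic to $\mathbb F_0$, to $\mathbb F_2$, or to the blow-up of $\mathbb P^2_{\field}$ at a bubble cycle of $r\le 8$ points. In each case $X$ is a smooth projective rational surface. For such surfaces we have $h^1(\mathcal O_X)=h^2(\mathcal O_X)=0$: both numbers are birational invariants of smooth projective surfaces in any characteristic, and they vanish for $\mathbb P^2_{\field}$. Hence $\chi(\mathcal O_X)=1$.

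The second step is the computation itself. Noether's formula now gives $c_2(T_X)=12-K_X^2$. One can also see this directly: $c_2(T_X)$ equals the topological Euler number, which is $3+r$ for the blow-ups, and $K_X^2 = 9-r$, so the two agree; for $\mathbb F_0$ and $\mathbb F_2$ we get $c_2=4$ and $K^2=8$. Consequently $c_2(T_X)$ is even exactly when $K_X^2$ is even. Since $X$ carries a regular foliation $\F$, Corollary~\ref{cor:C2even} forces $c_2(T_X)$ to be even, and therefore the degree $K_X^2$ is even.

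I do not expect a genuine obstacle here. The only points that need care are justifying Noether's formula and the vanishing of $h^1(\mathcal O_X)$ and $h^2(\mathcal O_X)$ in positive characteristic, and making sure the classification covers every case. Alternatively, one can avoid Noether's formula entirely by using the explicit values $c_2=3+r$ and $K_X^2=9-r$ for the blow-ups, together with the values $c_2=4$, $K^2=8$ for the Hirzebruch cases, since these are standard.
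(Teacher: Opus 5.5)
Your proposal is correct and follows the paper's route: the paper's proof likewise reduces to Corollary~\ref{cor:C2even}, simply asserting that $K_X^2$ and $c_2(T_X)$ have the same parity. Your use of Noether's formula with $\chi(\mathcal O_X)=1$, or alternatively the explicit values $c_2=3+r$ and $K_X^2=9-r$, just supplies the justification the paper leaves implicit.
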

\begin{proof}
    It suffices to notice that the degree of $X$ has the same parity as $c_2(T_X)$ and then conclude by Corollary \ref{cor:C2even}.
\end{proof}

The next result provides a characterization of regular foliations on del Pezzo surfaces. 

\begin{thm}\label{thm:delpezzo}
Let $X$ be a smooth projective del Pezzo surface, over an algebraically closed  field $\field$ of arbitrary characteristic.  Let $\F$ be a regular foliation on $X$. Then $X$ is either $\mathbb P^1_{\field} \times \mathbb P^1_{\field}$  or $\mathbb F_1$, in which case one recovers the description of $\F$ as given by Lemma \ref{lemma:reghirz} and Theorem \ref{thm:regularminimal}: $\F$ leaves one of the standard $\mathbb P^1_{\field}$-fibrations invariant. More precisely, if $X=\mathbb P^1_{\field} \times \mathbb P^1_{\field}$ then one of the two rulings is invariant, while for $\mathbb F_1$ the conclusion holds for the standard fibration $\mathbb F_1 \to \mathbb P^1_{\field}$.

\end{thm}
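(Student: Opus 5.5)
Del Pezzo surfaces are $\mathbb P^1_{\field}\times\mathbb P^1_{\field}$ or blow-ups of $\mathbb P^2_{\field}$ at $r\le 8$ points in general position; the degree is $9-r$ in the latter case. The plan is to eliminate every case except $\mathbb P^1_{\field}\times\mathbb P^1_{\field}$ and $\mathbb F_1$ (the blow-up at one point). The first eliminations are by parity. By Theorem~\ref{thm:evendegree} the degree must be even, which rules out $r$ even, in particular $\mathbb P^2_{\field}$. For $\mathbb P^1_{\field}\times\mathbb P^1_{\field}=\mathbb F_0$, Lemma~\ref{lemma:reghirz} with $e=0$ gives the statement directly. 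In case (2) with $e=0$, $\gamma$ is a section of $\Omega^1_{\mathbb P^1_{\field}}$, hence $0$, and $\F$ is the other ruling. It remains to treat $r\in\{1,3,5,7\}$.

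The case $r=1$ is $\mathbb F_1$. Here I would rerun the proof of Lemma~\ref{lemma:reghirz} with $e=1$. That argument is numerical: it uses $c_2=4$, the section $M$ with $M^2=-1$, Propositions~\ref{prop:GM} and \ref{prop:tangformula}, and \eqref{eq:KNd1d2}. It shows that either $\F$ is the fibration, or $\F$ is Riccati with $M$ invariant. In the Riccati alternative, Proposition~\ref{prop:regularHirz} forces $p\mid e=1$, which is impossible. So $\F$ is the fibration $\mathbb F_1\to\mathbb P^1_{\field}$.

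For $r\in\{3,5,7\}$ we have $c_2(T_X)=3+r$, and the Mori cone is generated by finitely many $(-1)$-curves $E$ (there are no $(-2)$-curves). For each $(-1)$-curve $E$ there are two possibilities:
\[
\text{if $E$ is invariant, } K_{\F}\cdot E=-2+Z(\F,E)=-2; \qquad \text{otherwise } K_{\F}\cdot E=1+tang(\F,E)\ge 1.
\]
The invariant value is $-2$ because $Z(\F,E,x)=0$ at regular points, $h$ being a unit there. I would then contract an invariant $(-1)$-curve, if one exists. Since $Z(\F,E)=1$ would be needed for an invariant exceptional curve to come from a singular blow-up, the relation \eqref{Zburegularinv} should be read in reverse. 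The exceptional curve of a blow-up at a point $x$ is invariant with $Z=1+a(x)$, and $a(x)=0$ exactly when $x$ is regular. Our $E$ is invariant with $Z(\F,E)=0$, which is never of the form $1+a(x)$ with $a(x)\ge 0$, so no such $E$ can exist. Hence every $(-1)$-curve is transverse to $\F$ in the weak sense, and $K_{\F}\cdot E\ge1$ for all of them. Since these curves span the Mori cone, $K_{\F}$ is ample, or at least nef and positive on the effective cone.

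To derive a contradiction I would combine Proposition~\ref{rrat}, which gives
\[
K_{\F}\cdot N_{\F}=-c_2=-(3+r),
\]
with $N_{\F}=K_{\F}-K_X$. This yields $K_{\F}^2-K_{\F}\cdot K_X=-(3+r)$. Both $K_{\F}^2>0$ and $-K_{\F}\cdot K_X>0$ hold, using nefness and positivity of $K_{\F}$ and ampleness of $-K_X$. The left side is therefore positive while the right side is negative, a contradiction.

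The main obstacle I expect is the step showing that no $(-1)$-curve is invariant. If my reading of the blow-up formulas fails, I would instead argue that an invariant $E$ with $Z(\F,E)=0$ can be contracted to a regular point of a foliation on a del Pezzo surface with smaller $r$. Induction would then reduce to $\mathbb F_1$ or $\mathbb P^1_{\field}\times\mathbb P^1_{\field}$, and I would check compatibility there: the invariant fibration in $\mathbb F_1$ is preserved only when the blown-up points lie suitably, which fails for points in general position.
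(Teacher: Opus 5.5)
Your proof is correct, and its skeleton matches the paper's. For degree $\le 7$ the Mori cone is spanned by $(-1)$-curves, none of which is $\F$-invariant. Hence $K_\F$, and with it $N_\F=K_\F-K_X$, is positive on the whole cone, which contradicts $K_\F\cdot N_\F=-c_2(T_X)<0$. The surfaces $\mathbb P^1_{\field}\times\mathbb P^1_{\field}$ and $\mathbb F_1$ are then handled by the classification on Hirzebruch surfaces. Your parity reduction to odd $r$ is harmless but unnecessary, since your argument for $r\ge 3$ works verbatim for every $2\le r\le 8$.

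The genuine difference is the key step that no $(-1)$-curve is invariant.
\begin{itemize}
\item The paper derives it from the Camacho--Sad formula. For a regular foliation all indices along a smooth invariant curve vanish, so $C^2=0$ in $\field$, i.e.\ $p\mid C^2$, which is impossible for $C^2=-1$.
\item You instead contract $E$ to a point $x$ of a smooth surface and observe that $\F$ is the strict transform of the induced foliation $\bar\F$. You then play $Z(\F,E)=1+a(x)\ge 1$ against $Z(\F,E)=0$.
\end{itemize}
Your route is valid in every characteristic and is more elementary. It needs only $N_\F\cdot E=E^2+Z(\F,E)$ and $N_\F=\pi^*N_{\bar\F}-l(x)E$, which together give $-1=N_\F\cdot E=l(x)\ge 0$, with no residue theorem. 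What the paper's route buys is the general divisibility $p\mid C^2$ for every smooth invariant curve of a regular foliation. That fact is reused for $(-2)$-curves in the weak del Pezzo analysis, where your contraction argument is unavailable.

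Three small points need fixing.
\begin{itemize}
\item Your sentence about $Z(\F,E)=1$ being ``needed for a singular blow-up'' is garbled. The value $Z=1$ characterizes blow-ups of \emph{regular} points.
\item Your fallback in the last paragraph is unnecessary and self-inconsistent. Contracting an invariant $E$ to a regular point would force $Z(\F,E)=1$, not $0$.
\item If you literally rerun the Hirzebruch computation for $e=1$ with $M$ invariant, the relations $d_1=d_2-2$ and $d_2(d_1+e)=0$ give $d_2(d_2-1)=0$. So the non-Riccati numerical solution $d_2=1$, $d_1=-1$ survives, and your claim that the rerun yields only ``Riccati with $M$ invariant'' is not accurate. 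Kill this case by applying your own no-invariant-$(-1)$-curve argument to $M$. The first half of that computation, with $M$ not invariant, then forces $\F$ to be the fibration $\mathbb F_1\to\mathbb P^1_{\field}$.
\end{itemize}
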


\begin{proof}
We first claim that for any smooth rational curve $C\subset X$, we have $N_\F\cdot C\geq 0$. Indeed, it follows from Propositions~\ref{prop:GM} and \ref{prop:tangformula}, and Camacho-Sad formula (cf. Theorem~\ref{thm:CSformula}), that  $N_\F \cdot C \ge 2 $ when $C$ is not invariant by $\F$, and $p=\mathrm{char}(k)$ divides $N_\F\cdot C=C^2$, otherwise.  In the last identity we used that $Z(\F,C_i)=0$, which follows from the fact that  $C$ is a smooth curve and $\F$ is regular. By adjunction formula, we obtain $C^2>-2$, therefore $N_\F\cdot C\ge 0$. 

We show first that the case where $X$ is obtained by blowing-up $\mathbb P^2_{\field}$  at $n$ points with $2\le n \le 8$, i.e $X$ has degree $\leq 7$, cannot happen. In this case, each extremal ray $C$ is a $(-1)$-curve, then not invariant under $\F$. This gives $K_\F\cdot C\ge 1$. In particular, $N_\F$ and $K_\F$ are both nef and then $N_\F\cdot K_\F\ge 0$. On the other hand, it follows from   Proposition~\ref{rrat} that 
\[
N_\F\cdot K_\F = -c_2(T_X) < 0
\]
giving a contradiction.

Then the conclusion of the theorem follows from Lemma~\ref{lemma:reghirz} and from the fact that $\mathbb P^2_{\field}$ does not admit  any regular foliation. 
\end{proof}

\begin{cor}\label{cor:delpezzopclosed}
Any  regular foliation on a smooth del Pezzo surface over an algebraically closed field of positive characteristic $p$ is $p$-closed.   
\end{cor}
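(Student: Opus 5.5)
By Theorem~\ref{thm:delpezzo}, we only need to treat two surfaces: $X=\mathbb P^1_{\field}\times\mathbb P^1_{\field}=\mathbb F_0$ and $X=\mathbb F_1$. In both cases $\F$ is a regular foliation on a Hirzebruch surface. Lemma~\ref{lemma:reghirz} applies to $\mathbb F_e$ for every $e\ge 0$, including $e=1$, so $\F$ falls into one of its two alternatives.

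In the first alternative $\F$ is the $\mathbb P^1_{\field}$-fibration $\pi\colon\mathbb F_e\to\mathbb P^1_{\field}$. Its tangent sheaf is the relative tangent sheaf $T_{X/\mathbb P^1_{\field}}$, i.e. the derivations killing $\pi^{-1}\mathcal O_{\mathbb P^1_{\field}}$. If $v$ kills $\pi^*u$, then so does $v^p$. Hence $v^p\in T_{\F}$ and $\F$ is $p$-closed.

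In the second alternative $p$ divides $e$. For $e=1$ this is impossible, so on $\mathbb F_1$ only the fibration occurs. For $e=0$, Proposition~\ref{prop:regularHirz} forces $\gamma=0$ (as $\h^0(\mathbb P^1_{\field},\Omega^1_{\mathbb P^1_{\field}})=0$). Then $\F$ is given by $dz$, which is the second ruling and hence $p$-closed by the same argument. Alternatively, one can note that this case is covered by the computation in Corollary~\ref{cor:minimalpclosed}, which shows that $v^p=0$ for $v=\partial_x-\gamma(x)\partial_z$ whenever $\deg\gamma\le e-2<p$ or $p\mid e$.

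The conclusion then follows directly from Corollary~\ref{cor:minimalpclosed}, since both $\mathbb F_0$ and the relevant foliations on $\mathbb F_1$ reduce to the minimal or fibration case. The only point needing care is that $\mathbb F_1$ is not minimal, so Corollary~\ref{cor:minimalpclosed} cannot be cited verbatim for it. This is why I handle $\mathbb F_1$ through Lemma~\ref{lemma:reghirz} and the divisibility $p\mid e$, which leaves only the fibration there.
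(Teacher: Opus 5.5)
Your proof is correct and is essentially the paper's argument: Theorem~\ref{thm:delpezzo} reduces to $\mathbb F_0$ and $\mathbb F_1$, Lemma~\ref{lemma:reghirz} then leaves only the standard rulings (on $\mathbb F_0$ because $\h^0(\mathbb P^1_{\field},\Omega^1_{\mathbb P^1_{\field}})=0$ forces $\gamma=0$, on $\mathbb F_1$ because $p\nmid 1$), and rulings are $p$-closed for exactly the reason you give.

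You should drop the ``alternative'' appeal to Corollary~\ref{cor:minimalpclosed}, which is not needed here. Moreover, the vanishing claimed there for $p\mid e$ does not hold in general. Since $v^k(z)=-\gamma^{(k-1)}(x)$, one gets $v^p=-\gamma^{(p-1)}(x)\,\partial_z$. For instance, with $e=2p$ and $\gamma=x^{p-1}dx$, Wilson's theorem gives $v^p=\partial_z$, which is not tangent to $\F$. Only in your case $\gamma=0$ is the vanishing trivial.
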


It follows from \cite{Druel15} that any regular foliation on a weak Fano complex variety $X$ is given by the fibers of a smooth morphism $X\to Y$ onto a projective manifold. It has been conjectured that the same conclusion holds  more generally for rationally connected smooth complex varieties. The following question points in this direction, although it remains open even for surfaces in positive characteristic.

\begin{question}
Let $X$ be a  smooth projective rationally connected variety over a field $\field$ of positive characteristic $p$, and let $\F$ be a regular foliation on $X$. Under what conditions is the foliation $p$-closed?
\end{question}

Regarding the above question, we show that additional hypotheses are required to ensure $p$-closedness, at least for $p=2$. The example is a regular foliation on a weak del Pezzo surface of degree $4$ that is not $2$-closed. This justifies our subsequent investigation into the structure of regular foliations on weak del Pezzo surfaces. 

\begin{thm}\label{thm:deg6implieschar2}
  Let  $X$ a weak del Pezzo surface of degree $6$ over an algebraically closed field $\field$. Let $\F$ be a regular foliation on $X$. Then $X$ is a SWDP and $\mathrm{char}(\field)=2$
\end{thm}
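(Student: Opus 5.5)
Since $X$ has degree $6\le 7$, the description of the Mori cone recalled above applies: $\overline{{\rm NE}}(X)$ is generated by the classes of the $(-1)$-curves and $(-2)$-curves of $X$. If $-K_X$ were ample, $X$ would be a del Pezzo surface of degree $6$. Theorem~\ref{thm:delpezzo} rules this out, since the only del Pezzo surfaces carrying regular foliations are $\mathbb P^1_{\field}\times\mathbb P^1_{\field}$ and $\mathbb F_1$, of degrees $8$. Hence $X$ is a SWDP and has at least one $(-2)$-curve. It remains to show $\mathrm{char}(\field)=2$, and the plan is to redo the nefness argument of Theorem~\ref{thm:delpezzo}, now also keeping track of the $(-2)$-curves.

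Let $C$ be one of the generating curves; it is smooth and rational. We distinguish two cases.
\begin{itemize}
\item If $C$ is not $\F$-invariant, Proposition~\ref{prop:tangformula} gives $N_\F\cdot C = 2+tang(\F,C)\ge 2$ and $K_\F\cdot C=-C^2+tang(\F,C)\ge 1$.
\item If $C$ is $\F$-invariant, regularity of $\F$ means every Camacho-Sad index vanishes, because we may take $h=1$ in Proposition~\ref{decomposition}. Theorem~\ref{thm:CSformula} then gives $C^2=0$ in $\field$.
\end{itemize}
In the invariant case, $C^2\in\{-1,-2\}$, so $p$ cannot be $0$. We get $p\mid 1$ for a $(-1)$-curve, which is impossible, and $p\mid 2$ for a $(-2)$-curve. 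Consequently every $(-1)$-curve is non-invariant, and an invariant $(-2)$-curve can exist only if $p=2$.

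Now suppose $p\neq 2$. Then no generating curve is invariant, so $N_\F\cdot C\ge 2>0$ and $K_\F\cdot C\ge 1>0$ for every generator $C$ of the Mori cone. Hence both $N_\F$ and $K_\F$ are nef, and their intersection number satisfies $N_\F\cdot K_\F\ge 0$. On the other hand, Proposition~\ref{rrat} gives $N_\F\cdot K_\F=-c_2(T_X)$. Since $X$ is $\mathbb P^2_{\field}$ blown up at $3$ points of a bubble cycle, $c_2(T_X)=3+3=6$, so $N_\F\cdot K_\F=-6<0$. This contradiction forces some $(-2)$-curve to be invariant, and therefore $\mathrm{char}(\field)=2$.

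The main point to check is the use of the Mori cone structure. Nefness of $N_\F$ and $K_\F$ must be tested only on the finitely many $(-1)$- and $(-2)$-curves, which is exactly what the description of $\overline{{\rm NE}}(X)$ for degree $\le 7$ provides. With that in hand, the rest is the same computation as in Theorem~\ref{thm:delpezzo}.
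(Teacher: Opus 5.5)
Your proposal is correct and is essentially the paper's proof. Both rule out the del Pezzo case by Theorem~\ref{thm:delpezzo}, use the Camacho--Sad formula to see that for $\mathrm{char}(\field)\neq 2$ none of the $(-1)$- or $(-2)$-curves generating $\overline{{\rm NE}}(X)$ is $\F$-invariant, deduce that $N_\F$ and $K_\F$ are nef, and contradict $N_\F\cdot K_\F=-c_2(T_X)<0$; you just spell out the details the paper leaves implicit (the condition $p\mid C^2$ for invariant curves, the tangency bounds, and $c_2(T_X)=6$).
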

\begin{proof}

The case of del Pezzo surface is ruled out by Theorem \ref{thm:delpezzo}. We argue as in the proof of Theorem \ref{thm:delpezzo}, using now that if $\mathrm{char}(\field)\not=2$, none of the $(-1)$ or $(-2)$-curves whose classes generate the Mori cone can be $\F$-invariant. As before, this gives $N_\F \cdot C\geq 2$, $K_\F \cdot C \geq 1$, which implies in turn the nefness of $N_\F$ and $K_\F$, contradicting again 
\[
N_\F\cdot K_\F = -c_2(T_X) < 0.
\]
\end{proof}
\begin{remark}\rm\label{rmk:weakdeg8}
  The description of regular foliations on $\mathbb F_2$ (the unique $SWDP$ surface of degree $8$) is covered by  Theorem~\ref{thmintro:regularminimal}. There is only one foliation given by the standard ruling, except in characteristic $2$  where we also have the exceptional family described in Item~\eqref{item2} of the Theorem~\ref{thmintro:regularminimal}. Every foliation in this family is $2$-closed.

  The remaining del Pezzo surfaces of degree $8$, namely $\P^1_{\field}\times \P^1_{\field}$ and $\mathbb F_1$ are covered by Theorem~\ref{thm:delpezzo} and all the regular foliations are $p$-closed.
   \terminou
  \end{remark}

\begin{remark}\rm\label{rmk:existencechr2}
It is easy to exhibit, in  characteristic $2$, regular foliations on weak del Pezzo surface  in every even degree $\leq 6$.
Indeed, start with a smooth projective surface $X$ over a field of characteristic $2$ carrying a $2$-closed foliation $\F$.  Consider the blow-up at  a  point $p$ not contained in $\rm{Sing}(\F)$. As $\F$ is $2$-closed, it is defined at $p$ by $dx=0$ where $x$ is a suitable formal coordinate. The strict  transform of the foliation  carries exactly one singularity along the exceptional divisor given formally by $xdy+ydx=0$ and that can be resolved by an additional blow-up, giving rise to an exceptional divisor $E$ transverse to the strict transform $\tilde{\F}$ of $\F$ by the composition $\pi$ of these two blow-ups. Note that $\tilde \F$  is still a regular foliation in the neighborhood of $\pi^{-1} (p)$ in the resulting surface $\tilde{X}$.
In particular, if one takes $\mathbb F_1$ equipped with its canonical $\P^1$-fibration, one obtains a regular foliation $\F_r$ on a weak del Pezzo degree $8-2r$ by performing $2r$ successive blow-ups in a suitable way. 

Moreover, a simple calculation yields $N_{\tilde{\F}}= \pi^* N_\F -2E$. Consequently,   $N_{\tilde{\F}}^2=N_{{\F}}^2-4$. As shown in the proof of Lemma \ref{lemma:reghirz},  $N_{\mathcal{F}_0}\equiv 2M +eF$  where $M$ denotes the section of self intersection $-e$, thus implying the vanishing of  $N_{\mathcal{F}_0}^2$ and then proving that $N_{\tilde{\F}_r}^2= -4r\not=0$ for $r>0$. Compare with Proposition~\ref{thm:bottvanishingC1}.\terminou
\end{remark}

\begin{thm}\label{TH:degree6}
Let $X$ be a weak del Pezzo surface of degree $6$ over an algebraically closed field of characteristic $2$, and let $\mathcal{F}$ be a regular foliation on $X$.  Then $\mathcal{F}$ is obtained from the standard ruling on $\mathbb F_1$ by a succession of two blow-ups according to the process described in Remark~\ref{rmk:existencechr2}.
\end{thm}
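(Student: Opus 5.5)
We first use the structure of the Mori cone. By Theorem~\ref{thm:deg6implieschar2}, $X$ is a SWDP, so $X$ contains at least one $(-2)$-curve, and $p=2$. Recall the numerical tools from the proof of Theorem~\ref{thm:delpezzo}, which apply to any smooth rational curve $C\subset X$. If $C$ is invariant, then $Z(\F,C)=0$, since $C$ is smooth and $\F$ is regular. Proposition~\ref{prop:GM} then gives $N_\F\cdot C=C^2$ and $K_\F\cdot C=-2$, and Theorem~\ref{thm:CSformula} gives $p\mid C^2$. Hence a $(-1)$-curve is never invariant, while a $(-2)$-curve may be invariant since $p=2$. If $C$ is not invariant, Proposition~\ref{prop:tangformula} gives $N_\F\cdot C=2+tang\ge 2$ and $K_\F\cdot C=-C^2+tang$.

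Suppose no $(-2)$-curve were invariant. Then $N_\F$ and $K_\F$ would both be nef, contradicting $N_\F\cdot K_\F=-c_2(T_X)=-6$ (Proposition~\ref{rrat}). So some $(-2)$-curve $E$ is $\F$-invariant. Moreover, the $(-2)$-curves on which $K_\F$ is negative are exactly the invariant ones.

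The next step is to enumerate the degree $6$ SWDP configurations. These are blow-ups of $\mathbb P^2_{\field}$ at bubble cycles of $3$ points, with root types $A_1$ (two cases), $2A_1$, $A_2$, and $A_2+A_1$. We then show that the only configuration compatible with a regular foliation is a single $A_1$ chain arising as in Remark~\ref{rmk:existencechr2}. This means: blow up a point of $\mathbb F_1$ (equivalently, blow up $\mathbb P^2_{\field}$ at two distinct points $q_1,q_2$), then blow up a point $q_3$ on the exceptional curve $E_2$ over $q_2$. The strict transform of $E_2$ is the invariant $(-2)$-curve, and the last exceptional curve is a transverse $(-1)$-curve.

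For the remaining configurations, we write $K_\F=aH-\sum b_iE_i$. The constraints are: $K_\F\cdot C=-2$ on invariant $(-2)$-curves, $K_\F\cdot C\ge 2$ on noninvariant $(-2)$-curves, $K_\F\cdot C\ge 1$ on $(-1)$-curves, and $N_\F=K_\F-K_X$ with $N_\F\cdot K_\F=-6$. We expect these to rule out every case except the one above by integer arithmetic. Invariant $(-2)$-curves cannot meet at a point, by Lemma~\ref{L:normalcrossing}, and this should kill the chains $A_2$ and $A_2+A_1$. This case analysis is the main obstacle; in each case we try first to produce a nef decomposition that contradicts $N_\F\cdot K_\F<0$.

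Finally, we reconstruct the foliation. We contract the transverse $(-1)$-curve $E_3$ meeting $E$. We check that the foliation descends to a regular foliation: $\F$ is transverse to $E_3$ with $tang=0$, and the contraction produces the singularity $xdy+ydx$ on the image of $E$, which is now a $(-1)$-curve. Contracting that curve gives a regular foliation on $\mathbb F_1$. Comparing $N_\F=\pi^*N_{\F_0}-2E_{\mathrm{tot}}$, or directly using that $\F$ is transverse to the exceptional divisor and smooth at the point, shows that the foliation at the blown-up point is $dx=0$, so it is $2$-closed there.

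By Theorem~\ref{thm:delpezzo}, the foliation on $\mathbb F_1$ is the standard ruling. Running the two blow-ups back recovers exactly the process of Remark~\ref{rmk:existencechr2}.
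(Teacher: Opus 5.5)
There is a genuine gap. Your numerical setup (Mori cone generators, nefness of twists of $N_\F$ and $K_\F$ against $N_\F\cdot K_\F=-6$) matches the paper's, but the configuration you try to isolate is the wrong one, so the planned case analysis cannot succeed.

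\textbf{The process does not produce a single $A_1$.} Write $G_1,G_2$ for the exceptional curves of the two-blow-up process in the paper's Remark. The second centre is the singular point $xdy+ydx$ on $G_1$. This point is $G_1\cap\tilde F$, where $\tilde F$ is the strict transform of the fibre (a leaf) through the first centre. Hence $F-G_1-G_2$ is a second invariant $(-2)$-curve, disjoint from $G_1-G_2$, and $G_2$ is a transverse line meeting both.

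In your notation ($G_1=E_2$, $G_2=E_3$), $q_3$ must be the point where $E_2$ meets the strict transform of the line $q_1q_2$. This makes $H-E_1-E_2-E_3$ a $(-2)$-curve too. For $q_3$ general on $E_2$ you get the ``$A_1$ with four lines'' surface instead.

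The surfaces actually produced are of type $2A_1$ when the first centre is off the $(-1)$-section $M$ of $\mathbb F_1$. They are of type $A_2+A_1$ when the first centre lies on $M$, since then $M-G_1$ is a non-invariant $(-2)$-curve meeting $G_1-G_2$. Your ``equivalently, two distinct points of $\mathbb P^2_{\field}$'' drops this second case.

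\textbf{The single-$A_1$ case is impossible.} Suppose $C$ is the only invariant $(-2)$-curve. Then $N_\F-C$ and $K_\F-C$ are nef: they are at least $1$, resp.\ $0$, on every line and $0$ on $C$. But $(N_\F-C)\cdot(K_\F-C)=-6+2=-4<0$. So both $A_1$ configurations carry no regular foliation, including the one you want to keep.

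\textbf{The normal crossing lemma does not remove $A_2+A_1$.} It only forbids two \emph{intersecting} $(-2)$-curves from both being invariant. In the realised case the invariant curves are the two disjoint ones, $C_2$ and $C_3$.

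\textbf{What you need instead, as in the paper.} In the $2A_1$ and $A_2+A_1$ cases, show that exactly two disjoint $(-2)$-curves $C_i,C_j$ are invariant. Show also that the line $L$ meeting both has $tang(\F,L)=0$; otherwise $N_\F-C_i-C_j$ and $K_\F-C_i-C_j$ are nef with product $-6+4=-2<0$. Then contract $L$. The formula $Z(\F,C)=Z(\tilde\F,\tilde C)+1+tang(\tilde\F,E)$ gives $Z=1$ along the images of the two invariant curves. The criterion $Z(\tilde\F,E)=1$ then shows that contracting either image yields a regular foliation.

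Your reconstruction must also say which curve to contract. One choice gives $\mathbb F_1$ with its ruling. The other gives $\mathbb P^1_{\field}\times\mathbb P^1_{\field}$ in case $2A_1$, or $\mathbb F_2$ in case $A_2+A_1$.
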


We prove Theorem~\ref{TH:degree6}  below, before this, we offer some consequences.
\begin{cor} 
Assume the characteristic is $2$. Every regular foliation $\F$ on a weak del Pezzo surface of degree $6$ or $8$ is birationally equivalent to the foliation defined by the standard ruling on $\mathbb F_1$. In particular, $\F$ is $2$-closed.   
\end{cor}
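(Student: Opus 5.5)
The corollary splits into two cases, degree $8$ and degree $6$, both in characteristic $2$.

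\emph{Degree $8$.} A weak del Pezzo surface of degree $8$ is $\mathbb P^1_{\field}\times\mathbb P^1_{\field}$, $\mathbb F_1$ or $\mathbb F_2$. Remark~\ref{rmk:weakdeg8}, together with Theorem~\ref{thm:delpezzo} and Lemma~\ref{lemma:reghirz}, lists the regular foliations on these surfaces. Each is either a $\mathbb P^1_{\field}$-fibration or, on $\mathbb F_2$ (where $p=2$ divides $e$), a Riccati foliation $dz+\gamma=0$.

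Every such foliation is birational to the ruling $dz=0$ on $\mathbb F_1$, as follows.
\begin{itemize}
\item Any $\mathbb P^1_{\field}$-fibration over $\mathbb P^1_{\field}$ on a Hirzebruch surface is birational, via elementary transformations, to the ruling on $\mathbb P^1_{\field}\times\mathbb P^1_{\field}$. That ruling is birational to the ruling of $\mathbb F_1$: blow up a point of $\mathbb P^1_{\field}\times\mathbb P^1_{\field}$ and contract the strict transform of the fiber through it.
\item For the Riccati foliation $dz+\gamma=0$ on $\mathbb F_2$, work in characteristic $2$. The form $\gamma=\gamma(x)dx$ has $\deg\gamma\le e-2=0$, so $\gamma$ is a constant $a_0$. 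Locally $dz+a_0\,dx=d(z+a_0x)$, so the foliation is given by the level sets of the rational function $z+a_0x$. Hence it is birational to a pencil of rational curves, namely the fibers of $(x,z)\mapsto z+a_0x$. After resolving the base points of this pencil we obtain a rational fibration, which in turn is birational to the ruling of $\mathbb F_1$.
\end{itemize}

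\emph{Degree $6$.} Theorem~\ref{TH:degree6} gives directly that $\F$ is obtained from the standard ruling of $\mathbb F_1$ by two blow-ups, so $\F$ is birational to that ruling.

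\emph{$2$-closedness.} Foliations birational to a fibration are $p$-closed. The reason is that $p$-closedness is a generic condition: $v^p\in T_\F$ can be checked on a dense open set, because $T_\F$ is saturated in $T_X$. On a common dense open set where the birational map is an isomorphism, $\F$ is defined by $df=0$ for a rational first integral $f$. If $v$ kills $f$, then $v^p$ also kills $f$, so $v^p$ is tangent to $\F$.

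The main obstacle is Theorem~\ref{TH:degree6} itself, whose proof comes later in the paper; given it, the remaining steps above are routine.
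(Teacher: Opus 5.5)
Your proof is correct. For degree $6$ it is the paper's argument: both simply cite the classification of regular foliations on weak del Pezzo surfaces of degree $6$.

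For degree $8$ you take a genuinely different route. The paper does not classify degree $8$ directly. It starts from the $2$-closedness of every regular foliation in degree $8$ (the remark on degree $8$). It then performs the two blow-ups of the characteristic-$2$ existence construction, which need $2$-closedness to put $\F$ locally in the form $dx=0$. This lands on a regular foliation on a weak del Pezzo surface of degree $6$, to which the degree-$6$ theorem is applied, so degree $8$ is reduced to degree $6$.

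You instead use the explicit list of foliations on $\mathbb P^1_{\field}\times\mathbb P^1_{\field}$, $\mathbb F_1$ and $\mathbb F_2$. Fibrations are trivially birational to the ruling of $\mathbb F_1$. For the exceptional Riccati family on $\mathbb F_2$ you observe that $\gamma=a_0\,dx$ is constant, so $\F$ has the rational first integral $z+a_0x$.

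Your route buys several things:
\begin{itemize}
\item It is more elementary and self-contained, and it exhibits the birational maps explicitly.
\item It derives $2$-closedness rather than taking it as input.
\item It spells out the ``in particular'' through the saturation of $T_\F$, which the paper leaves implicit.
\item It avoids a point the paper takes for granted. The degree-$6$ theorem applies only to weak del Pezzo surfaces, but the existence remark only treats $\mathbb F_1$, so nothing checks that the blow-ups of $\mathbb P^1_{\field}\times\mathbb P^1_{\field}$ or $\mathbb F_2$ can be chosen to give a weak del Pezzo surface.
\end{itemize}
The paper's route is shorter on the page and handles both degrees with a single theorem.

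One simplification of your Riccati step: you need not resolve base points and appeal to the structure of rational fibrations. The shear $(x,z)\mapsto(x,z+a_0x)$ is an automorphism of the affine chart, and it pulls $dz$ back to $dz+a_0\,dx$. So $\F$ is birational to a ruling of $\mathbb P^1_{\field}\times\mathbb P^1_{\field}$, and hence to that of $\mathbb F_1$.
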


\proof
Any regular foliation on a weak del Pezzo surface of degree 8 is $2$-closed, see Remark~\ref{rmk:weakdeg8}. Then performing two blow-ups as in Remark~\ref{rmk:existencechr2} we obtain a regular foliation on a weak del Pezzo surface of degree 6. Therefore, the conclusion follows from Theorem~\ref{TH:degree6}. 
\endproof

\begin{cor}\label{cor:6and8pclosed}
Assume the characteristic is $p>0$. Any regular foliation on a weak del Pezzo surface of degree $6$ or $8$ is $p$-closed.     
\end{cor}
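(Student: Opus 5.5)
The proof reduces to cases already treated, split according to the degree of $X$ and the characteristic $p$.

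\textbf{Degree $8$.} A weak del Pezzo surface of degree $8$ is $\mathbb P^1_{\field}\times\mathbb P^1_{\field}$, $\mathbb F_1$ or $\mathbb F_2$.
\begin{itemize}
\item For $\mathbb P^1_{\field}\times\mathbb P^1_{\field}$ and $\mathbb F_1$, which are del Pezzo surfaces, Corollary~\ref{cor:delpezzopclosed} applies directly. It rests on Theorem~\ref{thm:delpezzo} and Lemma~\ref{lemma:reghirz}.
\item For $\mathbb F_2$, Theorem~\ref{thm:regularminimal} says $\F$ is either the $\mathbb P^1_{\field}$-fibration, which is trivially $p$-closed, or a Riccati foliation $dz+\gamma$ with $p\mid 2$. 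The latter forces $p=2$, and Corollary~\ref{cor:minimalpclosed} shows it is $2$-closed. This is exactly Remark~\ref{rmk:weakdeg8}.
\end{itemize}

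\textbf{Degree $6$.} By Theorem~\ref{thm:deg6implieschar2}, a regular foliation on such a surface forces $X$ to be strict and $p=2$. So if $p\neq 2$ there is nothing to prove. If $p=2$, Theorem~\ref{TH:degree6} (equivalently, the preceding corollary) says $\F$ comes from the standard ruling on $\mathbb F_1$ by the two blow-ups of Remark~\ref{rmk:existencechr2}.

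It remains to check that $p$-closedness survives this construction. Away from the exceptional locus the two foliations are isomorphic, since $\pi$ is an isomorphism there. The condition $v^p\in T_{\F}$ for local sections $v$ of $T_\F$ is equivalent to the vanishing of $\omega(v^p)$, where $\omega$ is a local $1$-form defining $\F$. This vanishing can be tested on a dense open set, because $T_{\F}$ is saturated and $\omega(v^p)$ is a regular function. Hence $p$-closedness of the ruling on $\mathbb F_1$ transfers to its strict transform.

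The argument is essentially bookkeeping once Theorem~\ref{TH:degree6} is granted. The only point deserving care is this density step: $p$-closedness is a birational invariant because the saturated subsheaf is determined by its generic fibre.
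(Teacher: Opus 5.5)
Your proof is correct and follows the paper's own argument. Degree $8$ is handled through the remark on degree-$8$ surfaces: the del Pezzo classification for $\mathbb P^1_{\field}\times\mathbb P^1_{\field}$ and $\mathbb F_1$, and the minimal-surface classification for $\mathbb F_2$. Degree $6$ is handled by the theorem forcing $\mathrm{char}(\field)=2$, combined with the classification showing that $\F$ arises from the ruling of $\mathbb F_1$ by two blow-ups. Your one addition is the explicit density argument that $p$-closedness passes to the strict transform. The paper uses this step only tacitly, in the preceding corollary, where birational equivalence to the ruling is taken to give $2$-closedness.
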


\proof
The case of degree $8$ is discussed in Remark~\ref{rmk:weakdeg8}, while the case of degree $6$ is addressed by  Theorem~\ref{thm:deg6implieschar2} and Theorem~\ref{TH:degree6}. 
\endproof

Before proving Theorem~\ref{TH:degree6}, let us introduce some notation. Let $X$ be a weak del Pezzo surface of degree $\leq 7$ over an algebraically closed field of characteristic  $p=2$. Assume that $X$ carries a regular foliation $\F$. Denote the finite collection of $(-2)$-curves by $(C_i)_{i\in I}$ and the finite collection of $(-1)$-curves by $(L_j)_{j\in J}$.  According to the terminology in \cite{Dolclas}, the $L_j$'s will be called {\bf lines}. In the subsequent analysis, we shall repeatedly employ the following tangency formulae: 
\begin{itemize}
    \item $N_\F\cdot C_i\geq -2$ , $K_\F\cdot C_i\geq -2$ whenever $C_i$ is invariant; 
    \item $N_\F\cdot C_i\geq 2$ , $K_\F\cdot C_i\geq 2$ if $C_i$ is not invariant;
    \item $N_\F\cdot L_j\geq 2$ , $K_\F\cdot L_j\geq 1$ for every $j$ ($L_j$ is never $\F$-invariant).
\end{itemize}
Note also that,  by virtue of Lemma \ref{L:normalcrossing}, one has necessarily $C_i\cdot C_k=0$ whenever $C_i$ and $C_k$ are distinct invariant $(-2)$-curves. Recall that there exists at least one invariant $(-2)$-curve, otherwise we would have $N_{\F}$ and $K_{\F}$ nef, contradicting the inequality
\[
N_\F\cdot K_\F = -c_2(T_X) < 0.
\]


The next result will be useful in the proof of Theorem~\ref{TH:degree6}.  

\begin{lemma} \label{L:nonnefness} 
Assume the characteristic is $2$. Let $X$ be a weak del Pezzo surface of degree $9-r$, $r\geq 3$ and $\F$ a regular foliation on $X$ (in particular $r$ is an odd number, see Theorem~\ref{thm:evendegree}). Let $C_1, \dots, C_n$ be pairwise distinct $(-2)$ invariant curves and assume $n\leq \frac{r+1}{2}$.  Then either
\[
  N_\F- \sum_i C_i \quad \text{or} \quad K_\F-\sum_i C_i
\]
is not nef.
\end{lemma}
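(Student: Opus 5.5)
Argue by contradiction: assume both $N' := N_\F - \sum_i C_i$ and $K' := K_\F - \sum_i C_i$ are nef, and show that $N'\cdot K'$ is forced to be negative. The natural tool is Proposition~\ref{rrat}, which gives $N_\F\cdot K_\F = -c_2(T_X)$.

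First compute $c_2(T_X)$. Since $X$ is rational, Noether's formula gives $c_2(T_X) = 12 - K_X^2 = 12-(9-r) = 3+r$.

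Next expand
\[
N'\cdot K' = N_\F\cdot K_\F - \sum_i C_i\cdot (N_\F+K_\F) + \Big(\sum_i C_i\Big)^2 .
\]
Each $C_i$ is a smooth rational $\F$-invariant curve with $C_i^2=-2$. By Lemma~\ref{L:normalcrossing}, distinct invariant $(-2)$-curves are disjoint. Hence
\[
\Big(\sum_i C_i\Big)^2 = -2n .
\]
For the middle term, Proposition~\ref{prop:GM} gives $N_\F\cdot C_i = C_i^2 + Z(\F,C_i) = -2$, since $Z(\F,C_i)=0$ for a smooth invariant curve and a regular foliation. Likewise $K_\F\cdot C_i = 2p_a-2 = -2$. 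So each $C_i$ contributes $(N_\F+K_\F)\cdot C_i = -4$. Therefore
\[
N'\cdot K' = -(3+r) + 4n - 2n = 2n - r - 3 .
\]

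Now use the hypothesis $n\le \frac{r+1}{2}$, which gives $2n \le r+1$. Then
\[
N'\cdot K' \le (r+1)-r-3 = -2 < 0 .
\]
But the intersection product of two nef divisors on a surface is nonnegative, since nef classes lie in the closure of the ample cone. This is a contradiction, so at least one of $N'$, $K'$ fails to be nef.

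The only delicate points are the two inputs feeding the middle and last terms. The first is the disjointness of the $C_i$, which is exactly what Lemma~\ref{L:normalcrossing} provides: a transverse crossing of two invariant curves would force a singular point of $\F$. The second is the vanishing $Z(\F,C_i)=0$, which holds because $C_i$ is smooth and $\F$ is regular. The hypothesis $r\ge 3$ only ensures we are in the degree $\le 6$ setting where these curves arise; it plays no role in the arithmetic.
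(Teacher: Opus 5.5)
Your proof is correct and follows essentially the same route as the paper. Both arguments combine $N_\F\cdot K_\F=-c_2(T_X)=-3-r$, the equalities $N_\F\cdot C_i=K_\F\cdot C_i=-2$, and the disjointness of the $C_i$ (Lemma~\ref{L:normalcrossing}) to get $\bigl(N_\F-\sum_i C_i\bigr)\cdot\bigl(K_\F-\sum_i C_i\bigr)=2n-r-3\le -2<0$, which contradicts the nonnegativity of a product of two nef classes. You also spell out the Noether-formula computation of $c_2(T_X)$ and that nonnegativity, both of which the paper leaves implicit.
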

\begin{proof}
    By invariance of the $C_i$'s and regularity of the foliation, one has 
    \[
    K_\F\cdot N_\F=-c_2(X)=-3-r
    \]
    and 
    \[
    N_\F\cdot C_i= K_\F\cdot C_i=-2 \quad \text{for every $i$}. 
    \] 
    
    On the other hand, it follows from Lemma \ref{L:normalcrossing} that $C_i\cdot C_j=0$ whenever $i\not=j$. 
    This implies
    \[
    \big(N_\F- \sum_i C_i\big)\cdot \big(K_\F-\sum_i C_i\big)= -3-r+2n\leq -2<0.
    \]
\end{proof}

\subsubsection{Proof of Theorem \ref{TH:degree6}}   
Following the table given in \cite{Dolclas}, we review all possible cases for $X$ assuming a priori that $X$ admits a regular foliation $\F$. Recall that $C_i\cdot C_j=0$ whenever $C_i$ and $C_j$ are distinct invariant $(-2)$-curves. Also, there exists at least one invariant $(-2)$-curve. 

\medskip

The configuration of the $(-1)$ and $(-2)$ curves will be represented by their incidence graph, see Figures~\ref{graph:A11} to \ref{graph:A2+A1}.

\begin{figure}
	\begin{tikzpicture}
		\node[shape=circle,fill=gray!25, draw=black] (L1) at (-3,0) {$L_1$};
		\node[shape=circle,draw=black] (C) at (0,0) {$C$};
		\node[shape=circle,fill=gray!25, draw=black] (L2) at (3,0) {$L_2$};
		\node[shape=circle,fill=gray!25, draw=black] (L3) at (0,-3) {$L_3$};
		\path [-](L1) edge node[left] {$ $} (C);
		\path [-](L2) edge node[left] {$ $} (C);
		\path [-](L3) edge node[left] {$ $} (C);
	\end{tikzpicture}
	\caption{$A_1$ with three lines }
	\label{graph:A11}
\end{figure}

\begin{figure}
\begin{tikzpicture}
	
	\node[shape=circle,fill=gray!25, draw=black] (L1) at (0,3) {$L_1$};
	\node[shape=circle,draw=black] (C1) at (0,0) {$C_1$};
	\node[shape=circle,fill=gray!25, draw=black] (L2) at (3,0) {$L_2$};
	\node[shape=circle,draw=black] (C2) at (3,-3) {$C_2$};
	\path [-](L1) edge node[left] {$ $} (C1);
	\path [-](L2) edge node[left] {$ $} (C1);
	\path [-](C2) edge node[left] {$ $} (L2);
	
\end{tikzpicture}
\caption{$2A_1$ with two lines }
\label{graph:2A1}
\end{figure}

\begin{figure}
	\begin{tikzpicture}
	\node[shape=circle,fill=gray!25, draw=black] (L1) at (-3,0) {$L_1$};
	\node[shape=circle,draw=black] (C) at (0,0) {$C$};
	\node[shape=circle,fill=gray!25, draw=black] (L2) at (-6,0) {$L_2$};
	\node[shape=circle,fill=gray!25, draw=black] (L3) at (3,0) {$L_3$};
	\node[shape=circle,fill=gray!25, draw=black] (L4) at (6,0) {$L_4$};
	\path [-](L1) edge node[left] {$ $} (C);
	\path [-](L2) edge node[left] {$ $} (L1);
	\path [-](L3) edge node[left] {$ $} (C);
	\path [-](L3) edge node[left] {$ $} (L4);
	\end{tikzpicture}
	\caption{$A_1$ with four lines }
	\label{graph:A12}
\end{figure}

\begin{figure}
	\begin{tikzpicture}
	\node[shape=circle,fill=gray!25, draw=black] (L1) at (0,3) {$L_1$};
	\node[shape=circle,draw=black] (C1) at (0,0) {$C_1$};
	\node[shape=circle,fill=gray!25, draw=black] (L2) at (0,-3) {$L_2$};
	\node[shape=circle, draw=black] (C2) at (-3,0) {$C_2$};
	\path [-](L1) edge node[left] {$ $} (C1);
	\path [-](L2) edge node[left] {$ $} (C1);
	\path [-](C2) edge node[left] {$ $} (C);
	
	\end{tikzpicture}
	\caption{$A_2$ with two lines }
	\label{graph:A2}
\end{figure}

\begin{figure}
	\begin{tikzpicture}
	\node[shape=circle,fill=gray!25, draw=black] (L1) at (6,0) {$L_1$};
	\node[shape=circle,draw=black] (C1) at (0,0) {$C_1$};
	\node[shape=circle, draw=black] (C3) at (9,0) {$C_3$};
	\node[shape=circle, draw=black] (C2) at (3,0) {$C_2$};
	\path [-](L1) edge node[left] {$ $} (C2);
	\path [-](C2) edge node[left] {$ $} (C1);
	\path [-](C3) edge node[left] {$ $} (L1);

	\end{tikzpicture}
	\caption{$A_1+A_2$ with one line }
	\label{graph:A2+A1}
\end{figure}

\subsubsection*{{\bf First case: singularity of type $A_1$ with 3 lines}} In this case,  $C$ is the only $(-2)$-curve, hence invariant. Set $L_N=N_\F -C$. Then $L_N \cdot L_i\geq 1$, $L_N\cdot C= 0$.

 Set $L_T= K_\F -C$. Then $L_T\cdot L_i\geq 0$, $L_T\cdot C=0$. Hence $L_T$ and $L_N$ are nef, but this contradicts Lemma \ref{L:nonnefness}.

\subsubsection*{{\bf Second case: singularity of type $2A_1$}}
 
If $C_i$ is invariant but not $C_j$ ($\{i,j\}=\{1,2\}$), set $L_N=N_\F -C_i$, $L_T= K_\F -C_i$. As before, this implies that $L_N$ and $L_T$ are nef, and we obtain the same contradiction as before. 

Thus we are led to consider the only possible situation where both $C_1$ and $C_2$ are invariant. First assume that $tang(\F, L_2)\geq 1$. Then $L_N= N_\F-(C_1+C_2)$ and $L_T= K_\F -(C_1+C_2)$ are nef, again a contradiction with Lemma \ref{L:nonnefness}.

Hence $\F$ is completely transverse to $L_2$. It follows that $\F$ is the pull-back of a singular foliation $\overline{\F}$ defined on the surface $\overline{X}$ obtained by  contracting $L_2$ via $\pi: X\to \overline{X}$.  Let $D_i:={\pi}_*(C_i)$ for $i=1,2$.  By Formula~\eqref{eq:ZbytildeZ}, $Z(\overline{\F}, D_i)=1$. Let $\pi_i: \overline{X} \to X_i$ be the contraction of the $(-1)$-curve $D_i$. From Formula~\eqref{Zburegularinv}, one can derive that $\overline{\F}$ is the pull-back of a \textit{regular} foliation ${\F_i}$ on $X_i$.
Indeed,  when $D_1$ is contracted, ${\F}_1$ is the horizontal (or vertical) foliation on ${X}_1=\P^1\times \P^1$, whereas contracting $D_2$ yields the vertical foliation on ${X}_2=\mathbb F_1$, with the $(-1)$-section corresponding to $L_1$. 

A posteriori, we conclude that $L_1$ was also transverse to $\F$.  By contracting successively $L_1$ and $C_1$, we obtain ${X}_3=\mathbb F_2$ and for the same reasons, the induced foliation on $X_3$ is one of the foliations transverse to the vertical fibration on $\mathbb F_2$. Conversely, any regular foliation on $\mathbb F_2$ transverse to the vertical fibration gives rise, via two suitable blow-ups, to a  regular foliation on a weak Fano surface having a configuration of type $2A_1$. 

\subsubsection*{{\bf Third case: singularity of type $A_1$ with 4 lines.}} In this case there is a single (-2)-curve $C$, which is invariant. Then $L_N=N_\F -C$ and $L_T= K_\F -C$ are nef: contradiction.

\subsubsection*{{\bf Fourth case: singularity of type $A_2$ with 2 lines}}

Either $C_1$ or $C_2$ is invariant, but not both simultaneously, because their intersection is nonempty. If $C_i$ is invariant, then $L_N=N_\F -C_i$, $L_T= K_\F -C_i$ are nef: contradiction.

\subsubsection*{{\bf Fifth case: singularity of type $A_1 + A_2$ with one line}}
At least one of the curves $C_i$ is invariant, but $C_1$ and $C_2$ are not simultaneously invariant.
If $C_1$ is invariant and $C_3$ is not, then $L_N=N_\F -C_1$ and $L_T= K_\F -C_1$ are nef. If both $C_1$ and $C_3$ are invariant, then $L_N=N_\F -C_1 -C_3$ and $L_T= K_\F -C_1-C_3$ are nef. If $C_3$ is the only invariant curve, then $L_N=N_\F -C_3$ and $L_T= K_\F -C_3$ are nef. If $C_3$ and $C_2$ are invariant and $tang(\F, L_1)\geq 1$, then $L_N=N_\F -C_2-C_3$ and $L_T= K_\F -C_2-C_3$ are nef. In all these cases, we reach a contradiction.

The only remaining case is when $C_2$ and $C_3$ are invariant and $tang(\F, L_1)=0$. As in the second case, $\F$ is then the pull-back of a regular foliation $\overline{\F}$ on the degree $6$ del Pezzo surface $\overline{X}$ obtained by contracting successively $L_1$ and then $C_2$ or $C_3$ (more precisely, their image under the first contraction). Contracting $C_2$ yields the vertical foliation on $\mathbb F_1$, whereas contracting $C_3$ yields the vertical foliation on $\mathbb F_2$.


\subsubsection*{{\bf Summary}} Combining the second and fifth cases, we see that all regular foliations on del Pezzo surfaces of degree $6$ are obtained from the vertical foliation on $\mathbb F_1$ by two blow-ups, as wanted. This concludes the proof of Theorem~\ref{TH:degree6}.

\subsubsection{Regular foliations on weak del Pezzo surfaces of degree $4$}

We study the  configuration defined by the incidence graph of Figure~\ref{graph:4A1}. Let $X$ be the corresponding surface (the characteristic of the ground field is $2$). We will describe all regular foliations on $X$. 

\begin{thm}\label{thm:4invarcurevs}
Assume the characteristic is $2$. Let $X$ be a weak del Pezzo surface of degree $4$ having the incidence graph of Figure~\ref{graph:4A1} and let $\F$ be a regular foliation on $X$. Then $\F$ leaves all the $C_i$ invariant. 
\end{thm}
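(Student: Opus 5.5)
We argue by contradiction, following the pattern of the proof of Theorem~\ref{TH:degree6}. I am assuming that Figure~\ref{graph:4A1} is the configuration $4A_1$: four pairwise disjoint $(-2)$-curves $C_1,\dots,C_4$ together with the lines $L_j$ meeting them. Here $r=5$, so $c_2(T_X)=3+r=8$, and Proposition~\ref{rrat} gives $K_\F\cdot N_\F=-8$.

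Let $S\subset\{1,\dots,4\}$ be the set of indices of invariant $C_i$, and suppose $n:=|S|\le 3$. The bound $n\le 3=\frac{r+1}{2}$ is exactly the hypothesis of Lemma~\ref{L:nonnefness}. Hence at least one of
\[
L_N:=N_\F-\sum_{i\in S}C_i,\qquad L_T:=K_\F-\sum_{i\in S}C_i
\]
is not nef; explicitly, $L_N\cdot L_T=-8+2n\le -2$. The goal is to show that both classes are in fact nef. Since the Mori cone is generated by the $C_i$ and the $L_j$, it suffices to test these curves:
\begin{itemize}
\item For $i\in S$, Proposition~\ref{prop:GM} with $Z=0$ (smooth invariant curve, regular foliation) gives $N_\F\cdot C_i=K_\F\cdot C_i=-2$. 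Since the $C_i$ are pairwise disjoint, $L_N\cdot C_i=L_T\cdot C_i=0$.
\item For $k\notin S$, the tangency formulae give $N_\F\cdot C_k\ge 2$ and $K_\F\cdot C_k\ge 2$, and $C_k$ does not meet any $C_i$ with $i\in S$. So both intersections are positive.
\item For a line $L$, set $m_L=\sum_{i\in S}C_i\cdot L$. Then $L_N\cdot L\ge 2-m_L$ and
\[
L_T\cdot L = 1+\mathrm{tang}(\F,L)-m_L .
\]
\end{itemize}
Thus $L_N,L_T$ are nef, giving the desired contradiction, as soon as every line satisfies $m_L\le 1+\mathrm{tang}(\F,L)$.

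The main obstacle is the lines meeting two or more of the chosen invariant $C_i$. This is where the specific incidence graph of Figure~\ref{graph:4A1} must be used. At a point $L\cap C_i$ with $i\in S$ there is no tangency, since $L$ crosses the leaf $C_i$ transversally. So the needed tangency must come from elsewhere on $L$.

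My first attempt is a direct count on $L\simeq\mathbb P^1$. If $L$ meets invariant curves $C_{i_1},\dots,C_{i_m}$, restrict a local generator of $T_\F$ to $L$. This gives a section of $T_\F^*|_L\otimes N_L=\mathcal O_L(K_\F\cdot L-1)$ whose zeros are exactly the tangency points. If this is insufficient, I would argue by symmetry of the $4A_1$ configuration. Such a line is disjoint from the remaining $C_k$, and contracting $L$ (as in the second case of Theorem~\ref{TH:degree6}) turns $C_{i_1},\dots,C_{i_m}$ into invariant $(-1)$-curves through one point. When $m\ge2$, Lemma~\ref{L:normalcrossing} forces that point to be singular for the contracted foliation. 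Formula~\eqref{Zburegularinv} / \eqref{eq:ZbytildeZ} then forces $\mathrm{tang}(\F,L)\ge 1$.

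It remains to exclude $n=0$. If no $C_i$ is invariant, then $N_\F$ and $K_\F$ are both nef, contradicting $N_\F\cdot K_\F=-8<0$. Hence $n=4$, i.e.\ all $C_i$ are invariant.
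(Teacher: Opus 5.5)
Your reduction is sound as far as it goes. The nefness test on the $C_i$ and $L_j$ is correct, and it disposes of $n=0$, $n=1$ and $n=2$ with non-adjacent invariant curves, exactly as in the paper. The only remaining obstruction is a line $L$ that meets two adjacent invariant curves $C_i,C_{i+1}$ and has $\mathrm{tang}(\F,L)=0$; this arises for $n=2$ with adjacent curves and for $n=3$. Your treatment of that case is where the proof breaks.

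Contract the non-invariant $(-1)$-curve $L$ to a point $\bar x$. The blow-up formulas give $a(\bar x)=1+\mathrm{tang}(\F,L)\ge 1$, so $\bar x$ is \emph{always} a singular point of $\bar\F$. When $\mathrm{tang}(\F,L)=0$, the $1$-jet at $\bar x$ is radial, proportional to $x\,dy-y\,dx$. Two invariant branches crossing transversally at $\bar x$ are perfectly compatible with this. Consequently:
\begin{itemize}
\item The normal crossing lemma only returns $a(\bar x)\ge 1$, i.e.\ $\mathrm{tang}(\F,L)\ge 0$, which says nothing.
\item The criterion $Z(\tilde\F,E)=1$ for regular points applies only to invariant exceptional divisors, and $L$ is not invariant.
\item The strict transform formula gives $Z(\bar\F,\bar C_i)=1+\mathrm{tang}(\F,L)$, with no lower bound on the tangency.
\item Your ``direct count'' on $L$ is just the tangency formula again: $\deg\bigl(K_\F|_L\otimes\mathcal O_L(L)\bigr)=\mathrm{tang}(\F,L)$.
\end{itemize}
In fact no local argument can force $\mathrm{tang}(\F,L)\ge 1$, because the configuration does occur. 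In the $2A_1$ case of the degree $6$ theorem, both $(-2)$-curves are invariant and the line meeting both is transverse to $\F$. The $2$-closed foliations on your $4A_1$ surface, obtained from degree $6$ by two blow-ups, have transverse lines of exactly this kind.

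What is missing is global input from the degree $6$ classification. Suppose $L$ meets the invariant curves $C_i,C_{i+1}$ and $\mathrm{tang}(\F,L)=0$. Contract $L$, and then contract the image of $C_i$. That image is an invariant $(-1)$-curve with $Z=1$, so by $Z(\tilde\F,E)=1+a(x)$ it contracts to a \emph{regular} point. You obtain a regular foliation on the degree $6$ weak del Pezzo surface of type $2A_1$. Its two $(-2)$-curves are the images of the two $C_k$ that do not meet $L$. The second case of the proof of the degree $6$ theorem shows that both of these are invariant. Hence all four $C_i$ are invariant, contradicting $n\le 3$. With this step inserted for the problematic lines, your argument becomes the paper's proof.
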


\begin{figure}

\begin{tikzpicture}
	\foreach \ii in {1, ..., 4}{
		\pgfmathsetmacro{\angle}{(2*\ii-1)*45}
		\node[circle, draw,fill=gray!25] (L\ii) at (\angle:4cm) {$L_{ \ii }$};
		
	}
	\foreach \ii in {1, ...,4 }{
		\pgfmathsetmacro{\angle}{(2*(  \ii-1)*45)}
		\node[circle, draw] (C\ii) at (\angle:4cm) {$C_\ii$};
		\draw (L\ii) --  (C\ii);
	}
	\foreach \ii in {1,...,3}{
		\pgfmathtruncatemacro{\next}{\ii+1}
		\draw (L\ii) -- (C\next);
	}
	\draw (L4) --  (C1);

	\end{tikzpicture}
\caption{$4 A_1$ with four lines }
\label{graph:4A1}
\end{figure}
\begin{proof}
We discuss according to the number $n$ of possible invariant $(-2)$-curves.

\subsubsection*{{\bf First case: $n=1$}} We may assume that $C_1$ is invariant. Then the linebundles
\[ 
L_N=N_\F -C_1 \quad \text{and} \quad L_T= K_\F -C_1
\]
are nef, contradicting Lemma \ref{L:nonnefness}.

\subsubsection*{{\bf Second case: $n=2$}} By symmetry, we may assume either the pair $(C_1, C_2)$ or the pair $(C_1, C_3)$ is invariant. Suppose $C_2$ is invariant. If $tang(\F, L_1)\geq 1$, then the following linebundles are nef
\[
L_N=N_\F -C_1-C_2 \quad \text{and} \quad L_T= K_\F -C_1-C_2
\]
again contradiction with Lemma \ref{L:nonnefness}. Hence $\F$ is transverse to $L_1$, and contracting $L_1$ and $C_1$ leads to the degree $6$ surface of type $2A_1$, where the corresponding foliation is regular (for the reasons given in the study of previous cases, using Formula~\eqref{Zburegularinv} to conclude that the transformed foliation is regular). This is the second case in the proof of Theorem \ref{TH:degree6}, then the foliation leaves two $(-2)$-curves invariant,  namely those corresponding to $C_3$ and $C_4$, yielding a contradiction.

If $C_1$ and $C_3$ are invariant, we directly obtain that 
\[
L_N=N_\F -C_1-C_3 \quad \text{and} \quad L_T= K_\F -C_1-C_3
\]
are nef,  again we get a contradiction.

\subsubsection*{{\bf Third case: $n=3$}} By symmetry, assume $C_1, C_2, C_3$ are invariant. If $tang(\F, L_1)\geq 1$ and $tang(\F, L_2)\geq 1$, then 
\[
L_N=N_\F -C_1-C_2-C_3 \quad \text{and} \quad L_T= K_\F -C_1-C_2-C_3
\]
are nef, giving a contradiction. Hence, we may assume $tang(\F, L_1)=0$, which leads to the same contradiction as in the case $n=2$.

Based on the preceding analysis, we conclude that the only viable case is one where $\F$ leaves all the $C_i$ invariant.
\end{proof}

In the next result, we construct a 5-parameter family of regular foliations on the rational surface $X$ that are not $2$-closed.

\begin{thm}\label{thm:5parameter}
Assume the characteristic is $2$. Let $X$ be a weak del Pezzo surface of degree $4$ having the incidence graph of Figure~\ref{graph:4A1}. Then $X$ carries a $5$-parameter family of regular foliations such that each member of the family is not $2$-closed.
\end{thm}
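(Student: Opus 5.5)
The plan is to write $X$ down explicitly and build the foliations by an explicit ansatz, checking regularity and non-$2$-closedness by direct computation.

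\textbf{Model for $X$.} Let $\pi\colon X\to \P^1_{\field}\times\P^1_{\field}$ be the blow-up of the four torus-fixed points $(0,0),(0,\infty),(\infty,0),(\infty,\infty)$. Then $K_X^2=8-4=4$. The strict transforms of the four boundary lines $\{x=0\},\{x=\infty\},\{y=0\},\{y=\infty\}$ are four pairwise disjoint $(-2)$-curves, and the four exceptional curves are $(-1)$-curves, each meeting exactly two of them. This reproduces the cyclic incidence graph of Figure~\ref{graph:4A1}. I will take this as the surface of the statement, after checking against Dolgachev's table that the configuration determines $X$, at least for the purposes of the construction.

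\textbf{Local requirements.} By Theorem~\ref{thm:4invarcurevs}, the $C_i$ must be invariant. So I look for $\F=\pi^*\overline{\F}$, where $\overline{\F}$ on $\P^1_{\field}\times\P^1_{\field}$ has three properties:
\begin{enumerate}
\item[(a)] it leaves the four boundary lines invariant;
\item[(b)] it is regular away from the four corners;
\item[(c)] after one blow-up, each corner yields a non-invariant exceptional curve $E$ with $tang(\F,E)=0$ and no singularity on $E$.
\end{enumerate}
By \eqref{eq:noninvla}, \eqref{eq:tangbu} and \eqref{eq:normalbu}, condition (c) means that $a(x)=1$ at each corner and the linear part of $\omega$ there is a multiple of $y\,dx+x\,dy=d(xy)$. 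This is exactly the char $2$ phenomenon: the radial-type linear part is closed, so $E$ is non-invariant.

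\textbf{Ansatz.} In the affine chart I take
\[
\overline\omega=y\,P(x,y)\,dx+x\,Q(x,y)\,dy,\qquad P(0,0)=Q(0,0)=1 ,
\]
with $P,Q$ of bidegrees chosen so that $\overline\omega$ is a twisted $1$-form on $\P^1\times\P^1$ with $N_{\overline\F}=\mathcal O(2,2)$. This value of $N_{\overline\F}$ is the one forced by \eqref{eq:normalbu} and by $N_\F\cdot C_i=-2$ from Proposition~\ref{prop:GM} and Theorem~\ref{thm:CSformula}. The analogous normalization must hold at the other three corners. Invariance of the lines is automatic from the factors $x$ and $y$. The remaining conditions are:
\begin{itemize}
\item nonvanishing of $\overline\omega$ off the corners, which is a condition on the restrictions of $P$ to $y=0$ and $y=\infty$ and of $Q$ to $x=0$ and $x=\infty$, plus an open condition in the interior;
\item nondegeneracy of the quadratic terms at each corner, so that the unique candidate singular point on $E$ disappears.
\end{itemize}
I will then count the coefficients of $P,Q$ modulo scaling and the torus action $(x,y)\mapsto(\lambda x,\mu y)$, and expect a $5$-dimensional open family.

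\textbf{Non-$2$-closedness and the main obstacle.} I will take the vector field $v=xQ\,\partial_x-yP\,\partial_y$ defining $\overline\F$ and compute $v^2$ in characteristic $2$. It suffices to show that $v^2\wedge v\not\equiv 0$ for one explicit member, for instance at the generic point of the torus or by a leading-coefficient check. The condition $v^2\wedge v\neq 0$ is then Zariski open, so it holds on a dense open subset of the $5$-parameter family. I expect the main obstacle to be the global regularity check. It must hold simultaneously at all four corners and along the exceptional curves, while the family still retains enough freedom for $v^2\wedge v\neq 0$. If the naive bidegree does not allow this, I will first try adding terms in $x^2,y^2$, which behave as constants for derivations in characteristic $2$. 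This keeps the linear part equal to $d(xy)$ at each corner while breaking $2$-closedness.
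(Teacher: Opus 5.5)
Your construction breaks at requirement (c). Regularity of $\F$ along a non-invariant exceptional curve $E$ does not require $tang(\F,E)=0$: a regular foliation can be tangent to a non-invariant curve at a regular point.

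Imposing transversality at all four corners forces $a=1$ there. Then, as you compute yourself, $N_{\overline{\F}}=\mathcal O(2,2)$, so $K_{\overline{\F}}=N_{\overline{\F}}+K_{\P^1\times\P^1}=0$. Hence $\overline{\F}$ is generated by a global vector field
\[
(a_0+a_1x+a_2x^2)\partial_x+(b_0+b_1y+b_2y^2)\partial_y .
\]
Invariance of the four lines gives $a_0=a_2=b_0=b_2=0$, and your linear-part condition gives $a_1=b_1$. So $v=x\partial_x+y\partial_y$ and $\overline{\omega}=y\,dx+x\,dy=d(xy)$.

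Your ansatz therefore has exactly one solution, the pencil $xy=c$. It satisfies $v^2=v$, so it is $2$-closed, and there is no $5$-parameter family in it.

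Adding $x^2,y^2$ terms cannot rescue this. With $a=1$ at the corners, $N_{\overline{\F}}$ is pinned down by $N_\F\cdot C_i=C_i^2+Z(\F,C_i)=-2$. Any larger normal bundle would give $Z(\F,C_i)>0$, i.e.\ singularities on the $C_i$. This agrees with the paper's observation that transversality of even one $L_i$ already forces $2$-closedness. Contracting $L_i$ and an adjacent $C_j$ lands on the degree $6$ surface of type $2A_1$, where every regular foliation is $2$-closed.

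The missing ingredient is that a non-$2$-closed $\F$ must satisfy $tang(\F,L_i)=1$ for every $i$. The paper proves this by showing that $N_\F-\sum C_i$ and $K_\F-\sum C_i$ are nef with product $0$, and then applying the Hodge index theorem. By the blow-up formula $tang(\tilde{\F},E)=a(x)-1$, this gives $a(p_i)=2$. So the corners are singularities of $\overline{\F}$ of vanishing order two, and $N_{\overline{\F}}$ has bidegree $(4,4)$, not $(2,2)$.

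In that setting the admissible forms are
\[
\omega=(xyP(y)+\lambda y^2+\mu x^2y^2)\,dx+(xyQ(x)+\alpha x^2+\beta x^2y^2)\,dy .
\]
Regularity after the four blow-ups is equivalent to $P=\alpha+ay+\beta y^2$ and $Q=\lambda+bx+\mu x^2$ with $a\neq b$. The five parameters are $(\alpha,\beta,\lambda,\mu,a,b)$ up to a common scalar.

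These parameters are not taken modulo the torus. The paper uses the torus only to normalize $\beta=\mu=1$ before checking $v\wedge v^{(2)}\neq 0$. Quotienting by $(x,y)\mapsto(sx,ty)$, as you propose, would leave only three parameters, so your count would not reach five either.
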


\begin{proof}
We are looking for regular foliations on $X$ which are not $2$-closed. By Theorem~\ref{thm:4invarcurevs}, we can assume that all the $C_i$'s are invariant. Moreover, we claim that we may assume $tang(\F,L_i) = 1$. First, suppose one of the $L_i$, say $L_1$, is transverse to $\F$. Contracting $L_1$ and then $C_1$ or $C_2$ yields the degree $6$ model of type $2A_1$, whose corresponding foliation is in particular $2$-closed.
Now suppose that for all $i$, $tang(\F,L_i)\geq 1$. One checks that 
\[
L_N=N_\F -C_1-C_2-C_3-C_4 \quad \text{and} \quad L_T= K_\F -C_1-C_2-C_3-C_4
\]
are nef and $L_N \cdot L_T =0$. Moreover, for all $i$, we have 
\[
L_N\cdot L_i= tang(\F,L_i)>0 \quad \text{and} \quad L_T\cdot L_i= tang(\F,L_i) -1.
\]
Also, $L_N =  L_T -K_X$ has nonzero self-intersection. By  Hodge index theorem, we conclude that $L_T\equiv 0$, hence $tang(\F,L_i)=1$.  This proves the claim above. 

By contracting simultaneously the four $(-1)$-curves $L_i$, we obtain $\bar X=\P^1 \times \P^1$. The curves $C_i$ are mapped to four $\bar\F$-invariant lines $\mathcal L_i$, namely, in suitable projective coordinates $(x,y)$:
$$\mathcal L_1= \P^1\times \{0\}, \quad \mathcal L_2=\{\infty\}\times \P^1, \quad \mathcal L_3=\P^1\times\{ \infty\}, \quad \mathcal L_4 =\{0\}\times \P^1,$$ where $\bar\F$ is the (singular) foliation induced by $\F$ on $\bar X$.

The cycle $\mathcal C=\cup_i \mathcal L_i$ contains exactly four singular points of $\bar \F$, located at the intersections of adjacent lines. In cyclic order: $p_1=\mathcal{L}_1\cap \mathcal{L}_2$, \dots, $p_4= \mathcal{L}_4\cap \mathcal{L}_1$, and for all $i$, 
$$
Z(\bar\F, \mathcal L_i, p_i)=Z(\bar\F, \mathcal L_{ i+1}, p_i)=2 
$$ 
as a byproduct of \eqref{eq:ZbytildeZ}. 
We then  obtain 
\[
Z(\bar{\F},\mathcal L_i) = 4 = N_{\bar \F}\cdot \mathcal L_i.
\]
This implies that  $N_{\bar\F} \equiv 4 (\mathcal L_2 +\mathcal L_3)$. Also, by \eqref{eq:tangbu}, each singularity has vanishing order: 
\begin{eqnarray}\label{eq:a(p)}
a(p_i)=2 \quad \forall \, i= 1, \dots, 4.
\end{eqnarray}

In the coordinates $(x,y)$, the foliation $\bar \F$ is defined by a polynomial $1$-form
$$\omega= U(x,y)dx +V(x,y) dy,$$
where $U$ and $V$ are coprime and the polar divisor is
\begin{eqnarray}\label{eq:omegainfty}
{ (\omega)}_\infty=4 (\mathcal L_2 +\mathcal L_3).
\end{eqnarray}
Combining \eqref{eq:a(p)} and \eqref{eq:omegainfty} with the invariance of each $\mathcal L_i$,  one can verify (by a tedious but easy calculation) that $\omega$ has the form
$$\omega= (xyP(y)+\lambda y^2 +\mu x^2y^2)dx +(xy Q(x) +\alpha x^2+\beta x^2 y^2)dy,$$
where $P,Q$ are polynomials of degree $\leq 2$ and $\alpha, \beta ,\lambda , \mu \in \field^\times$.

Moreover, the strict transform of $\bar \F$ obtained by the four blow-ups at $p_i$, $i=1,...,4$, is regular 
if and only if 
$$P(y)=\alpha +ay +\beta y^2,\quad Q(x)=\lambda + bx +\mu x^2,$$
with $a\neq b\in \field$.  Indeed,  by Remark~\ref{R:smoothnessns},  it suffices to observe that this strict transform has no singularities on the octagonal cycle defined by the $(-1)$ and $(-2)$ curves on $X$ to conclude that it has no singularities at all. Note that the parameters $\beta$ and $\mu$ reappear in $P(y)$ and $Q(x)$, respectively, precisely to eliminate any potential singularities outside the set $\{p_1, \dots, p_4$\}.

The parameters involved are $\alpha, \beta, a, b, \lambda, \mu$, and since the foliation is defined by the $1$-form $\omega$ up to a scalar multiple, this yields a five parameters family of foliations.

We now verify that the corresponding family of regular foliations on $X$ satisfies the conclusion of the theorem.  By a change of coordinates $(x,y)\mapsto (sx, ty)$ with 
\begin{eqnarray}\label{eq:st}
\mu s^3 t^2= \beta s^2 t^3=1,
\end{eqnarray}
one can normalize $\beta=\mu=1$. To see that there exist $s,t$ satisfying \eqref{eq:st}, consider the curves $\mu s^3t^2-u^5=0$ and $\beta s^2t^3-u^5=0$, in homogeneous coordinates $(s:t:u)$. These curves intersect each other along the line $u=0$ at exactly two points, each with intersection multiplicity 10. Since the product of their degree is 25,  B\'ezout's theorem implies the existence of further intersection points in the affine chart $u=1$, then satisfying \eqref{eq:st}.  With this normalization, we have
\[
\omega =(xyP(y)+\lambda y^2 +  x^2y^2)dx +(xy Q(x) +\alpha x^2+ x^2 y^2)dy,
\]
with 
\[
P(y)=\alpha +ay + y^2,\quad Q(x)=\lambda + bx +  x^2,\quad a\neq b\in \field.
\]
By considering the rational vector field defining $\bar\F$
\[
v=(xy Q(x)+\alpha x^2+ x^2 y^2)\partial_x - (xyP(y)+\lambda y^2 +  x^2y^2)\partial_y
\]
one can readily check that $v\wedge v^{(2)}\not=0$. This concludes the proof.
\end{proof}

The following result is implicit in the proof of Theorem~\ref{thm:5parameter}. 

\begin{thm}
    Assume the characteristic is $2$. Let $X$ be a weak del Pezzo surface of degree $4$ having the incidence graph of Figure~\ref{graph:4A1} and let $\F$ be a regular foliation on $X$. Then:
    \begin{itemize}
        \item either $\F$ is $2$-closed and can be obtained from a regular foliation on a weak del Pezzo surface of degree 6 after two blow-ups; or
        \item $\F$ is not $2$-closed and lies in the $5$-parameters family of foliations of Theorem~\ref{thm:5parameter}. Furthermore, $\F$ can be obtained from a (singular) foliation on $\P^1\times \P^1$ after $4$ blow-ups.  
    \end{itemize}
\end{thm}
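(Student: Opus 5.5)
The plan is to extract the dichotomy from the proof of Theorem~\ref{thm:5parameter}, organized around the tangency orders $tang(\F,L_i)$ of the four lines. By Theorem~\ref{thm:4invarcurevs}, all four $(-2)$-curves $C_1,\dots,C_4$ are $\F$-invariant, so only the lines $L_i$ remain to be analyzed. The splitting is whether some line $L_i$ is everywhere transverse to $\F$, that is, $tang(\F,L_i)=0$, or whether $tang(\F,L_i)\ge 1$ for all $i$.

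\emph{First case: some $L_i$, say $L_1$, has $tang(\F,L_1)=0$.} Contract $L_1$. By formula \eqref{eq:ZbytildeZ}, the image of $C_1$ (or of $C_2$) becomes an invariant $(-1)$-curve $D$ with $Z=1$. Formula \eqref{Zburegularinv} then shows that $D$ can be contracted while keeping the induced foliation regular. This is exactly the argument of the second case of Theorem~\ref{thm:4invarcurevs}, and it produces a regular foliation on the degree $6$ surface of type $2A_1$. By Theorem~\ref{TH:degree6}, that foliation comes from the standard ruling on $\mathbb F_1$ by two blow-ups, hence it is $2$-closed. To conclude that $\F$ itself is $2$-closed, observe that $X\to\overline X$ is birational and $2$-closedness is a generic condition ($v\wedge v^{2}=0$), so it is preserved under pullback. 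Hence $\F$ is obtained from the degree-$6$ foliation by the two blow-ups (the contracted $L_1$ and $C_1$), which is the process of Remark~\ref{rmk:existencechr2}.

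\emph{Second case: $tang(\F,L_i)\ge1$ for all $i$.} Repeat the Hodge index argument from the proof of Theorem~\ref{thm:5parameter}: $L_N=N_\F-\sum C_i$ and $L_T=K_\F-\sum C_i$ are nef with $L_N\cdot L_T=0$ and $L_N^2\neq0$, which forces $L_T\equiv0$ and hence $tang(\F,L_i)=1$ for every $i$. Now contract the four $L_i$ to reach $\mathbb P^1\times\mathbb P^1$ with the four invariant lines $\mathcal L_i$. The local data \eqref{eq:a(p)} and \eqref{eq:omegainfty}, together with the regularity criterion of Remark~\ref{R:smoothnessns}, force $\omega$ into the normal form displayed there, with $P,Q$ as specified and $a\neq b$. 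Consequently $\F$ is the strict transform, under four blow-ups, of a singular foliation on $\mathbb P^1\times\mathbb P^1$, and it is a member of the $5$-parameter family.

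The remaining point, and the main obstacle, is that \emph{every} member of this family is non-$2$-closed. The proof of Theorem~\ref{thm:5parameter} only asserts $v\wedge v^{(2)}\neq0$ after normalizing $\beta=\mu=1$. I would compute $v^{2}$ explicitly in characteristic $2$ for the normalized $v$. The aim is to exhibit a coefficient of $v\wedge v^{2}$ that is a nonzero constant multiple of a power of $(a-b)$ or of $\alpha\lambda$, which are nonzero by hypothesis. This would show that the second alternative is disjoint from the first and that the dichotomy is exhaustive.
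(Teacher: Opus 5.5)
Your proposal is correct and follows the argument the paper leaves implicit in the proof of Theorem~\ref{thm:5parameter}. The case split is the same: either some $L_i$ is transverse to $\F$, and you contract $L_i$ and an adjacent $C_j$ and invoke Theorem~\ref{TH:degree6}; or $tang(\F,L_i)\ge 1$ for all $i$, in which case Hodge index forces $tang(\F,L_i)=1$ and the normal form on $\P^1\times\P^1$ follows. The computation you single out does go through, though not in the shape you predict: after normalizing $\beta=\mu=1$, the degree-$5$ and degree-$6$ homogeneous parts of the coefficient of $v\wedge v^{(2)}$ cancel, and its degree-$7$ part is $(a-b)\,x^3y^3(\alpha b\,x+\lambda a\,y)$, which is nonzero because $a\neq b$ forces $(a,b)\neq(0,0)$ and $\alpha\lambda\neq 0$.
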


We finish this section with an  example that lies in the above $5$-parameter family.  

\begin{ex}\rm \label{ex:regularnon2closed}
The following provides an explicit example of a regular foliation on the degree 4 weak del Pezzo $X$ as above, that is not $2$-closed. We begin with a foliation  on $\P^1\times \P^1$ given in affine coordinates $(x,y)$ by the $1$-form
    \[
    \omega =\big(xy(1+y^2) + y^2 + x^2y^2\big)dx +\big(xy(1+x+x^2) + x^2+ x^2 y^2\big)dy
    \]
(note that this foliation is invariant under the involutions $(x,y)\mapsto (\frac{1}{x}, y)$ and $(x,y)\mapsto (x, \frac{1}{y})$). 
Next, blow-up the four points $(0,0), (0, \infty), (\infty, 0), (\infty, \infty)$. The resulting surface $X$ is a weak del Pezzo surface of degree 4. The induced foliation on $X$, is a regular foliation that leaves the $(-2)$-curves $C_i$ of the configuration of Figure~\ref{graph:4A1} invariant and has tangency of order 1 along each $(-1)$-curve $L_i$.  
\terminou
\end{ex}

\section{Foliations on projective spaces}\label{section:regprojspaces}

We assume $n>1$.   Let $\mathcal F$ be a codimension $q$, $0 < q < n$, foliation on $\mathbb P_{\field}^n$. The $q$-th wedge product given by the inclusion $N_{\mathcal F}^* \to \Omega^1_{\mathbb P_{\field}^n}$ gives rise to a nonzero global section $\omega\in \h^0(\mathbb P_{\field}^n, \Omega^q_{\mathbb P_{\field}^n}\otimes \det(N_{\mathcal F}))$.  The \textbf{degree} of $\mathcal F$ is defined as the integer $d$ such that $\det (N_{\mathcal F}) = \mathcal O_{\mathbb P_{\field}^n}(q+d+1)$. In this section, we examine some conditions under which a foliation on $\mathbb P_{\field}^n$ is $p$-closed.

\subsection{Regular foliations on projective spaces} 

The characterization of regular foliations on projective spaces is a consequence of \cite[Theorem 1.5]{MR2148539}:  

\begin{thm}\label{thm:regfolproj}
Let $\mathcal F$ be a foliation on $\mathbb P_{\field}^n$ of codimension $q$, $0 < q < n$, and degree $d$. If $\mathcal F$ is regular, then $q=1$, $\mathrm{char}(\field)=2$, $d=0$ and $n$ is odd. Furthermore,
$\mathcal{F}$ is given by a global exact $1$-form $\Omega=dF$, where $F$ is a homogeneous polynomial of degree $2$.  In particular,  $\mathcal F$ is $2$-closed.
\end{thm}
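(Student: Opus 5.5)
We plan to derive the statement from \cite[Theorem 1.5]{MR2148539}, supplemented by a direct analysis of the codimension one case. The first step is to invoke that theorem as a black box. It says that a regular foliation of codimension $q$ on $\projn$, with $0<q<n$, can exist only when $q=1$ and $p=2$, and that its normal bundle is then $\mathcal O_{\projn}(2)$. Since $N_\F=\mathcal O(q+d+1)=\mathcal O(d+2)$, this gives $d=0$. If the cited statement is phrased more loosely, we would recover $d=0$ from a Chern class computation. For a regular $\F$ we have the exact sequence
\[
0\to N_\F^*\to\Omega^1_{\projn}\to\Omega^1_\F\to 0,
\]
so $c(\Omega^1_{\projn})=(1-(d+2)h)\,c(\Omega^1_\F)$. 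Here $c(\Omega^1_{\projn})=(1-h)^{n+1}$ and $c(\Omega^1_\F)$ has degree at most $n-1$. This forces $(1-h)^{n+1}$ to be divisible by $1-(d+2)h$ in $\mathbb Z/p\,[h]/(h^{n+1})$ in the appropriate sense, so $(d+2)\equiv 1$ would be needed, or $c_n$ must vanish modulo $p$. We expect this bookkeeping to pin down $p=2$ and $d=0$.

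The second step is to identify the form. A degree $0$ codimension one foliation is given by
\[
\Omega=\sum_i A_i\,dx_i,\qquad A_i \text{ linear},\qquad \iota_R\Omega=0.
\]
We would write $A_i=\sum_j a_{ij}x_j$, with matrix $M=(a_{ij})$. The condition $\iota_R\Omega=\sum_{i,j}a_{ij}x_ix_j=0$ means, in characteristic $2$, that $a_{ii}=0$ and $a_{ij}=a_{ji}$. So $M$ is symmetric with zero diagonal, i.e.\ alternating. Then the quadratic form
\[
F=\sum_{i<j}a_{ij}x_ix_j
\]
satisfies $dF=\Omega$, because $\partial_{x_i}F=\sum_{j\ne i}a_{ij}x_j=A_i$. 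The singular set is the projectivization of $\ker M$. Regularity requires this set to be empty, so $M$ is nondegenerate. An alternating nondegenerate matrix has even size $n+1$, hence $n$ is odd.

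Finally, $\F$ is $2$-closed because it is defined by an exact form. In the local chart, $T_\F$ is the set of derivations $v$ with $v(F)=0$, where $F$ is dehomogenized. Then $v^2(F)=v(v(F))=0$, so $v^2\in T_\F$. The main obstacle is the first step, namely ruling out $q\ge2$ and fixing $p=2$. For this we rely entirely on the cited theorem, and we expect the subsequent normal-form argument to be routine.
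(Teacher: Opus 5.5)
Your proposal has a genuine gap at the one point where integrability must enter: the conclusion $\mathrm{char}(\field)=2$.

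\textbf{Where $p=2$ is claimed but not proved.} Theorem 1.5 of \cite{MR2148539} concerns nesting maps $\mathbb P^n_{\field}\to\mathbb G$, i.e.\ subbundles of $T_{\mathbb P^n_{\field}}$ (regular distributions). Its conclusion is characteristic free: $q=1$, $n$ is odd, and $T_x\F=x^{\perp}$ for a nondegenerate alternating form. Thus $\F$ is defined by the contact form $x_0dx_1-x_1dx_0+\cdots+x_{n-1}dx_n-x_ndx_{n-1}$. The theorem cannot give $p=2$, because this form defines a regular codimension one distribution on $\mathbb P^n_{\field}$ ($n$ odd) in every characteristic.

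Your Chern class fallback has the same defect. $\mathrm{CH}^*(\mathbb P^n_{\field})=\mathbb Z[h]/(h^{n+1})$ does not see $p$, so there is no reduction modulo $p$ to perform. Also $1-(d+2)h$ is a unit in the truncated ring, so ``divisibility'' there is empty. Done correctly, as in the Proposition after the theorem, one uses $\deg c(T_\F)\le n-1$ to lift $c(T_{\mathbb P^n})=c(T_\F)\,(1+(d+2)h)$ to $\mathbb Z[h]$. Then $-1/(d+2)$ is a root of $(1+h)^{n+1}-h^{n+1}$, so $(d+1)^{n+1}=(-1)^{n+1}$. This gives $d=0$ and $n$ odd, but says nothing about the characteristic.

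Your second step assumes $p=2$ from the start. So integrability, the only thing distinguishing a foliation from a distribution here, is never used, and $p=2$ is unsupported.

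\textbf{The fix.} The repair fits your normal-form computation if you run it in arbitrary characteristic.
\begin{itemize}
\item The condition $\iota_R\Omega=\sum a_{ij}x_ix_j=0$ means $M$ is alternating: $a_{ii}=0$ and $a_{ji}=-a_{ij}$.
\item Regularity means $M$ is nondegenerate, so $n$ is odd and $n\ge 3$.
\item Differentiating gives $d\Omega=-2\sum_{i<j}a_{ij}\,dx_i\wedge dx_j$.
\item If $p\neq 2$, then $d\Omega$ is a constant $2$-form of rank $n+1\ge 4$, while $\Omega(x)=\sum_{i,j}a_{ij}x_j\,dx_i\neq 0$ for $x\neq 0$.
\item A nonzero covector wedged with a $2$-form of rank $\ge 4$ is nonzero. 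Hence $\Omega\wedge d\Omega\neq 0$, contradicting integrability.
\end{itemize}
Equivalently, as in the paper's alternative proof: de Rham--Saito plus a degree comparison gives $d\Omega=0$, and $\iota_R d\Omega=(d+2)\Omega=2\Omega$ then forces $p=2$.

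With $p=2$ established, $M$ is symmetric with zero diagonal. The rest of your argument then goes through: $\Omega=dF$ with $F=\sum_{i<j}a_{ij}x_ix_j$, and $2$-closedness follows from $v^2(f)=v(v(f))=0$. This matches the paper, which obtains $p=2$ by exactly this use of integrability on the contact form.
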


\proof
Following the terminology of \cite{MR2148539}, the subbundle $T_{\mathcal F}\subset T_{\mathbb P_{\field}^n}$ defines a nesting map $f\colon \mathbb P_{\field}^n\to \mathbb G(q,n)$, $x\mapsto T_x\mathcal F$, where $\mathbb G(q,n)$ denotes the Grassmannian of $q$ dimensional linear subspaces of $\mathbb P_{\field}^n$.  According to \cite[Theorem 1.5]{MR2148539}:  $q=1$ and $n$ is odd. Furthermore, there is a non-degenerate alternating bilinear form  on $\field^{n+1}$ such that $f$ is given by $f(q) = q^{\perp}$. Where  $q^{\perp}$ is the orthogonal complement with respect to the bilinear form. 

We may assume, up to change of coordinates, that the non-degenerate alternating bilinear form is given by a matrix consisting of  blocks $\small{\begin{pmatrix} 0 & 1 \\ -1 & 0 \end{pmatrix}}$ along the main diagonal and zero elsewhere. This means that $\F$ is defined, in homogeneous coordinates by the standard contact form
\begin{eqnarray}\label{eq:contactform}
\Omega = x_0dx_1 - x_1dx_0 + x_2dx_3-x_3dx_2+ \cdots + x_{n-1}dx_n - x_ndx_{n-1}.
\end{eqnarray}
The integrability implies that $p=2$ and we get 
\[
\Omega = d(x_0x_1+\cdots+ x_{n-1}x_n).
\] 
\endproof

\begin{remark}\rm
    During the proof of Theorem~\ref{thm:regfolproj}, we established that for $0 < q < n$,  a codimension $q$ distribution on $\mathbb P_{\field}^n$  is regular only if $q=1$ and $n$ is odd. Moreover, up to change of coordinates it can be defined by the contact form \eqref{eq:contactform}. This aligns with Proposition 2.1 in page 85  of \cite{MR537038}. 
    \terminou 
\end{remark}

The next result is an immediate consequence of Theorem~\ref{thm:regfolproj}; however  we present an alternative proof.

\begin{prop}
Let $\mathcal F$ be a codimension one foliation on $\mathbb{P}_{\field}^n$ of degree $d$. If $\mathcal F$ is regular, then $\mathrm{char}(\field)=2$, $d=0$ and $n$ is odd. Furthermore,
$\mathcal{F}$ is given by $\Omega=dF$, where $F$ is a homogeneous polynomial of degree $2$.  In particular,  $\mathcal F$ is $2$-closed.
\end{prop}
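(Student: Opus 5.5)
The plan is to replace the nesting-map argument by a Chern class count, and then to pin down the $1$-form by hand. Since $\F$ is regular of codimension one, the twisted $1$-form $\omega\in \h^0(\projn,\Omega^1_{\projn}(d+2))$ defining $\F$ has empty zero locus. The first step is therefore to show that the top Chern class $c_n(\Omega^1_{\projn}(d+2))$ vanishes. This is characteristic free: a nowhere vanishing section of a rank $n$ bundle on an $n$-dimensional variety forces its top Chern class to be $0$ in $\mathrm{CH}^n(\projn)\simeq\mathbb Z$.

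Next I compute this class from the twisted Euler sequence
\[
0\to \Omega^1_{\projn}(k)\to \mathcal O_{\projn}(k-1)^{\oplus(n+1)}\to \mathcal O_{\projn}(k)\to 0,
\]
with $k=d+2$ and $h$ the hyperplane class. This gives $c(\Omega^1_{\projn}(k))=(1+(k-1)h)^{n+1}/(1+kh)$. Extracting the coefficient of $h^n$ yields
\[
c_n\big(\Omega^1_{\projn}(k)\big)=\sum_{i=0}^{n}\binom{n+1}{i}(k-1)^i(-k)^{n-i}=\frac{(k-1)^{n+1}-(-1)^{n+1}}{k}.
\]
Vanishing forces $(d+1)^{n+1}=(-1)^{n+1}$ in $\mathbb Z$. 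Since $d\ge 0$, this holds only if $d=0$ and $n+1$ is even, that is, $n$ is odd.

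With $d=0$, I write $\omega=\sum_i A_i\,dx_i$ with $A_i$ linear, so $A=Mx$ for a square matrix $M$. The condition $i_R\omega=x^{T}Mx=0$ says exactly that $M$ is alternating. The singular set of $\omega$ is $\mathbb P(\ker M)$, so regularity means $M$ is nondegenerate. Now I use integrability. We have $dω=\sum_{i,j}M_{ij}\,dx_j\wedge dx_i$, and for alternating $M$ this equals $-2\sum_{i<j}M_{ij}\,dx_i\wedge dx_j$. If $p\neq 2$, this is a constant nondegenerate $2$-form on $\mathbb A^{n+1}_{\field}$. A direct check shows that $\omega\wedge d\omega\neq 0$ once $n+1\ge 4$; for instance, after putting $M$ in the standard block form, compare the coefficients of $dx_0\wedge dx_2\wedge dx_3$. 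This contradicts integrability, so $p=2$. In characteristic $2$ we have $M_{ij}=M_{ji}$ and $M_{ii}=0$. Setting $F=\sum_{i<j}M_{ij}x_ix_j$ gives $dF=\sum_{i<j}M_{ij}(x_i\,dx_j+x_j\,dx_i)=\omega$, so $\Omega=dF$ with $F$ homogeneous of degree $2$.

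Finally, $2$-closedness follows. For any local section $v$ of $T_\F$ we have $v(F)=0$, hence $v^2(F)=v(v(F))=0$. Since $T_\F$ is the saturated subsheaf annihilated by $dF$, it follows that $v^2\in T_\F$. I expect the main obstacle to be the sign bookkeeping in the Chern class computation, since the closed formula must hold integrally in every characteristic. The nonvanishing of $\omega\wedge d\omega$ for $p\neq2$ is routine once $M$ is normalized.
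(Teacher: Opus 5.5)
Your proof is correct. The Chern class half matches the paper's argument, but the second half takes a genuinely different route.

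\textbf{The Chern class step.} The paper applies the Whitney formula to $0\to T_\F\to T_{\projn}\to N_\F\to 0$ and uses that $T_\F$ has rank $n-1$; equivalently, $-(d+2)^{-1}$ is a root of $(1+h)^{n+1}-h^{n+1}$. You instead use the vanishing of the top Chern class of $\Omega^1_{\projn}(d+2)$, which carries the nowhere vanishing section $\omega$. Both give $(d+1)^{n+1}=(-1)^{n+1}$, and your closed formula is right.

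\textbf{The remaining assertions.} The paper uses general tools:
\begin{enumerate}
\item the de Rham--Saito division lemma on $\mathbb A^{n+1}_{\field}$ gives $d\omega=\eta\wedge\omega$, and a degree count forces $d\omega=0$;
\item Euler's relation $i_R d\omega=(d+2)\omega$ then gives $p\mid 2$;
\item Cartier's decomposition of closed forms, followed by contraction with $R$, shows that $\omega$ is exact.
\end{enumerate}
You exploit that $d=0$ makes $\omega$ linear, so everything reduces to linear algebra of the matrix $M$:
\begin{enumerate}
\item $i_R\omega=0$ makes $M$ alternating, and regularity makes it nondegenerate;
\item integrability tested on the symplectic normal form forces $p=2$, since the coefficient of $dx_0\wedge dx_2\wedge dx_3$ in $\omega\wedge d\omega$ is $\pm 2x_1$;
\item the primitive $F=\sum_{i<j}M_{ij}x_ix_j$ is written down explicitly.
\end{enumerate}

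\textbf{What each route buys.} Yours is more elementary and self-contained: no division lemma and no Cartier isomorphism. It also recovers the contact-form normal form of the paper's first proof without invoking the nesting-map classification. The paper's second half has the advantage of not depending on $d=0$. It is exactly the argument reused later for foliations of arbitrary degree whose singular set has codimension at least $3$.

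\textbf{A cosmetic point.} $F$ is not a function on $\projn$, so ``$v(F)=0$'' should be read on affine charts. For example, use $F/x_i^2$, whose differential is $dF/x_i^2$ in characteristic $2$.
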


\proof
The normal bundle of $\F$ can be written as $N_\mathcal F = \mathcal O_{\mathbb{P}_{\field}^n}(d+2)$. Considering the exact sequence of vector bundles 
\[
0\to T_\mathcal F \to T_{\mathbb{P}_{\field}^n} \to N_{\mathcal F} \to 0
\]
and taking the total Chern class we get the identity
\[
c(T_{\mathbb{P}_{\field}^n}) = c(T_\mathcal F)\cdot c(N_\mathcal F) \quad \in \mathrm{CH}^*(\mathbb{P}_{\field}^n)
\]
in the Chow ring of $\mathbb{P}_{\field}^n$.  Then 
\[
(1+h)^{n+1} - h^{n+1} = P(h)\cdot (1+(d+2)h)
\]
where $h=c_1(\mathcal O(1))$ and $P(h)$ is a polynomial of degree $n-1$ representing $c(T_\mathcal F)$. Therefore, $-(d+2)^{-1}$ is a root of $(1+h)^{n+1} - h^{n+1}$ and consequently 
\begin{eqnarray*}
(d+1)^{n+1} = 
\left\{
\begin{array}{l}
1 \quad \;\;\text{if} \; n \; \text{if odd}\\
-1\quad \text{if} \; n \; \text{if even}
\end{array}
\right.
\end{eqnarray*}
 Hence, since $d\ge 0$, one obtains that $d=0$ and $n$ is odd.  
 

Since $\mathcal{F}$ is regular and $\omega\wedge d\omega=0$,  the  de Rham-Saito's division lemma ensures the existence of a polynomial 1-form $\eta$ such that $d\omega = \eta \wedge \omega$. Comparing the degrees of $\omega$ and $d\omega$  we get $d\omega= 0$. Then, Euler's relation gives 
\[
0 = i_R d\omega = (d+2)\omega
\]
which implies that  $p$ is positive and divides $d+2$. Hence,  $p=2$ and $\mathcal{F}$ is given by a  closed projective $1$-form  $\omega$  having homogeneous polynomials of degree one as coefficients.  In particular, there is a Cartier decomposition \cite[Theorem 1.3.4]{MR2107324}
 \[
    \omega = dF+\sum_{i=0}^{n}u_{i}x_{i}dx_{i}
 \]
 and the contraction with the radial vector field $R=\sum_{i=0}^n x_i\frac{\partial}{\partial x_i}$ implies that $u_{i} = 0$ for every $i$. This concludes the proof of the proposition.
 
\endproof

\subsection{Invariant curves and singular points}\label{SS:singnonsing}

Let $C$ be a reduced affine plane curve which is invariant by a foliation $\F$ on $\mathbb A^2_{\field}$. 
Let us assume that there exists $q\in \textrm{Sing}(C)$ such that $q\notin  \textrm{Sing}(\F)$. This can only occur if the characteristic of the ground field $\field$ is positive. Actually, in characteristic zero, it is always possible to rectify (formally) a non vanishing vector field at $q$ and this prevents for this pathological phenomena, using that the kernel of the derivation $\partial_x$ in $\field \llbracket  x,y\rrbracket$ is precisely $\field \llbracket y\rrbracket$.

In positive characteristic $p$, a typical example that illustrates this phenomenon is provided by $f=y^2-x^p$, which is invariant by the foliation  $\F$ defined by $dy$.  At first glance, it  is reasonable  to think that this kind of phenomenon can only occur if $\deg(C)\equiv0\mod p$, or at least $\deg(C)\geq p$. This turns out to be false:



\begin{prop}\label{P:counterexample}
Assume $\mathrm{char}(\field)=p\geq 5$. Let $s=\lceil \frac{ p}{3} \rceil$ and $r=p-s$. Consider the affine plane curve $C=\mathcal Z(f)\subset { \mathbb A}^2_{\field}$, of degree $2s<p$, defined  by 
	\[
	f(x,y) = y^2 + 2(x^s + x^r)y + x^{2s}.
	\]
Then the polynomial  1-form

\begin{eqnarray}\label{eq:poly1form}
\omega = 2Ud(y+x^s+x^r) - (y+x^s+x^r)dU
\end{eqnarray}
where $U= 2+ x^{2r-p}$,  leaves $C$ invariant and  $\omega(0)\not=0$.

\end{prop}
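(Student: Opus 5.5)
Write $g=y+x^s+x^r$. Then $f=g^2-(x^s+x^r)^2+x^{2s}=g^2-x^{2r}-2x^{p}$, using $r+s=p$. The plan is to express $f$ in terms of $g$ and $U$ and check invariance directly from the definition. Since $2r-p=r-s\ge 0$, we have $x^{2r}=x^{p}\,x^{2r-p}$, so
\[
f=g^2-x^{p}\bigl(2+x^{2r-p}\bigr)=g^2-x^pU .
\]
The key point is that $x^p$ is a $p$-th power, hence $d(x^p)=0$ in characteristic $p$. This gives
\[
df=2g\,dg-x^p\,dU .
\]

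Next we compute $\omega\wedge df$ with $\omega=2U\,dg-g\,dU$:
\[
\omega\wedge df=(2U\,dg-g\,dU)\wedge(2g\,dg-x^pdU)=-2Ux^p\,dg\wedge dU+2g^2\,dg\wedge dU .
\]
Substituting $x^pU=g^2-f$, this becomes
\[
\omega\wedge df=\bigl(-2(g^2-f)+2g^2\bigr)\,dg\wedge dU=2f\,dg\wedge dU .
\]
Therefore $\omega\wedge \frac{df}{f}=2\,dg\wedge dU$ is regular, so $C$ is invariant by the foliation defined by $\omega$.

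For the nonvanishing at the origin, $U(0)=2+0^{2r-p}$. Here $2r-p=p-2s>0$ because $2s<p$; for $p\ge5$, $s=\lceil p/3\rceil$ gives $2s\le 2(p+2)/3<p$. So $U(0)=2\ne0$. Also $g(0)=0$ and $dg=dy+(\cdots)dx$ has $dy$-coefficient $1$, so $\omega(0)=2U(0)\,dy=4\,dy\neq 0$ (recall $p\ge5$). Finally, $\omega$ should have coprime coefficients in order to define a foliation with the stated $1$-form. Its $dy$-coefficient $2U-g\,\partial_yU=2U$ is a polynomial in $x$ alone, and it is nonzero at $0$; we only need $\omega(0)\ne0$ as claimed.

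It remains to check that $C$ is reduced, singular at $0$, and of degree $2s$. The degree is $2s$ because $r>s$ means the top-degree term of $f$ is $2x^ry$ of degree $r+1\le 2s$; this step needs care. Indeed $r+1\le 2s$ iff $p+1\le 3s$, which holds since $s\ge p/3$ and $3s\ne p$; whether it equals $2s$ is a minor arithmetic point. Singularity at $0$ follows because all terms of $f$ have order at least $2$. Reducedness holds since $f$ is monic quadratic in $y$ with discriminant $4x^{2r}+8x^p$ up to the $x^{2s}$ terms, which is not a square. The main (mild) obstacle is just this bookkeeping; the substance is the identity $f=g^2-x^pU$ with $d(x^p)=0$.
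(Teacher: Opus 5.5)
Your proof is correct, and it takes a more direct route than the paper's. Both arguments hinge on the same identity $f=G^2-x^pU$ with $G=y+x^s+x^r$.

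The paper reads this identity as a contact equivalence. It writes $f=U\cdot (g\circ\varphi)$ with $g=y^2-x^p$ and $\varphi(x,y)=(x,U^{-1/2}G)$, a formal change of coordinates at the origin; extracting $U^{1/2}$ uses $U(0)=2\neq 0$ and $p\neq 2$. Invariance is then inherited from the model pair $(y^2-x^p,\,dy)$, and $\omega$ is recognized as $2U^{3/2}\varphi^*(dy)$. You instead use $d(x^p)=0$ to get $df=2G\,dG-x^p\,dU$, and then compute $\omega\wedge df=2f\,dG\wedge dU$ outright.

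Your version needs no square roots. It also gives invariance on all of $\mathbb A^2_{\field}$ as a polynomial identity, with no need to pass from a formal statement at the origin back to a global one. The paper's version explains where $\omega$ comes from and exhibits $(C,0)$ as contact equivalent to $y^2=x^p$. That is exactly the mechanism reused in the refinement with $U=2+x$ that follows.

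Your closing bookkeeping has two harmless slips.
\begin{enumerate}
\item The degree is $2s$ because the monomial $x^{2s}$ occurs and every other monomial has degree at most $2s$, not because $2x^ry$ is the top term. Indeed $r+1\le 2s$, since $3\nmid p$ forces $3s\ge p+1$.
\item The discriminant of $f$ in $y$ is exactly $4x^pU$; the $x^{2s}$ terms cancel, so no ``up to'' is needed. It is not a square in $\field(x)$ because $p$ is odd and $U(0)\neq 0$, hence $f$ is irreducible.
\end{enumerate}

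Finally, you left aside the coprimality of the coefficients of $\omega$, but it is immediate. A common factor divides the $dy$-coefficient $2U$, so it lies in $\field[x]$ and divides $U$. It also divides the $dx$-coefficient $2UG_x-GU'$, hence divides $G\,U'$. Since $G$ has degree one in $y$, it must divide $U'=(p-2s)x^{p-2s-1}$. This is impossible, because $p\nmid p-2s$ and $U(0)\neq 0$.
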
  

\begin{proof}
	The  curve given by $g(x,y)= y^2 -x^p$ is invariant by the foliation defined  by  $dy$. Now, since $f$ can be written as  
	$$f(x,y)={(y+x^r + x^s)}^2 -2x^p-x^{2r}$$
we see that
	$$
	f(x,y)= U\cdot g(\varphi(x,y))
	$$ 
	where $U = 2+ x^{2r-p}$ and $\varphi (x,y)= (x, U^{-\frac{1}{2}}\cdot (y+x^s+x^r))$ is a local transformation. Note that the condition on $s$ ensures that $2r>p$.  The curve $C$ is invariant by the  polynomial 1-form 
	\[
	\omega= U^{\frac{3}{2}}\varphi^* (dy)
	\]
	which is nonsingular at the origin. A straightforward computation reveals that $\omega$ coincides, up to a constant, with the one defined by \eqref{eq:poly1form}.
\end{proof}

\begin{remark}
	It is easy to see that this phenomenon does not occur in characteristic $p=2,3$. 
\end{remark}

Building on the same idea, one can refine the argument of Proposition~\ref{P:counterexample}. Set  $p\geq 5$.  
By descending induction on the degree, it is easy to construct  polynomials $P,Q\in \field [x]$, $P(0)=0$, $\textrm{deg}(P)=\lfloor \frac{p}{2}\rfloor -1$, $\textrm{deg}(Q)\leq \lceil \frac{p}{2}\rceil $  such that 
$$
{( P(x) +x^{\lfloor \frac{p}{2}\rfloor} + x^{\lceil \frac{p}{2}\rceil})}^2= Q(x) +2x^p +x^{p+1}.
$$
For instance, for $p=5$, one can take $P(x)=-\frac{x}{2}$, so that $Q(x)=\dfrac{x^2}{4}-x^3$. Now, set $$f(x,y)=y^2 +2( P(x) +x^{\lfloor \frac{p}{2}\rfloor} + x^{\lceil \frac{p}{2}\rceil})y +Q(x).$$ It is a degree $\lceil \frac{p}{2}\rceil +1$ polynomial which can be also rewritten as 
\begin{eqnarray}\label{eq:newf}
f(x,y)={ ( y+  P(x) +x^{\lfloor \frac{p}{2}\rfloor} + x^{\lceil \frac{p}{2}\rceil})}^2-x^p(2+x).
\end{eqnarray}
In particular, it is contact equivalent at the origin to $y^2- x^p$. We do the details below. 

\begin{prop}\label{P:counterexamplebis}
Assume $\mathrm{char}( \field )=p\geq 5$. Then there exists a polynomial $1$-form $\omega$ in ${ \mathbb A}^2_{\field}$, with  $\omega (0)\not=0$,   that leaves invariant a reduced affine curve $C=\mathcal Z(f)\subset { \mathbb A}^2_{\field}$, of degree $\lceil \frac{p}{2}\rceil +1$,  and that is singular at the origin. Furthermore, $C$ is defined by a polynomial as \eqref{eq:newf}.  
\end{prop}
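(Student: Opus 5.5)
We follow the template of Proposition~\ref{P:counterexample}. Put $m=\lfloor \frac p2\rfloor$, so $\lceil \frac p2\rceil=m+1$ and $p=2m+1$, and set
\[
h(x,y)=y+P(x)+x^{m}+x^{m+1},\qquad U=2+x .
\]
The first step is to construct $P,Q\in\field[x]$ with $P(0)=0$, $\deg P=m-1$, $\deg Q\le m+1$ and $(P+x^m+x^{m+1})^2=Q+2x^p+x^{p+1}$. Expand $(P+x^m+x^{m+1})^2$. Its top terms are $x^{2m+2}=x^{p+1}$ and $2x^{2m+1}=2x^p$, which already match. The coefficients of $x^{2m},x^{2m-1},\dots,x^{m+2}$ must vanish. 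The coefficient of $x^{m+1+j}$ is $2p_j+2p_{j+1}$ plus terms quadratic in the $p_i$ with $i>j+1$, together with an $x^{2m}$ contribution coming from $(x^m)^2$. So we solve for $p_{m-1},p_{m-2},\dots,p_1$ successively by descending induction; this is possible since $2$ is invertible for $p\ge5$. The remaining low-degree part of the square is, by definition, $Q$, and it has degree $\le m+1$. The case $p=5$ given in the text is a sanity check. With this choice $f=y^2+2(P+x^m+x^{m+1})y+Q=h^2-x^pU$, which is \eqref{eq:newf}, and $\deg f=m+1+1=\lceil\frac p2\rceil+1$, the monomial $x^{m+1}y$ being the top-degree term.

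Next we produce the foliation. Near $0$ the function $U$ is a unit and has a square root in $\field\llbracket x\rrbracket$. Consider $\varphi(x,y)=(x,U^{-1/2}h)$. Its Jacobian in $y$ is $U^{-1/2}\neq0$, so $\varphi$ is a local (formal) isomorphism, and $f=U\cdot g\circ\varphi$ with $g=y^2-x^p$. Since $g$ is invariant by $dy$, $f$ is invariant by $\varphi^*(dy)=d(U^{-1/2}h)=U^{-3/2}\bigl(U\,dh-\tfrac12 h\,dU\bigr)$. Multiplying by $2U^{3/2}$, we set
\[
\omega=2U\,dh-h\,dU ,
\]
which is a polynomial $1$-form. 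At the origin we have $h(0)=0$ and $dh(0)=dy$, so $\omega(0)=2U(0)dy=4\,dy\ne0$. Invariance is then a polynomial identity: $\omega\wedge df/f$ is regular. We check it directly, avoiding the square root, by computing
\[
df=2h\,dh-x^pdU,\qquad \omega\wedge df=2U\,dh\wedge(-x^pdU)-h\,dU\wedge 2h\,dh=-2(x^pU-h^2)\,dh\wedge dU=2f\,dh\wedge dU .
\]
Here we used $d(x^p)=0$ in characteristic $p$, which is exactly where positive characteristic enters.

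Finally, $f$ must be reduced and $C$ singular at $0$. Singularity holds because $f(0)=0$, and the first-order terms of $f$ vanish: $h$ has no constant term and $x^pU$ starts in degree $p$, while $h^2$ has no linear part. For reducedness, note that $f$ is monic of degree $2$ in $y$ with discriminant $4x^pU$. This is not a square in $\field(x)$ because $p$ is odd, so $f$ is irreducible. The main obstacle is the bookkeeping in the descending induction for $P$, in particular confirming that $P(0)=0$ can be imposed, since the $x^m$-coefficient freedom allows it. We also need to check that $\deg Q\le m+1$ indeed keeps $\deg f=m+2$.
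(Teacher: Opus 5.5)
Your proposal is correct and follows the paper's proof: it uses the same $f=h^2-x^pU$ with $U=2+x$, solves for the coefficients of $P$ by the same descending induction, and takes the same form $\omega=2U\,dh-h\,dU$ (a constant multiple of $U^{3/2}\varphi^*(dy)$). Your direct identity $\omega\wedge df=2f\,dh\wedge dU$ and the discriminant argument for irreducibility are useful verifications the paper leaves implicit; the identity also covers the points of $C$ on $\{x=-2\}$, where the square-root argument does not directly apply. One small slip: $dh(0)=dy+P'(0)\,dx$ rather than $dy$ (for example $P'(0)=-\tfrac12$ when $p=5$), but $\omega(0)=4\,dh(0)\neq0$ still holds, so nothing changes.
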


\proof 
Let $s=\lfloor \frac{p}{2}\rfloor$ and $r=\lceil \frac{p}{2}\rceil$.  As explained above, it remains to construct  polynomials $P,Q\in \field [x]$, $P(0)=0$, $\textrm{deg}(P)=s-1$, $\textrm{deg}(Q)\leq r $  such that 
$$
{( P(x) +x^s + x^r)}^2= Q(x) +2x^p +x^{p+1}.
$$
Note that $s=r-1$, $r+s=p$ and $2s=p-1$. The identity above is equivalent to 
\begin{eqnarray}\label{eq:PQx}
P(x)^2+2P(x)x^s+2P(x)x^{s+1} + x^{2s}= Q(x). 
\end{eqnarray}
 
 Let us write 
 \[
 P(x) = a_{s-1}x^{s-1}+ a_{s-2}x^{s-2}+\cdots + a_1x+a_0
 \]
 with coefficients $a_i\in \field$, $i=0,\dots, s-1$. The coefficients must be chosen in such way that $\deg (Q) \le r$. If $p=5$, then $P(x)=-\frac{1}{2}x+a_0$ works for any $a_0\in\field$. If $p\ge 7$  (then $s\ge 3$) grouping the coefficients of terms of the same degree on the left-hand side of \eqref{eq:PQx},  we see that must have
 \begin{eqnarray*}
\left\{
\begin{array}{l}
 a_{s-1} = -\frac{1}{2} \\
  a_{s-2} = \frac{1}{2}\\
2a_{s-2} + 2a_{s-3} + a_{s-1}^2 = 0\\
\vdots\\
2a_{s-t} + 2a_{s-(t-1)} + \displaystyle{\sum_{i+j=2s-t}} a_{i}a_{j} = 0\\
\vdots\\
2a_1+2a_0+ \displaystyle{\sum_{i+j=s+1}}a_{i}a_{j} = 0
\end{array}
\right.
\end{eqnarray*}
Hence, for $p\ge 7$, the system above has a unique solution $a_0,\dots, a_{s-1}$ and therefore there exists a unique  polynomial $P(x)$ satisfying the constraints. 

Similar to the proof of Proposition~\ref{P:counterexample}, by taking  $U=2+x$ and 
\[
\varphi(x,y)=(x, U^{-\frac{1}{2}}\cdot (y+P(x)+x^s+x^{s+1}))
\]
we see that $f = U\cdot g(\varphi(x,y))$ and $\omega =  U^{\frac{3}{2}}\varphi^* (dy)$ leaves $f=0$ invariant.   This concludes the proof of the proposition. 
\endproof

We end up these section by proving that the singular locus of an invariant reduced curve $C$ is necessarily contained in the singular locus of $\F$ provided that $d_C=\deg C$ is ``small" with respect to $p$. That is, $d_C$ is asymptotically ``controlled'' by $\sqrt{p}$, see Theorem~\ref{thm:invsmalldegree} below. It is very likely that this bound is not sharp. However, observe that it cannot exceed $\lceil \frac{p}{2} \rceil +1$ according to Proposition \ref{P:counterexamplebis}.

We need to introduce some notation. Let $\field \llbracket x,y\rrbracket $ be the ring of formal series in two indeterminates. For $f,g\in \field\llbracket x,y\rrbracket$, we denote by

$$i(f,g)=\textrm{dim}_{\field} \field \llbracket x,y\rrbracket/ (f,g) \in \mathbb N \cup{ \{ \infty\}}$$
the intersection multiplicity between $f$ and $g$. The {\bf Kappa invariant} of $f\in \field \llbracket x,y\rrbracket$ is defined as 

$$\kappa(f)= i(f, \alpha f_x +\beta f_y)$$for general $(\alpha:\beta) \in { \mathbb P}^1_{\field}$. Here and hereafter, $f_x$ and $f_y$ stand for the partial derivative with respect to $x$ and $y$. It can be readily seen that, if $f$ is irreducible, then  

$$\kappa (f) = \textrm{min}( i(f,  f_x),i(f,f_y))$$
The following properties are well known:
\begin{prop}
	The intersection multiplicity and the kappa invariant are invariant under contact equivalence, that is, for any formal change of variables $\varphi(x,y)$ and every units $u,v\in \field \llbracket x,y\rrbracket $, then
	\begin{enumerate}
		\item $i(f,g)=i(uf\circ\varphi, vg\circ\varphi)$,
		\item $\kappa (f)=\kappa (uf\circ\varphi)$.
	\end{enumerate}
\end{prop}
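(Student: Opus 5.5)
The plan is to reduce both items to the corresponding statements about formal changes of variables and multiplication by units, and then to basic facts about the quotient ring $\field\llbracket x,y\rrbracket/(f,g)$.

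For item (1), a formal change of variables $\varphi$ (that is, $\varphi(0)=0$ with invertible Jacobian at the origin) induces a $\field$-algebra automorphism $\varphi^*\colon h\mapsto h\circ\varphi$ of $\field\llbracket x,y\rrbracket$. This automorphism maps the ideal $(f,g)$ onto $(f\circ\varphi,g\circ\varphi)$. Since $u$ and $v$ are units, $(uf\circ\varphi, vg\circ\varphi)=(f\circ\varphi,g\circ\varphi)$. Hence the two quotient algebras are isomorphic as $\field$-algebras, so they have the same dimension (possibly infinite), which gives $i(f,g)=i(uf\circ\varphi,vg\circ\varphi)$.

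For item (2), set $F=uf\circ\varphi$ and write $J$ for the Jacobian matrix of $\varphi$. The chain rule gives
\[
(F_x,F_y) = u\,\big((f_x,f_y)\circ\varphi\big)\,J + (f\circ\varphi)\,(u_x,u_y).
\]
Modulo the ideal $(F)=(f\circ\varphi)$, we get $\alpha F_x+\beta F_y \equiv u\,\big[(f_x\circ\varphi, f_y\circ\varphi)\,J(\alpha,\beta)^t\big]$. Write $(\alpha',\beta')^t(x,y)=J(x,y)(\alpha,\beta)^t$. Using item (1), this yields
\[
i(F,\alpha F_x+\beta F_y)= i\big(f\circ\varphi,\ (\alpha' f_x+\beta' f_y)\circ\varphi\big)\quad\text{up to the non-constant coefficients } \alpha',\beta'.
\]
The remaining issue is that $\alpha'$ and $\beta'$ are power series rather than constants. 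I would handle this by writing $(\alpha',\beta')=J(0)(\alpha,\beta)^t + (\text{terms in } \frak m)$, and then showing that for general $(\alpha:\beta)$ the higher-order terms do not change the intersection multiplicity.

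This last step is the main obstacle. My first approach is via parametrizations: decompose $f$ into branches and compute $i(f,\cdot)$ as a sum of orders along the normalization $\gamma_i$ of each branch. The order of $(\alpha' f_x+\beta' f_y)\circ\gamma_i$ equals the order of $(a f_x+b f_y)\circ\gamma_i$ for $(a,b)=J(0)(\alpha,\beta)^t$, provided $(a:b)$ avoids the finitely many directions where cancellation of leading terms occurs. For general $(a:b)$, the order of $af_x+bf_y$ along $\gamma_i$ is the minimum of the orders of $f_x\circ\gamma_i$ and $f_y\circ\gamma_i$, and perturbing the coefficients by elements of $\frak m$ only adds terms of strictly larger order. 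This is the same reasoning that justifies the formula $\kappa(f)=\min(i(f,f_x),i(f,f_y))$ in the irreducible case, applied branch by branch.

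One degenerate case needs separate treatment: when $f_x$ or $f_y$ vanishes identically along a branch, as can happen in characteristic $p$. In that case the corresponding term contributes nothing on either side. Finally, since $J(0)$ is invertible, it maps a general direction $(\alpha:\beta)$ to a general direction $(a:b)$, so $\kappa(F)=\kappa(f)$.
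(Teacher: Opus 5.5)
The paper gives no proof of this proposition; it is quoted as well known, so there is no argument of the authors to compare with. On its own terms your proof is correct. Item (1) is the standard argument. For item (2), the chain-rule reduction to $i(f,\tilde\alpha f_x+\tilde\beta f_y)$ with $(\tilde\alpha(0),\tilde\beta(0))^t=J(0)(\alpha,\beta)^t$ works, and so does the branchwise order computation that follows it. Along a branch $\gamma_i$, the series $(\tilde\alpha-\tilde\alpha(0))\circ\gamma_i$ and $(\tilde\beta-\tilde\beta(0))\circ\gamma_i$ have positive order. So they cannot change the order $\min(\ord(f_x\circ\gamma_i),\ord(f_y\circ\gamma_i))$, except possibly in one direction $(a:b)$ per branch.

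A few points should be written more carefully.

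First, the displayed identity ``up to the non-constant coefficients'' is not literally correct. What you have is $\alpha'\,(f_x\circ\varphi)+\beta'\,(f_y\circ\varphi)$. To apply (1), write it as $(\tilde\alpha f_x+\tilde\beta f_y)\circ\varphi$ with $\tilde\alpha=\alpha'\circ\varphi^{-1}$ and $\tilde\beta=\beta'\circ\varphi^{-1}$, and note that $\tilde\alpha(0)=\alpha'(0)$ and $\tilde\beta(0)=\beta'(0)$. Alternatively, work directly with the branches $\varphi^{-1}\circ\gamma_i$ of $f\circ\varphi$.

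Second, the formula $i(f,g)=\sum_i\ord(g\circ\gamma_i)$ presumes $f$ reduced. If $f_1^2\mid f$ for some irreducible $f_1$, then $f_1$ divides $f$ and every $\alpha f_x+\beta f_y$. The same holds for $uf\circ\varphi$, so both sides equal $\infty$.

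Third, in the degenerate case, allow both $f_x\circ\gamma_i$ and $f_y\circ\gamma_i$ to vanish. Then both sides are $\infty$ for every direction, so nothing breaks; for reduced $f$ this cannot occur, by the Jacobian criterion.

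Finally, state explicitly that your analysis shows $(\alpha:\beta)\mapsto i(f,\alpha f_x+\beta f_y)$ is constant outside a finite subset of $\mathbb P^1_{\field}$. This is what gives ``general'' in the definition of $\kappa$ a meaning, and the invertibility of $J(0)$ then transports the finite exceptional set.

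For comparison, there is a parametrization-free route. Since $u$ and $J$ are invertible, $(F,F_x,F_y)=\varphi^*(f,f_x,f_y)$. For reduced $f$, a general combination $\alpha f_x+\beta f_y$ generates a reduction of the ideal $(f_x,f_y)R$ in the one-dimensional Cohen--Macaulay ring $R=\field\llbracket x,y\rrbracket/(f)$. Hence $\kappa(f)$ is the Hilbert--Samuel multiplicity of that ideal, which is visibly a contact invariant. Your argument is more elementary, and it yields $\kappa(f)=\sum_i\min(\ord(f_x\circ\gamma_i),\ord(f_y\circ\gamma_i))$ along the way.
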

\begin{cor}\label{C:boundkappa}
	Let $C \subset { \mathbb P}^2_{\field}$ be a reduced algebraic curve of degree $d$. Let $q$ be a singular point of $C$ and let $f$ be a local  equation for $C$ at $q$. Then 
	\[
	\kappa_q(C):=\kappa(f)\leq d(d-1).
	\]
\end{cor}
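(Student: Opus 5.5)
Corollary~\ref{C:boundkappa} asserts $\kappa_q(C)\le d(d-1)$. The plan is to realise $\kappa_q(C)$ as a local intersection number of $C$ with a polar curve of degree $d-1$, and then bound that local number by the global intersection number using B\'ezout's theorem.

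Let $F(x_0,x_1,x_2)$ be a reduced homogeneous equation of $C$ of degree $d$. Choose coordinates with $q=(1:0:0)$, and let $f(x,y)=F(1,x,y)$ be the local equation. For a general $(\alpha:\beta)\in\mathbb P^1_{\field}$, consider the polar curve
\[
P_{\alpha,\beta}=\mathcal Z(\alpha F_{x_1}+\beta F_{x_2}),
\]
which has degree $d-1$ (or is the zero polynomial; this is addressed below). Its dehomogenization at $q$ is exactly $\alpha f_x+\beta f_y$. Hence
\[
\kappa(f)=i_q(f,\alpha f_x+\beta f_y)=(C\cdot P_{\alpha,\beta})_q
\]
for general $(\alpha:\beta)$, using the contact invariance stated in the Proposition just above, so that the choice of local coordinates is harmless.

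If $C$ and $P_{\alpha,\beta}$ have no common component, B\'ezout gives
\[
(C\cdot P_{\alpha,\beta})_q\le \sum_{z\in C\cap P_{\alpha,\beta}}(C\cdot P_{\alpha,\beta})_z = d(d-1),
\]
and the corollary follows.

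The main obstacle is showing that, for general $(\alpha:\beta)$, the polar does not share a component with $C$. This is genuinely delicate in characteristic $p$, since some partial derivatives may vanish identically. I would argue componentwise. Let $C_1$ be an irreducible component with equation $G$, and write $F=GH$ with $\gcd(G,H)=1$ because $C$ is reduced. On $C_1$ we have
\[
\alpha F_{x_1}+\beta F_{x_2}\equiv H\,(\alpha G_{x_1}+\beta G_{x_2}) \mod G.
\]
Since $G\nmid H$, it suffices to know that $G$ does not divide $\alpha G_{x_1}+\beta G_{x_2}$ for general $(\alpha:\beta)$. For this I would use the fact already invoked in the proof of Proposition~\ref{decomposition}: $\mathcal Z(G)$ is integral over a perfect field, so it is generically smooth, and by the Jacobi criterion not all of $G_{x_0},G_{x_1},G_{x_2}$ vanish identically on $C_1$. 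The set of directions $(\alpha,\beta)$ for which $\alpha G_{x_1}+\beta G_{x_2}$ vanishes on $C_1$ is a linear subspace, so it suffices to ensure that $G_{x_1}$ and $G_{x_2}$ do not both vanish on $C_1$.

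If $G_{x_1}$ and $G_{x_2}$ both vanished on $C_1$, the Euler relation $\sum_i x_iG_{x_i}=(\deg G)\,G$ would force $x_0G_{x_0}$ to vanish on $C_1$ as well. Then either $C_1=\{x_0=0\}$, or $G_{x_0}$ also vanishes on $C_1$, which contradicts generic smoothness. To avoid the line $x_0=0$ being a component, I would choose the coordinates so that $\{x_0=0\}$ is a general line through no component of $C$; this is possible since only finitely many lines are components. Excluding finitely many $(\alpha:\beta)$ for each of the finitely many components then gives a general choice for which no common component exists. This same choice also guarantees that $\alpha F_{x_1}+\beta F_{x_2}\neq 0$ and that the polar has degree exactly $d-1$, which completes the argument.
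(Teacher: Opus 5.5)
Your proof is correct, and it is the argument the paper leaves implicit: the paper states this corollary without proof, immediately after the contact-invariance proposition. The intended argument is exactly yours, namely to realise $\kappa_q(C)$ as the local intersection number of $C$ with a general polar curve of degree $d-1$ and apply B\'ezout. Your componentwise check that a general polar shares no component with $C$ in characteristic $p$ is something the paper does not spell out. You handle it correctly: $G\nmid H$, then Euler plus generic smoothness, and choosing $\{x_0=0\}$ not to be a component of $C$.
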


We will need the following result.  


\begin{prop}\label{P:totalkappa}
Let $f\in \field \llbracket x,y\rrbracket -\{ 0\}$, $f(0)=0$ and let $f=f_1 f_2....f_r$ be its decomposition into irreducible factors. Then
\[
\kappa(f)=\sum_{k=1}^r\big(\kappa (f_k) +\sum_{j\not=k} i(f_k, f_j)\big).
\]
\end{prop}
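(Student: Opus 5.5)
We prove the formula by computing $i(f,\alpha f_x+\beta f_y)$ directly from the product rule and additivity of intersection multiplicity. Fix a general direction $(\alpha:\beta)$ and write $D=\alpha\partial_x+\beta\partial_y$, which is a derivation of $\field\llbracket x,y\rrbracket$. Then
\[
D(f)=\sum_{k=1}^r D(f_k)\prod_{j\neq k} f_j .
\]
Intersection multiplicity is additive in each argument: $i(gh,u)=i(g,u)+i(h,u)$ whenever all terms are finite. Applying this in the first argument gives
\[
\kappa(f)=i(f,D f)=\sum_{k=1}^r i(f_k, Df).
\]
Modulo $f_k$, every summand of $Df$ except the $k$-th vanishes, since each of them contains $f_k$ as a factor. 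Hence $i(f_k,Df)=i\big(f_k, D(f_k)\prod_{j\ne k}f_j\big)$. Using additivity in the second argument, this equals
\[
i(f_k,Df_k)+\sum_{j\neq k} i(f_k,f_j).
\]
Finally, $i(f_k,Df_k)=\kappa(f_k)$ for a general direction, and summing over $k$ gives the formula.

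The only delicate point is to make "general" uniform and to check finiteness. Each $\kappa(f_k)$ is defined using a general $(\alpha:\beta)$, that is, a direction outside a proper closed subset. Since there are finitely many factors $f_k$, one direction can be chosen that is general for $f$ and for every $f_k$ simultaneously.

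Finiteness of each $i(f_k,Df_k)$ must also be checked, because in characteristic $p$ it could happen that $f_k\mid Df_k$. For this, argue as in the proof of Proposition~\ref{decomposition}. Since $f_k$ is irreducible and $\field$ is perfect, the Jacobian criterion shows that the directions $\mathbf b$ with $\partial_{\mathbf b}f_k\equiv 0 \bmod f_k$ form a proper linear subspace. A general direction avoids all of these subspaces at once.

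The distinct factors are pairwise coprime, so $i(f_k,f_j)<\infty$ for $j\neq k$. Together with the previous step, all terms in the formula are finite and the additivity manipulations above are justified. This genericity and finiteness check is the main, though mild, obstacle; the rest is formal.

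For completeness, the additivity $i(f_k, gh)=i(f_k,g)+i(f_k,h)$ for irreducible $f_k$ is standard. One way to see it is to pass to the normalization of the branch $f_k$, where $i(f_k,g)$ becomes the valuation of $g$ along the branch, and valuations are additive. Alternatively, use the exact sequence $0\to \field\llbracket x,y\rrbracket/(f_k,h)\xrightarrow{\cdot g}\field\llbracket x,y\rrbracket/(f_k,gh)\to \field\llbracket x,y\rrbracket/(f_k,g)\to 0$. Multiplication by $g$ is injective here because $g$ is a nonzerodivisor modulo $f_k$ when $i(f_k,g)<\infty$.
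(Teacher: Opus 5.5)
The paper gives no proof of this proposition. It is quoted as a standard fact, like the contact invariance of $i$ and $\kappa$ stated just before it, so there is no argument to compare with. Your proof is the standard one and it is correct: Leibniz rule for $D=\alpha\partial_x+\beta\partial_y$, reduction of $Df$ modulo each $f_k$, and additivity of intersection multiplicity in each slot.

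A few small remarks follow.

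\textbf{The identity holds for every direction.} Your manipulations give
\[
i(f,Df)=\sum_{k=1}^r\Big(i(f_k,Df_k)+\sum_{j\neq k}i(f_k,f_j)\Big)
\]
for \emph{every} direction $(\alpha:\beta)$, as an identity in $\mathbb N\cup\{\infty\}$. This is because $i(u,gh)=i(u,g)+i(u,h)$ holds with no finiteness assumption:
\begin{itemize}
\item if $u$ has a common factor with $g$ or with $h$, both sides are infinite;
\item otherwise your exact sequence applies, reading ``$g$ is a nonzerodivisor modulo $u$'' as ``$g$ is coprime to $u$''.
\end{itemize}
You need this general form for your first step $i(f,Df)=\sum_k i(f_k,Df)$, where the fixed argument $Df$ is not irreducible. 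Your written justification only treats an irreducible fixed argument. Consequently, genericity enters only through the definitions of $\kappa(f_k)$ and $\kappa(f)$, and your finiteness check can be dropped.

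\textbf{The finiteness check as written does not quite apply.} You invoke the Jacobian criterion from the proof of the local decomposition $g\omega=hdf+f\sigma$. That argument rests on density of the smooth locus of an integral scheme of finite type, so it does not literally apply to a formal branch $f_k$. The conclusion is still true. If $(f_k)$ were stable under $\partial_x$ and $\partial_y$, expand $f_k$ in the $p$-basis $\{x^iy^j\}_{0\le i,j<p}$ over $\field\llbracket x^p,y^p\rrbracket$. This shows that $(f_k)$ is generated by a $p$-th power, contradicting irreducibility.

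\textbf{Reducedness.} You tacitly assume the $f_k$ are pairwise non-associate, i.e.\ $f$ is reduced. This is the situation in the paper's application to invariant curves of small degree. With a repeated factor, both sides of the formula are $\infty$.
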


Assuming that $f\in \field \llbracket x,y\rrbracket$ is irreducible, it always admits a parametrization generalising  the Puiseux expansion in characteristic $0$, called Hamburger-Noether  expansion (see \cite{Camp80}), which is a pair $(x(t), y(t))$, with $x(t), y(t)$  in $t\field \llbracket t\rrbracket$, satisfying 
\[
f(x(t), y(t))=0
\]
 and possessing the following  properties:
%
%
%

\begin{enumerate}
\item  For every pair $(u(t),v(t))$,  with $u(t), v(t)\in t\field \llbracket t\rrbracket $ and $f(u(t), v(t))=0$, there exists a unique $\varphi (t)\in t\field \llbracket t\rrbracket $ such that 
\[
u(t)=x(\varphi (t)) \quad \text{and} \quad v(t)=y(\varphi (t)).
\]
\item For every $g\in \field \llbracket x,y\rrbracket $, we have
\begin{eqnarray}\label{E:paraideal}
g(x(t),y(t))=0 \Longleftrightarrow g\in (f)
\end{eqnarray}
and 
\begin{eqnarray}\label{E:paraintersect}
i(f,g)=\textrm{ord} ( g(x(t),y(t) ).
\end{eqnarray}
\end{enumerate}


Now, we can prove the main result of this section. 

\begin{thm}\label{thm:invsmalldegree}
Let $C\subset \mathbb A^2_{\field}$ be a reduced curve which is invariant by a foliation $\F$ on $\mathbb A^2_{\field}$ and  assume that 
\[
\deg(C)(\deg(C) -1)<p. 
\]
Then $\sing(C)\subset \sing(\F)$.
\end{thm}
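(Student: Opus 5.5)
We argue by contradiction: suppose $q\in\sing(C)$ but $q\notin\sing(\F)$. Work in the completed local ring $\field\llbracket x,y\rrbracket$ at $q$, and let $f$ be a local equation of $C$. Since $\F$ is regular at $q$, Proposition~\ref{decomposition} (with $h=1$) gives $g\omega=df+f\sigma$. Equivalently, choose a derivation $v=a\partial_x+b\partial_y$ generating $T_\F$ near $q$ with $v(q)\neq0$; invariance of $C$ means $v(f)\in(f)$. After a linear change of coordinates we may assume $a(q)\neq 0$. Dividing $v$ by the unit $a$ keeps it tangent, so we may take $v=\partial_x+b\,\partial_y$.

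The key step is to show that this forces $\kappa(f)\ge p$. By Proposition~\ref{P:totalkappa} it suffices to understand each branch $f_k$ and the intersections $i(f_k,f_j)$. The factors $f_k$ are the formal branches, and the formal prime ideal $(f_k)$ should also be $v$-invariant: $v$ preserves $(f)$, hence its minimal primes. Fix an irreducible branch $f_k$ with Hamburger--Noether parametrization $(x(t),y(t))$. By \eqref{E:paraideal}, invariance means that the derivation $v$ induces a derivation $\delta=c(t)\,d/dt$ on $\field\llbracket t\rrbracket$ (on the normalization) with $x'(t)c(t)=1$ and $y'(t)c(t)=b(x(t),y(t))$. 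The first identity says $x'(t)$ is a unit. If $p\nmid \ord x(t)$, this forces $\ord x(t)=1$, and the branch is smooth and transverse to $\partial_y$.

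So for a singular branch we must have $\ord x(t)\equiv 0 \pmod p$, hence the multiplicity is at least $p$. Singular branches then have large $\kappa$: I expect $\kappa(f_k)\ge \ord x(t)-1+\ldots$, and in fact $i(f_k,\partial_y f_k)\ge \ord x(t)\ge p$. This is because $f_y(x(t),y(t))x'(t)=-f_x(x(t),y(t))y'(t)$ (coming from $f(x(t),y(t))=0$), and $x'$ is a unit. For $\kappa$, use a general direction $\alpha f_x+\beta f_y$. Its order along the branch is at least $\min(\ord f_x,\ord f_y)$ evaluated on the branch. Controlling this by $\ord x(t)$ via the relation above and a conductor-type argument is the main technical point. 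I would first try to show directly that $\ord f_y(x(t),y(t))\ge \ord x(t)$ using the equality of orders of $y$-derivative with the different.

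If instead $q$ is singular because of two smooth branches meeting, each branch is a smooth $v$-invariant curve through a regular point of $v$. In characteristic $p$, two distinct integral curves of $\partial_x+b\partial_y$ through the same point should have contact order at least $p$. To see this, write the branches as $y=\phi_j(x)$; both satisfy $\phi'=b(x,\phi)$. The difference $\psi=\phi_1-\phi_2$ satisfies $\psi'=\psi\cdot u$ with $u$ a power series. A nonzero solution of such a linear ODE with $\psi(0)=0$ forces $\ord\psi\equiv0\pmod p$, because $\ord\psi'=\ord\psi-1$ unless $p\mid\ord\psi$. Hence $i(f_1,f_2)\ge p$.

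In either case Proposition~\ref{P:totalkappa} gives $\kappa_q(C)\ge p$, while Corollary~\ref{C:boundkappa} (after projective closure) gives $\kappa_q(C)\le \deg C(\deg C-1)<p$, which is a contradiction. The hardest step is the singular-branch bound $\kappa(f_k)\ge p$ when the tangent direction of the branch is not exactly $\partial_x$.
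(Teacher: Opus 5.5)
\textbf{There is a genuine gap in the singular-branch case.} Your argument for two smooth branches is correct, and more elementary than the paper's, which goes through a normal form. A smooth branch is its own normalization, so $x'(t)$ is a unit. Both branches are then graphs $y=\phi_j(x)$ with $\phi_j'=b(x,\phi_j)$, and $\psi'=u\psi$ forces $p\mid\ord\psi$, so $i(f_1,f_2)\ge p$.

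The singular-branch case rests on a false step. You assume that $v$ induces a derivation $c(t)\,d/dt$ of the normalization $\field\llbracket t\rrbracket$. That is Seidenberg's extension theorem, which requires characteristic zero. In characteristic $p$ the induced derivation lives only on $\field((t))$, with $c=1/x'(t)$, and this can have a pole. The paper's basic example shows it: for $p$ odd, $f=y^2-x^p$ is invariant by $v=\partial_x$, its branch is parametrized by $(t^2,t^p)$, and $c=1/(2t)$. Hence ``singular branch $\Rightarrow p\mid\ord x(t)\Rightarrow$ multiplicity $\ge p$'' is false; here the multiplicity is $2$. The paper's curve $y^2+2(x^s+x^r)y+x^{2s}$, with $s=\lceil p/3\rceil$, $r=p-s$, $p\ge 5$, contradicts your claim outright. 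It has degree $2s<p$, is invariant by a foliation regular at the origin, and its singularity there is a single branch of multiplicity $2$. A smaller point of the same kind: ``$v$ preserves $(f)$, hence its minimal primes'' is not a valid principle in characteristic $p$, since $\partial_x$ preserves $(x^p)$ but not $(x)$. Here $v(f_k)\in(f_k)$ does follow from reducedness of $f$, so no harm is done.

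\textbf{The missing ingredient.} The true mechanism is the reverse of what you expected. In suitable coordinates it is the \emph{$y$}-coordinate of every branch that lies in $\field\llbracket t^p\rrbracket$, while $\ord x(t)=n<p$. The paper obtains this from two facts:
\begin{itemize}
\item the normal form $v=\partial_x+x^{p-1}g(x^p,y)\partial_y$ of \cite{Mc17};
\item invariant branches divide the $p$-divisor $g(x^p,y)$, which gives $y'(t)=0$ on each branch.
\end{itemize}
You can also get this within your own framework, as follows.
\begin{itemize}
\item The derivation $D$ induced on $\field((t))$ satisfies $D^p=\lambda D$, because derivations of $\field((t))$ form a one-dimensional $\field((t))$-space.
\item Since $D^p(x)=0$ and $D(x)=1$, we get $\lambda=0$, so $D^p=0$.
\item Set $Y=\sum_{i<p}\frac{(-1)^i}{i!}x^iv^i(y)$. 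Then $(x,Y)$ are coordinates and $v(Y)=\pm x^{p-1}v^p(y)$.
\item $v(Y)$ vanishes on every branch because $D^p=0$, so $Y(t)\in\field\llbracket t^p\rrbracket$.
\end{itemize}
Since the multiplicity is $<p$, you can reparametrize to $x=t^n$, $y=\alpha(t^p)$. The branch equation $g_1$ is then, up to a unit, monic in $y$ with coefficients in $\field\llbracket x^p\rrbracket$. So $\partial_x g_1=0$, and $\kappa(f_1)=i(g_1,\partial_y g_1)$ is the order of a series in $t^p$, hence a positive multiple of $p$. With this in place, your final step (the total-$\kappa$ formula together with $\kappa_q(C)\le d(d-1)$) works as you planned.
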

\begin{proof}
	We proceed by contradiction by assuming that there exists $q\in  \sing(C)$ and $q\notin\sing(\F)$. According to \cite{Mc17}, in suitable formal coordinates centered at $q$ the foliation is defined by a vector field having the form 
	$$v = \partial_x + x^{p-1}g(x^p,y)\partial_y,\ g\in \field \llbracket x,y\rrbracket ,$$ or equivalently by the form 
	
	$$\omega=x^{p-1}g(x^p,y) dx -dy.$$
This implies  that $h:=g(x^p,y)$ is an equation of the $p$-divisor of $\F$ in the formal neighborhood of $p$ (we do not exclude that $g$ is identically zero, corresponding to $\F$ being $p$-closed). 

Let $f(x,y)=0$ be a local equation of $C$ and $f=f_1 \cdots f_r$ its (local) decomposition into irreducible factors. Let $\varphi_l (t)=(x_l (t), y_l(t))$ be a parametrization of $f_l$. From the vanishing of $\varphi_l^* \omega$ and from $\ord_q (C)\leq \deg(C) < p$, we can infer that 
\[
x_l(t)=t^{n_l} U(t) \quad \text{and} \quad y_l(t)= \alpha_l (t^p), 
\]
where $1\leq n_l<p$ and $\alpha_l\in t\field \llbracket t\rrbracket $ and $U$ is a unit in $\field \llbracket t\rrbracket $. Indeed, since $f_l$ is invariant and $h$ is an equation for the $p$-divisor, then $f_l$ divides $h$ and we get 
\begin{eqnarray*}
y_l'(t)=x_l^{p-1}h(\varphi_l(t))x_l'(t)=0.
\end{eqnarray*}
By performing the change of variables $t\to tU^{\frac{1}{n_l}}(t)$ (which makes sense as $n_l<p$), one can actually suppose that
	\begin{equation}\label{E:param}
	 x_l(t)=t^{n_l}\ \textrm{ and}\ y(t)= \alpha_l(t^p).
	 \end{equation} 
	 
	 This allows us to describe $f_l$ (up to multiplication by a formal unit $u_l$) by using fractional power series like in the classical Puiseux parametrization:  $f_l=u_l g_l$ with
	\[
	g_l= \prod_{i=1}^{n_l}(y-\alpha_l (\xi_{n_l,i}^p x^\frac{p}{n_l}))
	\]
where the $\xi_{n_l,i}$'s are the  (pairwise distinct) $n_l\textrm{th}$ roots of unity in $\field$. Actually, one can justify this equality by observing that $g_l$ is well defined as a power series (after expansion), $g_l(\varphi_l(t))=0$ and $\textrm{ord}(g_l)=\textrm{ord}(f_l)=n_l$, and the  conclusion follows by \eqref{E:paraideal}. Note that the power series expansion of $g_l$ takes the form 

\begin{equation}\label{E:expansion}
g_l(x,y)=y^{n_l}+ \sum_{i=1}^{n_l}a_{l,i}(x^p)y^{n_l-i},\ a_{l,i}(x)\in x\field \llbracket x\rrbracket 
\end{equation}

Now, since $q$ is a singular point of $C$,  two possible (not mutually exclusive) cases arise:
\begin{enumerate}
	\item $C$ admits two local smooth branches; or
	\item  $C$ admits a singular branch.
\end{enumerate}
Up to renumbering, one can suppose that in the first occurrence  (resp. the second one) this corresponds to $g_1$ and $g_2$ (resp. to $g_1$). In the first case, $n_1=n_2=1$ and 
\[
i(f_1,f_2)=i(g_1,g_2)=0 \mod p
\] 
(and of course positive) thanks to \eqref{E:paraintersect}, \eqref{E:param} and \eqref{E:expansion}. In the second case, $n_1>1$, one can conclude similarly that 
\[
\kappa (f_1)=\kappa(g_1)=i(g_1, { (g_1)}_y)=0 \mod p.
\]
But, the combination of Corollary \ref{C:boundkappa},  Proposition \ref{P:totalkappa} and our hypotheses give 
\[i(f_1,f_2)\le \kappa (f) \le \deg(C)(\deg(C)-1) < p\] in the first case and 
\[\kappa (f_1)\le \kappa (f)\le\deg(C)(\deg(C)-1) < p\] in the second one, yielding to a contradiction. 
  
\end{proof}

\begin{remark}\rm 
    It would be interesting to determine the sharp bound ensuring that any singular point of a reduced invariant curve is necessarily a singular point of the foliation. Let $C_d\subset \mathbb A^2_{\field}$ be a reduced singular curve of degree $d$ invariant under a foliation $\F$. Below, we write $(\F, C_d)$ to denote such a pair. In view of Lemma~\ref{L:normalcrossing}, we may assume $d\ge 3$. Let us fix a prime number $p\ge 5$ and set
\begin{equation}
    \frak l(p):=\max\{ m\in \mathbb N \quad : \quad \text{if } d < m, \text{ then } \sing(C_d)\subset \sing (\F) \text{ for any pair } (\F, C_d)\}.  
\end{equation}
Proposition~\ref{P:counterexamplebis} and Theorem~\ref{thm:invsmalldegree} yield 
\[
\sqrt{p+3} \le \frak l(p) \le \left\lceil \frac{p}{2} \right\rceil +1.
\]
\terminou
\end{remark}

\begin{remark}\rm
 The bound $\deg (C)(\deg(C)-1)<p$ already appears   in the paper \cite{Nguy16}. One also refers to the results presented in  the  introductory section, where it is shown to provide a sufficient condition for the  nonexistence of the so-called wild vanishing cycles.
 \terminou
\end{remark}

\subsection{Codimension one foliations and invariant hypersurfaces I}

In this section, we address  the case where a codimension one foliation $\F$ on $\mathbb{P}_{\field}^n$ admits a smooth invariant hypersurface that does not intersect the singular set of $\F$. This is a typical phenomenon that occurs exclusively in positive characteristic.

When the degree of a homogeneous polynomial $F$ is a multiple of the characteristic $p$, Euler's identity implies that $i_R dF=0$,  where $R$ is the radial vector field. Consequently,  the differential $dF$ induces a foliation on  $\mathbb{P}_{\field}^n$. The following example illustrates this phenomenon. 

\begin{ex}\rm
 For each prime number $p > 2$ consider the polynomial in $\overline{\mathbb{F}}_p[x_0,\ldots,x_n]$ given by
$$
    F = x_0x_1^{p-1}+x_1x_2^{p-1}+\cdots+x_{n-1}x_{n}^{p-1}+x_{0}^{p}
$$    
and let $ Y = \mathcal{Z}(F) \subset \mathbb{P}_{\overline{\mathbb{F}}_p}^{n}$ be the corresponding hypersurface. 
The $1$-form $\Omega = dF$ defines a codimension one foliation in $\mathbb{P}_{\overline{\mathbb{F}}_p}^{n}$ having a single singular point $q=(1:0:\cdots: 0)$. We note that $\Omega$ leaves the smooth hypersurface $Y$  invariant, and the  singular point $q$ of  $\mathcal F$  does not lie in  $Y$. In Corollary~\ref{cor:HnotSing} below, we will see that this example represents, to some extent,  the general case.  
\terminou
\end{ex}

In the next result,   Proposition \ref{decomposition} is applied to obtain a global decomposition of the homogeneous $1$-form that defines the  foliation on affine coordinates of $\mathbb{P}^n_{\field}$. 

\begin{prop}\label{decP} 
Let $\mathcal{F}$ be a codimension one foliation on $\mathbb{P}_{\field}^{n}$ given by a projective $1$-form $\Omega = \sum_{i=0}^{n}A_{i}dx_{i}$. Let $Y = \mathcal{Z}(F) \subset \mathbb{P}_{\field}^{n}$ be a smooth hypersurface invariant by $\mathcal{F}$. Then there exist
a homogeneous polynomial $H \in \field[x_0,\ldots,x_n]$ and a homogeneous $1$-form $\eta$ in $\mathbb{A}_{\field}^{n+1}$   such that 
\[
    \Omega = HdF+F\eta.
\]
\end{prop}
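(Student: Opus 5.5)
Since $Y$ is invariant by $\F$, the polynomial $F$ divides every coefficient of the homogeneous $2$-form $\Omega\wedge dF$ on $\mathbb A^{n+1}_{\field}$. I would first make this precise. Invariance on each affine chart $x_i\neq 0$ says that $\omega\wedge df/f$ is regular there. Homogenizing, $\Omega\wedge dF = F\,\Theta$ on each cone $\{x_i\neq 0\}$, with $\Theta$ having coefficients in $\field[x_0,\dots,x_n][x_i^{-1}]$. Since $F$ is irreducible (a smooth hypersurface of $\mathbb P^n$, $n\ge 2$, is irreducible) and is not a power of any $x_i$ unless $Y$ is a hyperplane (that case is easy), the divisibility holds in the polynomial ring. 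So $\Omega\wedge dF = F\Theta$ with $\Theta$ a homogeneous polynomial $2$-form.

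Next I would use smoothness of $Y$. Smoothness means the partials $\partial_{x_i}F$ have no common zero on $Y$ in $\mathbb P^n$. Hence the ideal $I=(F,\partial_0F,\dots,\partial_nF)$ is $\mathfrak m$-primary, where $\mathfrak m$ is the irrelevant ideal. Write $\Omega\wedge dF=F\Theta$ in coordinates:
\[
\partial_iF\,A_j-\partial_jF\,A_i = F\,\theta_{ij}.
\]
As in Proposition~\ref{decomposition}, multiplying by $\partial_iF$ gives, modulo $F$,
\[
\partial_iF\cdot\Omega \equiv A_i\,dF \pmod F .
\]
So for every $i$ we have $\partial_iF\cdot\Omega = A_i\,dF + F\,\sigma_i$, with $\sigma_i$ polynomial. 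The goal is to combine these identities into a single one with coefficient $1$ in front of $\Omega$.

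Working in $S/(F)$, where $S=\field[x_0,\dots,x_n]$, the relations say that the image $\bar\Omega$ of $\Omega$ satisfies $\partial_iF\,\bar\Omega\in S/(F)\cdot d\bar F$ for all $i$. This is the main step, and I expect it to be the main obstacle. The $\partial_iF$ generate an ideal whose radical in $S/(F)$ is $\bar{\mathfrak m}$, since $Y$ is smooth. The $S/(F)$-module $M=(\Omega^1_S/F\Omega^1_S)/(S/(F))\,d\bar F$ is free over $S/(F)$ away from the vertex. Indeed, $d\bar F$ is a nowhere vanishing section of the free module $\Omega^1_S/F\Omega^1_S$ off the vertex, because some $\partial_iF$ is nonzero there. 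So $M$ is locally free on the punctured cone. The class of $\bar\Omega$ in $M$ is killed by an $\bar{\mathfrak m}$-primary ideal, so it is supported at the vertex. Now $S/(F)$ is a hypersurface ring, hence Cohen--Macaulay, of dimension $n\ge 2$, so its depth at $\bar{\mathfrak m}$ is at least $2$. Together with the splitting $0\to S/(F)\to (S/(F))^{n+1}\to M\to 0$ (injective because $d\bar F\neq0$ generically), this gives $\mathrm{depth}\,M\ge 1$. Therefore $M$ has no nonzero sections supported at $\bar{\mathfrak m}$, i.e. $H^0_{\mathfrak m}(M)=0$.

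Hence $\bar\Omega = \bar H\,d\bar F$ for some $\bar H\in S/(F)$, which I take homogeneous of degree $\deg\Omega-\deg F+1$. Lifting gives
\[
\Omega = H\,dF + F\,\eta
\]
with $\eta$ a homogeneous polynomial $1$-form, as required. If a concrete substitute for the depth argument is preferred, I would instead write $x_k^N=\sum_i c_{ik}\,\partial_iF \bmod F$ for large $N$ and each $k$. Combining the identities $\partial_iF\,\Omega = A_i\,dF+F\sigma_i$ then gives $x_k^N\Omega\in (dF)+F\Omega^1_S$ for every $k$. Since the $x_k^N$ form a regular sequence, this yields the same conclusion by a Koszul/gcd argument.
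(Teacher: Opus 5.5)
Your proof is correct. It differs from the paper's in how the local identities are globalized.

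\textbf{The paper's route.} The paper works on the punctured cone $U=\mathbb A^{n+1}_{\field}\setminus\{0\}$. It applies the local decomposition $g\omega=h\,df+f\sigma$ of Section 2, with $g=1$ since $Y$ is smooth, to get $\Omega=h_i\,dF+F\sigma_i$ on an open cover. It notes that $h_i-h_j=Fg_{ij}$ defines a \v{C}ech $1$-cocycle of $\mathcal O_U$ and kills it using $H^1(U,\mathcal O_U)=0$. It then extends the glued $H$ and $\eta$ to $\mathbb A^{n+1}_{\field}$ via surjectivity of $\mathcal O(\mathbb A^{n+1}_{\field})\to\mathcal O(U)$, and takes homogeneous components at the end.

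\textbf{Your route.} You never leave the graded ring $S$. The identities $\partial_iF\,\Omega\equiv A_i\,dF \pmod F$, which also drive the proof of the local decomposition, show that the class of $\Omega$ in $M=\Omega^1_S/(S\,dF+F\Omega^1_S)$ is annihilated by the Jacobian ideal. That ideal is $\bar{\mathfrak m}$-primary in $S/(F)$. The depth lemma for $0\to S/(F)\to (S/(F))^{n+1}\to M\to 0$, with $S/(F)$ Cohen--Macaulay of dimension $n\ge 2$, then gives $H^0_{\mathfrak m}(M)=0$.

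\textbf{Comparison.} The underlying input is the same in both: vanishing of $H^i_{\mathfrak m}(S)$ for $i\le 2$. This is equivalent to $H^1(U,\mathcal O_U)=0$ together with the Hartogs extension, and it is what gives $\mathrm{depth}\,S/(F)\ge 2$. Your version needs no cover or gluing, and produces polynomial $H$ and $\eta$ directly, with homogeneity automatic from the grading. It also makes transparent that smoothness of $Y$ enters only through the $\mathfrak m$-primarity of $(F,\partial_0F,\dots,\partial_nF)$. The paper's version has the advantage of reusing its local proposition verbatim.

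\textbf{Minor remarks.}
\begin{enumerate}
\item The sequence $0\to S/(F)\to(S/(F))^{n+1}\to M\to0$ is exact but not split; it cannot split, since $d\bar F$ vanishes at the vertex. Exactness is all the depth lemma needs.
\item The hyperplane case needs no separate treatment. A prime $F$ cannot divide powers of two different coordinates, so comparing two charts already gives $F\mid\Omega\wedge dF$.
\item Your ``concrete substitute'' misplaces the key input. What is needed is a length-two regular sequence on $S/(F)$ inside the annihilating ideal, not regularity of the $x_k^N$ on $S$. For $F=x_0x_2+x_1x_3$, the pair $x_0^N,x_1^N$ is not regular on $S/(F)$. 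The existence of such a sequence is again the depth statement.
\end{enumerate}
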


\begin{proof} The argument  is similar to  \cite[Proposition 1]{MR1150571}. Consider the open set 
\[
U = \mathbb{A}_{\field}^{n+1}\setminus \{(0,\ldots,0)\}. 
\]
We will use the following facts (see \cite[Theorem 1.14]{MR1917232}):

%
%

\begin{enumerate}
    \item $\h^{1}(U, \mathcal{O}_U) =0$; and 
    
    \item  the restriction map $\mathcal{O} (\mathbb{A}_{\field}^{n+1}) \to \mathcal{O}(U)$ is surjective. 
    
\end{enumerate}

Denote by $f$ the local equation of $Y$ in $U$ and $\omega$ the $1$-form defining $\mathcal{F}$ in $U$ via restriction, that is, $\omega = \Omega|_{U}$. Since, $Y$ is smooth, by Proposition \ref{decomposition}, we know that there exists an open covering  $\{U_i\}$ of $U$, elements $h_i \in \mathcal{O}_{X}(U_i)$ and $\sigma_i \in \Omega_X^{1}(U_{i})$ such that
\[
\omega|_{U_i} = h_idf+f\sigma_i .
\]
If $U_{ij} = U_{i}\cap U_{j}$ is nonempty, we have 
$h_{i}df+f\sigma_{i} = h_{j}df+f\sigma_{j}$, which gives $(h_{i}-h_{j})df = f(\sigma_{j}-\sigma_{i})$, and then     $h_{i}-h_{j} = fg_{ij}$  
for certain $g_{ij} \in \mathcal{O}(U_{ij})$. Note that $g_{ik}= g_{ij}+g_{jk}$ for every $i,j,k$ and since 
$\h^{1}(U, \mathcal{O}_U) =0$
we have $g_{ij}=g_{j}-g_{i}$ for some $g_{i} \in \mathcal{O}(U_i)$ and  $g_{j} \in \mathcal{O}(U_j)$. Therefore  
\[
h_{i}+fg_{i} = h_{j}+fg_{j} \quad \text{in} \quad U_{ij}
\] 
and then we can take  $H\in \mathcal{O}_{\mathbb{A}^{n+1}}(U)$ by
\[
H|_{U_i} := h_i+fg_i.
\]
Since the restriction map  $\mathcal{O}(\mathbb{A}^{n+1}) \to \mathcal{O}(U)$ is surjective, one can assume that 
\[
H \in \field[x_0,\ldots,x_n]. 
\]

In order to define the $1$-form $\eta$ of the statement, we note that
\[
 \sigma_{i}-g_{i}df = \sigma_{j}-g_{j}df \quad \text{in} \quad U_{ij}. 
 \] 
Using the same extension principle above, there exists 
$\eta \in \Omega_{\mathbb A_{\field}^{n+1}}^{1}(\mathbb A_{\field}^{n+1})$ such that
\[
\eta|_{U_i} = \sigma_i - g_idf.
\] 
Hence, 
we have the identity
    $$
        \Omega = HdF+F\eta.
    $$
Finally,  using that $F$ and $\Omega$ are homogeneous, we can assume  that $H$ and $\eta$ are homogeneous. This finishes the proof.

\end{proof}

As a first and easy application of Proposition~\ref{decP}, we provide a bound for the degree of a smooth invariant hypersurface in terms of the degree of the foliation. We refer to \cite{EstevesKleiman} for a more complete treatment of the problem of bonding the degree of invariant subvarieties.

\begin{cor} Let $\mathcal{F}$ be a codimension one foliation on $\mathbb{P}_{\field}^{n}$, $n\ge 2$, and $Y = \mathcal{Z}(F)$ be a smooth hypersurface invariant by $\mathcal{F}$. Then 
\[
\deg(Y)\leq \deg(\mathcal{F})+2.
\]
\end{cor}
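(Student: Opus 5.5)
By Proposition~\ref{decP}, the projective $1$-form $\Omega=\sum_i A_i\,dx_i$ defining $\F$ can be written as
\[
\Omega = H\,dF + F\eta ,
\]
with $H$ a homogeneous polynomial and $\eta$ a homogeneous $1$-form on $\mathbb A^{n+1}_{\field}$. Write $d_Y=\deg F$ and $d=\deg(\F)$, so the coefficients $A_i$ have degree $d+1$. We may take $H$ and $\eta$ homogeneous of the degrees forced by this identity: $\deg H = d+2-d_Y$, and the coefficients of $\eta$ have degree $d+1-d_Y$. If $H$ were nonzero, we would get $\deg H\ge 0$, i.e.\ $d_Y\le d+2$, which is the claim. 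So the plan is to show that $H\neq 0$, and more precisely that the degree-matching forces a nonnegative degree.

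Suppose instead that $d_Y>d+2$. Then every homogeneous polynomial of negative degree vanishes, so $H=0$. If also $d_Y>d+1$, the coefficients of $\eta$ have negative degree and vanish too. This gives $\Omega=0$, a contradiction since $\Omega\ne 0$.

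The only point needing care is the homogeneity normalization in Proposition~\ref{decP}. One takes the homogeneous components of degree $d+2-d_Y$ of $H$ and of degree $d+1-d_Y$ of the coefficients of $\eta$. The identity $\Omega = H\,dF+F\eta$ is then preserved, because comparing components of degree $d+1$ in the coefficients of each $dx_i$ shows that only these components contribute. Hence $d_Y>d+2$ forces $\Omega=0$.

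The main (mild) obstacle is this graded bookkeeping, together with making sure the decomposition genuinely exists on all of $\mathbb A^{n+1}_{\field}$. The latter is exactly what Proposition~\ref{decP} supplies, using the smoothness of $Y$. No contradiction arises from the boundary cases $H=0$ with $d_Y=d+2$ or $d_Y=d+1$. For example, $d_Y=d+2$ with $H$ constant is the case $\Omega=dF$, which occurs in positive characteristic. So the bound $d_Y\le d+2$ is what the argument yields.
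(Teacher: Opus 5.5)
Your argument is correct and is essentially the paper's: both use the global decomposition $\Omega=H\,dF+F\eta$ from the preceding proposition and compare degrees. The only difference is that the paper reads off $\deg(\mathcal F)+1=\deg H+\deg Y-1$, tacitly using $H\neq 0$ (true, since otherwise $F$ divides every coefficient of $\Omega$, contradicting $\codim \sing(\Omega)\ge 2$), whereas you avoid this by noting that $d_Y>d+2$ forces both $H$ and $\eta$ to vanish for degree reasons. One small correction to your closing remark, which does not affect the proof: the case $H=0$, $d_Y=d+2$ does give a contradiction, since then $\eta$ has coefficients of degree $-1$ and so $\Omega=0$.
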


\begin{proof} 
It  follows from Proposition \ref{decP} that there is a global decomposition: $\Omega = HdF+F\eta$, where $\Omega$ is a $1$-form defining $\mathcal{F}$ in homogeneous coordinates and $H$ is a homogeneous polynomial. In particular, we have 
\[
\deg(\mathcal{F})+1 = \deg(H)+\deg(Y)-1
\] 
and the result follows. 

\end{proof}

Now, we deal with a smooth invariant hypersurface that does not intersect the singular locus of $\F$. 

\begin{cor}\label{cor:HnotSing} Let $\mathcal{F}$ be a codimension one foliation on $\mathbb{P}_{\field}^{n}$, $n\ge 2$, and let  $Y=\mathcal Z (F)$ be a smooth hypersurface invariant by $\mathcal{F}$. If 
$Y$ does not intersect $\sing (\mathcal F)$, then $\field$ has  characteristic $p>0$,  $\deg F$ is a multiple of $p$ and $\mathcal{F}$ is given by the kernel of the global exact $1$-form $\Omega=dF$. In particular, $\F$ is $p$-closed.
\end{cor}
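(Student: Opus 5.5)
Start from the global decomposition of Proposition~\ref{decP}: there are a homogeneous polynomial $H$ and a homogeneous $1$-form $\eta$ on $\mathbb A^{n+1}_{\field}$ with
\[
\Omega = H\,dF + F\eta ,
\]
where $\Omega$ is the projective $1$-form of degree $d+1$ in the coefficients defining $\F$. The plan is to show that $H$ is a nonzero constant and $\eta=0$, using the hypothesis $Y\cap\sing(\F)=\emptyset$.

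\textbf{Step 1: $H$ is a nonzero constant.} Suppose $\deg H>0$. Since $n\ge 2$, the hypersurfaces $\mathcal Z(H)$ and $Y$ in $\mathbb P^n_{\field}$ meet at some point $y$. There $F(y)=H(y)=0$, so $\Omega(y)=0$ as a form on $\mathbb A^{n+1}_{\field}$. Vanishing of the homogeneous form at a point of the cone forces $y\in\sing(\F)$, because the singular set of $\F$ is the projectivized zero set of the coefficients $A_i$. This contradicts the hypothesis. If instead $H=0$, then $\Omega=F\eta$, so $F$ divides all the $A_i$; this contradicts $\codim\sing(\Omega)\ge 2$, since $Y$ would lie in the singular set. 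Hence $H=c\in\field^*$, and we may normalize $c=1$. Degree comparison then gives $\deg F=d+2$.

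\textbf{Step 2: $\eta=0$ via the radial field.} Contract with $R$. Because $i_R\Omega=0$ and Euler gives $i_R dF=(\deg F)F$, we get
\[
0=(\deg F)F+F\,i_R\eta ,\qquad\text{so}\qquad i_R\eta=-\deg F .
\]
Now $\eta$ is homogeneous with polynomial coefficients of degree $\deg\Omega-\deg F = 0$ (coefficient degree $d+1-(d+2)<0$), so comparing degrees forces $\eta=0$. Then $i_R\eta=0$, which gives $\deg F=0$ in $\field$. Since $\deg F\ge1$, the characteristic must be $p>0$ with $p\mid\deg F$, and $\Omega=dF$.

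The main point to verify is the degree bookkeeping in Step 2. The coefficients of $F\eta$ have degree $d+1$, so those of $\eta$ have degree $d+1-\deg F=-1$; hence $\eta=0$ immediately, even before contracting with $R$. Contracting $\Omega=dF$ with $R$ then yields $(\deg F)F=0$, which gives the characteristic conclusion.

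Finally, $p$-closedness: $\F$ is locally the foliation of level sets of the function $F/\ell^{\deg F}$ for a linear form $\ell$, i.e.\ it is given by a closed (indeed exact) rational $1$-form. A vector field $v$ tangent to $\F$ satisfies $v(\phi)=0$ for the rational first integral $\phi$, and then $v^p(\phi)=0$. Since $\F$ is codimension one and defined by $d\phi$, this means $v^p\in T_\F$. The step I expect to need most care is Step 1, namely the claim that a zero of $\Omega$ on the cone over $Y$ is a genuine singular point of $\F$. This follows because $\sing(\F)$ is exactly the projectivized common zero locus of the $A_i$.
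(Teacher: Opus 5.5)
Your proof is correct and follows essentially the same route as the paper. Both arguments apply Proposition~\ref{decP}, use that a nonconstant $H$ would give a point of $\mathcal Z(H,F)\subset Y\cap\sing(\F)$, and then finish with degree bookkeeping and Euler's relation $i_R\,dF=(\deg F)F$; the paper simply orders it as ``$\sigma\neq 0\Rightarrow \deg H\ge 1\Rightarrow$ contradiction.'' One thing to tidy: the first line of your Step 2 should say that the coefficients of $\eta$ have degree $d+1-\deg F=-1$, not $0$, as you yourself note further on.
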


\begin{proof} 
Let us consider the decomposition given by  Proposition \ref{decP}:
$$
    \Omega = HdF+F\sigma.
$$

We will show that $\sigma = 0$.  Aiming to a contradiction, let us assume that $\sigma \neq 0$. In this case,  $H$ has degree $\ge 1$, because $F\sigma$ has coefficients of degree at least $\deg F$ and $dF$ has degree $\deg F-1$.
Hence, the algebraic set $\mathcal{Z}(H, F)$  is  nonempty:
\[
\dim \mathcal{Z}(H,F)\geq n-2 \ge 0. 
\] 
See \cite[Chap. I Theorem 7.2]{AGHarthorne}.
Since $\mathcal{Z}(H, F)$ lies in the singular set of $\mathcal F$, we get a contradiction, because this implies that  $\sing(\mathcal{F})\cap Y$ is nonempty. 

Therefore,  $\sigma = 0$,   $H \in \field^{*}$ (since $\codim \sing(\mathcal{F})\geq2$), and $\mathcal{F}$ is defined by $dF$. In addition,  the characteristic of $\field$ is $p>0$, with $p|\deg F $ and $\deg F = \deg(\mathcal{F})+2$. 
\end{proof}

\subsection{Codimension one foliations and invariant hypersurfaces II}

%

In this section, we  extend Corollary~\ref{cor:HnotSing} to distributions on $\mathbb{P}^n_{\field}$, while also relaxing the condition that the hypersurface does not intersect the singular locus of the distribution.

We  need the following result. 

\begin{lemma}\label{lemma:d=0modp}
Let $\F$ be a foliation on $\mathbb P^2_{\field}$ and let $C=\mathcal Z(F)$ be an invariant curve, possibly singular, that does not intersect $\sing (\F)$. Then the characteristic $p$ of $\field$ is positive and $\deg C = 0 \mod p$.   
\end{lemma}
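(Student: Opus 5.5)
The plan is to apply the Camacho--Sad formula (Theorem~\ref{thm:CSformula}) to the reduced invariant curve $C$ on $X=\mathbb P^2_{\field}$. Since $C\cap\sing(\F)=\emptyset$, every point $x\in C$ is a regular point of $\F$. By Proposition~\ref{decomposition} we may then take $h=1$ in the local decomposition $g\omega = hdf+f\sigma$. This holds even at singular points of $C$, because the proposition only requires $\F$ to be smooth at $x$.

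With $h=1$, the form $\varphi_i^*(\sigma/h)=\varphi_i^*\sigma$ is regular on each branch normalization. Hence every residue vanishes, and $CS(\F,C,x)=0$ for all $x\in C$. This is exactly the remark made in the paper right after the definition of the index.

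Summing over $x\in C$ gives $CS(\F,C)=0$, and Theorem~\ref{thm:CSformula} yields
\[
(\deg C)^2 = C^2 = CS(\F,C) = 0 \quad\text{in } \field .
\]
If $\mathrm{char}(\field)=0$, this is impossible, since $\deg C\ge 1$ and so $(\deg C)^2\neq 0$ in $\field$. Hence $p>0$. Then $p\mid (\deg C)^2$, and since $p$ is prime, $p\mid \deg C$.

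The only point needing care is the claim that $CS(\F,C,x)=0$ at a singular point of $C$ which is regular for $\F$. The positive-characteristic phenomenon of Section~\ref{SS:singnonsing} shows that such points can really occur, so this case cannot be ignored. The claim holds because Proposition~\ref{decomposition} provides $h=1$ whenever $\F$ is smooth at $x$, with no smoothness assumption on $Y$. The index is then a sum of residues of regular forms on the branch normalizations, and each of these vanishes.
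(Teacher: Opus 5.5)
Your proposal is correct and is the paper's argument: the paper proves the lemma in one line as an immediate consequence of the Camacho--Sad formula, and the vanishing of $CS(\F,C,x)$ at points where $\F$ is regular (take $h=1$ in the local decomposition, which is allowed even at singular points of $C$) is the remark the paper makes right after defining the index. You additionally spell out the final step: $(\deg C)^2=C^2=0$ in $\field$, which forces $p>0$ and $p\mid \deg C$.
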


\proof 
This is an immediate consequence of Theorem~\ref{thm:CSformula}. 

%

\endproof

Now we can prove the main result of this section. 

\begin{thm}\label{thm:HcapSing=empty}
Let $\D$ be a codimension one distribution on $\mathbb{P}^n_{\field}$, $n\ge 2$. Let $Y=\mathcal Z(F)$ be a reduced  hypersurface invariant by $\D$. If $Y\cap \textrm{Sing}(\D)$ has codimension at least $3$,  then $\field$ has characteristic $p>0$, $\deg F$  is a multiple of $p$ and $\D$ is given by the kernel of $dF$. In particular, $\D$ is integrable and $p$-closed.
\end{thm}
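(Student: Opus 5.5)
Throughout, $\Omega=\sum_i A_i\,dx_i$ denotes the homogeneous $1$-form of degree $d+1$ defining $\D$ on $\mathbb A^{n+1}_{\field}$, so that $i_R\Omega=0$. The plan has three steps. First, obtain a global decomposition $\Omega = H\,dF + F\eta$. Second, show that $\eta$ vanishes by restricting to a general plane $\mathbb P^2_{\field}\subset \projn$ and applying Lemma~\ref{lemma:d=0modp}. Third, conclude from Euler's identity.

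\emph{Step 1: the global decomposition.} Proposition~\ref{decP} assumed $Y$ smooth, and Proposition~\ref{decomposition} only gives $g\omega = h\,df + f\sigma$ with $g$ prime to $f$, not $g=1$. I would use instead the following algebraic fact: invariance means $F$ divides every $2\times 2$ minor $A_iF_{x_j}-A_jF_{x_i}$. Since $F$ is reduced, we have $\operatorname{codim} \mathcal Z(F,\partial F)\geq 2$ in $\mathbb A^{n+1}_{\field}$ after a general linear change of coordinates (in characteristic $p$ one must check that $F$ is not a $p$-th power in all variables; this holds because $F$ is reduced). The Koszul complex of a regular sequence, or a depth argument, then shows that $\Omega \equiv H\,dF \pmod F$ on $\mathbb A^{n+1}_{\field}$ away from a codimension $\geq 3$ set. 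The hypothesis that $Y\cap\sing(\D)$ has codimension $\geq 3$ is what should let the Hartogs-type extension go through, as in the proof of Proposition~\ref{decP}: the local $h_i$ glue outside $\mathcal Z(F)\cap\sing$, which has codimension $\geq 2$ in $Y$, and extend to all of $Y$ because $\h^1$ vanishes on the complement of a set of codimension $\ge 3$ in affine space. This gives $\Omega = H\,dF + F\eta$ with $H,\eta$ homogeneous.

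\emph{Step 2: $\eta=0$.} Take a general plane $P\cong\mathbb P^2_{\field}$. Since $\operatorname{codim}(Y\cap\sing(\D))\geq 3$ in $\projn$, we get $P\cap Y\cap \sing(\D)=\emptyset$. The restriction of $\D$ to $P$ is a foliation (every distribution on a surface is integrable), and its singular set is contained in $(\sing(\D)\cap P)\cup\{\text{tangency points}\}$. The restricted curve $C=Y\cap P$ is reduced and invariant. I need $C$ to avoid the singularities of $\D|_P$. Points of $C$ where $P$ is tangent to $\D$ are points where $\Omega|_P = H\,dF|_P$ vanishes on $C$. These are either zeros of $H$ on $C$, or points where $dF|_P=0$, i.e.\ singular points of $C$ with $H\neq0$; at the latter, $\Omega|_P$ still vanishes.

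This is the main obstacle. I would argue via the degree. If $\eta\neq0$ then $\deg H\geq1$ (same degree count as in Corollary~\ref{cor:HnotSing}), so $\mathcal Z(H,F)$ has codimension $2$. Since $\mathcal Z(H,F)\subset\sing(\D)$, it lies in $Y\cap\sing(\D)$, which has codimension $\geq 3$, a contradiction. So the point is to get $\mathcal Z(H,F)\subset \sing(\D)$ honestly, which follows from $\Omega=H\,dF+F\eta$. Hence $\eta=0$ and $H\in\field^*$, using $\operatorname{codim}\sing\geq 2$. Lemma~\ref{lemma:d=0modp} on $P$ then serves as a consistency check rather than a necessity.

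\emph{Step 3: conclusion.} With $\Omega=c\,dF$, contracting with the radial vector field gives $0=i_R\Omega=c\deg(F)\,F$. Therefore $p>0$ and $p\mid\deg F$. Moreover $\Omega\wedge d\Omega=0$, so $\D$ is integrable, and it is $p$-closed because it is locally defined by the exact form $dF$ (its kernel is defined by $F$, a $p$-closed condition as in Corollary~\ref{cor:HnotSing}).

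I expect Step 1 for singular $Y$ to be the delicate point. If it fails globally, the fallback is to apply Lemma~\ref{lemma:d=0modp} and Theorem~\ref{thm:CSformula} on a general plane to get $p\mid\deg F$, and then work with the restricted decomposition on that plane.
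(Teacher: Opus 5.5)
\textbf{Step 1 is false, not just delicate.} The global decomposition $\Omega=H\,dF+F\eta$ with $H$ polynomial does not hold in general under the hypotheses of the theorem. At any point where $F$ and $dF$ both vanish it forces $\Omega=0$, so it would give $\sing(Y)\subset\sing(\D)$. In positive characteristic this inclusion fails.

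The paper's own example is a counterexample. Take characteristic $3$, $F=y^2z-x^3$ and $\Omega=z\,dy-y\,dz$ on $\mathbb P^2_{\field}$. Then $dF=2y\,\Omega$, so $Y$ is invariant, and $\sing(\D)=\{(1:0:0)\}$ misses $Y$. Yet $Y$ has a cusp at $(0:0:1)$, where $\Omega=dy\neq 0$. The same works for any $p\geq 3$ with $F=y^2z^{p-2}-x^p$, since then $dF=2yz^{p-3}\Omega$.

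Your Steps 2--3 would even conclude $\Omega=c\,dF$ with $c\in\field^*$, which is false here. The theorem only asserts that $\D$ is the kernel of $dF$, i.e.\ $dF=G\,\Omega$ with $G$ possibly nonconstant.

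The error in your gluing is that you conflate $\sing(Y)$ with $Y\cap\sing(\D)$. Local expressions $\omega=h\,df+f\sigma$ with $h$ regular exist only where $Y$ is smooth; this is the case $g=1$ of the decomposition proposition. At points of $\sing(Y)\setminus\sing(\D)$ you only have $g\omega=df+f\sigma$ with $g(x)=0$. So the $h_i$ live on the smooth locus of $Y$, whose complement can have codimension $1$ in $Y$. No Hartogs or $\h^1$-vanishing argument extends them across it.

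\textbf{What does hold is the decomposition in the opposite direction, and your extension idea works for it.} Let $\hat Y=\mathcal{Z}(F)\subset\mathbb A^{n+1}_{\field}$ be the cone.
\begin{itemize}
\item Away from the cone over $Y\cap\sing(\D)$, the form $\Omega$ is nowhere zero on $\hat Y$ and $\Omega\wedge dF\equiv 0 \pmod F$. Hence locally $dF\equiv g\,\Omega\pmod F$, with $g$ unique modulo $F$ (this is the case $h=1$).
\item The excluded set now genuinely has codimension $\geq 2$ in the Cohen--Macaulay hypersurface $\hat Y$. So $g$ extends and lifts to a homogeneous polynomial $G$ of degree $\deg F-d-2$ with $dF-G\,\Omega=F\sigma$.
\item Comparing degrees gives $\sigma=0$. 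Reducedness of $F$ gives $dF\neq 0$, hence $G\neq 0$.
\item Finally $0=G\,i_R\Omega=i_R\,dF=\deg(F)\,F$, which gives $p>0$ and $p\mid\deg F$.
\end{itemize}

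The paper argues differently and uses only $\Omega\wedge dF=F\eta$, never a division of $\Omega$ by $dF$. It first gets $p\mid\deg F$ from the Camacho--Sad formula on a general plane; the indices vanish along $C$ because $h=1$ there. This makes $\eta$ a genuine section of $\Omega^2_{\mathbb P^2_{\field}}(l)$. It then shows $\eta=0$ using local divisions $\eta=\Omega\wedge\xi_i$, together with $\h^1(\mathbb P^2_{\field},\mathcal{O}(-l))=0$ and the absence of global $1$-forms on $\mathbb P^2_{\field}$.
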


\begin{proof}
Let us first assume that $n=2$. In this situation, $\mathcal D=\F$ is automatically integrable and $Y=C$ is a (maybe singular) curve which does not contain any singular point of $\F$.

By Lemma~\ref{lemma:d=0modp} we may assume $d_C:=\deg C = 0 \mod p$, and thus $p>0$. By assumption, we have that 
\begin{equation}\label{E:omegawedgedf}
\Omega\wedge dF=F\eta
\end{equation}
where $\Omega$ is a section of $\Omega_{ {\mathbb P}^2}^1 (l)$ defining $\F$. We see $F$  and $dF$ as sections of ${ \mathcal O}_{ {\mathbb P}^2} (d_C)$ and    $\Omega_{ {\mathbb P}^2}^1 (d_C)$, respectively. Then $\eta\in \h^0({\mathbb  P}^2,\Omega_{ {\mathbb P}^2}^2 (l) )$. 

For every $q\in {\mathbb P}^2$, there exists a neighborhood of $q$ and a one form $\xi_q$ such that $\eta=\Omega\wedge\xi_q$. Indeed, when $q\in C$, this is a consequence of $\Omega (q)\not =0$ and otherwise, one can take $\xi_q=\frac{dF}{F}$. In particular, one can  find an open cover $\mathcal U=\{U_i\}$ of ${\mathbb P}^2$ together with a collection of one forms $\xi_i\in \h^0(U_i,\Omega_{ {U_i}}^1 )$ such that
\begin{equation}\label{E:divisionprop} 
\eta=\Omega\wedge \xi_i .
\end{equation}
It follows that the $\xi_i-\xi_j$ define a cocycle in $Z^1(\mathcal U, N_\F^*)$ where $N_\F^*\simeq { \mathcal O}_{ {\mathbb P}^2} (-l)$ is the conormal sheaf of the foliation. By the triviality of degree one cohomology and up to refining $\mathcal U$, this implies that there exists a collection of one forms $\alpha_i\in 	\h^0(U_i,N_\F^* )$ such that the $\xi_i-\alpha_i$ glue together to a global one form, which is then necessarily trivial. Consequently, $\eta=0$, as wanted.

%

	
Now, we take $n\ge 2$ arbitrary. Let $\Omega\in \h^0(\mathbb{P}^n_{\field}, \Omega_{ \mathbb{P}^n_{\field}}^1 (l))$ defining $\D$.  We consider the restriction $\D|_{\proj2}$ of $\D$ to a general plane section $\proj2\subset\mathbb{P}^n_{\field}$ that avoids  the singular set of $\D$, and let $C = Y\cap \proj2$. 

We will show that $\deg C = 0 \mod p$, and therefore $\deg Y = 0 \mod p$.  We cannot guarantee that $C$ does not intersect the singular locus of  $\D|_{\proj2}$. Indeed, there may have tangencies between $\proj2$ and  $\D$ lying in $C$. However, one can show that the Camacho-Sad indices are vanishing along $C$.  For this, given $q\in C$, there exists a neighborhood of $q$ in $\mathbb{P}^n_{\field}$ where the  decomposition $g\Omega = df + f \sigma$ holds, because $q\notin \sing (\D)$. Here, $f$ is a local equation for $Y$,  see Proposition~\ref{decomposition}. Therefore, there is a similar decomposition  for the restriction  $\tilde{\Omega} = \Omega|_{\proj2}$:
\begin{eqnarray}\label{eq:restric}
\tilde{g}\tilde{\Omega} = d\tilde{f}  + \tilde{f} \tilde{\sigma}. 
\end{eqnarray}
where the symbol $\sim$ means restriction to $\proj2$. 
 This implies that 
 \[
 CS(\D|_{\proj2}, C, q)  = 0.  
 \]
 It follows from Theorem~\ref{thm:CSformula} that $\deg C = 0 \mod p$. 

Since $Y$ is invariant by $\D$, we can write
\[
\Omega\wedge dF=F\eta. 
\] 
Let us show that the restriction $\tilde{\eta}$ of $\eta$ to $\proj2$ is identically vanishing, and so is $\eta$. First note that even $C$ may intersect the singular locus of $\D|_{\proj2}$, there is still a covering $\{U_i\}$ of $\proj2$ and a family of 1-forms $\xi_i$ satisfying 
\begin{eqnarray}\label{eq:familyxi}
\tilde{\eta} = \tilde{\Omega} \wedge \xi_i.
\end{eqnarray}
Indeed, from \eqref{eq:restric}, near a point $q$ of $C$, we obtain 
\[
\tilde{\Omega}\wedge d\tilde{f} = \tilde{f}\tilde{\sigma}\wedge \tilde{\Omega}
\]
and therefore $\tilde{\eta} = \tilde{\sigma}\wedge \tilde{\Omega}$. If $q\notin C$ we can take $\displaystyle\xi = \frac{d\tilde{f}}{\tilde{f}}$.  Finally, using the family of $1$-forms $\xi_i$ that satisfy \eqref{eq:familyxi} and following the same procedure  as in the case $n=2$, we conclude that $\tilde{\eta} = 0$. So $\eta = 0$ and then $\Omega\wedge dF = 0$. 
%
This finishes the proof of the theorem.

\end{proof}

We observe that Theorem~\ref{thm:HcapSing=empty} is consistent with \cite[Corollary 4.5]{EstevesKleiman}. Indeed, under the hypothesis of Theorem~\ref{thm:HcapSing=empty}, we could apply \cite[Corollary 4.5]{EstevesKleiman} to conclude that $p$ divides $\deg(Y)$.

The next result is an immediate consequence of Theorem~\ref{thm:HcapSing=empty}.

\begin{cor}\label{cor:nonintegrabledis}
	Let $\D$ be a codimension one  non integrable distribution on  $\mathbb{P}^n_{\field}$ such that $ \sing (\D) $ has codimension at least $3$, then $\D$ has no invariant hypersurface. 
\end{cor}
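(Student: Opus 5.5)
The plan is to argue by contradiction and reduce directly to Theorem~\ref{thm:HcapSing=empty}. Suppose $\D$ is a codimension one non integrable distribution on $\mathbb{P}^n_{\field}$ with $\codim \sing(\D)\ge 3$. Suppose also that $\D$ admits an invariant hypersurface. After replacing it by its reduction (invariance is defined for reduced hypersurfaces, and each irreducible component of an invariant hypersurface is again invariant), we may take $Y=\mathcal Z(F)$ reduced and $\D$-invariant.

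First, we check the hypothesis of Theorem~\ref{thm:HcapSing=empty}. Since $Y\cap \sing(\D)\subset \sing(\D)$, its codimension in $\mathbb{P}^n_{\field}$ is at least $\codim \sing(\D)\ge 3$. The case where $Y\cap\sing(\D)$ is empty is also covered, with the usual convention that the empty set has arbitrarily large codimension.

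Next, we apply Theorem~\ref{thm:HcapSing=empty}. It gives $p>0$ with $p\mid \deg F$, and shows that $\D$ is defined by the kernel of $dF$. Then $\Omega=dF$ (up to a constant) is closed, so $\Omega\wedge d\Omega=0$ and $\D$ is integrable. This contradicts the assumption that $\D$ is non integrable, so $\D$ has no invariant hypersurface.

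There is no real obstacle, since the corollary is a formal consequence of the theorem. The only points needing care are the reduction to a reduced invariant hypersurface and the trivial codimension inclusion. One should also note that $n\ge 3$ is implicit: when $n=2$, every distribution is integrable, so the statement is vacuous.
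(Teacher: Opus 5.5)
Your proposal is correct and matches the paper's argument: the corollary is recorded there as an immediate consequence of the preceding theorem on reduced invariant hypersurfaces $Y$ with $\codim(Y\cap\sing(\D))\ge 3$. That theorem forces $\D$ to be given by $\ker dF$, hence integrable, which contradicts the hypothesis. Your reduction step is harmless but unnecessary, since the paper only defines invariance for reduced hypersurfaces.
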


\begin{OBS}\rm
Theorem~\ref{thm:HcapSing=empty} and Corollary~\ref{cor:nonintegrabledis}  are basically illustrated by the standard contact distribution $\D$,  given on $\mathbb P^3_{\field}$  in homogeneous coordinates by the form  
\[
\Omega=x_0 dx_1 -x_1dx_0 + x_2 dx_3 -x_3 dx_2. 
\]
The singular locus of $\D$ is empty. Furthermore, $\D$ is never integrable and does not admit any invariant hypersurface, except when $p=2$, in which case $\Omega= d(x_1 x_2 +x_3 x_4)$.
\terminou
\end{OBS}

In strong contrast with the non integrable case,  the degree of the normal bundle $N_\F$ of a codimension one  foliation on $\mathbb{P}^n_{\field}$,  whose singular locus has codimension at least $3$, is necessarily a multiple of $p=\mathrm{char} (\field)$,  in particular $p>0$. This is the  content of the next result. 

\begin{cor}\label{cor:singcod3}
Let $\F$ be a codimension one foliation on $\mathbb{P}^n_{\field}$ of degree $d$ and assume that $\sing (\F)$ has codimension at least $3$.  Then $p=\mathrm{char} (\field)$ is positive,  $d+2$ is a multiple of $p$ and $\F$ is  given by the kernel of a closed projective $1$-form $\Omega$ on $\mathbb A^{n+1}_{\field}$. Moreover, the following conditions are equivalent: 
\begin{enumerate}
\item   $\F$ admits a reduced invariant hypersurface; 
\item   $\Omega$ is exact, i.e there exists a homogeneous polynomial $G$ of degree $d+2$ such that $\Omega=dG$. 
\end{enumerate}
\end{cor}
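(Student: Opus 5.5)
The plan is to mimic the alternative proof of the regular case given after Theorem~\ref{thm:regfolproj}, replacing regularity by the codimension~$3$ hypothesis on $\sing(\F)$. Let $\Omega=\sum A_i\,dx_i$ be the homogeneous $1$-form of degree $d+1$ defining $\F$. Integrability gives $\Omega\wedge d\Omega=0$. Since $\sing(\F)$ has codimension at least $3$, the affine cone of the singular locus has codimension at least $3$ in $\mathbb A^{n+1}_\field$, and the coefficients $A_i$ have depth at least $3$ there. The de Rham--Saito division lemma then applies, in the form valid for $2$-forms when the zero set of $\Omega$ has codimension $\geq 3$, and it holds in arbitrary characteristic since it is a Koszul complex statement. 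It produces a homogeneous polynomial $1$-form $\eta$ with $d\Omega=\eta\wedge\Omega$. The coefficients of $d\Omega$ have degree $d$, while those of $\Omega$ have degree $d+1$, so comparing degrees forces $\eta=0$. Hence $d\Omega=0$.

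Next I will contract with the radial field. Since $i_R\Omega=0$, Cartan's formula gives
\[
0=i_R d\Omega = L_R\Omega - d(i_R\Omega)=(d+2)\Omega .
\]
Because $\Omega\neq 0$, it follows that $p>0$ and $p\mid d+2$. This proves the first part: $\F$ is given by the closed projective form $\Omega$.

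For the equivalence, first assume (2), so that $\Omega=dG$ with $\deg G=d+2$. If $G$ were a $p$-th power, $dG$ would vanish, so $G$ is not. Write $G=G_0^{\,p}G_1$ with $G_1$ having some factor of exponent prime to $p$. Then I would take $Y$ to be the zero set of the reduced equation of an irreducible factor $P$ of $G$ whose exponent $m$ is not divisible by $p$. Writing $G=P^mQ$, we get $dG=P^{m-1}(mQ\,dP+P\,dQ)$, and therefore $dG\wedge dP=P^m\,dQ\wedge dP$ is divisible by $P$. So $Y$ is invariant.

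Conversely, assume (1), with $Y=\mathcal Z(F)$ a reduced invariant hypersurface. Then $Y\cap\sing(\F)\subset\sing(\F)$ has codimension at least $3$ in $\mathbb P^n_\field$. Theorem~\ref{thm:HcapSing=empty} applies directly: $\F$ is given by the kernel of $dF$, with $p\mid\deg F$. Thus $\Omega$ and $dF$ define the same foliation. Since $\sing(\F)$ has codimension at least $2$, both $\Omega$ and $dF$ generate the saturated conormal sheaf, so $\Omega=c\,dF$ for some $c\in\field^*$. This also shows $\deg F=d+2$. Taking $G=cF$ gives (2).

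I expect the main obstacle to be justifying the division lemma in positive characteristic under only the codimension~$3$ assumption. I would argue via exactness of the Koszul complex of $(A_0,\dots,A_n)$ in low degrees, which needs depth $\geq 3$ of the ideal $(A_0,\ldots,A_n)$. The radial vector field sits inside the singular cone directions, so I need to check that the codimension bound passes from $\mathbb P^n_\field$ to the cone. This does hold, since the cone over a subset of codimension $c$ has codimension $c$ in $\mathbb A^{n+1}_\field$.
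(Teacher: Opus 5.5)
Your first part (de Rham--Saito division, the degree comparison giving $d\Omega=0$, and $i_Rd\Omega=(d+2)\Omega$) and your proof of (2)$\Rightarrow$(1) are correct and agree with the paper. The gap is in (1)$\Rightarrow$(2). The theorem on reduced invariant hypersurfaces $Y$ with $\codim(Y\cap\sing(\D))\ge 3$ only gives $\Omega\wedge dF=0$, that is, $dF=H\Omega$ for some homogeneous polynomial $H$ of degree $\deg F-(d+2)$.

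You assert that $dF$ also generates the saturated conormal sheaf, so that $\Omega=c\,dF$ and $\deg F=d+2$. This is unjustified. The codimension hypothesis concerns the zeros of $\Omega$, not of $dF$, and $dF$ can vanish along a hypersurface even when $F$ is reduced. For instance, on $\mathbb P^3_{\field}$ take $G=x_0x_1^{p-1}+x_1x_2^{p-1}+x_2x_3^{p-1}$ and $\Omega=dG$, so that $d+2=p$ and $\sing(\F)\subset\{(1:0:0:0)\}$. Put $F=x_0^pG+x_3^{2p}$. Then $dF=x_0^p\,\Omega$, so $\mathcal Z(F)$ is invariant. Moreover $F$ is reduced: a repeated irreducible factor of $F$ would divide every $\partial F/\partial x_i=x_0^p\,\partial G/\partial x_i$, whose $\gcd$ is $x_0^p$, whereas $x_0\nmid F$. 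But $\deg F=2p\neq d+2$, and $\Omega$ is not a constant multiple of $dF$.

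What you are missing is a way to pass from $dF=H\Omega$ to the exactness of $\Omega$; this is where the paper uses Cartier theory.

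\begin{enumerate}
\item Choose $F$ of minimal degree among homogeneous polynomials with $p\mid\deg F$, $dF\neq 0$ and $dF\wedge\Omega=0$. Such $F$ exist by the theorem applied to $Y$.
\item Closedness of $\Omega$ gives $dH\wedge\Omega=d(H\Omega)=d(dF)=0$.
\item Since $p\mid\deg H<\deg F$, minimality forces $dH=0$, i.e.\ $H=h^p$.
\item Write the Cartier decomposition $\Omega=dG+\sum_i a_i^p x_i^{p-1}dx_i$. Then $dF=h^p\Omega=d(h^pG)+\sum_i (ha_i)^p x_i^{p-1}dx_i$.
\item Since $dF$ is exact, uniqueness of the decomposition gives $ha_i=0$, hence $a_i=0$ and $\Omega=dG$. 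Equivalently, $0=C(dF)=h\,C(\Omega)$ for the Cartier operator $C$.
\end{enumerate}

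With this argument in place of the step $\Omega=c\,dF$, your proof is complete.
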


\proof
Let $\Omega = \sum_{i=0}^n A_i dx_i$ be a projective $1$-form defining $\F$, with $\sing (\Omega)$ of codimension $\ge 2$ and whose coefficients are homogeneous polynomials of degree $d+1$. If $\sing (\Omega)$ has codimension $\ge 3$, since $\Omega$ satisfies $\Omega\wedge d\Omega = 0$, we can apply de Rham-Saito's lemma (see \cite{MR413155}) to guarantee the existence of a polynomial 1-form $\eta$ such that $d\Omega = \eta\wedge \Omega$. Comparing the degrees in both sides of the last identity, it follows that  $\eta$ as well as $ d\Omega$ must be zero. Hence, $\Omega$ is closed. However, Euler's relation gives 
\[
0 = i_R d\Omega = (d+2)\Omega
\]
then $p$ is positive and divides $d+2$.  

The last assertion of the statement will follow from Theorem~\ref{thm:HcapSing=empty}.  According to \cite [Theorem I.3.4]{ MR2107324}, one can write 

$$\Omega=dG +\sum_{i=0}^n a_i^p x_i^{p-1} dx_i$$
and $\Omega$ is exact iff the terms in the summand are trivial. It is clear that (2) implies (1). Suppose from now on that $\F$ admits an invariant hypersurface.  From Theorem~\ref{thm:HcapSing=empty}, there exists a homogeneous polynomial $F$ such that $deg(F)=0\ \mathrm{mod}\ p$ and such that $\F$ is given by the kernel of $dF$. Let us choose $F$ of minimal degree with these properties. Then, there exists a polynomial $H$ such that $dF=H\Omega$. Moreover $dH\wedge \Omega=0$ by closedness. On the other hand, $deg(H)<deg(F)$, thus implying that $dH=0$, i.e, $H=h^p$ is a $p-$power. From the exactness of $dF=H\Omega= d(HG)+\sum_{i=0}^n { ( h a_i)}^p x_i^{p-1} dx_i$, one concludes that $\Omega=dG$.
\endproof

Note that  if a codimension one foliation $\F$ on $\mathbb{P}^n_{\field}$ has degree $d$, and  $p=\mathrm{char} (\field)$ does not divide $d+2$, then $\sing (\F)$ has an irreducible component of codimension two. This follows from Corollary~\ref{cor:singcod3}. 





\begin{ex}\label{ex:noinvhy}\rm 
For all prime $p>0$ and $n\ge 3$ there exist codimension one foliations on $\mathbb{P}_{\field}^{n}$ with finite singular set and no invariant hypersurface.   
Consider the projective $1$-form 
$$
\omega=d\big( \sum_{i=0}^{n-1} x_i x_{i+1}^{2p-1}\big) +x_n^p x_{n-1}^{p-1}dx_{n-1}-x_ { n-1}^p x_{n}^{p-1}dx_{n}.
$$
Note that $\omega$ defines a codimension one foliation on $\mathbb{P}_{\field}^{n}$ of degree $2p-2$ with finite singular set: $\sing(\mathcal{F}) = \{(1\!:\!0\!:\cdots :\!0)\}$. Also note that $\mathcal{F}$ has no algebraic invariant hypersurfaces. Indeed, on the contrary, Corollary \ref{cor:singcod3} would imply that $\omega$ is exact.  In particular, $x_n^px_2^{p-1}dx_{n-1}-x_{n-1}^{p}x_{n}^{p-1}dx_n$ should be also exact, but this contradicts \cite [Theorem I.3.4]{MR2107324}.
\terminou
\end{ex}

\section{Bott vanishing in Hodge cohomology}\label{sec:BottCohom}

We assume $X$ is a smooth   variety over an algebraically closed field $\field$. 

\subsection{The Bott-connection}\label{sec:BottCon}

Let $\mathcal{F}$ be a regular foliation on $X$ and  consider the  map
$$
\nabla^{\mathrm B}\colon T_{\mathcal{F}} \longrightarrow \End(N_{\mathcal{F}})
$$
which sends a derivation $v$ of $T_{\mathcal F}$ to $\nabla^{\mathrm B}_v\in \End(N_{\mathcal{F}})$, given by 
\[
\nabla^{\mathrm B}_v(w):= [v,w]
\] 
for each $w\in N_{\mathcal F}=T_X/T_{\mathcal F}$. It follows from the fact that $T_{\mathcal F}$ is involutive that $\nabla^{\mathrm B}_v$ is well defined, i.e., $[v,w]$ is independent of the choice of a lift in $T_X$ representing the class  $w$ in $N_{\mathcal F}$.  We note that $\nabla^{\mathrm B}$ satisfies 
\begin{itemize}
\item  $\nabla^{\mathrm B}_{a\cdot v} = a\cdot \nabla^{\mathrm B}_v$,  for all $a\in \mathcal O_X$ and $v\in T_{\mathcal F}$;
\item $\nabla^{\mathrm B}_v(a\cdot w) = a\cdot \nabla^{\mathrm B}_v(w) + v(a)\cdot w$,  for all $a\in \mathcal O_X$, $v\in T_{\mathcal F}$ and $w\in N_{\mathcal F}$. 
\end{itemize}
From this, one obtains a \textit{$\mathcal F$-connection on $N_{\mathcal F}$}, i.e., a $\field$-linear map
\[
\nabla^{\mathrm B}: N_{\mathcal F} \longrightarrow N_{\mathcal F}\otimes \Omega^1_{\mathcal F}
\]
satisfying  Leibniz' rule:
\[
\nabla^{\mathrm B}(aw) = a\nabla^{\mathrm B}(w) + d_{\mathcal F}(a)\cdot w, \quad \forall\; a\in \mathcal O_X, w\in N_{\mathcal F}
\]
where $d_{\mathcal F}: \mathcal O_X\to \Omega^1_{\mathcal F}$ is defined by $d_{\mathcal F}(a)(v)=v(a)$, for all $v\in T_{\mathcal F}$. 
We call $\nabla^{\mathrm B}: N_{\mathcal F} \to N_{\mathcal F}\otimes \Omega^1_{\mathcal F}$ the {\bf Bott connection}  associated to $\mathcal F$.

\subsection{The $\mathcal F$-Atiyah class}

Let $\mathcal{F}$ be a regular foliation on $X$. Let us endow the sheaf $\mathcal E := N_{\mathcal F}\oplus (N_{\mathcal F}\otimes \Omega^1_{\mathcal F})$ with a structure of 
 $\mathcal{O}_X$-module: given $a\in \mathcal{O}_X$ and $(w,\sigma) \in \mathcal E$ we define:
$$
    a\cdot (w,\sigma) = (a w, d_{\mathcal{F}}(a) \cdot w+a\sigma). 
$$
Consider the  exact sequence of $\mathcal O_X$-modules
\begin{center}
    \begin{tikzcd}[row sep=tiny]
        0  \arrow{r} & N_{\mathcal F}\otimes \Omega_{\mathcal{F}}^{1} \arrow[r,"i"] & \mathcal E \arrow[r,"\pi"] & N_{\mathcal F} \arrow{r} & 0
    \end{tikzcd}
\end{center}
where $i\colon \sigma \mapsto (0,\sigma)$ and $\pi\colon (s,\sigma) \mapsto s$. The exact sequence splits if there is an $\mathcal{O}_{X}$-morphism $\psi\colon N_{\mathcal F} \longrightarrow \mathcal E$ such that $\pi\circ\psi = id_{N_{\mathcal F}}$. On the one hand, the  sequence above splits if and only if $N_{\mathcal F}$ admits a $\mathcal{F}$-connection. The connection is given by the second factor of $\psi$.  On the other hand, the  sequence above determines an element $\at_{\mathcal{F}}(N_{\mathcal F})$ in the group 
\[
\Ext(N_{\mathcal F},\Omega_{\mathcal{F}}^{1}\otimes N_{\mathcal F}) \cong \Ext(\mathcal{O}_{X},\Omega_{\mathcal{F}}^{1}\otimes N_{\mathcal F} \otimes N_{\mathcal F}^{*}) \cong \hh^1(X, \text{End}_{\mathcal O_X} (N_{\mathcal F})\otimes \Omega_{\mathcal{F}}^{1} ).
\] 
Since a regular foliation admits the Bott-connection then the sequence splits and the $\mathcal{F}$-Atiyah class of $N_{\mathcal F}$ is vanishing, i.e.  
\begin{eqnarray}\label{eq:fatiyah}
\at_{\mathcal{F}}(N_{\mathcal F}) = 0.
\end{eqnarray}

\subsection{Bott vanishing theorem}


In this section, we study the vanishing of powers of Chern classes in $\h^*(X, \Omega_X^*)$, a phenomenon often referred to as Bott's vanishing theorem.  Before proceeding, we recall a fundamental result:  Chern classes can be expressed in terms of the Atiyah class. 

Let $E$ be a vector bundle over a smooth variety $X$,  let $c_s(E)\in \h^s(X, \Omega_X^s)$ be its $s$th Chern class and let
\[
c(E) = 1 + c_1(E) t + \cdots  + c_s(E)t^s + \cdots
\]
be its total Chern class; see \cite[p. 143]{Gro1958}. These classes satisfy, and are characterized, by  the following properties
\begin{itemize}
\item If $f: X\to Y$ is a morphism between smooth varieties over $\field$, and  $F$ is a vector bundle over $Y$,  then 
\[
c_s(f^*(F)) = f^*(c_s(F)). 
\]
\item If $0\to E'\to E\to E''\to 0$ is an exact sequence of vector bundles over $X$, then 
\[
c(E) = c(E')\cdot c(E'').
\]
\item If $E$ is a line bundle over $X$, then $c_1(E)$ is the image in $\h^1(X, \Omega_X^1)$ of the class of $E$ in $\h^1(X, \mathcal O_X^*)$,  via the homomorphism $\mathcal O_X^* \to \Omega_X^1$  which sends $f$ to $\frac{df}{f}$. Also, $c_s(E) = 0$ for all $s>1$. 
\end{itemize}
See \cite[Th\'eor\`eme 1]{Gro1958}.

Using polarizations of the invariant polynomials together with the Atiyah class, we can construct additional classes in Hodge cohomology.  For this purpose,  
let $\sigma_s$ be  the $s$th  invariant polynomial in the space of $n\times n$ matrices, defined by the relation
\[
\det (A+t\cdot I) = \sum_{s=0}^n \sigma_{n-s}(A)\cdot t^s.
\] 
We can write 
\begin{eqnarray*}
\sigma_s(A) &=& \sum_{\# J=s} \det A_{J,J}\\
		&=& {\rm trace}(\bigwedge^s A)
\end{eqnarray*}
where $J\subset \{1, \dots, n\}$ and $A_{J,J}$ denotes the $(J,J)$th minor $(a_{i,j})_{i,j\in J}$ of $A$. To polarize $\sigma_s$, for a $s$-tuple of matrices $(A^1, \dots, A^s)$, $\tau$ a permutation of $\{1, \dots, n\}$, and  $J\subset \{1, \dots, n\}$ of cardinality $s$, let $A^{\tau}_J$ be the $s\times s$ matrix whose $i$th column is the $i$th column of $A^{\tau(i)}_{J,J}$, then we set 
\begin{eqnarray}\label{eq:polarization}
\tilde{\sigma_s}(A^1, \dots, A^s) = \sum_{\tau}\sum_{\# J=s} \det (A^{\tau}_J)
\end{eqnarray}
where $\tau$ runs over the set of all the permutations $\{1, \dots, n\}\to \{1, \dots, n\}$; see \cite[p. 403]{GH}. 
The polarization satisfies 
\[
\tilde{\sigma_s}(A, \dots, A) = s!\sigma_s(A). 
\]
 Now, using the Atiyah class $\at(E) \in \text{H}^1(X, \text{End}_{\mathcal O_X}E\otimes\Omega^1_X)$, we can consider the class
\[
\tilde{\sigma_s}(\at(E), \dots, \at(E))\in \h^s(X, \Omega_X^s).
\]
It turns out that these classes coincide with the Chern classes, up to a constant:
\begin{eqnarray}\label{eq:chernpol}
\tilde{\sigma_s}(\at(E), \dots, \at(E)) = s!c_s(E) . 
\end{eqnarray}
See \cite[Theorem 6]{MR86359} and \cite{kumar2005hodge}.

For example, when $s=2$, we have 
\[
\tilde{\sigma_2}(A^1, A^2) =\sigma_2(A^1+A^2)-\sigma_2(A^1) - \sigma_2(A^2) 
\]
and if $\at(E) = \{ A_{ij} \}_{ij}\in\text{H}^1(X, \text{End}_{\mathcal O_X}(E)\otimes\Omega^1_X)$, then 
\[
2c_2(E) = \{ \tilde{\sigma_2}(A_{ij}, A_{jk}) \}_{i,j,k} \in \h^2(X,\Omega_X^2).  
\]

The following is a classical result, proved in \cite[Section~3]{BaumBott70} for the complex case.

\begin{thm}\label{thm:bottvanishing}(Bott vanishing) 
Let $X$ be a smooth algebraic variety over an algebraically closed field $\field$ of characteristic $p$. Let $\mathcal F$ be a regular foliation on $X$, of codimension $q$. Assume that either $p=0$ or $p>q$. Then, for every sequence of nonnegative integers $\alpha_1,\dots, \alpha_r, s_1,\dots, s_r$  such that $l:= \alpha_1s_1+\cdots+\alpha_rs_r>q$ we have 
\[
c_{s_1}(N_{\mathcal{F}})^{\alpha_1}\cdots c_{s_r}(N_{\mathcal{F}})^{\alpha_r} = 0 \quad \mbox{in}
\quad \hh^{l}(X,\Omega^{l}_{X}).
\]
\end{thm}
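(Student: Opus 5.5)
The plan follows the Baum--Bott strategy, transposed to Hodge cohomology through the Atiyah class formalism just recalled. First, I will relate the full Atiyah class $\at(N_{\mathcal F})\in \h^1(X,\text{End}(N_{\mathcal F})\otimes\Omega^1_X)$ to the $\mathcal F$-Atiyah class. Restriction of forms $\Omega^1_X\to\Omega^1_{\mathcal F}$ sends $\at(N_{\mathcal F})$ to $\at_{\mathcal F}(N_{\mathcal F})$, which vanishes by \eqref{eq:fatiyah}. Consider the exact sequence
\[
0\to N_{\mathcal F}^*\to \Omega^1_X\to \Omega^1_{\mathcal F}\to 0 .
\]
Concretely, on a cover $\{U_i\}$ I choose local connections $\nabla_i$ on $N_{\mathcal F}$ whose $\mathcal F$-part is the Bott connection $\nabla^{\mathrm B}$; such $\nabla_i$ exist because a local splitting of $\Omega^1_X\to\Omega^1_{\mathcal F}$ lets one extend $\nabla^{\mathrm B}$. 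Then the Čech cocycle $A_{ij}=\nabla_i-\nabla_j$ takes values in $\text{End}(N_{\mathcal F})\otimes N_{\mathcal F}^*$. So $\at(N_{\mathcal F})$ is represented by a cocycle with coefficients in $\text{End}(N_{\mathcal F})\otimes N^*_{\mathcal F}\subset \text{End}(N_{\mathcal F})\otimes\Omega^1_X$.

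Second, I will use multiplicativity. Any polynomial in the Chern classes of weight $l$ can be written using \eqref{eq:chernpol} as a combination of products of the classes $\tilde\sigma_{s}(\at,\dots,\at)$. The Čech cup products of the representatives take values in $\bigwedge^{l}$ of the image of $N_{\mathcal F}^*$ in $\Omega^1_X$, that is, in the subsheaf $\bigwedge^l N_{\mathcal F}^*\subset\Omega^l_X$. This subsheaf vanishes when $l>q$, because $N_{\mathcal F}^*$ has rank $q$. So the product $\prod_k \big(s_k!\,c_{s_k}(N_{\mathcal F})\big)^{\alpha_k}$ is zero in $\h^l(X,\Omega^l_X)$.

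Third, I divide by the factorials. Each $s_k$ with $\alpha_k>0$ satisfies $s_k\le q$, since $c_s(N_{\mathcal F})=0$ for $s>q=\operatorname{rk}N_{\mathcal F}$; if some $s_k>q$ with $\alpha_k>0$, the product is trivially zero. The hypothesis $p=0$ or $p>q$ therefore makes every $s_k!$ invertible in $\field$, and the vanishing follows.

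I expect the main obstacle to be making the first step rigorous at the cocycle level. The identification of Chern classes via $\tilde\sigma_s$ in \eqref{eq:chernpol} is stated for the class $\at(E)$. I must check that the polarized determinant built from the Čech cocycle $\{A_{ij}\}$ with values in $\text{End}\otimes N^*_{\mathcal F}$ computes the same class, with wedge products in the form factors, and also handle the sign and ordering conventions in the Čech cup product. If this proves delicate, I would instead argue via the filtration: the ideal generated by $N^*_{\mathcal F}$ in $\Omega^\bullet_X$ has $(q+1)$-st power zero, and Chern classes lift to the hypercohomology of the first filtration step. This reproduces Bott's original argument.
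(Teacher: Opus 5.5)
Your proposal is correct and follows essentially the same route as the paper. The vanishing of $\at_{\F}(N_{\F})$ lifts $\at(N_{\F})$ to $\h^1(X,\mathrm{End}(N_{\F})\otimes N_{\F}^*)$; you realize this lift at the cocycle level with local connections extending $\nabla^{\mathrm B}$, while the paper uses the long exact sequence of $0\to N_{\F}^*\to\Omega^1_X\to\Omega^1_{\F}\to 0$ twisted by $\mathrm{End}(N_{\F})$. Hence each $c_s(N_{\F})$ with $s\le q<p$ comes from $\h^s(X,\wedge^s N_{\F}^*)$, and any product of weight $l>q$ factors through $\h^l(X,\wedge^l N_{\F}^*)=0$, exactly as in the paper.
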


\proof
Let us consider the exact sequence 
\[
0 \longrightarrow N_{\mathcal{F}}^*  \longrightarrow \Omega^1_X \longrightarrow \Omega^1_{\mathcal F} \longrightarrow 0.
\]
The Atiyah class $\at(N_{\mathcal F})$  lies in $\text{H}^1(X, \text{End}_{\mathcal O_X}(N_{\mathcal F})\otimes\Omega^1_X)$.  Twisting the sequence above by $\text{End}_{\mathcal O_X}(N_{\mathcal F})$ we get 
\[
0 \longrightarrow \text{End}_{\mathcal O_X}(N_{\mathcal F})\otimes N_{\mathcal{F}}^*  \longrightarrow \text{End}_{\mathcal O_X}(N_{\mathcal F})\otimes\Omega^1_X \longrightarrow \text{End}_{\mathcal O_X}(N_{\mathcal F})\otimes\Omega^1_{\mathcal F} \longrightarrow 0
\]
and by taking the exact long sequence in cohomology we see that $\at(N_{\mathcal F})$ is sent to the $\mathcal F$-Atiyah class $\at_{\mathcal{F}}(N_{\mathcal F})$. Since  $\at_{\mathcal{F}}(N_{\mathcal F})$ is vanishing, see \eqref{eq:fatiyah},  the long exact sequence yields to 
\[
\at(N_{\mathcal F}) \in \h^1(X, \text{End}_{\mathcal O_X}(N_{\mathcal F})\otimes N_{\mathcal F}^*).
\]
This gives 
\[
c_{s}(N_{\mathcal F}) = \frac{1}{s!}\tilde{\sigma_s}(\at(N_{\mathcal F}), \dots, \at(N_{\mathcal F})) \in \h^s(X, \wedge^s N_{\mathcal F}^*)
\]
for any $0<s\le q$.  Then the result follows from the fact that $N_{\mathcal F}$ has rank $q$. 
\endproof


For the first Chern class, the result holds over any ground field, regardless of its characteristic:

\begin{prop}\label{thm:bottvanishingC1}
Let $X$ be a smooth algebraic variety over an algebraically closed field $\field$ of characteristic $p\ge 0$. Let $\mathcal F$ be a regular foliation on $X$ of codimension $q$. Then $c_{1}(N_{\mathcal F}) \in \h^1(X,  N_{\mathcal F}^*)$. In particular, we have
\[
c_1(N_{\mathcal{F}})^{q+1} = 0 \quad \mbox{in}
\quad \hh^{q+1}(X,\Omega^{q+1}_{X}).
\]
\end{prop}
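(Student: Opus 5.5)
The plan is to run the first step of the proof of Theorem~\ref{thm:bottvanishing} in rank one, and to show directly that $c_1$ needs no division by factorials.

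First, the Atiyah class lies in the conormal part. As in the proof of Theorem~\ref{thm:bottvanishing}, twist the exact sequence
\[
0\to N_{\F}^*\to\Omega^1_X\to\Omega^1_{\F}\to 0
\]
by $\mathrm{End}_{\mathcal O_X}(N_{\F})$ and pass to the long exact cohomology sequence. The Atiyah class $\at(N_{\F})$ maps to the $\F$-Atiyah class $\at_{\F}(N_{\F})$, which vanishes by \eqref{eq:fatiyah} because of the Bott connection. Hence $\at(N_{\F})$ lifts to a class in $\h^1(X,\mathrm{End}(N_{\F})\otimes N_{\F}^*)$. This step involves no characteristic assumption.

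Second, take the trace. By \eqref{eq:chernpol} with $s=1$ we have $\tilde\sigma_1=\sigma_1=\mathrm{trace}$, so
\[
c_1(N_{\F})=\mathrm{trace}\,\at(N_{\F}).
\]
The trace map $\mathrm{End}(N_{\F})\otimes N_{\F}^*\to N_{\F}^*$ is compatible with the inclusion into $\mathrm{End}(N_{\F})\otimes\Omega^1_X\to\Omega^1_X$. It follows that $c_1(N_{\F})$ is the image of a class in $\h^1(X,N_{\F}^*)$. No factorial is inverted here, so the conclusion holds for every $p$.

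To be safe I would also check the identity $c_1=\mathrm{trace}\,\at$ directly rather than citing \eqref{eq:chernpol}. One route is $c_1(N_{\F})=c_1(\det N_{\F})$ together with $\mathrm{trace}\,\at(E)=\at(\det E)$. The other is a local computation: on a cover with frames $e_i$ and transition matrices $G_{ij}$, the Atiyah class is the cocycle $dG_{ij}G_{ij}^{-1}$. Its trace is $d\log\det G_{ij}$, which is exactly Grothendieck's description of $c_1(\det N_{\F})$. Using Bott-connection frames, one can see concretely that $d\log\det G_{ij}$ becomes a section of $N_{\F}^*$ after correcting by a coboundary. I expect this compatibility check, that the lifted class really maps to $c_1$ with no hidden constants, to be the main obstacle, but it is routine.

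Third, take powers. The cup product of $q+1$ classes in $\h^1(X,N_{\F}^*)$ lands in $\h^{q+1}(X,\wedge^{q+1}N_{\F}^*)$ via the natural map $\otimes^{q+1}N_{\F}^*\to\wedge^{q+1}N_{\F}^*$, which is compatible with $\otimes^{q+1}\Omega^1_X\to\Omega^{q+1}_X$. Since $N_{\F}^*$ has rank $q$, we have $\wedge^{q+1}N_{\F}^*=0$. Therefore $c_1(N_{\F})^{q+1}=0$ in $\h^{q+1}(X,\Omega^{q+1}_X)$.
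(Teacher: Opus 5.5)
Your proposal is correct and follows the paper's own proof. As in the paper, you use the vanishing of the $\F$-Atiyah class to lift $\at(N_{\F})$ to $\h^1(X,\mathrm{End}(N_{\F})\otimes N_{\F}^*)$, take the trace to put $c_1(N_{\F})$ in the image of $\h^1(X,N_{\F}^*)$ with no factorial inverted, and conclude from $\wedge^{q+1}N_{\F}^*=0$. Your extra verification of $c_1=\mathrm{trace}\,\at$ via $d\log\det G_{ij}$, and your compatibility of cup products with $\wedge^{q+1}N_{\F}^*\to\Omega^{q+1}_X$, only fill in details the paper leaves implicit.
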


\proof
The same argument in the proof of Theorem~\ref{thm:bottvanishing} shows that 
\[
\at(N_{\mathcal F}) \in \h^1(X, \text{End}_{\mathcal O_X}(N_{\mathcal F})\otimes N_{\mathcal F}^*).
\]
Therefore, we get
\[
c_{1}(N_{\mathcal F}) = {\rm trace}(\at(N_{\mathcal F})) \in \h^1(X,  N_{\mathcal F}^*)
\]
and the result follows. 
\endproof

Now, we consider the case where $\codim(\sing(\F))\ge q+2$:

\begin{thm}\label{prop:SingBB}
Let $X$ be a smooth algebraic variety of dimension $n>1$, over an algebraically closed field $\field$ of characteristic $p\ge 0$. If $\mathcal F$ is a codimension $q$ foliation on $X$ with singular set of codimension at least $q+2$, then 
\[
c_1(N_\mathcal F)^{q+1}=0 \quad \text{in}\quad \h^{q+1}(X, \Omega_X^{q+1}).
\] 
\end{thm}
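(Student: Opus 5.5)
Let $U = X\setminus \sing(\F)$ and $Z=\sing(\F)$, so $\codim Z\ge q+2$. The plan is to prove the vanishing on $U$ using Proposition~\ref{thm:bottvanishingC1}, and then transport it back to $X$ through a restriction map that is injective in the relevant degree because $Z$ has large codimension.

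The first step is to work with $\det N_\F$ rather than the reflexive sheaf $N_\F$. On $X$ we define $c_1(N_\F)$ as $c_1(\det N_\F)$, where $\det N_\F=(\wedge^q N_\F)^{**}$ is a line bundle since $X$ is smooth. On $U$ the foliation $\F|_U$ is regular and $\det N_\F|_U=\det(N_{\F|_U})$.

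Applying Proposition~\ref{thm:bottvanishingC1} to $\F|_U$ gives $c_1(N_{\F|_U})\in \h^1(U,N^*_{\F|_U})$. Hence the $(q+1)$-st power of this class factors through $\h^{q+1}(U,\wedge^{q+1}N^*_{\F|_U})=0$, since $N^*_{\F|_U}$ has rank $q$. Because restriction is a ring homomorphism compatible with Chern classes, we obtain
\[
c_1(N_\F)^{q+1}|_U=0 \quad\text{in}\quad \h^{q+1}(U,\Omega_U^{q+1}).
\]

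It remains to show that the restriction $\h^{q+1}(X,\Omega^{q+1}_X)\to \h^{q+1}(U,\Omega^{q+1}_U)$ is injective. We use the long exact sequence of local cohomology
\[
\h^{q+1}_Z(X,\Omega^{q+1}_X)\to \h^{q+1}(X,\Omega^{q+1}_X)\to \h^{q+1}(U,\Omega^{q+1}_U),
\]
so it suffices that $\h^{q+1}_Z(X,\Omega_X^{q+1})=0$. Since $\Omega^{q+1}_X$ is locally free on a smooth (hence Cohen--Macaulay) variety, its depth along $Z$ is $\codim Z\ge q+2$. Grothendieck's vanishing theorem (SGA2) then gives $\mathcal H^i_Z(\Omega^{q+1}_X)=0$ for $i<q+2$. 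The local-to-global spectral sequence $\h^a(X,\mathcal H^b_Z)\Rightarrow \h^{a+b}_Z$ therefore yields $\h^{i}_Z(X,\Omega^{q+1}_X)=0$ for $i\le q+1$. This is exactly where the hypothesis $\codim \ge q+2$ (rather than $q+1$) enters, and the argument is characteristic-free, consistent with Proposition~\ref{thm:bottvanishingC1}.

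The main obstacle I expect is bookkeeping, not a new idea. One has to check that $c_1(N_\F)$ on $X$ is defined compatibly with the restriction, and that restriction is multiplicative for cup products in Hodge cohomology; both are standard. The depth argument should be stated carefully when $Z$ is not equidimensional, using $\codim Z\ge q+2$ at every point. If one wants to avoid local cohomology, an alternative is a Čech argument with an affine cover, but the SGA2 route seems cleanest.
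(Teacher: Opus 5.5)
Your proposal is correct and is the paper's proof: restrict to $U=X\setminus\sing(\F)$, apply the first-Chern-class form of Bott vanishing for regular foliations on $U$, and conclude by the injectivity of $\h^{q+1}(X,\Omega_X^{q+1})\to\h^{q+1}(U,\Omega_U^{q+1})$. The only difference is that the paper cites lemmas of Druel and Araujo--Druel for this injectivity, whereas you prove it directly from the vanishing of $\h^{q+1}_Z(X,\Omega^{q+1}_X)$ via the depth of $\Omega^{q+1}_X$ along $Z$, which has the advantage of being visibly valid in every characteristic.
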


\proof
Let $U:=X\setminus \sing (\F)$. By Proposition~\ref{thm:bottvanishingC1}, $c_1(N_{\F}|_U)$ lies in $\h^1(U, N_{\F}^*|_U)$ and $c_1(N_{\F}|_U)^{q+1}=0$ in $\h^{q+1}(U, \Omega_U^{q+1})$. Then the conclusion follows from the fact that the natural map
\[
\h^{q+1}(X, \Omega_X^{q+1}) \to \h^{q+1}(U, \Omega_U^{q+1})
\]
is injective. See \cite[Lemma 3.5]{DruelNumtrivial} and \cite[Lemma 6.4]{AraDru}.
\endproof

For surfaces, it follows from Bott vanishing theorem that $N_\F^2 = 0$ in $\field$. However,  it may happen $N_\F^2\neq 0$ in $\mathbb Z$,  see Remark~\ref{rmk:existencechr2}.  Note also that there are examples of codimension one regular foliations  on $\mathbb P^n_{\field}$, $n$ odd,  with $N_\F = \mathcal O_{\mathbb P^n_{\field}}(2)$, see Theorem~\ref{thm:regfolproj}.

\section{Bott ``vanishing'' in the Chow ring}\label{bott_chow}

We now want to profit from the particularities of  Geometry in positive characteristic and in this section we assume that  $p>0$. 

Let $X$ be a smooth variety over $\field$. We then let $X'$ be its Frobenius twist, i.e. as a scheme $X'$ is just $X$, but its structure of $\field$-scheme is obtained by means of the the composition 
\[
X \to\mathrm{Spec}\, \field\to\mathrm{Spec}\, \field,
\]
where the rightmost map is  induced by the {\it inverse} of the Frobenius. With these conventions, the identity map $\mathrm {id}:|X|\to |X'|$, between underlying topological spaces, together with the Frobenius morphism of rings $(-)^p:\mathcal O_{X'}\to \mathcal O_X$ give rise to the {\it geometric Frobenius morphism $F:X\to X'$}. This is $\field$-linear. In what follows, we shall       identify $\mathcal O_{X'}$ with a subring of $\mathcal O_X$. For an open subset $U$, we have $\mathcal O_{X'}(U)=\mathcal O_X(U)^p$.

\subsection{Basic material on $p$-closed foliations}

Let $\F$ be a regular foliation on $X$ and write $m=\mathrm{rank}\,T_\F$, and $q=\mathrm{rank}\,N_\F$. Let us in addition suppose that $\F$ is $p$-closed. 

For each open subset $U\subset X$, we define the  $\mathcal O_{X'}(U)$-module 
\[
\mathcal B (U):=\left\{a\in \mathcal O_X(U)\,:\,\begin{array}{c}\text{for all $V\subset U$ and all} \\ \text{   $D\in T_\F(V)$, we have $D(a|_V)=0$}\end{array}\right\}.\]
It is a simple matter to note that $\mathcal B$ is a sheaf of rings containing     $\mathcal O_{X'}$.
Let \[Y=(|X|,\mathcal B) \]  and let $\varphi:X\to Y$ be the morphism of ringed spaces naturally obtained from the inclusion $\mathcal B\subset \mathcal O_X$.  
The proof of the following   can easily be extracted from \cite[\S3]{seshadri60}. 

\begin{prop}\label{frobenius_theorem0} 
$\empty$
\begin{enumerate}
\item The ringed space $Y$ is a scheme and $\varphi$ is a morphism of schemes. 
\item The morphism   $\varphi:X\to Y$ is finite and  flat of rank   $p^m$. In particular, $\varphi$ has degree $p^m$. 
\item Each $P\in X$ possesses an affine and  open neighbourhood $U$ on which there is  a $p$-basis $\{x_1,\ldots,x_m\}$ of  $\mathcal O_X(U)$ over $\mathcal O_Y(U)$, i.e. 
\[\mathcal O_X(U)=\bigoplus_{0\le i_1,\ldots,i_m<p}\mathcal O_Y(U)\cdot x_1^{i_1}\cdots x_m^{i_m}.\]
\item  
The $\mathcal O_X$-module $\Omega_{X/Y}^1$, on $U$, is free and generated by $\mathrm dx_1,\ldots,\mathrm dx_m$.\end{enumerate}
\end{prop}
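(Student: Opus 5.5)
The plan is to reduce everything to a local structure theorem for $p$-closed regular foliations: near each point, $\F$ is cut out by the differentials of $q$ functions that complete to a $p$-basis. The remaining items then follow by standard commutative algebra, as in Seshadri's treatment of the Jacobson correspondence.

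The first and central step is local. Fix $P\in X$. Because $\F$ is regular, $T_\F$ is a subbundle of rank $m$. Because it is involutive and $p$-closed, $T_\F$ is a restricted Lie subalgebra of $\mathrm{Der}_{\mathcal O_{X'}}(\mathcal O_X)$ that is also an $\mathcal O_X$-submodule. I will invoke the Jacobson--Seshadri Galois correspondence for purely inseparable extensions of exponent one, in its local-ring form. From it I want local coordinates $x_1,\ldots,x_n$ at $P$ (a $p$-basis of $\mathcal O_{X,P}$ over $\mathcal O_{X',P}$) such that $T_\F$ is freely spanned near $P$ by $\partial_{x_1},\ldots,\partial_{x_m}$.

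Concretely, I will argue by induction on $m$. Choose $D\in T_\F$ with $D(x)=1$ for some local function $x$; this uses regularity. Replace $D$ by a combination $D'$ satisfying $D'^p=0$ and $D'(x)=1$. Then $D'$ has kernel $\mathcal O^{D'}$, and $\mathcal O_X=\bigoplus_{i<p}\mathcal O^{D'}x^i$. Next, restrict the remaining vector fields, which can be chosen to commute with $D'$ and kill $x$, to $\mathcal O^{D'}$, and continue. I expect this step to be the main obstacle. The difficulty is producing commuting generators with $D_i(x_j)=\delta_{ij}$, which requires using $p$-closedness to kill the $p$-th powers. I would follow Seshadri's \S3 argument directly rather than redoing it.

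Given such coordinates on an affine neighbourhood $U$, the ring $\mathcal B(U)$ is exactly $\mathcal O_{X'}(U)[x_{m+1},\ldots,x_n]$, the functions killed by all $\partial_{x_i}$ with $i\le m$. It is a finitely generated $\field$-algebra. Moreover,
\[
\mathcal O_X(U)=\bigoplus_{0\le i_1,\ldots,i_m<p}\mathcal B(U)\,x_1^{i_1}\cdots x_m^{i_m}.
\]
This gives item (3) and shows that $\mathcal O_X(U)$ is free of rank $p^m$ over $\mathcal B(U)$.

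For item (1), I will check that $\mathcal B$ localizes compatibly, i.e. $\mathcal B(U_g)=\mathcal B(U)_{g^p}$. This holds because $g^p\in\mathcal B$ and $D(a/g^p)=D(a)/g^p$. Hence $(U,\mathcal B|_U)\cong\mathrm{Spec}\,\mathcal B(U)$, using that $|X|\to|\mathrm{Spec}\,\mathcal B(U)|$ is a homeomorphism (both sit between $X$ and $X'$, and Frobenius is a homeomorphism). These affine pieces glue, so $Y$ is a scheme and $\varphi$ is a morphism of schemes.

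Item (2) then follows: $\varphi$ is finite and locally free of rank $p^m$, and the degree equals the rank. For item (4), $\Omega^1_{X/Y}$ on $U$ is $\Omega^1_{\mathcal O_X(U)/\mathcal B(U)}$. By the $p$-basis description, $\mathcal O_X(U)=\mathcal B(U)[x_1,\ldots,x_m]/(x_i^p-x_i^p)$, where each $x_i^p$ lies in $\mathcal B$. The relations therefore have differential $p x_i^{p-1}dx_i=0$, so $\Omega^1_{X/Y}|_U$ is free on $dx_1,\ldots,dx_m$.
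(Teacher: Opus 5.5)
Your proposal is correct and follows essentially the paper's route: you localize to an affine open where $T_\F$ is free, obtain the $p$-basis decomposition $\mathcal O_X(U)=\bigoplus \mathcal B(U)\,x^{\boldsymbol i}$ from Seshadri's \S3 argument, and read off items (1), (2) and (4), which you spell out in more detail than the paper does. The step you flag as the main obstacle is in fact immediate, as in the paper: choose $x_1,\dots,x_m$ making the minor $(D_i(x_j))_{i,j\le m}$ invertible and pass to the dual basis with $D_i(x_j)=\delta_{ij}$; then $[D_i,D_j]$ and $D_i^p$ are sections of $T_\F$ that kill every $x_j$, hence vanish, so the commuting $p$-nilpotent generators your induction needs come for free, and item (3) follows directly without first building full coordinates.
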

\begin{proof}[Elements of the proof.] Let us assume that $X=\mathrm{Spec}\,A$,   that $T_\F$ is free on $v_1,\ldots,v_m$,   
$T_X$ is free on $\{\partial /\partial x_i\}_{i=1}^n$, where $x_i\in A$, and $T_X/T_\F$ is also free.  
 Then    $B:=\mathcal B(X)$ is simply $\{a\in A\,:\,v_i(a)=0,\,\forall i\}$.   Write $v_j=\sum_{i=1}^na_{ij}\partial/\partial x_i$. We can assume that  the minor $\|a_{ij}\|_{1\le i,j\le m}$ is invertible and hence we can find a basis $\{D_i\}_{i=1}^m$ of $T_\F$ such that $D_i(x_j)=\delta_{ij}$ for $i,j\in\{1,\ldots,m\}$.   Let $x^{\boldsymbol i}:=x_1^{i_1}\cdots x_m^{i_m}$ for each ${\boldsymbol i}=(i_1,\ldots,i_m)$. Then, arguing  as in \cite[Lemma 3]{seshadri60}, we have  
\[A=\bigoplus_{\boldsymbol i\in\mathbb N_p^m}Bx^{\boldsymbol i},\]
where $\mathbb N_p=\{0,\ldots,p-1\}$.
 \end{proof}

Basic Commutative Algebra (cf. Theorem 23.7 and    Theorem 28.8 in \cite{Mat89}) now  implies: 

\begin{cor} The scheme  $Y$ is regular and hence  $\field$-smooth. \qed\end{cor}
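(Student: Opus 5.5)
The statement is purely local, so I would work on an affine open set $X=\mathrm{Spec}\,A$ as in the elements of proof of Proposition~\ref{frobenius_theorem0}, with $B=\mathcal B(X)$, $x_1,\ldots,x_m$ a $p$-basis and $A=\bigoplus_{\boldsymbol i\in\mathbb N_p^m}Bx^{\boldsymbol i}$. The plan is to use two results from \cite{Mat89}, Theorem~23.7 (flat local homomorphisms) and Theorem~28.8 (descent of regularity), to transfer regularity from $A$ to $B$. Smoothness over $\field$ will then follow because $\field$ is perfect.

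\textbf{Step 1.} Show that $B$ is noetherian and that $\mathrm{Spec}\,A\to\mathrm{Spec}\,B$ is finite, flat and surjective. By Proposition~\ref{frobenius_theorem0}, $A$ is a free $B$-module of rank $p^m$. In addition, $A^p\subset B\subset A$, and $A$ is finite over the noetherian ring $A^p\cong A$. Hence $B$ is an $A^p$-submodule of a finite $A^p$-module, so $B$ is finite over $A^p$ and therefore noetherian.

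\textbf{Step 2.} Fix a prime $\mathfrak q\subset B$ and choose a prime $\mathfrak P\subset A$ over it, which exists by integrality. The map $B_{\mathfrak q}\to A_{\mathfrak P}$ is a flat local homomorphism, and $A_{\mathfrak P}$ is regular because $X$ is smooth.

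\textbf{Step 3.} Invoke the result that if $R\to S$ is a flat local homomorphism of noetherian local rings and $S$ is regular, then $R$ is regular. This is exactly Theorem~23.7(i) in \cite{Mat89}: regularity descends along faithfully flat maps, since finite projective dimension of the residue field descends by flatness. It gives that $B_{\mathfrak q}$ is regular, so $Y$ is regular.

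\textbf{Step 4.} Pass from regular to smooth. Since $\field$ is perfect, a regular scheme of finite type over $\field$ is $\field$-smooth. For finite type, note that $B$ is a finite module over $A^p$. Moreover $A^p$ is the image of $A$ under Frobenius, so it is of finite type over $\field^p=\field$. Hence $B$ is of finite type over $\field$, and Step 3 finishes the argument.

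I expect the main obstacle to be the bookkeeping in Step 1, namely checking that $B$ is noetherian and of finite type over $\field$. This is where the inclusion $A^p\subset B$ must be used. The descent of regularity itself is a direct citation.
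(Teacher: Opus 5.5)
Your proposal is correct and follows the same route as the paper, which simply cites Matsumura's Theorem~23.7 (regularity descends along the flat local maps $B_{\mathfrak q}\to A_{\mathfrak P}$ coming from the free extension $B\subset A$) and Theorem~28.8 (regular implies smooth over the perfect field $\field$). Your Steps~1 and~4, which use $A^p\subset B\subset A$ to show that $B$ is noetherian and of finite type over $\field$, fill in the bookkeeping that the paper leaves implicit.
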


Another pleasant way of presenting the situation at hand is thorough the 
\begin{cor}[The Frobenius theorem] 
\label{frobenius_theorem} Let $P\in X$ be given. 
There exists an open neighbourhood $U$ of $P$ and functions $x_1,\ldots,x_m\in\mathcal{O}_X(U)$, $y_1,\ldots,y_q\in\mathcal{O}_Y(U)$ such that 
\[
\mathrm dx_1,\ldots,\mathrm dx_m,\varphi^*(\mathrm d y_1),\ldots,\mathrm \varphi^*(\mathrm{d} y_q)
\]
is a basis of $\Omega^1_{X}|_U$. In addition, $\varphi^*(\mathrm d   y_1),\ldots,\varphi^*(\mathrm d y_q)$ is a basis of $N_\F^*|_U$ over $U$. If we  interpret $y_1,\ldots,y_q$ as elements in $\mathcal O_X(U)$ and use these to define the basis 
\[
\{\partial/\partial {x_i},\partial/\partial{y_j}\,:\,1\le i\le m,\,1\le j\le q\}
\]
of $T_{\mathcal F}|_U$, then  the vector fields $\{\partial/\partial{x_i}\}_{i=1}^m$ form a basis of  $T_{\mathcal F}|_U$.
\end{cor}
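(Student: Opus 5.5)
We work locally, using the $p$-basis from Proposition~\ref{frobenius_theorem0}. Fix $P\in X$ and an affine open $U=\mathrm{Spec}\,A$ with $B=\mathcal O_Y(U)$ and a $p$-basis $x_1,\dots,x_m$ of $A$ over $B$. By item (4), $\mathrm dx_1,\dots,\mathrm dx_m$ freely generate $\Omega^1_{X/Y}|_U$. Since $Y$ is $\field$-smooth of dimension $n-m=q$ (degree $p^m$, $\dim Y=\dim X$), after shrinking we may choose local coordinates $y_1,\dots,y_q\in B$ at $\varphi(P)$, so that $\mathrm dy_1,\dots,\mathrm dy_q$ form a basis of $\Omega^1_Y$ near $\varphi(P)$. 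The exact sequence $\varphi^*\Omega^1_Y\to\Omega^1_X\to\Omega^1_{X/Y}\to 0$ then shows that $\mathrm dx_i$ together with $\varphi^*\mathrm dy_j$ generate $\Omega^1_X|_U$. These are $n$ generators of a free module of rank $n$, so they form a basis (a surjection between free modules of equal rank is an isomorphism). In particular $\varphi^*\Omega^1_Y\to\Omega^1_X$ is injective on $U$, and its image is the free submodule spanned by the $\varphi^*\mathrm dy_j$.

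Next we identify this image with $N_\F^*$. Every $D\in T_\F$ kills $B$, hence kills each $y_j$, so $\varphi^*\mathrm dy_j(D)=D(y_j)=0$ and $\varphi^*\mathrm dy_j\in N_\F^*=\ker(\Omega^1_X\to\Omega^1_\F)$. Both $N_\F^*|_U$ and $\bigoplus\mathcal O_U\,\varphi^*\mathrm dy_j$ are saturated rank-$q$ subbundles of $\Omega^1_X$: the latter is a direct summand because it is part of a basis. One is contained in the other, so they coincide. A quicker route is to note that the quotient by the $\varphi^*\mathrm dy_j$ is free on the $\mathrm dx_i$, and that $\Omega^1_X\to\Omega^1_\F$ factors through it as a surjection of free rank-$m$ modules, hence an isomorphism.

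Finally, let $\{\partial/\partial x_i,\partial/\partial y_j\}$ be the basis of $T_X|_U$ dual to $\{\mathrm dx_i,\mathrm dy_j\}$. Since $T_\F$ is the annihilator of $N_\F^*$, which is spanned by the $\mathrm dy_j$, the module $T_\F|_U$ is exactly the span of the dual vectors $\partial/\partial x_i$. This completes the argument.

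The only delicate step is the choice of the $y_j$. It requires that $Y$ be smooth of dimension $q$ at $\varphi(P)$ (the preceding Corollary), and that the homeomorphism $|X|=|Y|$ lets us shrink neighbourhoods compatibly. This is precisely why the $y_j$ can be taken in $\mathcal O_Y(U)$ for the same open set $U$.
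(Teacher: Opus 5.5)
\textbf{There is a genuine error at exactly the step you flag as delicate: the choice of the $y_j$.}

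The scheme $Y$ does not have dimension $q$. Since $\varphi$ is a finite homeomorphism, $\dim Y=\dim X=n$, as your own parenthetical says. Hence $\Omega^1_Y$ is locally free of rank $n$. When $m>0$, no $q$ functions $y_1,\dots,y_q$ can have differentials forming a basis of $\Omega^1_Y$ near $\varphi(P)$. Moreover, $\varphi^*\Omega^1_Y\to\Omega^1_X$ is never injective in that case; its kernel has rank $m$. For instance, take $X=\mathbb A^2_{\field}$ and $T_\F=\mathcal O\,\partial_x$. Then $\mathcal O_Y=\field[x^p,y]$, which is two-dimensional, and $\varphi^*\mathrm d(x^p)=0$. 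So your derivation that $\mathrm dx_i$ and $\varphi^*\mathrm dy_j$ generate $\Omega^1_X|_U$ rests on a false premise, namely that $\Omega^1_Y$ is generated by $\mathrm dy_1,\dots,\mathrm dy_q$. Your claimed injectivity is also false.

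\textbf{The repair, which is what the paper does.} Choose the $y_j$ not as coordinates on $Y$, but so that their pulled-back differentials span the image of $\varphi^*\Omega^1_Y$ at $P$.

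Tensor the exact sequence $\varphi^*\Omega^1_Y\to\Omega^1_X\to\Omega^1_{X/Y}\to0$ with $\field(P)$. This shows that the image of $\varphi^*\Omega^1_Y\otimes\field(P)$ in $\Omega^1_X(P)$ is the kernel of $\Omega^1_X(P)\to\Omega^1_{X/Y}(P)$. That kernel has dimension $n-m=q$. Since $\Omega^1_Y$ is generated by exact differentials, you can find $y_1,\dots,y_q\in\mathcal O_Y$ near $Q=\varphi(P)$ such that $\varphi^*\mathrm dy_1(P),\dots,\varphi^*\mathrm dy_q(P)$ is a basis of this kernel. Equivalently, take $n$ local coordinates of $Y$ at $Q$ and keep $q$ of them.

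Then $\mathrm dx_i(P)$ and $\varphi^*\mathrm dy_j(P)$ together span $\Omega^1_X(P)$. By Nakayama, these $n$ sections generate $\Omega^1_X$ near $P$, hence form a basis there.

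With this change, the rest of your argument goes through:
\begin{itemize}
\item the $\varphi^*\mathrm dy_j$ lie in $N_\F^*$ because $T_\F$ kills $\mathcal O_Y$;
\item $N_\F^*$ and the span of the $\varphi^*\mathrm dy_j$ are two rank-$q$ subbundles, one contained in the other, so they coincide;
\item $T_\F|_U$ is spanned by the dual vectors $\partial/\partial x_i$.
\end{itemize}
The paper identifies the span with $N_\F^*$ fibrewise at $P$ and spreads it out by Nakayama. This is essentially equivalent to your saturation argument.
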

\begin{proof}We employ the notations specified in the natural exact sequence 
\[
\varphi^*\Omega_{Y}^1\stackrel{\varphi^*}\longrightarrow \Omega^1_{X}\stackrel\rho\longrightarrow\Omega^1_{X/Y}\longrightarrow0.
\]
Let    $Q=\varphi(P)$. The following  are easily verified.
\begin{enumerate} 
\item[($\star$)] The space $\Omega_{X}^1(P)$ is spanned by elements of the form $\mathrm df(P)$, where  $f$ is defined in a neighbourhood of $P$ and vanishing on it. (The same claim is true if one replaces $X$ and  $P$ by $Y$ and $Q$. )
\end{enumerate}

Using ($\star$) and that $\Omega_{X/Y}^1$ is locally free of rank $m$,  we can find   an open neighbourhood  $U$ of $P$ and   $x_1,\ldots,x_m\in\mathcal{O}_X(U)$    such that 
\[\{\rho(dx_1)(P),\ldots,\rho(dx_m)(P)\}
\] 
is a basis of  $\Omega^1_{X/Y}(P)$.  (In particular $\{ dx_i(P)\}$ are linearly independent.) 
By the same argument, we can find   an open neighbourhood $V$ of $Q$ and  $y_1,\ldots,y_q\in\mathcal O_Y(V)$   such that 
\[
\{\varphi^*(dy_1)(P),\ldots,\varphi^*(dy_q)(P)\}
\]
is a basis for       $\mathrm{Im}\,\,\Omega^1_{Y}(Q)\otimes\field (P)\to\Omega^1_{X}(P)$. 
Since  $N_\F^* (P)$ is a $\field  (P)$-subspace of $\Omega_{X}^1(P)$ of dimension $q$ and contains each $d\varphi^\#(y_i)=\varphi^*(  dy_i)$, 
we conclude that 
\[
\{\varphi^*( dy_1)(P),\ldots,\varphi^*( dy_q)(P)\}
\]
is a basis of $N_\F^*(P)$. 
Consequently, $N_\F^*$ is free on  $\{\varphi^*( dy_i)\}_{i=1}^q$ near $P$. 
\end{proof}



We  insert  $\varphi$   into a commutative diagram 
\begin{equation}\label{diagram1}
\xymatrix{X\ar[r]^\varphi \ar[rd]_F& Y\ar[d]^\psi\\&X'.}
\end{equation}
Since $F$ and $\varphi$ are faithfully flat we have by \cite[p. 46]{Mat89}: 

\begin{cor}\label{cor:faithfullyflat}
The morphism  $\psi$ is also faithfully flat and finite, and has degree $p^q$.
\end{cor}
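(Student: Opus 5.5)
The plan is to factor the relative Frobenius through $\varphi$ and derive all properties of $\psi$ from the known properties of $F$ and $\varphi$. First, the diagram \eqref{diagram1} commutes by construction. On underlying spaces all three maps are the identity of $|X|$. On sheaves, $\mathcal O_{X'}=\mathcal O_X^p\subset\mathcal B\subset\mathcal O_X$: a $p$-th power is killed by every derivation. So $\psi$ is the morphism induced by the inclusion $\mathcal O_{X'}\subset\mathcal O_Y=\mathcal B$, and $\varphi\circ\psi$ on rings is the inclusion $\mathcal O_{X'}\subset\mathcal O_X$, i.e. $F$.

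Finiteness comes next. $F$ is finite because $X$ is smooth, hence of finite type over a perfect field: locally $\mathcal O_X$ is a finite $\mathcal O_X^p$-module, generated by monomials in a $p$-basis. Since $\mathcal O_{X'}$ is noetherian and $\mathcal O_Y\subset\mathcal O_X$ is a submodule of a finite $\mathcal O_{X'}$-module, $\mathcal O_Y$ is finite over $\mathcal O_{X'}$. Hence $\psi$ is finite.

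Flatness is the main step, and the only point needing care. Apply the descent criterion in \cite[p. 46]{Mat89}: if $A\to B\to C$ are ring maps with $A\to C$ flat and $B\to C$ faithfully flat, then $A\to B$ is flat. Here $F$ is flat, and by Proposition~\ref{frobenius_theorem0}(2) $\varphi$ is finite flat of positive rank $p^m$, hence faithfully flat. Flatness is checked locally, so this gives $\psi$ flat. Alternatively, one can argue directly: $Y$ is regular by the preceding corollary, $X'$ is regular, and $\psi$ is finite with equidimensional fibres, so miracle flatness applies. Surjectivity is clear because $\psi$ is a homeomorphism. So $\psi$ is faithfully flat and finite, hence locally free of constant rank since $X'$ is connected, or work componentwise.

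Finally, ranks multiply in a composition of finite locally free maps: $\operatorname{rank}F=\operatorname{rank}\varphi\cdot\operatorname{rank}\psi$. For $F$ on a smooth $n$-dimensional variety, $\operatorname{rank}F=p^n$, using a $p$-basis of $n$ local coordinates. Since $\operatorname{rank}\varphi=p^m$ and $n=m+q$, we get $\operatorname{rank}\psi=p^{n-m}=p^q$, which is the degree.
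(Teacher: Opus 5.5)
Your proposal is correct and follows the same route as the paper, whose proof is just the descent-of-flatness criterion of \cite[p.~46]{Mat89} applied to $F=\psi\circ\varphi$, with $F$ flat and $\varphi$ faithfully flat. Your finiteness argument, surjectivity from $\psi$ being a homeomorphism, and the rank count $p^{n}=p^{m}\cdot\mathrm{rank}\,\psi$ with $n=m+q$ supply details the paper leaves implicit; the miracle-flatness variant via regularity of $Y$ would also work.
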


Let us end this section by reviewing some further local properties  $\varphi:X\to Y$ in order to draw an important parallel between foliations and ``local'' actions of the additive groups. 

Let $U=\mathrm{Spec}\,A$ be an affine open subset of $X$  where we can 
find 
\[
x_1,\ldots,x_m,y_1,\ldots,y_q\in  A
\]  
such that: 
\begin{itemize}
\item If $B=\mathcal B(U)$, then $A$ is a free $B$-module on $x_{1}^{i_1}\cdots x_m^{i_m}$, where $(i_1,\ldots,i_m)$ ranges over $\{0,\ldots,p-1\}^m$; 
\item The $\mathcal O_U$-module $T_\F|_{U}$ is free on $\{\partial/\partial x_i\}_{i=1}^m$;  
\item
The $\mathcal O_U$-module $T_U$ is free on $\{\partial/\partial x_i\}_{i=1}^m\cup\{\partial/\partial y_i\}_{i=1}^q$. 
\end{itemize}

To proceed, we require some   notations. The set $\{0,\ldots,p-1\}$ is denoted by $\mathbb N_p$, the derivations $\partial/\partial x_i$ are abbreviated to $\partial_i$. Given $\boldsymbol i=(i_1,\ldots,i_m)\in\mathbb N^m_p$, we put 
\[
x^{\boldsymbol i}=x_1^{i_1}\cdots x_m^{i_m},  \quad\partial^{\boldsymbol i}=\partial_{1}^{i_1}\cdots\partial_m^{i_m}\quad\text{and}\quad \partial^{[{\boldsymbol i}]}=\frac{1}{{\boldsymbol i}!}\partial^{\boldsymbol i}
\] 
where ${\boldsymbol i} = i_1!\cdots i_m!$.

Let $A':=A\otimes_BA$ be considered as an $A$-algebra via the unnamed morphism $a\mapsto a\otimes1$. As we learn from Grothendieck's theory of differential operators, it is important to consider   also the morphism \[\tau:A\longrightarrow A',\quad a\longmapsto1\otimes a,\] and to interpret it as a ``Taylor series''. This is done by writing
\[
\delta_i= \tau (x_i)-x_i= 1\otimes x_i -x_i\otimes 1
\] 
and by noting that the binomial formula  gives, for $j\in\mathbb N_p$,  
\[
\tau(x^j_i)=\sum_{\ell=0}^j\partial^{[\ell]}_i(x^j_i)\delta_i^{\ell}. 
\]
Hence, if we agree to abbreviate $\delta^{\boldsymbol \ell}=\delta_1^{\ell_1}\cdots\delta_m^{ \ell_m}$, we obtain
\[
\tau(x^{\boldsymbol j}) = \sum_{\boldsymbol\ell\in\mathbb N_p^m} \partial^{[\boldsymbol\ell]}(x^{\boldsymbol j})\cdot\delta^{\boldsymbol\ell},
\]
which extends, by $B$-linearity, to 
\begin{equation}\label{taucompatibility}
\tau(a) =  \sum_{\boldsymbol i\in\mathbb N_p^m} \partial^{[\boldsymbol i]}(a)\cdot\delta^{\boldsymbol i}.
\end{equation}

\subsection{A Bott ``vanishing theorem'' for $p$-closed foliations}
Let us maintain the notations and conventions used in diagram \eqref{diagram1}. Also, let us follow \cite{Fulton13} and write $Z^\ell(-)$ for the group of cycles of codimension $\ell$, $[-]$ for the cycle associated to a subscheme, and     $\varphi^*:Z^\ell(Y)\to Z^\ell(X)$ for the   flat pull-back. See   \cite[Ch. 1]{Fulton13}. It should be noted that, since $\varphi$ is a homeomorphism, then for each integral subscheme $W\subset Y$, we have 
\[\varphi^*[W]=\|W\|_\varphi\cdot[\varphi^{-1}(W)_{\text{red}}],\]
where $\|W\|_\varphi$ is the multiplicity of the irreducible closed subscheme $\varphi^{-1}(W)$, i.e. the length of the local ring of the generic point. This in particular says that the map  
$[W]\mapsto [\varphi^{-1}(W)_{\mathrm{red}}]$ defines a bijection $Z^\ell(Y)\to Z^\ell(X)$.

\begin{prop}
Let $\ell > q$ be an integer. Then
\[\varphi^*Z^\ell(Y)\subset  p^{\ell-q}Z^\ell(X).
\]
\end{prop}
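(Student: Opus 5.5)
The plan is to reduce to a local multiplicity computation. Since $Z^\ell(Y)$ is generated by classes $[W]$ of integral closed subschemes $W\subset Y$ of codimension $\ell$, and $\varphi^*[W]=\|W\|_\varphi\cdot[\varphi^{-1}(W)_{\mathrm{red}}]$, it suffices to show that $p^{\ell-q}$ divides $\|W\|_\varphi$. This multiplicity is the length of the artinian ring $\mathcal O_{X,\xi}/\mathfrak m_\eta\mathcal O_{X,\xi}$, where $\eta$ is the generic point of $W$ and $\xi$ is the unique point of $X$ over it ($\varphi$ is a homeomorphism).

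First we localize at $\eta$. Put $B=\mathcal O_{Y,\eta}$, a regular local ring of dimension $\ell$ with residue field $K=\kappa(\eta)$, and $A=\mathcal O_{X,\xi}$. By Proposition~\ref{frobenius_theorem0} and Corollary~\ref{frobenius_theorem}, after shrinking we have a $p$-basis: $A=\bigoplus_{\boldsymbol i\in\mathbb N_p^m}B\,x^{\boldsymbol i}$. Hence $A/\mathfrak m_\eta A=\bigoplus K\,\bar x^{\boldsymbol i}$ has $K$-dimension $p^m$, and it is the purely inseparable-type algebra $K[x_1,\dots,x_m]/(x_i^p-b_i)$ with $b_i=x_i^p\in B$ reduced mod $\mathfrak m_\eta$.

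The length of this algebra equals $p^m/[L:K]$, where $L$ is its residue field, namely $L=\kappa(\xi)$. So the task is to bound $[\kappa(\xi):\kappa(\eta)]\le p^{m-(\ell-q)}$; equivalently, at most $m-(\ell-q)$ of the $\bar b_i$ are $p$-independent over $K$. Here $L\subset K^{1/p}$, so $[L:K]=p^r$, where $r$ is the $p$-rank of $\{\bar b_i\}$ in $K$ over $K^p$. I would use the tower $X\to Y\to X'$ from Corollary~\ref{cor:faithfullyflat}. The point $\xi$ has residue field $\kappa(\xi)$ of transcendence degree $n-\ell$ over $\field$, so $[\kappa(\xi):\kappa(\xi)^p]=p^{n-\ell}$. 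Moreover, $\kappa(\xi)^p$ is the residue field of the image point in $X'$, and $\kappa(\eta)$ lies between the two. Therefore
\[
[\kappa(\xi):\kappa(\eta)]\cdot[\kappa(\eta):\kappa(\xi)^p]=p^{n-\ell}.
\]
Next we need a lower bound $[\kappa(\eta):\kappa(\xi)^p]\ge p^{\,?}$. The fiber of $\psi$ over the image point has length $p^q$, so $[\kappa(\eta):\kappa(\xi)^p]\le p^q$, which goes the wrong way. Instead we use the upper bound $[\kappa(\xi):\kappa(\eta)]\le p^{m}$, obtained by combining the equality above with the fact that $\kappa(\eta)\supset\kappa(\xi)^p$ contains the elements $y_j$ mod $\mathfrak m$. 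The cleaner route is to view $\kappa(\xi)=\kappa(\eta)(\bar x_1,\dots,\bar x_m)$, which gives $[\kappa(\xi):\kappa(\eta)]\le p^{\min(m,\,n-\ell)}$. Since $\ell>q$ means $n-\ell<m$, we get $[\kappa(\xi):\kappa(\eta)]\le p^{n-\ell}=p^{m-(\ell-q)}$. The length is then at least $p^{m}/p^{m-(\ell-q)}=p^{\ell-q}$, and because every quantity involved is a power of $p$, $p^{\ell-q}$ divides $\|W\|_\varphi$.

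The main obstacle is the step $[\kappa(\xi):\kappa(\eta)]\le p^{n-\ell}$. It rests on $\kappa(\eta)\supset\kappa(\xi)^p$ (true because $\mathcal O_{X'}\subset\mathcal B$) and on the identity $[\kappa(\xi):\kappa(\xi)^p]=p^{\operatorname{trdeg}}$ for finitely generated fields over a perfect field. We also have to check carefully that the length equals $p^m/[L:K]$. This holds because $A/\mathfrak m_\eta A$ is a local artinian $K$-algebra whose composition factors are all isomorphic to $L$, so its length is $\dim_K/[L:K]$.
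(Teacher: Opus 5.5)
Your argument is correct, and it takes a genuinely different route from the paper's.

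\textbf{The paper's route.} It works at the level of cycles. It uses Kunz's theorem $\mathrm{length}(R/\mathfrak m^{[p]})=p^{\dim R}$ to get $F^*[V']=p^\ell[F^{-1}(V')_{\mathrm{red}}]$. It then uses the projection formula $\psi_*\psi^*[V']=p^q[V']$ for the finite flat map $\psi\colon Y\to X'$ to bound $\|V'\|_\psi\le p^q$. Finally, the factorization $F=\psi\circ\varphi$ gives $\|V'\|_\psi\cdot\|W\|_\varphi=p^\ell$, hence $\|W\|_\varphi$ is a power of $p$ that is at least $p^{\ell-q}$.

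\textbf{Your route.} You stay entirely at the generic point. Freeness of rank $p^m$ gives the exact formula $\|W\|_\varphi=p^m/[\kappa(\xi):\kappa(\eta)]$. The sandwich $\kappa(\xi)^p\subset\kappa(\eta)\subset\kappa(\xi)$, together with $[\kappa(\xi):\kappa(\xi)^p]=p^{n-\ell}$ (which uses that $\field$ is perfect), does two things. It shows the denominator is a power of $p$, by the tower law. It also bounds the denominator by $p^{n-\ell}=p^{m-(\ell-q)}$.

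\textbf{Comparison.} In effect you replace Kunz's length computation by the dual residue-degree computation. For $F$ itself your formula reads $p^n/p^{n-\ell}=p^\ell$, which is exactly Kunz's statement in this geometric setting. Your version is more elementary and yields an explicit formula for the multiplicity. It never uses $\psi$, its flatness, the regularity of $Y$, or the projection formula. The paper's version is more functorial and makes visible that the loss $p^q$ is precisely the degree of $\psi$.

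\textbf{Write-up.} Delete the exploratory middle portion: the bound you note ``goes the wrong way'', and the garbled sentence about $[\kappa(\xi):\kappa(\eta)]\le p^m$ via the $y_j$. The only inequality you need is $[\kappa(\xi):\kappa(\eta)]\le[\kappa(\xi):\kappa(\xi)^p]=p^{n-\ell}$, and the $\min(m,n-\ell)$ and ``$n-\ell<m$'' remarks play no role.
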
  

\begin{proof} Recall that $X'$  is  $(|X|, \mathcal{O}_X^p)$ with its structure morphism to $\mathrm{Spec}\, \field$ modified  and   $F:X\to X'$ is  the identity on the underlying topological spaces while $F^\#:\mathcal O_{X}\to \mathcal O_X$ is the Frobenius morphism.
It follows that  $[V']\mapsto [F^{-1}(V')_{\mathrm{red}}]$ defines an isomorphism $Z^\ell(X')\to Z^\ell(X)$. In addition, $F^*:Z^\ell (X')\to Z^\ell (X)$, is given by 
\[
[V']\longmapsto\|V'\|_F\cdot[F^{-1}(V')_{\mathrm{red}}], 
\]
 where we agree to denote by $\|V'\|_F$  the multiplicity of the irreducible closed subscheme $F^{-1}(V')$. These observations and accompanying notations are employed below.

The following facts shall lead   to a proof of the proposition. 
\begin{enumerate}
\item\label{1propproof}  If $V'\subset X'$ is a subvariety of codimension $\ell$, then $\|V'\|_F=p^\ell$, i.e. $F^*[V']=p^\ell\cdot [F^{-1}(V')_\mathrm{red}]$.
\item\label{2propproof}  If $W\subset Y$ is a subvariety  of codimension $\ell$, then $\|W\|_\varphi$  is a power of $p$. 
\item\label{3propproof}  Assume that $\ell>q$. Then, $\|W\|_\varphi$ is divisible by    $p^{ \ell-q}$.
\end{enumerate}

Let us prove \eqref{1propproof}. This is a consequence of a remark of Kunz and although  well-known, is  almost never patiently explained.  Let $V:=F^{-1}(V')$ and denote its generic point by $\xi$ so that 
$\|V'\|_F$ is, by definition, just $\mathrm{length}_{\mathcal O_{V,\xi}}(\mathcal O_{V,\xi})$. 
If $\mathcal I'\subset\mathcal O_X$ is the ideal of $V'$, then $\mathcal I'^{[p]}\subset\mathcal O_X$ is the ideal of $V$. Let   
$\xi$ be the generic point of  the space $|V'|=|V|$ so that $I':=\mathcal I_{\xi}'\subset\mathcal O_{\xi}$ is the maximal ideal of this $\ell$  dimensional regular local ring. 
Since we have 
\[
\mathrm{length}_{\mathcal O_\xi/I'^{[p]}}(\mathcal O_\xi/I'^{[p]})=\mathrm{length}_{\mathcal O_\xi}(\mathcal O_\xi/I'^{[p]}), 
\]
we now only need to employ Kunz's observation: for a regular local ring $R$ with maximal ideal $\mathfrak m$, the equality 
\[
\mathrm{length}_R(R/\mathfrak m^{[p]})=p^{\dim R}
\]
holds \cite{kunz69}.

The statement \eqref{2propproof}  is a direct consequence of \eqref{1propproof} and \cite[Lemma 1.7.1]{Fulton13}.

In order to prove \eqref{3propproof},   we observe that for any  $V'\subset X'$,   we have $\psi_*\psi^*[V']=p^q\cdot[V']$ according to \cite[1.7.4]{Fulton13}. (The proof of this employs \cite[A.1.3]{Fulton13}.) As a consequence, $\|V'\|_\psi\le p^q$.

Let  $[V']\in Z^\ell(X')$ be a prime cycle and let  $W=\psi^{-1}(V')_{\mathrm{red}}$ so that $\psi^*[V']=p^t\cdot[W]$ with $t\le q$. If we write $\varphi^*[W]=p^s\cdot[\varphi^{-1}(W)_{\mathrm{red}}]$, then  $\varphi^*\psi^*[V'] =p^{s+t}\cdot[\varphi^{-1}(W)_{\mathrm{red}}]$. On the other hand, $\varphi^*\psi^*[V']=F^*[V']=p^\ell\cdot[F^{-1}(V')_{\mathrm{red}}]$. Then $s=\ell-t\ge\ell- q$. \end{proof}

 {The previous proposition motivates the following definition.}

\begin{definition}
Let $\mathcal M$ be a quasi-coherent $\mathcal{O}_X$-module. An  $\mathcal F$-quotient     of  $\mathcal{M}$ is a pair $(\mathcal N,\sigma)$ consisting of a quasi-coherent $\mathcal{O}_{Y}$-module $\mathcal N$ and an isomorphism \[\sigma:\varphi^*\mathcal N\stackrel\sim\longrightarrow \mathcal M.\]

An $\mathcal O_X$-module having an $\mathcal F$-quotient will be said to {\bf descend} to $Y$. 
\end{definition}

 {Below, we let  $\mathrm{CH}^*(X)$ denote the Chow ring of $X$ and write $\mathrm{Chern}^*(\mathcal M)$ for the graded subring of $\mathrm{CH}^*(X)$ generated by the Chern classes of $\mathcal M$.}

\begin{cor}[Bott  vanishing modulo $p$]\label{21.08.2019--1} 
Let $\mathcal  M$ be a coherent $\mathcal O_X$-module on $X$  {having an $\F$-quotient}.  Then, for each $\ell>q$ we have 
\[\mathrm{Chern}^\ell(\mathcal M)\subset p^{\ell-q}\mathrm{CH}^\ell(X). \]
\end{cor}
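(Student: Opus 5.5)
The plan is to transport the Chern classes of $\mathcal M$ to $Y$ and then come back to $X$ using the preceding Proposition on cycles. Since $\mathcal M$ has an $\F$-quotient $(\mathcal N,\sigma)$, we have $\mathcal M\simeq\varphi^*\mathcal N$. Because $\varphi$ is finite and faithfully flat, $\mathcal N$ is coherent: coherence descends along faithfully flat morphisms. Moreover, $Y$ is smooth by the Corollary following Proposition~\ref{frobenius_theorem0}. Hence $\mathcal N$ admits a finite resolution by locally free sheaves, and its Chern classes $c_s(\mathcal N)\in\mathrm{CH}^s(Y)$ are defined. Since $\varphi$ is flat, $\varphi^*$ is exact, so pulling back such a resolution gives a locally free resolution of $\mathcal M$. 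Functoriality of Chern classes under flat pull-back then gives
\[
c_s(\mathcal M)=\varphi^*c_s(\mathcal N)\quad\text{in } \mathrm{CH}^s(X).
\]
The flat pull-back $\varphi^*\colon\mathrm{CH}^*(Y)\to\mathrm{CH}^*(X)$ is a ring homomorphism, because both varieties are smooth and the intersection product is compatible with flat pull-back \cite{Fulton13}. It follows that every monomial $c_{s_1}(\mathcal M)^{\alpha_1}\cdots c_{s_r}(\mathcal M)^{\alpha_r}$ of total degree $\ell$ equals $\varphi^*$ of the corresponding monomial in the $c_{s_i}(\mathcal N)$, which is a class in $\mathrm{CH}^\ell(Y)$. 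Consequently
\[
\mathrm{Chern}^\ell(\mathcal M)\subset\varphi^*\mathrm{CH}^\ell(Y).
\]

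Next, every class in $\mathrm{CH}^\ell(Y)$ is represented by a cycle in $Z^\ell(Y)$, and flat pull-back at the level of cycles is compatible with rational equivalence. So any class in $\varphi^*\mathrm{CH}^\ell(Y)$ is the image of some $\varphi^*\zeta$ with $\zeta\in Z^\ell(Y)$. For $\ell>q$ the preceding Proposition gives $\varphi^*\zeta\in p^{\ell-q}Z^\ell(X)$. Passing to rational equivalence classes then yields $\mathrm{Chern}^\ell(\mathcal M)\subset p^{\ell-q}\mathrm{CH}^\ell(X)$, because the quotient map $Z^\ell(X)\to\mathrm{CH}^\ell(X)$ carries $p^{\ell-q}Z^\ell(X)$ into $p^{\ell-q}\mathrm{CH}^\ell(X)$.

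The main obstacle is the first step, namely making Chern classes of a coherent, possibly non-locally-free, sheaf on $Y$ available and functorial. On smooth $Y$ this rests on the existence of finite locally free resolutions (regularity of $Y$) and the Whitney formula. The points to verify carefully are that $\mathcal N$ is coherent and that $\varphi^*$ preserves resolutions, which holds since $\varphi$ is flat. If $\mathcal M$ is locally free the argument is simpler: faithfully flat descent shows $\mathcal N$ is locally free, and one only needs $c(\varphi^*\mathcal N)=\varphi^*c(\mathcal N)$ \cite{Fulton13}.
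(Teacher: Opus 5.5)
Your argument is correct and follows the route the paper intends. The paper gives no separate proof: it states the corollary as an immediate consequence of the preceding Proposition $\varphi^*Z^\ell(Y)\subset p^{\ell-q}Z^\ell(X)$, combined with $c(\varphi^*\mathcal N)=\varphi^*c(\mathcal N)$ and the compatibility of flat pull-back with rational equivalence. Your write-up makes explicit the routine points the paper leaves implicit, namely coherence of $\mathcal N$ by faithfully flat descent, locally free resolutions on the smooth scheme $Y$, and the fact that $\varphi^*$ carries Chern monomials to Chern monomials.
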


The prime example of a module having an  $\mathcal F$-quotient  is the normal bundle of a regular $p$-closed foliation:

\begin{ex}[The Bott $\mathcal F$-quotient] \label{rmk:Bquotient}  \rm
We contend that  $N_{\mathcal F}^*$ (the annihilator of $T_\F$) descends to $Y$. In particular, its dual $N_{\mathcal F}$, descends also.

Let $U$ be an open subset of $X$ and let $\{y_i\}_{i=1}^q$ and $\{x_i\}_{i=1}^m$  be as in  Corollary \ref{frobenius_theorem}:   $N_\F^*|_U$ is free on $\{\mathrm dy_i\}_{i=1}^q$ and $T_{\mathcal F}|_U$ is free on $\{\partial/\partial x_i\}_{i=1}^m$. Let now $U'$ be an open subset overlapping $U$ where we have a similar situation with $N_\F^*|_{U'}$
being free on $\{\mathrm{d} y'_i\}_{i=1}^q$  and $T_{\mathcal F}|_{U'}$ on $\{\partial/\partial x_i'\}_{i=1}^m$.  We now write over $U\cap U'$: 
\[\begin{split}
\mathrm{d} y_j'&=\sum_{i=1}^m\fr{\partial y'_j}{\partial x_i}\mathrm{d} x_i+\sum_{i=1}^q\fr{\partial y'_j}{\partial y_i}\mathrm{d} y_i
\\&=\sum_{i=1}^q\fr{\partial y'_j}{\partial y_i}\mathrm{d} y_i;
\end{split}
\]
consequently, the $\GL_q(\mathcal{O}(U\cap U'))$-cocycle giving rise to $N_\F^*$ is \[\left(\fr{\partial y'_j}{\partial y_i}\right)_{1,i,j\le q}.\]
But, for every $h=1,\ldots,m$, we have 
\[
\fr\partial{\partial x_h}\fr{\partial y'_j}{\partial y_i}=\fr\partial{\partial y_i}\fr{\partial y'_j}{\partial x_h}=0. 
\]
This means that a 1-cocycle for $N_\F^*$ can be found with coefficients in $\mathcal{O}_Y$, and this implies that $N_\F^*$ descends to $Y$. Similarly, a cocycle for $N_\F$ is $\displaystyle\left(\fr{\partial y_i}{\partial y'_j}\right)_{1,i,j\le q}$;   we obtain a locally free $\mathcal B$-module   $\mathcal L$ by
\[\mathcal L|_U=\bigoplus_{i=1}^q\mathcal B|_U e_i^U\]
and an isomorphism $\varphi^*\mathcal L\simeq N_\F$ defined by $e_i^U\mapsto\partial /\partial y_i$.
\terminou
\end{ex}

 {It follows from  Example~\ref{rmk:Bquotient} that $N_{\F}$ has an $\F$-quotient, this yields to the following result, which is Theorem~\ref{thmintro:Bootva} of the introduction.  It  should be compared with \cite[Theorem $(*)''$, p.35]{Bott06}.} 

\begin{cor}[Bott vanishing]\label{thm:Bootva}
 {Let $\F$ be a regular $p$-closed foliation on a smooth variety $X$}. If $\mathrm{Chern}^*(N_{\mathcal F})$ stands for the graded subring of $\mathrm{CH}^*(X)$ generated by the Chern classes of $N_{\mathcal F}$, it follows that 
\[
\mathrm{Chern}^\ell(N_{\mathcal F})\subset p^{\ell-q}\cdot\mathrm{CH}^\ell(X)
\]
for each $\ell>q$. 
\end{cor}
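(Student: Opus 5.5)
The proof is essentially a concatenation of Example~\ref{rmk:Bquotient} with Corollary~\ref{21.08.2019--1}. Since $\F$ is regular and $p$-closed, we have the quotient $\varphi:X\to Y$ of Proposition~\ref{frobenius_theorem0}. By Example~\ref{rmk:Bquotient}, $N_\F$ admits an $\F$-quotient: there is a locally free $\mathcal O_Y$-module $\mathcal L$ of rank $q$ with $\varphi^*\mathcal L\simeq N_\F$. So the statement reduces to Corollary~\ref{21.08.2019--1} applied to $\mathcal M=N_\F$. Since that corollary is stated without proof, I would make its argument explicit as follows.

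\textbf{Step 1.} Since $\varphi$ is finite and flat and $Y$ is smooth (Corollary after Proposition~\ref{frobenius_theorem0}), flat pullback $\varphi^*:\mathrm{CH}^*(Y)\to\mathrm{CH}^*(X)$ is a ring homomorphism. It is compatible with Chern classes: $c_s(\varphi^*\mathcal L)=\varphi^*c_s(\mathcal L)$ (Fulton, Ch.~3). Hence every monomial in the Chern classes of $N_\F$ of total codimension $\ell$ has the form $\varphi^*\beta$ with $\beta$ a monomial in the Chern classes of $\mathcal L$, lying in $\mathrm{CH}^\ell(Y)$. The whole subring $\mathrm{Chern}^\ell(N_\F)$ is therefore contained in $\varphi^*\mathrm{CH}^\ell(Y)$.

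\textbf{Step 2.} Represent $\beta$ by a cycle $z\in Z^\ell(Y)$. Flat pullback on cycles descends to rational equivalence, so $\varphi^*\beta$ is the class of $\varphi^*z$. By the preceding Proposition, $\varphi^*z\in p^{\ell-q}Z^\ell(X)$ whenever $\ell>q$, hence $\varphi^*\beta\in p^{\ell-q}\mathrm{CH}^\ell(X)$. Sums of such classes remain in this subgroup, which gives the inclusion for all of $\mathrm{Chern}^\ell(N_\F)$.

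\textbf{Main obstacle.} The only delicate point is justifying that $\varphi^*$ commutes with Chern classes and products in the Chow ring for a finite flat morphism between smooth varieties. For Chern classes this is standard flat-pullback compatibility. For products I would use that $\varphi^*$ is a ring map because $\varphi$ is a flat morphism of smooth varieties (Fulton, Prop.~8.3). No projectivity of $X$ is needed, since only these functorial properties are used.
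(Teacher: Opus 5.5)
Your proposal is correct and follows the paper's argument: the paper proves this corollary by combining the Example on the Bott $\mathcal F$-quotient (which gives $N_{\mathcal F}\simeq\varphi^*\mathcal L$) with the Corollary ``Bott vanishing modulo $p$''. That Corollary in turn rests on the proposition $\varphi^*Z^\ell(Y)\subset p^{\ell-q}Z^\ell(X)$ for $\ell>q$, and your Steps 1 and 2 (flat pullback commutes with Chern classes and products and respects rational equivalence) simply spell out the deduction from that proposition, which the paper leaves implicit.
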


In view of Theorem~\ref{thm:bottvanishing}, which applies for foliations that are not necessarily $p$-closed, the following question arises naturally.  

\begin{question}
Does Corollary~\ref{thm:Bootva} also hold  for foliations that are not  $p$-closed?
\end{question}

The reader acquainted with the theory of Cartier descent \cite[Theorem 5.1]{katz70} must have remarked that the notion of $\mathcal F$-quotient  
needs to  be   related to the notion of ``partial'' connection which appeared in a particular setting in   Section~\ref{sec:BottCon}.

\begin{definition}Let $\mathcal M$ be a quasi-coherent $\mathcal{O}_X$-module. A partial integrable $\mathcal F$-connection is an $\mathcal O_X$-linear morphism of sheaves  
\[
\nabla:T_\mathcal{F}\longrightarrow \mathcal {\rm End}_{\field}(\mathcal M),\quad v\longmapsto\nabla_v
\]
such that 
\begin{enumerate}\item for all sections $v$ of $T_\mathcal{F}$, $a$ of $\mathcal{O}_X$ and $s$ of $\mathcal M$, over an open subset, we have 
\[
\nabla_v(as)=v(a)\cdot s+a\nabla_v(s),
\] 
\item for sections $v$ and $w$ of $T_\mathcal{F}$, we have $\nabla_{[v,w]}=[\nabla_v,\nabla_w]$.\end{enumerate}
If, in addition,   $\nabla_{v^p}=(\nabla_v)^p$ for all sections $v$ of $T_\F$ over some open subset,  then we say that $\nabla$ is   $p$-closed.\end{definition}

Note that if $\nabla$ is an integrable $\mathcal F$-partial connection on $\mathcal M$, then $\nabla$ actually takes values in $\mathcal {\rm End}_{\mathcal O_Y}(\mathcal M)$. As with usual connections, a partial connection can also be defined by means of a $\field$-linear morphism of sheaves $\nabla:\mathcal M\to\mathcal M\otimes T_{\mathcal F}^*$ which satisfies Leibniz' rule. 

Define 
\[
\mathcal M^\nabla (U):=\left\{s\in \mathcal M (U)\,:\,\begin{array}{c}\text{for all $V\subset U$ and all} \\ \text{   $v\in T_\F(V)$, we have $\nabla_v(s|_V)=0$}\end{array}\right\}.\]
In this way, we obtain an $\mathcal O_Y$-module $\mathcal M^\nabla$; it is clear that $\mathcal M^\nabla$ can also be expressed as the kernel of $\nabla:\mathcal M\to \mathcal M\otimes T_{\mathcal F}^*$.

\begin{prop}[``Partial Cartier descent''] \label{partial_cartier_descent}
Let $\mathcal F$ be a regular $p$-closed foliation on a smooth algebraic variety $X$ and let $\varphi: X \to Y$ be the $\F$-quotient map.
\begin{enumerate}
\item\label{1thmpartial}  Let $\mathcal N$ be a quasi-coherent 
$\mathcal O_Y$-module. Then $\varphi^*\mathcal N=\mathcal O_X\otimes_{\mathcal O_Y}\mathcal N$ carries an integrable $p$-closed  partial $\mathcal F$-connection $\nabla^{\mathrm {can}}$ defined by letting each $v\in T_\mathcal{F}$ act as 
\[
v\otimes\id:\mathcal O_X\otimes_{\mathcal O_Y}\mathcal N\longrightarrow \mathcal O_X\otimes_{\mathcal O_Y}\mathcal N
\]
\item\label{2thmpartial} The preceding construction gives rise to a functor 
\[\varphi^*:\left\{\begin{array}{c}\text{quasi-coherent} \\ \text{ $\mathcal O_Y$-modules}\end{array}\right\}\longrightarrow\left\{\begin{array}{c}\text{partial integrable $p$-closed} \\ \text{ $\mathcal F$-connections}\end{array}\right\},\]
which is an equivalence. Associating  to each integrable  $p$-closed  partial $\mathcal F$-connection $(\mathcal M,\nabla)$  the quasi-coherent $\mathcal O_Y$-module $\mathcal M^\nabla$ gives rise to an inverse equivalence.
\item\label{3thmpartial} If $\nabla^{\rm B}$ is the Bott connection on $N_\F$ constructed in Section~\ref{sec:BottCon}, then it coincides with the canonical connection on $N_\F$ obtained through the construction in \eqref{1thmpartial} applied to the descended module of   Example~\ref{rmk:Bquotient}. 
\end{enumerate}
\end{prop}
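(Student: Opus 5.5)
The plan is to prove the three items in order, working locally with the Frobenius coordinates of Corollary~\ref{frobenius_theorem} and the Taylor map \eqref{taucompatibility}. Since everything is local, I may assume $X=\mathrm{Spec}\,A$ and $Y=\mathrm{Spec}\,B$, where $A$ is free over $B$ on the monomials $x^{\boldsymbol i}$, $\boldsymbol i\in\mathbb N_p^m$, and $T_\F$ is free on $\partial_1,\ldots,\partial_m$.

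\textbf{Item \eqref{1thmpartial}.} Each $v\in T_\F$ is a $B$-linear derivation of $A$, because $B$ is by definition killed by $T_\F$. Hence $v\otimes\id$ on $A\otimes_B N$ is well defined. Leibniz' rule, $\mathcal O_X$-linearity in $v$, compatibility with brackets, and compatibility with $p$-th powers are then inherited directly from the corresponding identities for derivations of $A$. In particular, $(v\otimes\id)^p=v^p\otimes\id$, and $v^p\in T_\F$ because $\F$ is $p$-closed.

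\textbf{Item \eqref{2thmpartial}.} This is the core of the argument and the place I expect to be the main obstacle. I will imitate Katz's proof of Cartier descent.

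First, I will check that $(\varphi^*N)^{\nabla^{\rm can}}=N$. Writing an element as $\sum_{\boldsymbol i} x^{\boldsymbol i}\otimes n_{\boldsymbol i}$ and applying the $\partial_j$, one sees that the kernel is exactly $1\otimes N$, since $A$ is free over $B$ on the $x^{\boldsymbol i}$.

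Second, I must show that for a $p$-closed integrable pair $(M,\nabla)$ the natural map $A\otimes_B M^\nabla\to M$ is an isomorphism. For this I will use the commuting operators $\nabla_j:=\nabla_{\partial_j}$. They commute by integrability, since $[\partial_i,\partial_j]=0$, and they satisfy $\nabla_j^p=\nabla_{\partial_j^p}=0$ by $p$-closedness, since $\partial_j^p=0$ on the coordinates. With these I define the Katz-type projector
\[
P=\prod_{j=1}^m\Bigl(\sum_{k=0}^{p-1}\frac{(-x_j)^k}{k!}\nabla_j^k\Bigr).
\]
It is the analogue of \eqref{taucompatibility} read backwards. One verifies that $P(M)\subset M^\nabla$, using $\nabla_j^p=0$ and the truncated identity $\partial\bigl((-x)^k/k!\bigr)=-(-x)^{k-1}/(k-1)!$, and that every $s\in M$ satisfies
\[
s=\sum_{\boldsymbol i}\frac{x^{\boldsymbol i}}{\boldsymbol i!}\,P(\nabla^{\boldsymbol i}s).
\]
This gives surjectivity. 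Injectivity comes from applying the $\nabla^{\boldsymbol i}$ to a relation $\sum_{\boldsymbol i} x^{\boldsymbol i}\otimes m_{\boldsymbol i}=0$ and extracting the coefficients inductively. The delicate bookkeeping here is that only exponents below $p$ occur, so the factorials are invertible.

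Finally, the constructions glue because they are canonical, and quasi-coherence of $M^\nabla$ follows from its description as the kernel of a map of quasi-coherent $\mathcal O_Y$-modules.

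\textbf{Item \eqref{3thmpartial}.} By Example~\ref{rmk:Bquotient}, $N_\F$ is $\varphi^*\mathcal L$, with local basis $e_i^U\mapsto\partial/\partial y_i$. The canonical connection kills these basis vectors. Meanwhile $\nabla^{\rm B}_{\partial_h}(\partial/\partial y_i)=[\partial/\partial x_h,\partial/\partial y_i]=0$ in the coordinates of Corollary~\ref{frobenius_theorem}. Both connections satisfy the same Leibniz rule, so they agree on a local basis and therefore coincide.
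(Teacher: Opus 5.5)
Your proposal is correct. Items (1) and (3) go exactly as in the paper, which calls them ``quite simple'' and ``straightforward''. For item (2), however, you take a genuinely different route. The paper constructs the Taylor-type descent datum $\Phi(s)=\sum_{\boldsymbol i}D^{[\boldsymbol i]}(s)\otimes\delta^{\boldsymbol i}$ over $A\otimes_BA$ and verifies the hexagon cocycle condition through the identity $D^{[\boldsymbol j]}D^{[\boldsymbol i]}=(\!(\boldsymbol i,\boldsymbol j)\!)D^{[\boldsymbol i+\boldsymbol j]}$; this is where $D_i^p=0$ is used. It then appeals to faithfully flat descent for $B\to A$ (Knus--Ojanguren) to conclude that $A\otimes_B\bigcap_i\ker D_i\to M$ is an isomorphism. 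You instead write down Katz's projector $P=\sum_{\boldsymbol i}\frac{(-x)^{\boldsymbol i}}{\boldsymbol i!}\nabla^{\boldsymbol i}$, which is essentially the paper's $\Phi$ with $\delta$ specialized to $-x$, and prove bijectivity of $A\otimes_BM^\nabla\to M$ by hand. Telescoping gives $\nabla_jP=0$. The vanishing of $\sum_{i+k=n}(-1)^k/(i!\,k!)$ for $0<n<p$, together with $\nabla_j^p=0$, gives your reconstruction formula. For horizontal $m_{\boldsymbol i}$ one has $P\nabla^{\boldsymbol j}\bigl(\sum_{\boldsymbol i}x^{\boldsymbol i}m_{\boldsymbol i}\bigr)=\boldsymbol j!\,m_{\boldsymbol j}$, which is the coefficient extraction you need. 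Your argument is self-contained and produces an explicit inverse. It also shows precisely where integrability (commuting $\nabla_j$) and $p$-closedness ($\nabla_j^p=0$) enter. Run with $M=A$, it even recovers the $p$-basis decomposition of $A$ over $B$ that you take as input. The paper's route buys formality instead: the equivalence of categories, independence of coordinates and gluing all come out of the general descent theorem, whereas you must supply them yourself. You do so adequately, but two points deserve to be stated explicitly. First, the isomorphisms $N\simeq(\varphi^*N)^{\nabla^{\mathrm{can}}}$ and $\varphi^*(\mathcal M^\nabla)\simeq\mathcal M$ are natural with respect to $\mathcal O_Y$-linear, respectively horizontal, morphisms; this is what makes the two functors quasi-inverse. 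Second, $\mathcal M^\nabla$ is defined intrinsically, which is why the local isomorphisms glue.
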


\begin{proof} Item \eqref{1thmpartial} is quite simple. The proof of  \eqref{2thmpartial} is a  direct consequence of faithfully flat descent for modules \cite[Theorem II.6.4, p.51]{knus-ojanguren}. For the convenience of the reader, we render explicit some details. 
  
Let $(\mathcal M,\nabla)$ be a partial $\mathcal F$-connection. Let $U=\mathrm{Spec}\,A$ be an open and affine subset of $X$ where we can 
find 
\[x_1,\ldots,x_m,y_1,\ldots,y_q\in\mathcal O(U)\]  such that: 
\begin{itemize}\item If $B=\mathcal B(U)$, then $A$ is a free $B$-module on $x_{1}^{i_1}\cdots x_m^{i_m}$, where $(i_1,\ldots,i_m)$ ranges over $\{0,\ldots,p-1\}^m$; 
\item The $\mathcal O_U$-module $T_\F|_{U}$ is free on $\{\partial/\partial x_i\}_{i=1}^m$;  \item
The $\mathcal O_U$-module $T_U$ is free on $\{\partial/\partial x_i\}_{i=1}^m\cup\{\partial/\partial y_i\}_{i=1}^q$. 
\end{itemize}
That we can set up such a situation is a consequence of Proposition \ref{frobenius_theorem0} and Corollary~\ref{frobenius_theorem}.
Let
  $M=\mathcal M(U)$. 
We want to show that $M$ descends  to a $B$-module, i.e., that $M\simeq A\otimes_BN$ for a certain $B$-module $N$; that  $N=\mathcal M^\nabla(U)$ will then follow from the general theory \cite[II, Theorem 3.2, p.36]{knus-ojanguren}. 

Following \cite{knus-ojanguren} (see also \cite[Remark 2.21]{milne80}), we need to produce a {\it descent data} for the $A$-module $M$. For this,  let us now adopt the notations employed at the discussion made after Corollary~\ref{cor:faithfullyflat} and add a few more. Put  $D_i=\nabla_{\partial_i}$; these are commuting  $B$-linear endomorphisms of $M$. For $\boldsymbol  i=(i_1,\ldots,i_m)\in\mathbb N_p^m$,   write    
\[
D^{\boldsymbol  i}=D_1^{i_1}\cdots D_m^{i_m},\quad  D^{[\boldsymbol  i]}=  \frac{1}{\boldsymbol  i!}D^{\boldsymbol  i}, \quad\text{and}\quad (\!(\boldsymbol i,\boldsymbol j)\!)=\binom{i_1+j_1}{i_1}\cdots\binom{i_m+j_m}{i_m}.  
\]
This last element is taken to be in $\field$.
Given   $s\in M$, let  
\[
\Phi(s)=\sum_{\boldsymbol  i\in\mathbb N^m_p} D^{[\boldsymbol  i]}(s)\otimes\delta^{\boldsymbol  i}\in M\otimes_AA'.
\]  
Because of eq. \eqref{taucompatibility}, we have $\tau(a)\Phi(s)=\Phi(as)$, so that we obtain an $A'$-linear map  
\[\Phi':
A'\otimes_{\tau,A}M\longrightarrow M\otimes_AA',\quad a'\otimes s\longmapsto \sum_{\boldsymbol i\in\mathbb N^m_p} D^{[\boldsymbol  i]}(s)\otimes a'\delta^{\boldsymbol  i}.
\] 
If we for a moment regard $A$ as an $A'$-algebra via  the multiplication    morphism $a'\otimes b'\mapsto a'b'$, then   $\mathrm{id}_A\otimes_{A'}  \Phi'=\mathrm{id}_M$.   
 
 Let now $A''=A\otimes_BA\otimes_BA$, so that we have morphisms of rings 
\[
d_{12}:A'\longrightarrow A'',\quad a'\otimes b'\longmapsto a'\otimes b'\otimes1,
\]  
\[d_{23}:A'\longrightarrow A'',\quad a'\otimes b'\longmapsto 1\otimes a'\otimes b' 
\]
and 
\[d_{13}:A'\longrightarrow A'',\quad a'\otimes b'\longmapsto a'\otimes1\otimes b'.
\] Let $d_1:A\to A'$ be  the map $a\mapsto a\otimes1$. Then 
   $d_{12}d_1 =d_{13}d_1$, $d_{12}\tau=d_{23}d_1$ and   $d_{13}\tau =d_{23}\tau$. Let us also adopt the convention that $\tau^*M$, $d_1^*M$, etc, denote the tensor products $M\otimes_{\tau,A}A'$, $M\otimes_{d_1,A}A'$, etc.
   The cocycle condition for the descent is now equivalent to the commutativity of the following   hexagon of maps of $A''$-modules 
\begin{equation}\label{cocycle0}
\xymatrix{
&d_{23}^*\tau^*M\ar[dr]^{d_{23}^*\Phi'}\ar@{=}[dl]&
\\
d_{13}^*\tau^*M\ar[d]_{d_{13}^*\Phi'}&&d_{23}^*d_1^*M\ar@{=}[d]
\\
d_{13}^*d_1^*M\ar@{=}[rd]&&d_{12}^*\tau^*M\ar[dl]^{d_{12}^*\Phi'}
\\
&d_{12}^*d_1^*M.&
}
\end{equation}
 Now,   $d_{13}(\delta_i)=1\otimes\delta_i+\delta_i\otimes1$, and writing 
\[
(1\otimes\delta+\delta\otimes1)^{\boldsymbol h}=\prod_{i=1}^m(1\otimes\delta_i+\delta_i\otimes1)^{h_i},
\]
commutativity of \eqref{cocycle0}  amounts to showing that  
\begin{equation}\label{cocycle1}
\sum_{\boldsymbol h\in\mathbb N_p^m} (1\otimes\delta+\delta\otimes1)^{\boldsymbol h}\underset{d_{12}}\otimes \left( D^{[\boldsymbol h]}(s)\underset{d_{1}}\otimes 1\right) = \sum_{\boldsymbol i,\boldsymbol j} (\delta\otimes1)^{\boldsymbol j}\cdot (1\otimes\delta)^{\boldsymbol i}\underset{d_{12}}\otimes \left(D^{[\boldsymbol j]}  D^{[\boldsymbol i]} (s)\underset{d_{1}}\otimes 1\right)
\end{equation}
for each $s\in M$.
The binomial formula assures that for $\boldsymbol h\in\mathbb N_p^m$, we have  \[
(1\otimes\delta+\delta\otimes1)^{\boldsymbol h}=\sum_{\boldsymbol i+\boldsymbol j=\boldsymbol h} (\!(\boldsymbol i,\boldsymbol j)\!)(\delta\otimes1)^{\boldsymbol j}(1\otimes\delta)^{\boldsymbol i}. 
\]
From the fact that $D_i^h=0$ for $h\ge p$, we have 
\[
D^{[\boldsymbol j]}D^{[\boldsymbol i]}=(\!(\boldsymbol i,\boldsymbol j)\!)D^{[\boldsymbol i+\boldsymbol j]}, \]
which proves   that  eq. \eqref{cocycle1} holds true and hence diagram \eqref{cocycle0} commutes. It is therefore possible to apply \cite[II, Theorem 3.2]{knus-ojanguren}: Let 
\[\begin{split}
N&=\{s\in M\,:\, \Phi(s)=s\otimes1\} 
\\& =\bigcap_i\{s\in M\,:\,D_i(s)=0\};
\end{split}\]
then $N\otimes_BA\to M$ is an isomorphism. This concludes the proof of \eqref{2thmpartial} of the statement.

The proof of \eqref{3thmpartial} is straightforward. 
\end{proof}

\begin{remark}\rm A result along the lines of Proposition  \ref{partial_cartier_descent} appeared in \cite{borelli-moreira-salomao25}. 
\terminou
\end{remark}

\bibliographystyle{amsplain}
\bibliography{annot}

\end{document}